\pdfoutput=1
\documentclass[10pt]{article}
\PassOptionsToPackage{usenames,dvipsnames}{xcolor}
\usepackage[T1]{fontenc}
\usepackage{newtxtext}
\usepackage{microtype}
\usepackage[english]{babel}					\usepackage[margin = 1.2in]{geometry}
\usepackage{amssymb,cite}	

\usepackage{amsmath,amsfonts,amsthm, mathtools}
\usepackage{newtxmath}
\usepackage{mathrsfs}
\usepackage[pdftex]{graphicx}	
\usepackage[title,titletoc,page]{appendix} 
\usepackage{listings} 
\usepackage{color}
\usepackage{caption}
\usepackage{subcaption}
\usepackage{cases}
\usepackage[ruled,vlined]{algorithm2e}
\usepackage{sectsty}
\usepackage[prependcaption,textsize=tiny,textwidth=3cm]{todonotes}
\usepackage{cases}
\allsectionsfont{\centering \normalfont\scshape}

\usepackage{stmaryrd}

\usepackage{fancyhdr}
\usepackage[cal=cm]{mathalfa}
\usepackage{mathrsfs}
\usepackage[dvipsnames]{xcolor}

\usepackage{tikz}
\usetikzlibrary{shapes.geometric, shapes.misc, positioning, arrows, decorations.pathreplacing, angles, quotes,calc}
\usepackage{booktabs}
\usepackage{xfrac}
\usepackage{siunitx}
\usepackage{centernot}
\usepackage{comment}
\usepackage{chngcntr}
\usepackage{extarrows}
\usepackage{adjustbox}
\usepackage{tikz-cd}

\makeatletter
\def\thmhead@plain#1#2#3{%
  \thmname{#1}\thmnumber{\@ifnotempty{#1}{ }\@upn{#2}}%
  \thmnote{ {\the\thm@notefont#3}}}
\let\thmhead\thmhead@plain
\makeatother

\def\smallsection#1{\smallskip\noindent\textbf{#1}.}

\usepackage{hyperref}
\hypersetup{colorlinks = true,
linkcolor = MidnightBlue, anchorcolor =red,
citecolor = ForestGreen,
filecolor = red,urlcolor = red,
            pdfauthor=author}

\usepackage{titlesec}
\titleformat{\subsection}{\normalfont\large\bfseries}{\thesubsection}{1em}{}
\titleformat{\subsubsection}{\normalfont\normalsize\bfseries}{\thesubsubsection}{1em}{}

\usepackage{cleveref}
\crefname{theorem}{Theorem}{Theorems}
\crefname{lemma}{Lemma}{Lemmas}
\crefname{proposition}{Proposition}{Propositions}
\crefname{corollary}{Corollary}{Corollaries}
\crefname{definition}{Definition}{Definitions}
\crefname{remark}{Remark}{Remarks}

\usepackage{aliascnt}

\newtheorem{theorem}{Theorem}[section]

\newaliascnt{lemma}{theorem}
\newtheorem{lemma}[lemma]{Lemma}
\aliascntresetthe{lemma}

\newaliascnt{proposition}{theorem}
\newtheorem{proposition}[proposition]{Proposition}
\aliascntresetthe{proposition}

\newaliascnt{corollary}{theorem}
\newtheorem{corollary}[corollary]{Corollary}
\aliascntresetthe{corollary}

\newaliascnt{remark}{theorem}
\newtheorem{remark}[remark]{Remark}
\aliascntresetthe{remark}

\theoremstyle{definition}
\newaliascnt{definition}{theorem}
\newtheorem{definition}[definition]{Definition}
\aliascntresetthe{definition}

\newaliascnt{example}{theorem}

\aliascntresetthe{example}

\numberwithin{equation}{section}
\numberwithin{figure}{section}

\newaliascnt{condition}{theorem}

\aliascntresetthe{condition}

\newaliascnt{assumption}{theorem}
\newtheorem{assumption}[assumption]{Assumption}
\aliascntresetthe{assumption}

\newcommand{\mc}[1]{\mathcal{#1}}

\def\bb{\begin{equation}
  \left\{\ 
   \begin{aligned} }
\def\ee{   \end{aligned}
  \right.
  \end{equation}}

\def\mm{ \left[
 \begin{matrix}}
\def\nn{\end{matrix} \right] } 

\def\p{\partial}
\def \dd{\mathrm{d}}

\def\d{\delta}

\def \vp{\varphi}

\def \diag{\operatorname{diag}}

\def \sdiv {\sdiv_S}

\def \NN{\mathbb{N}}
\def \R{\mathbb{R}}

\def \C{\mathbb{C}}

\def \Z{\mathbb{Z}}

\def \i{\mathrm{i}}

\def \A{\mathcal{A}}
\def \L{\mathcal{L}}

\def \O{\mathcal{O}}
\def \S{\mathcal{S}}

\def\D{\mathcal{D}}

\def \abs#1{\mathopen| #1 \mathclose|}

\newcommand{\Ker}{\operatorname{Ker}}

\begin{document}

\title{Frequency-dependent capacitance matrix formulation for Fabry--P\'{e}rot resonances in two- and three-dimensional systems}

\author{Habib Ammari\footnote{
  ETH Z\"urich, Department of Mathematics, Rämistrasse 101, 8092 Z\"urich, Switzerland and 
  Hong Kong Institute for Advanced Study, City University of Hong Kong, Kowloon Tong, Hong Kong Special Administrative Region
  (habib.ammari@math.ethz.ch).} \and Bowen Li\footnote{Department of Mathematics,  City University of Hong Kong, Kowloon Tong, Hong Kong SAR, (bowen.li@cityu.edu.hk).}
\and Ping Liu\footnote{
School of Mathematical Sciences, Zhejiang University, Hangzhou, 310027, China, (pingliu@zju.edu.cn, shaoyingjie323@zju.edu.cn).}  \thanks{
Institute of
Fundamental and Transdisciplinary Research, Zhejiang University, Hangzhou, 310027, China.}
\and Yingjie Shao\footnotemark[3] \and Alexander Uhlmann\footnote{
  ETH Z\"urich, Department of Mathematics, Rämistrasse 101, 8092 Z\"urich, Switzerland
(alexander.uhlmann@sam.math.ethz.ch).}
}

\date{}
\maketitle

\begin{abstract}
We study scattering resonances of finite and infinite periodic two- and three-dimensional systems of high-contrast resonators beyond the subwavelength regime. At each fixed admissible nonzero reference frequency, we introduce frequency-dependent capacitance matrices and derive quantitative asymptotic expansions of hybridized Fabry--P\'erot resonant frequencies and their corresponding eigenmodes in terms of the material contrast parameter. We provide a partial differential equation (PDE) formulation of the
frequency-dependent capacitance matrix analogous to the one for the capacitance matrix in the subwavelength regime. Based on this PDE formulation, we establish key properties of the frequency-dependent capacitance matrix and estimate its
norm at high frequencies for a single smooth resonator with nontrapping exterior in three dimensions, identifying a regime in which the reduced residual remains perturbative. For infinite periodic resonator arrays, we prove bandgap opening under a uniform exterior non-resonance condition and generic Dirac cones for a honeycomb scaling family. Our results extend the use of discrete approximations as a powerful tool for characterizing the resonant properties of finite and infinite periodic systems of high-contrast resonators at arbitrarily high frequencies and understanding their anomalous localization and transport properties arising from strong coupling to a discrete set of eigenfrequencies.
\end{abstract} 

\noindent
\textbf{Keywords:} Fabry--Pérot scattering resonance, frequency-dependent capacitance matrix, high-contrast resonator, discrete approximation, periodic structure, Dirac degeneracy, bandgap, scattering resonances at high frequencies\\
    
\noindent \textbf{AMS Subject classifications:} 35B34, 35P25, 35C20, 15A18\\

\tableofcontents

\section{Introduction}

The possibility of guiding and localizing waves at given frequencies is critical for many applications in nanosciences and quantum technologies. Structures achieving such properties with building blocks much smaller than the operating wavelengths are known as meta-structures. These meta-structures enjoy guiding and localization properties at subwavelength scales and have recently attracted a lot of attention both in physics and applied mathematics. Their building blocks are high-contrast resonators. Recently, functional analytic methods for studying these subwavelength resonator systems have been developed in \cite{ammari2025mathematical,ammari2024functional}. In particular, rigorous discrete approximations of scattering resonance problems have been derived in an asymptotic subwavelength regime. This has been achieved by reframing the governing equations as a nonlinear eigenvalue problem in terms of integral operators. In the subwavelength limit, resonant eigenmodes are described by the eigenmodes of a generalized capacitance matrix. Using this formulation, it is possible to describe subwavelength resonances and related exotic transport and localization phenomena \cite{anderson,ammari2019topologically,fano,ammari2020exceptional,ammari2020robust}.

Nevertheless, it turns out that in ultrasonics the design of microstructured materials or acoustic cavities with building blocks larger than the operating wavelength is important in applications and mathematically challenging; see, for instance, \cite{refhigh1,refhigh2,refhigh3,ultrasonic}. Our main aim in this part of the series of papers \cite{fabry1,fabry2,pm1} is to show that high-contrast systems of resonators can operate beyond the subwavelength regime and ensure unusual wave transport and localization properties in two- and three-dimensional settings. Parts I and II are only devoted to, respectively, finite- and periodic one-dimensional systems of high-contrast resonators.    

We consider scattering resonances of high-contrast two- and three-dimensional resonator
systems beyond the subwavelength regime, \emph{i.e.}, at wavelengths smaller than the
characteristic resonator size. We prove that scattering resonances, called Fabry--P\'erot
resonances, bifurcate from the interior Neumann eigenfrequencies, and their leading-order
frequency shifts are governed by the eigenvalues of the \emph{frequency-dependent
capacitance matrix}, which plays the same role as the generalized capacitance matrix in
the subwavelength regime. We establish key properties of the associated matrices: the
unweighted matrix $\mathscr C(\omega_0)$ is complex symmetric, while the
frequency-dependent capacitance matrix $\mc C(\omega_0)$ is similar to a
complex-symmetric matrix and is generally non-Hermitian due to radiation losses. For a single smooth resonator with nontrapping exterior in three dimensions,
we derive a high-frequency norm estimate that identifies a perturbative regime for the
reduced residual.
We then extend the formalism to infinite periodic structures. Introducing the quasiperiodic
frequency-dependent capacitance matrix, we derive leading-order asymptotic formulas for
the Bloch band functions and corresponding Bloch eigenmodes of a crystal of high-contrast
resonators in terms of the material contrast. Under natural non-resonance conditions on
the exterior domain, we prove that bandgap opening occurs as the contrast parameter tends
to zero. Assuming honeycomb lattice symmetry, we further prove that Dirac degeneracies
arise generically beyond the subwavelength regime.

The frequency-dependent capacitance matrix formalism serves a dual purpose: it provides
an efficient numerical tool for computing scattering resonances and eigenmodes of finite
and periodic systems beyond the subwavelength regime, and it furnishes a theoretical
framework for understanding the anomalous spectral and transport properties that arise
from strong coupling to the Neumann eigenvalues. In particular, it opens the door to the
systematic design of meta-structures operating at wavelengths smaller than the
characteristic resonator size.

The paper is organized as follows. In \Cref{sec:2}, we formulate the scattering resonance
problem for a finite collection of high-contrast resonators. \Cref{sec:3} is devoted to
the approximation of the leading frequency shifts of scattering resonances, as the
contrast parameter tends to zero, by the eigenvalues of the frequency-dependent
capacitance matrix. We construct the
frequency-dependent capacitance matrix from the exterior Dirichlet-to-Neumann map,
establish that its unweighted form is complex symmetric and that the frequency-dependent
capacitance matrix is similar to a complex-symmetric matrix, derive, for a single smooth
three-dimensional resonator with nontrapping exterior, a high-frequency norm estimate
that identifies a perturbative regime for the reduced residual, and present numerical results
illustrating the accuracy of the approximations. In \Cref{sec:4}, we turn to crystals of
high-contrast resonators and derive asymptotics of the Bloch band functions in terms of
material contrast. We prove bandgap opening beyond the subwavelength regime for a
three-dimensional lattice with a single resonator per unit cell under the interior regularity and uniform exterior non-resonance conditions, and
establish the generic occurrence of Dirac cones for a two-dimensional dimer honeycomb
scaling family. The appendix
provides an ODE formulation of the frequency-dependent capacitance matrix, complementing
the one-dimensional analysis of \cite{fabry1}: we show that, as in higher dimensions, it
can be computed explicitly from the exterior Dirichlet-to-Neumann map. A notable
difference is that, while this map is well defined at every positive real frequency in two
and three dimensions, it fails to be well defined in one dimension when the frequency is a
Dirichlet eigenvalue of $-\mathrm{d}^2/\mathrm{d}x^2$ in one of the separating intervals
between the resonators.

Note that most of the derivations and results in this paper hold for both two- and three-dimensional cases in the same manner. This is in contrast with the subwavelength regime, where the Dirichlet-to-Neumann map has a logarithmic singularity at zero frequency, while beyond the subwavelength regime, it is analytic in frequency. Nevertheless, for simplicity of presentation, we only consider the three-dimensional case unless otherwise indicated.

\section{Fabry--P\'{e}rot scattering resonances} \label{sec:2}

In this section, we formulate the scattering resonance problem for a finite collection of high-contrast resonators in $\mathbb{R}^3$ and reduce it, via the layer potential method, to the study of characteristic values of an operator-valued family.

Let $D = \cup_{j=1}^N D_j$, where $D_1,\dots,D_N\subset \mathbb{R}^3$ are bounded connected domains with $C^\infty$ boundaries and pairwise disjoint closures; the resonators
need not be identical. We assume that $\mathbb{R}^3\setminus\overline D$ is connected. The background medium has wave speed $v>0$, while the resonator $D_j$ has 
wave speed $v_j>0$. We write
\begin{equation*}
    k:=\frac{\omega}{v}, \qquad k_j:=\frac{\omega}{v_j}, \quad j=1,\dots,N,
\end{equation*}
for the exterior and interior wave numbers. Assume that the density contrasts $\delta_j\in\mathbb{C}$ satisfy
\begin{equation*}
    \delta_j = \O(\delta), \qquad \delta\to 0, \quad j=1,\dots,N,
\end{equation*}
and are encoded by the piecewise-constant function $\delta_{\partial D}(x):=\delta_j$ 
for $x\in\partial D_j$.

For a complex frequency $\omega\in \mathbb{C}$, we consider the resonance problem
\begin{equation}
\label{eq:multi-resonator-scattering-problem}
\begin{dcases}
\Delta u + k^2 u = 0 & \text{in } \mathbb{R}^3\setminus \overline{D},\\
\Delta u + k_j^2 u = 0 & \text{in } D_j, \quad j=1,\dots,N,\\
u\big|_+ = u\big|_- & \text{on } \partial D,\\
\delta_j \dfrac{\partial u}{\partial \nu}\Big|_+ - \dfrac{\partial u}{\partial \nu}\Big|_- = 0 
    & \text{on } \partial D_j, \quad j=1,\dots,N,\\
\text{$u$ is outgoing.}
\end{dcases}
\end{equation}
Here, $\nu$ is the unit outward normal to $\partial D$, and the subscripts $+$ and $-$ denote traces taken from the exterior and interior of $D$, respectively. Throughout this paper, we use interchangeably $\partial /\partial \nu$ and  $\partial_\nu$  to denote the normal derivative. 
When $k\neq 0$ is real, the outgoing  radiation condition is understood with respect to the exterior wave number $k$, namely,
\begin{equation} \label{sommerfeld}
\frac{\partial u}{\partial |x|}-\mathrm{i}k u=o(|x|^{-1})
\qquad \text{as } |x|\to \infty,
\end{equation}
uniformly in the angular variable $x/|x|$. For nonreal $k$, the outgoing condition is defined by meromorphic continuation of the outgoing resolvent from $\operatorname{Im} k>0$, equivalently through the outgoing Dirichlet-to-Neumann (DtN) map on a sphere containing $D$, away from its poles; see, for instance, \cite{ammari2025mathematical}.

Next, we introduce the layer-potential operators associated with the disconnected boundary 
$\partial D = \cup_{j=1}^N \partial D_j$. For $\omega\in \mathbb{C}$, let
\begin{equation*}
G^{\omega}(x):=-\frac{e^{\mathrm{i}\omega |x|}}{4\pi |x|},
\qquad x\neq 0,
\end{equation*}
be the outgoing Helmholtz Green function in $\mathbb{R}^3$. The corresponding single-layer 
potential and Neumann--Poincar\'e operator are defined by 
\begin{equation*}
\mathcal{S}_D^{\omega}[\varphi](x)
:=
\int_{\partial D} G^{\omega}(x-y)\,\varphi(y)\,\mathrm{d}\sigma(y):\ L^2(\partial D) \to H^1_{\rm loc}(\R^3)\,,
\end{equation*}
\begin{equation*}
\mathcal{K}_D^{\omega,*}[\varphi](x)
:=
\int_{\partial D} \frac{\partial}{\partial \nu_x}G^{\omega}(x-y)\,\varphi(y)\,\mathrm{d}\sigma(y):\ L^2(\partial D) \to L^2(\partial D)\,.
\end{equation*}
Here, $H^1_{\rm loc}(\R^3)$ is the set of functions that together with their weak derivatives are locally square integrable in $\R^3$.

With abuse of notation, we use the same notation for the boundary operator $\mathcal{S}_D^{\omega}: L^2(\partial D) \to H^1(\partial D)$, where $H^1(\partial D)$ denotes the space of $L^2$-functions on $\partial D$ with weak $L^2$
tangential derivatives.  
For convenience, we also write these operators in the following block forms:
\begin{equation*}
\mathcal{S}_D^{\omega}=\bigl(\mathcal{S}_{D_i,D_j}^{\omega}\bigr)_{i,j=1}^N,
\qquad
\mathcal{K}_D^{\omega,*}=\bigl(\mathcal{K}_{D_i,D_j}^{\omega,*}\bigr)_{i,j=1}^N,
\end{equation*}
where, for a density $\varphi_j$ supported on $\partial D_j$,
\begin{equation*}
\mathcal{S}_{D_i,D_j}^{\omega}[\varphi_j]
:=
\left.\mathcal{S}_D^{\omega}[\varphi_j]\right|_{\partial D_i},
\qquad
\mathcal{K}_{D_i,D_j}^{\omega,*}[\varphi_j]
:=
\left.\mathcal{K}_D^{\omega,*}[\varphi_j]\right|_{\partial D_i}.
\end{equation*}
Since the interior wave numbers may vary from one resonator to another, we introduce the following piecewise operators, as in \cite[Section 2.1]{ammari2024functional}, for $\psi \in L^2(\partial D)$ and $j=1,\dots,N$,
\begin{equation*}
\widetilde{\mc S}_D^{\omega}[\psi](x)
:=
\mc S_D^{k_j}[\psi](x),
\qquad x\in D_j,
\end{equation*}
and
\begin{equation*}
\widetilde{\mc K}_D^{\omega,*}[\psi](x)
:=
\mc K_D^{k_j,*}[\psi](x),
\qquad x\in \partial D_j.
\end{equation*}

We now introduce the layer potential representation for the scattering resonance \eqref{eq:multi-resonator-scattering-problem}. 
We seek a resonant field in the form
\begin{equation}
\label{eq:multi-resonator-layer-ansatz}
u(x)=
\begin{cases}
\mc S_D^{k}[\phi](x), & x\in \R^3\setminus \overline D,\\
\widetilde{\mc S}_D^{\omega}[\psi](x), & x\in D,
\end{cases}
\end{equation}
for some densities $\phi,\psi\in L^2(\partial D)$. The exterior part $S_D^{k}[\phi]$ of \eqref{eq:multi-resonator-layer-ansatz} automatically satisfies the outgoing radiation condition, while each interior component solves the corresponding Helmholtz equation in $D_j$. The jump relations for the single-layer potential yield \cite{ammari2009layer}:
\begin{equation*}
\mc S_D^{k}[\phi]\big|_+ = \mc S_D^{k}[\phi]\big|_-,
\qquad
\frac{\partial}{\partial \nu}\mc S_D^{k}[\phi]\Big|_{\pm}
=
\left(\pm \frac12 I + \mc K_D^{k,*}\right)[\phi],
\end{equation*}
on $\partial D$ and, on each $\partial D_j$,
\begin{equation*}
\widetilde{\mc S}_D^{\omega}[\psi]\big|_- = \mc S_D^{k_j}[\psi]\big|_{\partial D_j},
\qquad
\frac{\partial}{\partial \nu}\widetilde{\mc S}_D^{\omega}[\psi]\Big|_-
=
\left(-\frac12 I + \widetilde{\mc K}_D^{\omega,*}\right)[\psi].
\end{equation*}
Therefore, the transmission conditions in \eqref{eq:multi-resonator-scattering-problem} are equivalent to the following system of boundary integral equations \cite[Lemma 2.1]{ammari2025mathematical}:
\begin{equation} \label{eq:multi-resonator-bie-system}
\begin{cases}
\widetilde{\mc S}_D^{\omega}[\psi]-\mc S_D^{k}[\phi]=0\\
\left(-\dfrac12 I + \widetilde{\mc K}_D^{\omega,*}\right)[\psi]
-\delta_{\partial D}\left(\dfrac12 I + \mc K_D^{k,*}\right)[\phi]=0
\end{cases}
\qquad \text{on } \partial D.
\end{equation}
This can be written more compactly as
\begin{equation}
\label{eq:multi-resonator-operator-equation}
\mc A(\omega,\delta_{\partial D})
\begin{pmatrix}
\psi\\
\phi
\end{pmatrix}
=
\begin{pmatrix}
0\\
0
\end{pmatrix},
\end{equation}
where the block operator $\mc A(\omega,\delta_{\partial D}):L^2(\partial D)\times L^2(\partial D) \to H^1(\partial D)\times L^2(\partial D)$ is
\begin{equation*}
\mc A(\omega,\delta_{\partial D}) = 
\begin{pmatrix}
\widetilde{\mc S}_D^{\omega} & -\mc S_D^{k}\\
-\dfrac12 I+\widetilde{\mc K}_D^{\omega,*} & -\delta_{\partial D}\left(\dfrac12 I+\mc K_D^{k,*}\right)
\end{pmatrix}.
\end{equation*}

We thus obtain the equivalence between the scattering resonance problem  \eqref{eq:multi-resonator-scattering-problem} and the operator-valued characteristic equation \eqref{eq:multi-resonator-operator-equation}, that is, provided that 
$\mathcal{S}_D^k$ and $\widetilde{\mathcal{S}}_D^\omega$ are invertible, a nontrivial 
solution of \eqref{eq:multi-resonator-scattering-problem} is equivalent to a nontrivial 
pair $(\psi,\phi)$ solving \eqref{eq:multi-resonator-operator-equation}; see also \cref{lem:layerrep} below. Note that the condition therein that $k_j^2$ is not a Dirichlet eigenvalue of $-\Delta$ in $D_j$ for each $j = 1, \dots, N$ is automatically satisfied, since the resonant frequencies satisfy $\mathrm{Im}\, \omega < 0$ in the case of real material parameters, which implies $k_j^2 = \omega^2 / v_j^2 \notin (0,+\infty)$, while the Dirichlet eigenvalues of $-\Delta$ in $D_j$ have to be strictly positive. In the subwavelength regime $\omega \to 0$, all the above assumptions are naturally satisfied. Those $\omega \in \mathbb{C}$ for which \eqref{eq:multi-resonator-scattering-problem} admits nontrivial solutions are scattering resonances; in this paper, those that bifurcate, as $\delta\to0$, from nonzero interior Neumann eigenfrequencies are called \emph{Fabry--P\'erot resonances}~\cite{fabry1,li2025high}.

\section{Frequency-dependent capacitance matrix approximation} \label{sec:3}

Having reduced the scattering resonance problem to the characteristic equation \eqref{eq:multi-resonator-operator-equation}, we shall derive a finite-dimensional approximation for the Fabry--P\'{e}rot resonant frequencies. For this, we first identify the characteristic values $\omega_0$ of the limiting operator $\mathcal{A}(\omega,0)$ with the interior Neumann eigenvalue problem in Section \ref{sec:limit}, and then establish a pole-pencil decomposition of $\mathcal{A}(\omega,0)^{-1}$ near $\omega_0$ in Section \ref{sec:pole-pencil}. In Section \ref{sec:finite-reduction}, we apply a Lyapunov--Schmidt reduction to the full operator family $\mathcal{A}(\omega,\delta)$ to show that the resonant frequencies near $\omega_0$ are, to the leading order in $\delta$, determined by the eigenvalues of a \emph{frequency-dependent capacitance matrix} $\mc C(\omega_0)$. We will also discuss the general properties of $\mc C(\omega_0)$ in \Cref{sec:general_properties}. In particular, we derive a norm estimate of $\mc C(\omega_0)$ and identify a high-frequency regime in which the reduced residual remains perturbative, as shown at the end of this section.

\subsection{The limiting operator and the interior Neumann problem} \label{sec:limit}

We begin by analyzing the limiting operator $\mathcal{A}(\omega,0)$ obtained as $\delta\to 0$, and identify its characteristic values with the interior Neumann eigenvalues of $-\Delta$ in the individual resonators.

Setting $\delta_{\partial D}=0$ in \eqref{eq:multi-resonator-scattering-problem} gives 
the limiting problem
\begin{equation}
\label{eq:limiting-pde-problem}
\begin{dcases}
\Delta u + k^2 u = 0 & \text{in } \mathbb{R}^3\setminus \overline{D},\\
\Delta u + k_j^2 u = 0 & \text{in } D_j, \quad j=1,\dots,N,\\
u\big|_+ = u\big|_-  & \text{on } \partial D,\\
\dfrac{\partial u}{\partial \nu}\Big|_- = 0  & \text{on } \partial D_j, \quad j=1,\dots,N,\\
\text{$u$ is outgoing,}
\end{dcases}
\end{equation}
in which the exterior field is coupled to the resonators only through the continuity of the trace, while the interior flux condition reduces to a homogeneous Neumann condition on each $\partial D_j$. In particular, the above equivalence between the resonance problem \eqref{eq:multi-resonator-scattering-problem} and the boundary integral equation
\eqref{eq:multi-resonator-operator-equation} 
extends to the limiting problem \eqref{eq:limiting-pde-problem}, as we now state. We refer, for example, to \cite{ammari2025mathematical} for its proof. Let $\sigma_{\rm Neu}(-\Delta;D_j)$ and $\sigma_{\rm Dir}(-\Delta;D_j)$ denote the Neumann eigenvalues and Dirichlet eigenvalues of $-\Delta$ in $D_j$, respectively. 

\begin{lemma}[{(Special case of \cite[Lemma 1.2]{ammari2025mathematical})}] \label{lem:layerrep}
Let
\begin{equation*}
    \mathcal A_0(\omega):=\mathcal A(\omega,0)
= \begin{pmatrix}
\widetilde{\mathcal S}_D^\omega & -\mathcal S_D^k\\[1mm]
-\dfrac12 I+\widetilde{\mathcal K}_D^{\omega,*} & 0
\end{pmatrix}.
\end{equation*}
Assume that
\begin{enumerate}
    \item[(i)] $\mathcal S_D^k:L^2(\partial D)\to H^1(\partial D)$ is invertible;
    \item[(ii)] $\widetilde{\mathcal S}_D^\omega:L^2(\partial D)\to H^1(\partial D)$ is invertible;
    \item[(iii)] for each $j=1,\dots,N$, $k_j^2 \notin \sigma_{\rm Dir}(-\Delta;D_j)$.  
\end{enumerate}
Then, equation \eqref{eq:limiting-pde-problem} is equivalent to the existence of a frequency $\omega \in \C$ and a nontrivial density $(\psi,\phi)\in L^2(\partial D)\times L^2(\partial D)$ such that 
\begin{equation*}
     \mathcal A_0(\omega)
    \binom{\psi}{\phi}=0.
\end{equation*}
In this case,  $u$ given by
\begin{equation*}
    u(x)=
\begin{cases}
\mathcal S_D^k[\phi](x), & x\in \mathbb R^3\setminus \overline D,\\[1mm]
\widetilde{\mathcal S}_D^\omega[\psi](x), & x\in D,
\end{cases}
\end{equation*}
is a solution to \eqref{eq:limiting-pde-problem}.
\end{lemma}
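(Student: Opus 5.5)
The proof will establish the two implications separately, using the jump relations for the single-layer potential recalled in \Cref{sec:2} together with the invertibility hypotheses (i)--(iii).

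\emph{From densities to a solution.} Suppose $(\psi,\phi)\neq 0$ satisfies $\mathcal A_0(\omega)(\psi,\phi)^T=0$, and define $u$ as in the statement. The exterior part $\mathcal S_D^k[\phi]$ solves $\Delta u+k^2u=0$ in $\mathbb R^3\setminus\overline D$ and is outgoing, since $G^\omega$ is outgoing; for nonreal $k$ this holds by analytic continuation. The interior part $\widetilde{\mathcal S}_D^\omega[\psi]$ solves $\Delta u+k_j^2u=0$ in each $D_j$. The first row of $\mathcal A_0$ gives $\widetilde{\mathcal S}_D^\omega[\psi]=\mathcal S_D^k[\phi]$ on $\partial D$. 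Because the single-layer potential is continuous across $\partial D$, this is exactly the trace condition $u|_+=u|_-$. The second row, combined with the interior jump formula, gives $\partial_\nu u|_-=(-\tfrac12 I+\widetilde{\mathcal K}_D^{\omega,*})[\psi]=0$, which is the Neumann condition.

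It remains to check that $u\not\equiv 0$. Suppose $u=0$. Then $\mathcal S_D^k[\phi]=0$ on $\partial D$, and (i) forces $\phi=0$. Likewise $\widetilde{\mathcal S}_D^\omega[\psi]|_{\partial D}=0$, and (ii) forces $\psi=0$. This contradicts the assumption that $(\psi,\phi)$ is nontrivial.

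\emph{From a solution to densities.} Let $u\neq0$ solve \eqref{eq:limiting-pde-problem}. By (ii), choose $\psi$ with $\widetilde{\mathcal S}_D^\omega[\psi]=u|_-$ on $\partial D$. On each $D_j$, both $u$ and $\mathcal S_D^{k_j}[\psi]$ solve the Helmholtz equation with the same Dirichlet data. By (iii), the interior Dirichlet problem is uniquely solvable, so $u=\widetilde{\mathcal S}_D^\omega[\psi]$ in $D$. Taking normal derivatives and using the Neumann condition yields the second row of $\mathcal A_0(\omega)(\psi,\phi)^T=0$.

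By (i), choose $\phi$ with $\mathcal S_D^k[\phi]=u|_+$ on $\partial D$. Then $w:=u-\mathcal S_D^k[\phi]$ is an outgoing solution in the exterior with zero Dirichlet data. The trace condition $u|_+=u|_-$ then gives the first row of the system.

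\emph{Main obstacle: the exterior identification.} I expect the delicate step to be showing that $w$ vanishes in the exterior. For real $k\neq0$, Rellich's lemma gives uniqueness of the exterior Dirichlet problem. For complex $k$, exterior Dirichlet resonances can occur, so I would argue differently. Consider $v:=\mathcal S_D^k[\phi]$ in $D$. It solves the Helmholtz equation with wavenumber $k$ and has the same Dirichlet data as $u|_+$.

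\begin{itemize}
\item If $k$ is an exterior Dirichlet resonance, a nonzero outgoing $w$ exists. Gluing $w$ with $0$ inside $D$ produces a nonzero outgoing function whose trace on $\partial D$ vanishes on both sides.
\item The jump of its normal derivative defines a density $\eta:=\partial_\nu w|_+$, and Green's representation gives $w=-\mathcal S_D^k[\eta]$ outside $D$ (up to the paper's sign convention).
\item Then $\mathcal S_D^k[\eta]=0$ on $\partial D$, so $\eta=0$ by (i), and hence $w=0$.
\end{itemize}

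Thus hypothesis (i) is precisely what excludes exterior Dirichlet resonances and closes the argument. I would carry out this Green's-representation step carefully, including the meromorphic continuation of the outgoing condition.

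Finally, $(\psi,\phi)\neq0$, since otherwise $u\equiv0$.
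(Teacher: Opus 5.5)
Your proof is correct and follows the standard argument that the paper defers to \cite{ammari2025mathematical}. The densities-to-solution direction uses the jump relations. The converse uses invertibility of the single-layer operators, together with uniqueness of the interior Dirichlet problem via (iii) and of the exterior Dirichlet problem. Deriving exterior uniqueness from (i), by identifying the zero-extended field with a single-layer potential of $\eta=\partial_\nu w|_+$, is the right way to handle complex $\omega$, where Rellich's lemma is unavailable. One small addition: note that elliptic regularity up to the smooth boundary puts $u|_\pm$ in $H^1(\partial D)$ and $\eta$ in $L^2(\partial D)$, so (i) and (ii) apply as stated.
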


We can now identify the characteristic values of the limiting operator. Note that for real nonzero $k$, the exterior Dirichlet problem with the outgoing radiation
condition is uniquely solvable. Consequently,
the limiting PDE \eqref{eq:limiting-pde-problem} has a nontrivial solution if and only if
\begin{equation*}
    \omega^2\in \bigcup_{j=1}^N v_j^2\,\sigma_{\rm Neu}(-\Delta;D_j).
\end{equation*}
Moreover, when $\mathcal{S}_D^k$ is invertible, equivalently, when $k^2 = \omega^2/v^2$  is not a Dirichlet eigenvalue of $-\Delta$ in $D$, then $\mathcal{A}_0(\omega)$ shares its characteristic values with $-\tfrac{1}{2} I + \widetilde{\mathcal{K}}_D^{\omega,*}$. Combining these facts with \cref{lem:layerrep}, we have the following lemma.  

\begin{lemma}
\label{thm:limiting-char-values-sharp}
Under the assumptions in \cref{lem:layerrep},
the following statements are equivalent:
\begin{enumerate}
\item[(i)] $\omega$ is a characteristic value of the limiting operator $\mc A_0$;
\item[(ii)] $\omega$ is a characteristic value of $-\dfrac12 I+\widetilde{\mathcal K}_D^{\omega,*}$;
\item[(iii)] there exists $j\in\{1,\dots,N\}$ such that $k_j^2$ is a Neumann eigenvalue of $-\Delta$ in $D_j$.
\end{enumerate}
That is, 
\begin{equation}
\label{eq:limiting-char-values-sharp}
\omega \text{ is a characteristic value of } \mc A_0
\quad\Longleftrightarrow\quad
\omega^2\in \bigcup_{j=1}^N v_j^2\,\sigma_{\rm Neu}(-\Delta;D_j). 
\end{equation}
Moreover, the geometric multiplicity of $\omega$ as a characteristic value of $\mc A_0$ is
\begin{equation}
\label{eq:multiplicity-limiting}
\dim \Ker \mc A_0(\omega)
=
\sum_{j=1}^N
\dim \ker\bigl(-\Delta_{{\rm Neu},D_j}-k_j^2\bigr).
\end{equation}
\end{lemma}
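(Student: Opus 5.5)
We prove (i)$\Leftrightarrow$(ii) from the block-triangular structure of $\mc A_0(\omega)$, and (ii)$\Leftrightarrow$(iii) by identifying the kernel of $-\frac12 I+\widetilde{\mc K}_D^{\omega,*}$ with the interior Neumann eigenspaces through the interior single-layer potential. The multiplicity formula \eqref{eq:multiplicity-limiting} then comes from tracking the kernel dimensions in both steps.

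\emph{Step 1: (i)$\Leftrightarrow$(ii).} Suppose $\mc A_0(\omega)(\psi,\phi)^T=0$. The second row gives $(-\frac12 I+\widetilde{\mc K}_D^{\omega,*})[\psi]=0$. The first row gives $\mc S_D^k[\phi]=\widetilde{\mc S}_D^\omega[\psi]$ on $\partial D$.

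Since $\mc S_D^k:L^2(\partial D)\to H^1(\partial D)$ is invertible by assumption (i), $\phi=(\mc S_D^k)^{-1}\widetilde{\mc S}_D^\omega[\psi]$ is uniquely determined by $\psi$. Hence the map $(\psi,\phi)\mapsto\psi$ is a linear isomorphism from $\Ker\mc A_0(\omega)$ onto $\Ker(-\frac12 I+\widetilde{\mc K}_D^{\omega,*})$. Its inverse is $\psi\mapsto(\psi,(\mc S_D^k)^{-1}\widetilde{\mc S}_D^\omega\psi)$, and this pair lies in $\Ker\mc A_0(\omega)$ because both rows vanish. In particular the two kernels have equal dimension.

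\emph{Step 2: (ii)$\Leftrightarrow$(iii).} The operator $-\frac12 I+\widetilde{\mc K}_D^{\omega,*}$ is block diagonal in the sense relevant here: on $\partial D_j$ it acts as $-\frac12 I+\mc K_D^{k_j,*}$ restricted to $\partial D_j$. It is, however, applied to densities on all of $\partial D$, so the off-diagonal blocks $\mc K^{k_j,*}_{D_j,D_i}$ with $i\neq j$ appear.

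To avoid handling these blocks directly, use the PDE. For $\psi$ in the kernel, set $u=\widetilde{\mc S}_D^\omega[\psi]$ in each $D_j$. By the jump relation, $u$ solves $\Delta u+k_j^2u=0$ in $D_j$ with $\partial_\nu u|_-=0$. So $u|_{D_j}$ is a Neumann eigenfunction, or is zero.

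Conversely, let $w_j\in\ker(-\Delta_{{\rm Neu},D_j}-k_j^2)$, and let $w$ be the function equal to $w_j$ on $D_j$ and $0$ on the other resonators. Its trace lies in $H^1(\partial D)$. By assumption (ii), there is a unique $\psi$ with $\widetilde{\mc S}_D^\omega[\psi]=w$ on $\partial D$. By assumption (iii), the interior Dirichlet problem is uniquely solvable in each $D_i$, so $\widetilde{\mc S}_D^\omega[\psi]=w$ in all of $D$. Then its interior normal derivative is $\partial_\nu w=0$, so $\psi$ lies in the kernel.

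The map $\psi\mapsto u|_D$ is injective. Indeed, if $u=0$ in $D$, then its trace vanishes on $\partial D$, and invertibility of $\widetilde{\mc S}_D^\omega$ forces $\psi=0$. Together with the converse construction, this gives an isomorphism
\[
\Ker(-\tfrac12 I+\widetilde{\mc K}_D^{\omega,*})\cong\bigoplus_{j=1}^N\ker(-\Delta_{{\rm Neu},D_j}-k_j^2).
\]
Combining this with Step 1 yields both the equivalence (ii)$\Leftrightarrow$(iii) and the multiplicity formula \eqref{eq:multiplicity-limiting}. Condition (iii) reformulates directly as $\omega^2\in\bigcup_j v_j^2\sigma_{\rm Neu}(-\Delta;D_j)$.

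\emph{Consistency with \cref{lem:layerrep}.} By \cref{lem:layerrep}, each kernel element produces a solution of \eqref{eq:limiting-pde-problem}. The exterior part of that solution is the unique outgoing solution with Dirichlet data $u|_{\partial D}$; this is well posed for real $k\neq0$ and is guaranteed in general by the representation with $\mc S_D^k$ invertible.

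\emph{Main obstacle.} The delicate point is the claim that $\widetilde{\mc S}_D^\omega[\psi]$ equals $w$ throughout each $D_i$, and not merely on the boundary. Here the piecewise definition of $\widetilde{\mc S}_D^\omega$ matters: the field in $D_i$ uses wavenumber $k_i$ but sums contributions from densities on every $\partial D_l$. The argument only needs that $\widetilde{\mc S}_D^\omega[\psi]|_{D_i}$ solves the $k_i$-Helmholtz equation in $D_i$ with trace $w|_{\partial D_i}$. That holds because every term $\mc S^{k_i}_{D_l}$ is a $k_i$-Helmholtz solution away from $\partial D_l$. Uniqueness then follows from the Dirichlet non-resonance assumption (iii).
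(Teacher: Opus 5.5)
Your proof is correct. Step 1 is the same reduction the paper uses (invertibility of $\mc S_D^k$ lets $\mc A_0$ share its characteristic values with $-\frac12 I+\widetilde{\mc K}_D^{\omega,*}$), but your Step 2 takes a different route for the key equivalence.

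\textbf{The paper's route.} The paper argues through the full transmission problem. It uses the unique solvability of the outgoing exterior Dirichlet problem for real $k\neq0$ to show that \eqref{eq:limiting-pde-problem} has a nontrivial solution exactly when some $k_j^2$ is a Neumann eigenvalue. It then transfers this statement to $\mc A_0$ via \cref{lem:layerrep}. The multiplicity formula \eqref{eq:multiplicity-limiting} is left implicit there.

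\textbf{Your route.} You stay on the interior side. Using only invertibility of $\widetilde{\mc S}_D^\omega$ and interior Dirichlet non-resonance, you build an explicit isomorphism
\[
\Ker\bigl(-\tfrac12 I+\widetilde{\mc K}_D^{\omega,*}\bigr)\cong\bigoplus_j\ker\bigl(-\Delta_{{\rm Neu},D_j}-k_j^2\bigr).
\]
You never invoke exterior uniqueness or \cref{lem:layerrep} itself. This buys you several things:
\begin{itemize}
\item an argument valid for nonreal $\omega$ under exactly the stated hypotheses;
\item an actual proof of \eqref{eq:multiplicity-limiting};
\item combined with Step 1, the explicit kernel elements $\bigl((\widetilde{\mc S}_D^\omega)^{-1}[g],(\mc S_D^k)^{-1}[g]\bigr)$ that reappear as the basis in \cref{lem:kernel-A0-omega0}.
\end{itemize}
Your argument is essentially the kernel-level counterpart of the block reduction $\mc A_0(\omega)\sim\diag(I,\Lambda_{\rm int}^\omega)$ in the proof of \cref{prop:simplepole}. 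The paper's route, in exchange, makes the connection with the physical limiting problem transparent.

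\textbf{Minor remark.} Your closing ``consistency'' paragraph is not needed. Its uniqueness claim for nonreal $k$ would itself need justification, for instance that injectivity of $\mc S_D^k$ excludes exterior Dirichlet resonant states via Green's representation. Nothing in your proof depends on it.
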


\begin{remark}
It is known that, generically, the Neumann and Dirichlet spectra of $-\Delta$ 
on a connected domain are disjoint; see \cite{generic,henry2005perturbation}. 
Consequently, for generic resonators $D_j$, the assumption that $k_j^2 = \omega^2/v_j^2$ 
is not a Dirichlet eigenvalue of $-\Delta$ in $D_j$ is satisfied whenever $k_j^2$ is a Neumann eigenvalue and therefore can be omitted. 
\end{remark}

\begin{remark}
The existence of Fabry--P\'erot resonances then follows from \cref{thm:limiting-char-values-sharp} and the asymptotic Gohberg--Sigal theory~\cite{gohberg1971operator}.
\end{remark}

\subsection{Pole-pencil decomposition} \label{sec:pole-pencil}

In this subsection, we fix a positive characteristic value $\omega_0 > 0$ of the limiting operator
\begin{equation*}
\A_0(\omega):=\A(\omega,0).
\end{equation*}
We write
\begin{equation*}
    k_0:=\frac{\omega_0}{v},
    \qquad
    k_{j,0}:=\frac{\omega_0}{v_j},\qquad j=1,\dots,N,
\end{equation*}
and introduce the index set
\begin{equation*}
    \mathcal J:=\Bigl\{j\in\{1,\dots,N\}: k_{j,0}^2\in \sigma_{\rm Neu}(-\Delta;D_j)\Bigr\}.
\end{equation*}

For each $j\in \mathcal J$, let $m_j$ denote the multiplicity of the Neumann eigenvalue $k_{j,0}^2$ in $D_j$, and choose a real-valued $L^2(D_j)$-orthonormal basis
\begin{equation*}
    \{u_{j,\ell}\}_{\ell=1}^{m_j}
    \subset \ker\bigl(\Delta+k_{j,0}^2\bigr)
\end{equation*}
of the corresponding Neumann eigenspace, \emph{i.e.},
\begin{equation} \label{eq:sharp-neumann-eig}
\begin{cases}
(\Delta+k_{j,0}^2)u_{j,\ell}=0 & \text{in }D_j,\\[1mm]
\partial_\nu u_{j,\ell}=0 & \text{on }\partial D_j.
\end{cases}
\end{equation}
To connect the Neumann problem \eqref{eq:sharp-neumann-eig} with $\ker \mc{A}_0(\omega_0)$, we denote by $g_{j,\ell}\in H^1(\partial D)$ the boundary trace extended by zero to the other components, that is, 
\begin{equation} \label{eq:tracegu}
    g_{j,\ell}|_{\partial D_j}=u_{j,\ell}|_{\partial D_j},
    \qquad
    g_{j,\ell}|_{\partial D_i}=0\quad \text{for }i\neq j.
\end{equation}

\begin{remark}
When the resonators are identical, the situation is analogous to the subwavelength case: the material speeds $v_j$ coincide, the Neumann spectra of the components are the same, and hence $k_{j,0}^2$ is a Neumann eigenvalue of every resonator whenever it is one for a single component. For $\omega_0=0$ (\emph{i.e.} in the subwavelength regime), no identical-shape assumption is needed, since zero is automatically a Neumann eigenvalue of every resonator.
\end{remark}

\begin{lemma}\label{lem:kernel-A0-omega0}
Assume that $\omega_0\neq 0$ is an isolated characteristic value of $\A_0$, and that
\begin{equation*}
    \S_D^{k_0}:L^2(\partial D)\to H^1(\partial D),
    \qquad
    \widetilde \S_D^{\omega_0}:L^2(\partial D)\to H^1(\partial D)
\end{equation*}
are invertible, and that $k_{i,0}^2\notin\sigma_{\rm Dir}(-\Delta;D_i)$ for all $i=1,\ldots,N$. For each $j\in\mathcal J$ and $\ell=1,\dots,m_j$, define
\begin{equation} \label{eq:field2}
    \psi_{j,\ell}:=(\widetilde \S_D^{\omega_0})^{-1}[g_{j,\ell}],
    \qquad
    \phi_{j,\ell}:=(\S_D^{k_0})^{-1}[g_{j,\ell}],
\end{equation}
and set
\begin{equation} \label{eq:field}
    \Psi_{j,\ell}:=\binom{\psi_{j,\ell}}{\phi_{j,\ell}}
    \in L^2(\partial D)\times L^2(\partial D).
\end{equation}
Then, we have
\begin{equation*}
    \ker \A_0(\omega_0) =  \mathrm{span}\bigl\{\Psi_{j,\ell}: j\in\mathcal J,\ 1\leq \ell\leq m_j\bigr\}.
\end{equation*}
In particular, it follows that
\begin{equation*}
    \dim \ker \A_0(\omega_0)=m,
    \qquad
    m:=\sum_{j\in\mathcal J} m_j.
\end{equation*}
\end{lemma}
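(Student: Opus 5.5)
We prove the two inclusions separately and then count dimensions. Throughout, hypotheses (i)--(iii) of \cref{lem:layerrep} hold at $\omega=\omega_0$, so that lemma applies.

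\emph{Step 1: each $\Psi_{j,\ell}$ lies in the kernel.} Fix $j\in\mathcal J$ and $1\le \ell\le m_j$. The first row of $\A_0(\omega_0)\Psi_{j,\ell}$ is $\widetilde\S_D^{\omega_0}[\psi_{j,\ell}]-\S_D^{k_0}[\phi_{j,\ell}]=g_{j,\ell}-g_{j,\ell}=0$ by \eqref{eq:field2}.

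For the second row, set $w:=\widetilde\S_D^{\omega_0}[\psi_{j,\ell}]$ in $D$. On each component $D_i$, $w$ solves $(\Delta+k_{i,0}^2)w=0$ with Dirichlet trace $g_{j,\ell}|_{\partial D_i}$. Since $k_{i,0}^2\notin\sigma_{\rm Dir}(-\Delta;D_i)$, this Dirichlet problem has a unique solution. Hence $w=u_{j,\ell}$ in $D_j$ and $w=0$ in $D_i$ for $i\neq j$.

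By the interior jump relation, $\bigl(-\tfrac12 I+\widetilde{\mc K}_D^{\omega_0,*}\bigr)[\psi_{j,\ell}]=\partial_\nu w|_-$. This vanishes on $\partial D_j$ by \eqref{eq:sharp-neumann-eig}, and trivially on $\partial D_i$ for $i\neq j$. So the span is contained in $\ker\A_0(\omega_0)$.

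\emph{Step 2: the kernel is contained in the span.} Let $(\psi,\phi)\in\ker\A_0(\omega_0)$ and form $u$ as in \cref{lem:layerrep}, so that $u$ solves \eqref{eq:limiting-pde-problem} at $\omega_0$. In each $D_i$, $u$ is a Neumann eigenfunction with eigenvalue $k_{i,0}^2$. Therefore $u|_{D_i}=0$ for $i\notin\mathcal J$, and for $i\in\mathcal J$ we can write $u|_{D_i}=\sum_\ell c_{i,\ell}u_{i,\ell}$.

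Now set $g:=u|_{\partial D}=\sum_{i,\ell}c_{i,\ell}g_{i,\ell}$. The first equation gives $\widetilde\S_D^{\omega_0}[\psi]=g=\S_D^{k_0}[\phi]$ on $\partial D$. By the assumed invertibility of both single-layer operators, $\psi=\sum c_{i,\ell}\psi_{i,\ell}$ and $\phi=\sum c_{i,\ell}\phi_{i,\ell}$, so $(\psi,\phi)=\sum c_{i,\ell}\Psi_{i,\ell}$.

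\emph{Step 3: linear independence and dimension.} Suppose $\sum c_{j,\ell}\Psi_{j,\ell}=0$. Applying $\widetilde\S_D^{\omega_0}$ to the first component gives $\sum c_{j,\ell}g_{j,\ell}=0$ on $\partial D$. By the argument of Step 1, the function $\sum_\ell c_{j,\ell}u_{j,\ell}$ on $D_j$ is the unique Dirichlet solution with this zero trace. Hence it vanishes identically.

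Orthonormality of $\{u_{j,\ell}\}_\ell$ in $L^2(D_j)$ then gives $c_{j,\ell}=0$ for all $j,\ell$. Thus $\dim\ker\A_0(\omega_0)=m$, consistent with \eqref{eq:multiplicity-limiting}.

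\emph{Main subtlety.} The crux is the non-Dirichlet condition $k_{i,0}^2\notin\sigma_{\rm Dir}(-\Delta;D_i)$. It is used twice: to identify $\widetilde\S_D^{\omega_0}[\psi_{j,\ell}]$ with $u_{j,\ell}$ rather than some other interior Helmholtz solution with the same trace, and to show that distinct Neumann eigenfunctions have linearly independent traces. Everything else is direct use of the jump relations and \cref{lem:layerrep}.
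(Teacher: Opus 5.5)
Your proof is correct and is essentially the argument the paper relies on: the paper states this lemma without a separate proof, treating it as a direct consequence of the equivalence in \cref{lem:layerrep} together with the Neumann-eigenspace characterization and multiplicity count in \cref{thm:limiting-char-values-sharp}. Your Steps 1--3 make that reasoning explicit: jump relations plus interior Dirichlet uniqueness give the inclusion of the span in the kernel, invertibility of the two single-layer operators gives the reverse inclusion, and $L^2(D_j)$-orthonormality gives independence.
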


For an analytic Fredholm family of operators $\mc{A}(\omega,\delta)$, an isolated characteristic value gives rise to a meromorphic inverse whose principal part is finite-rank. 

\begin{proposition} \label{prop:simplepole}
Let $\omega_0\neq 0$ be a characteristic value of the limiting operator $\A_0(\omega)$.
Assume that in a neighborhood of $\omega_0$, $\widetilde{\S}_D^\omega$ and $\S_D^k$ are invertible, and 
\begin{equation*}
    k_j^2 \notin \sigma_{\rm Dir}(-\Delta;D_j),
\qquad j = 1, \dots, N. 
\end{equation*}
Then, $\omega_0$ is a pole of order one of $\A_0(\omega)^{-1}$. 
\end{proposition}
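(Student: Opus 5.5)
Our approach is to reduce the pole-order question to the invertibility of a finite matrix, namely the pairing of the derivative $\partial_\omega \A_0(\omega_0)$ with the kernel and cokernel. By standard Gohberg--Sigal theory, $\omega_0$ is a simple pole of $\A_0(\omega)^{-1}$ exactly when no root function has length exceeding one. Concretely, we must show that there is no $\Phi_0\in\ker\A_0(\omega_0)\setminus\{0\}$ and $\Phi_1$ satisfying
\begin{equation*}
\A_0(\omega_0)\Phi_1+\partial_\omega\A_0(\omega_0)\Phi_0=0 .
\end{equation*}
A direct operator analysis is awkward, so we will translate this condition into a PDE statement for the interior fields. By \cref{lem:kernel-A0-omega0}, $\Phi_0=\sum c_{j,\ell}\Psi_{j,\ell}$. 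The associated interior field is $u_0=\sum c_{j,\ell}u_{j,\ell}$, a Neumann eigenfunction at $k_{j,0}$ on each $D_j$ with $j\in\mathcal J$ and zero on the other components.

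\textbf{Step 1: a PDE for the interior generalized field.} Define the field family
\begin{equation*}
U(\omega):=\widetilde\S_D^\omega[\psi_0+(\omega-\omega_0)\psi_1],\qquad \Phi_i=(\psi_i,\phi_i).
\end{equation*}
The second row of $\A_0$ is $(-\frac12 I+\widetilde{\mc K}_D^{\omega,*})[\psi]=\partial_\nu U|_-$. Hence the second row of the Jordan-chain equation says $\partial_\nu U(\omega)|_-=O((\omega-\omega_0)^2)$ on $\partial D$. Write $U(\omega)=u_0+(\omega-\omega_0)u_1+O((\omega-\omega_0)^2)$ in $D_j$. Differentiating $(\Delta+\omega^2/v_j^2)U(\omega)=0$ at $\omega_0$ then gives
\begin{equation*}
(\Delta+k_{j,0}^2)u_1=-\frac{2\omega_0}{v_j^2}u_0\ \text{in }D_j,\qquad \partial_\nu u_1=0\ \text{on }\partial D_j .
\end{equation*}
The first row only couples to the exterior density. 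Since $\S_D^{k}$ is invertible near $\omega_0$, it can always be solved for $\phi_1$ and imposes no obstruction. Equivalently, we may restrict attention to the $(2,1)$ block $-\frac12 I+\widetilde{\mc K}_D^{\omega,*}$ after eliminating $\phi$ through the invertible $\S_D^k$ and $\widetilde\S_D^\omega$, which is consistent with (ii) of \cref{thm:limiting-char-values-sharp}.

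\textbf{Step 2: Fredholm alternative.} Multiply the equation for $u_1$ by $\overline{u_0}$ and integrate over $D_j$. Green's identity with Neumann data on both $u_0$ and $u_1$ kills the boundary terms. Because $u_0$ is real up to the coefficients and $k_{j,0}$ is real, this yields
\begin{equation*}
0=\frac{2\omega_0}{v_j^2}\int_{D_j}|u_0|^2 .
\end{equation*}
Since $\omega_0\neq0$, we conclude $u_0=0$ in every $D_j$, so $g=u_0|_{\partial D}=0$. Therefore $\Phi_0=0$ by invertibility of $\widetilde\S_D^{\omega_0}$ and $\S_D^{k_0}$, which is a contradiction. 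Hence every root function has length one, the rank-one-in-$(\omega-\omega_0)$ structure holds, and the pole is simple. The meromorphy and finite-rank principal part themselves come from $\A_0$ being an analytic Fredholm family of index zero, since it is a compact perturbation of an invertible operator under the invertibility assumptions. Note also that the interior field determines $\psi$ uniquely because $k_j^2\notin\sigma_{\rm Dir}(-\Delta;D_j)$.

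\textbf{Main obstacle.} The delicate point is justifying the reduction from the operator Jordan chain to the PDE chain. One must check that $\partial_\omega$ acting on $\widetilde\S_D^\omega$ produces exactly the $\omega$-derivative of the field, which follows from analyticity of the kernels. One must also check that $U(\omega)$ is a genuine Helmholtz solution for each $\omega$, which holds because the single layer satisfies the equation off $\partial D$. Finally, simple pole order requires both that root chains have length one and that the partial multiplicities sum consistently. For the latter, we will invoke the Gohberg--Sigal statement that all partial multiplicities equal one if and only if the pole of the inverse is simple.
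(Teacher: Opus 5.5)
Your argument is correct, but it takes a different route from the paper. The paper never examines Jordan chains. It uses the block elimination
\[
\begin{pmatrix} I&0\\ -\Lambda_{\rm int}^\omega & I\end{pmatrix}\A_0(\omega)\begin{pmatrix}(\widetilde\S_D^\omega)^{-1} & (\widetilde\S_D^\omega)^{-1}\\ 0 & (\S_D^k)^{-1}\end{pmatrix} = \begin{pmatrix} I & 0\\ 0 & \Lambda_{\rm int}^\omega\end{pmatrix}
\]
to move the pole of $\A_0(\omega)^{-1}$ onto the interior NtD map $\bigoplus_j N_j(\omega^2/v_j^2)$. It then reads off the simple pole of $N_j(\lambda)$ at $k_{j,0}^2$ from the spectral resolution of the self-adjoint Neumann Laplacian. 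The hypothesis $\omega_0\neq0$ is used only to make $\omega\mapsto\omega^2/v_j^2$ locally biholomorphic.

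You instead rule out root functions of multiplicity $\ge2$ by a Green-identity (Fredholm alternative) computation. This is the same self-adjointness mechanism seen along a single chain, and $\omega_0\neq0$ enters through the nonvanishing factor $2\omega_0/v_j^2$. If you keep the boundary term $\int_{\partial D}\partial_\nu u_1\,\overline{u_0}$, your computation is exactly the normalization the paper later records in \eqref{eq:interior-derivative-normalization}. So you are in effect proving that $\partial_\omega\A_0(\omega_0)$ pairs nondegenerately between kernel and cokernel.

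What each route buys:
\begin{itemize}
\item The paper's route avoids the theorem that the pole order of an analytic Fredholm inverse equals the largest partial multiplicity (local Smith form). It also gives the explicit residue $\Pi_{\omega_0}$ needed in \cref{prop:A0-pole-pencil-reduction} at no extra cost.
\item Your route is local and needs no explicit equivalence, but it rests on that Gohberg--Sigal/Keldysh result.
\end{itemize}

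Three small points to fix:
\begin{itemize}
\item The pole order is the \emph{maximal} partial multiplicity. Your remark about multiplicities ``summing consistently'' is irrelevant, since the sum is the algebraic multiplicity.
\item You should state why $\omega_0$ is an isolated characteristic value. This follows from \cref{thm:limiting-char-values-sharp}, or from the analytic Fredholm theorem using invertibility at a nearby non-Neumann frequency.
\item The first row of the chain equation is never needed, because you only use necessary conditions to reach $\Phi_0=0$.
\end{itemize}
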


\begin{proof}
We define the interior DtN map by
\begin{equation*} \Lambda_{\mathrm{int}}^\omega := \left(-\frac12 I+\widetilde{\mc K}_D^{\omega,*}\right)
\bigl(\widetilde{\mc S}_D^\omega\bigr)^{-1},
\end{equation*}
which depends analytically on $\omega$ in a sufficiently small neighborhood of $\omega_0$.
A direct block elimination shows that $\mc A_0(\omega)$ is analytically equivalent to
\begin{equation*}
    \begin{pmatrix}
I&0\\[1mm]
0&\Lambda_{\mathrm{int}}^\omega
\end{pmatrix}.
\end{equation*}
Indeed, one can compute 
\begin{equation*}
    \begin{pmatrix}
I&0\\[1mm]
-\Lambda_{\mathrm{int}}^\omega&I
\end{pmatrix}
\mc A_0(\omega)
\begin{pmatrix}
(\widetilde{\mc S}_D^\omega)^{-1}&(\widetilde{\mc S}_D^\omega)^{-1}\\[1mm]
0&(\mc S_D^k)^{-1}
\end{pmatrix}
=
\begin{pmatrix}
I&0\\[1mm]
0&\Lambda_{\mathrm{int}}^\omega
\end{pmatrix}.
\end{equation*}
Hence, the pole singularity of $\mc A_0(\omega)^{-1}$ is the same as that of $(\Lambda_{\mathrm{int}}^\omega)^{-1}$. 
By \cref{lem:layerrep}, the operator $\Lambda_{\mathrm{int}}^\omega$ is the direct sum of the DtN maps on the individual resonators:
\begin{equation*}
    \Lambda_{\mathrm{int}}^\omega
= \bigoplus_{j=1}^N \Lambda_j\!\left(\frac{\omega^2}{v_j^2}\right),
\end{equation*}
where $\Lambda_j(\lambda)$ is the DtN map for
\begin{equation*}
    (\Delta+\lambda)u=0\quad \text{in }D_j.
\end{equation*}
Consequently, we have the meromorphic function:
\begin{equation} \label{eq:block_decom}
    (\Lambda_{\mathrm{int}}^\omega)^{-1}
= \bigoplus_{j=1}^N N_j\!\left(\frac{\omega^2}{v_j^2}\right),
\end{equation}
where $N_j(\lambda):=\Lambda_j(\lambda)^{-1}$ is the Neumann-to-Dirichlet (NtD) map.

Since the Laplacian with Neumann boundary conditions on $D_j$ is self-adjoint, its resolvent has only simple poles at isolated eigenvalues. Therefore, for each $j\in\mc J$, the operator $N_j(\lambda)$ has a simple pole at $\lambda=k_{j,0}^2$. Since $\omega_0\neq 0$, the map $\omega\mapsto \omega^2/v_j^2$ is locally invertible near $\omega_0$, with nonzero derivative. Hence, each singular block remains a simple pole after composition with $\omega\mapsto \omega^2/v_j^2$. Summing over $j\in\mc J$ shows that $(\Lambda_{\mathrm{int}}^\omega)^{-1}$, and therefore also $\mc A_0(\omega)^{-1}$, has only a simple pole at $\omega_0$.
\end{proof}

Next, we give the pole-pencil decomposition of $\A_0(\omega)^{-1}$. A direct block computation shows that, for $\omega$ away from the characteristic values,
\begin{equation} \label{eq:inverseA0}
    \A_0(\omega)^{-1}
=
\begin{pmatrix}
0 & \bigl(\widetilde{\S}_D^\omega\bigr)^{-1}\bigl(\Lambda_{\rm int}^\omega\bigr)^{-1}\\[1mm]
-\bigl(\S_D^k\bigr)^{-1} &
\bigl(\S_D^k\bigr)^{-1}\bigl(\Lambda_{\rm int}^\omega\bigr)^{-1}
\end{pmatrix}. 
\end{equation}

\begin{proposition}\label{prop:A0-pole-pencil-reduction}
Let $\omega_0\neq 0$ be a characteristic value of $\A_0(\omega)$. Then, under the assumptions of \cref{prop:simplepole}, the Laurent expansion of $\A_0(\omega)^{-1}$ near $\omega_0$ is of the
form
\begin{equation} \label{eq:A0-pole-pencil-reduction}
    \A_0(\omega)^{-1}
= -\frac{1}{\omega-\omega_0}\,\mc P_{\omega_0}
+ \mc R_0(\omega), 
\end{equation}
where $\mc R_0(\omega)$ is analytic near $\omega_0$, and the finite-rank residue operator $\mc P_{\omega_0}$ is given by
\begin{equation*}
    \mc P_{\omega_0}
= \begin{pmatrix}
 0 & \bigl(\widetilde{\S}_D^{\omega_0}\bigr)^{-1}\Pi_{\omega_0}\\[1mm]
 0 & \bigl(\S_D^{k_0}\bigr)^{-1}\Pi_{\omega_0}
\end{pmatrix}.    
\end{equation*}
Moreover, let $\mathcal G_{\omega_0} :=\operatorname{span}\{g_{j,\ell}:j\in\mathcal J,\ 1\leq \ell\leq m_j\} \subset H^1(\partial D).$ Then $\Pi_{\omega_0}:L^2(\partial D)\to\mathcal G_{\omega_0}\subset H^1(\partial D)$ is the finite-rank operator
\begin{equation*}
    \Pi_{\omega_0}[h]
= \sum_{j\in\mc J}\frac{v_j^2}{2\omega_0} \sum_{\ell=1}^{m_j}
\langle h,g_{j,\ell}\rangle_{\partial D}\,g_{j,\ell},
\end{equation*}
and the same formula extends continuously to
\begin{equation*}
\Pi_{\omega_0}:H^{-1/2}(\partial D)
\longrightarrow\mathcal G_{\omega_0}\subset H^1(\partial D).
\end{equation*}
Here, $H^{1/2}(\partial D)$ is the trace space of functions in $H^1(D)$, $H^{-1/2}(\partial D)$ is its dual, and the bilinear duality pairing is taken component-wise. Therefore, it follows that 
\begin{equation} \label{eq:Pw0-rankone-reduction}
    \mc P_{\omega_0}\binom{f}{g}
=
\sum_{j\in\mc J}\frac{v_j^2}{2\omega_0}
\sum_{\ell=1}^{m_j}
\langle g,g_{j,\ell}\rangle_{\partial D}\,\Psi_{j,\ell},
\end{equation}
where $\Psi_{j,\ell}$ are defined in \eqref{eq:field}. 
\end{proposition}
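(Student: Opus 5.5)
The plan is to read off the Laurent expansion directly from the block formula \eqref{eq:inverseA0}. Its first column, $\bigl(0,\,-(\S_D^k)^{-1}\bigr)^T$, is analytic near $\omega_0$ by assumption (i) of \cref{prop:simplepole}. The same holds for the factors $(\widetilde\S_D^\omega)^{-1}$ and $(\S_D^k)^{-1}$ in the second column. So the only singular factor is $(\Lambda_{\rm int}^\omega)^{-1}=\bigoplus_j N_j(\omega^2/v_j^2)$ from \eqref{eq:block_decom}, and the whole task is to compute its residue at $\omega_0$. Once we have
\begin{equation*}
(\Lambda_{\rm int}^\omega)^{-1} = -\frac{1}{\omega-\omega_0}\Pi_{\omega_0} + (\text{analytic}),
\end{equation*}
we multiply on the left by the analytic factors and expand them at $\omega_0$. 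The products of their $\O(\omega-\omega_0)$ parts with the pole are analytic and are absorbed into $\mc R_0(\omega)$. This gives \eqref{eq:A0-pole-pencil-reduction} with the stated $\mc P_{\omega_0}$.

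The key step is the spectral representation of the Neumann-to-Dirichlet map $N_j(\lambda)$ on each resonator. For $h\in H^{-1/2}(\partial D_j)$, let $w$ solve $(\Delta+\lambda)w=0$ in $D_j$ with $\partial_\nu w=h$. Expanding $w$ in the real orthonormal Neumann eigenbasis $\{u_n\}$ with eigenvalues $\mu_n$ and using Green's identity gives $(\mu_n-\lambda)\langle w,u_n\rangle_{D_j}=\langle h,u_n\rangle_{\partial D_j}$. Hence
\begin{equation*}
N_j(\lambda)[h]=w|_{\partial D_j}=\sum_n \frac{\langle h,u_n\rangle_{\partial D_j}}{\mu_n-\lambda}\,u_n|_{\partial D_j}.
\end{equation*}
We isolate the terms with $\mu_n=k_{j,0}^2$, namely $u_n=u_{j,\ell}$ for $\ell=1,\dots,m_j$. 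The remaining sum is analytic in $\lambda$ near $k_{j,0}^2$, since it equals the regularized resolvent restricted to the orthogonal complement. Rather than rely on convergence of the series on the boundary, I would justify this abstractly: write $w=w_{\rm sing}+w_{\rm reg}$, where $w_{\rm reg}$ solves the Neumann problem for the data corrected by the eigenspace projection, which is uniquely solvable and analytic in $\lambda$.

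Next we compose with $\lambda=\omega^2/v_j^2$. Since $\mu-\omega^2/v_j^2=-\frac{2\omega_0}{v_j^2}(\omega-\omega_0)+\O((\omega-\omega_0)^2)$, we get
\begin{equation*}
\frac{1}{\mu-\omega^2/v_j^2} = -\frac{v_j^2}{2\omega_0}\,\frac{1}{\omega-\omega_0} + \text{analytic}.
\end{equation*}
Collecting over $j\in\mc J$, and using that $g_{j,\ell}$ is the trace of $u_{j,\ell}$ extended by zero (so that the pairing over $\partial D$ reduces to the pairing over $\partial D_j$), yields exactly the stated $\Pi_{\omega_0}$. The blocks with $j\notin\mc J$ are analytic. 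The extension of $\Pi_{\omega_0}$ to $H^{-1/2}$ is immediate because $g_{j,\ell}\in H^{1/2}(\partial D)$, indeed smooth by elliptic regularity. This also requires interpreting $\Lambda_{\rm int}^{-1}$ on $H^{-1/2}$ consistently with its $L^2$ definition.

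Finally, \eqref{eq:Pw0-rankone-reduction} follows by applying $\mc P_{\omega_0}$ to $(f,g)^T$. Only $g$ enters, and the result is $\sum_{j,\ell}\frac{v_j^2}{2\omega_0}\langle g,g_{j,\ell}\rangle\bigl((\widetilde\S_D^{\omega_0})^{-1}g_{j,\ell},(\S_D^{k_0})^{-1}g_{j,\ell}\bigr)^T$, which is $\Psi_{j,\ell}$ by \eqref{eq:field2}–\eqref{eq:field}. The main obstacle I expect is rigor in the residue computation of $N_j$: we must ensure the identification of $(\Lambda_{\rm int}^\omega)^{-1}$ with the Neumann-to-Dirichlet map and the analyticity of the regular part in the correct operator topology ($L^2\to H^1$ and $H^{-1/2}\to H^{1/2}$). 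The resolvent-splitting argument above is meant to handle this, and sign bookkeeping ($-\tfrac12+\widetilde{\mc K}^*$ giving the interior normal derivative) must be checked against the jump relations.
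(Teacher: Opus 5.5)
Your proposal is correct and follows the paper's proof essentially step by step. Both arguments read off the singular part from the block formula \eqref{eq:inverseA0}, take the residue of each Neumann-to-Dirichlet block $N_j$ from the spectral resolution of the Neumann Laplacian, compose with $\lambda=\omega^2/v_j^2$ to produce the factor $-v_j^2/(2\omega_0)$, and then multiply by the analytic factors $(\widetilde\S_D^\omega)^{-1}$ and $(\S_D^k)^{-1}$. Your splitting $w=w_{\rm sing}+w_{\rm reg}$ goes slightly further than the paper: your Green-identity computation makes $w_{\rm reg}$ automatically orthogonal to the eigenspace, which justifies the analyticity of the regular part that the paper simply asserts.
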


\begin{proof}
It suffices to identify the principal part of the NtD map
$(\Lambda_{\rm int}^\omega)^{-1}$, according to \eqref{eq:inverseA0}. Recalling \eqref{eq:sharp-neumann-eig} and \eqref{eq:block_decom}, by the spectral resolution of the Neumann Laplacian on $D_j$, we have, near the eigenvalue $k_{j,0}^2$, 
\begin{equation*}
    N_j(\lambda) =
\frac{\Pi_j}{k_{j,0}^2-\lambda} + H_j(\lambda), \qquad 
\Pi_j[h] = \sum_{\ell=1}^{m_j} \langle h,g_{j,\ell}\rangle_{\partial D_j}\,g_{j,\ell}.
\end{equation*} 
where $H_j$ is analytic. Composing with $\lambda=\omega^2/v_j^2$ and using
\begin{equation*}
    k_{j,0}^2-\frac{\omega^2}{v_j^2} = -\frac{2\omega_0}{v_j^2}(\omega-\omega_0)+\O\bigl((\omega-\omega_0)^2\bigr),
\end{equation*}
we obtain
\begin{equation*}
N_j\!\left(\frac{\omega^2}{v_j^2}\right) = -\frac{1}{\omega-\omega_0}\frac{v_j^2}{2\omega_0}\Pi_j
+\widetilde H_j(\omega), 
\end{equation*}
with $\widetilde H_j$ analytic near $\omega_0$. Therefore, it follows that 
\begin{equation*}
\bigl(\Lambda_{\rm int}^\omega\bigr)^{-1} = -\frac{1}{\omega-\omega_0}\Pi_{\omega_0} + \mc H(\omega), \qquad \Pi_{\omega_0}[h] = \sum_{j\in\mc J}\frac{v_j^2}{2\omega_0}\sum_{\ell=1}^{m_j} \langle h,g_{j,\ell}\rangle_{\partial D}\,g_{j,\ell},
\end{equation*}
where $\mc H$ is analytic. Finally, $\widetilde{\S}_D^\omega$ and $\S_D^k$ are invertible and depend
analytically on $\omega$ near $\omega_0$. We finish the proof by using expression \eqref{eq:inverseA0} and \cref{lem:kernel-A0-omega0}. 
\end{proof}

\subsection{Finite-dimensional reduction} \label{sec:finite-reduction}

We now derive the finite-dimensional reduction of the full operator family
\begin{equation*}
    \A(\omega,\delta_{\partial D})=\A_0(\omega)+\L(\omega,\delta_{\partial D}),    
\end{equation*}
near a nonzero characteristic value $\omega_0$ of the limiting operator $\A_0$, which shall give the \emph{frequency-dependent capacitance matrix} in \cref{def:freq-cap-matrix-general-updated}.  Here, the operator $\mc{L}$ is defined by
\begin{equation} \label{def:lomega}
    \L(\omega,\delta_{\partial D}) :=
\begin{pmatrix}
0 & 0\\[1mm]
0 & -\delta_{\partial D}\Bigl(\dfrac12 I+\mc K_D^{k,*}\Bigr)
\end{pmatrix}.
\end{equation}
For convenience, we also write 
\begin{equation} \label{eq:dimensionIm}
    \mc I:=\{(j,\ell): j\in \mc J,\ 1\leq \ell\leq m_j\},
\qquad
m =|\mc I|=\sum_{j\in \mc J}m_j.
\end{equation}

\smallsection{Exterior fields and the matrix coefficients}
For each $(j,\ell)\in \mc I$, let $V_{j,\ell}$ be the unique outgoing solution of the exterior Dirichlet problem
\begin{equation}\label{eq:def-Vjl-updated}
\begin{cases}
(\Delta+k_0^2)V_{j,\ell}=0 & \text{in }\R^3\setminus\overline D,\\[1mm]
V_{j,\ell}=g_{j,\ell} & \text{on }\partial D.
\end{cases}
\end{equation}
Then,  $V_{j,\ell}$ admits the following layer potential representation: 
\begin{equation*}
V_{j,\ell}=\S_D^{k_0}\bigl[(\S_D^{k_0})^{-1}[g_{j,\ell}]\bigr]
\qquad\text{in }\R^3\setminus\overline D,
\end{equation*}
with 
\begin{equation*}
    \partial_\nu V_{j,\ell}\big|_+
=
\Bigl(\frac12 I+\mc K_D^{k_0,*}\Bigr)(\S_D^{k_0})^{-1}[g_{j,\ell}].    
\end{equation*}
Here, $(\S_D^{k_0})^{-1}$ denotes the inverse of $\S_D^{k_0}$ as an operator from $L^2(\partial D)$ into $H^1(\partial D)$.

\begin{definition}
\label{def:freq-cap-matrix-general-updated}
Let $\omega_0 \in \R$ be a characteristic value of $\mc{A}_0(\omega)$ (equivalently, the interior DtN map). The associated \emph{frequency-dependent capacitance matrix} at $\omega_0$ is the $m\times m$ matrix $\bigl(\mc C_{(i,r),(j,\ell)}(\omega_0, \delta_{\partial D})\bigr)_{(i,r),(j,\ell)\in \mc I}$, given by
\begin{equation}\label{eq:def-freq-cap-general-updated}
\mc C_{(i,r),(j,\ell)}(\omega_0, \delta_{\partial D})
:= -\frac{v_i^2}{2\omega_0}
\Bigl\langle \delta_{\partial D}\,\partial_\nu V_{j,\ell}\big|_+,g_{i,r}\Bigr\rangle_{\partial D}.
\end{equation}
Here $\langle f, g \rangle_{\partial D} = \int_{\partial D} f g \, {\rm d} \sigma$ denotes the bilinear duality pairing on $\partial D$. 

For simplicity, in what follows, we omit $\delta_{\partial D}$, and write  $\mc C_{(i,r),(j,\ell)}(\omega_0, \delta_{\partial D})$ as $\mc C_{(i,r),(j,\ell)}(\omega_0)$. 
\end{definition}

\begin{remark}
This is the non-subwavelength analogue of the abstract capacitance-matrix reduction in the subwavelength regime \cite{ammari2025mathematical}. The traces $g_{j,\ell}$ of the interior Neumann eigenmodes play the role of the constant boundary values $\chi_{\partial D_j}$ from the subwavelength theory, while the outgoing exterior solutions $V_{j,\ell}$ replace the harmonic capacitary potentials. 
\end{remark}

Equivalently, since $g_{i,r}$ is supported on $\partial D_i$ by \eqref{eq:tracegu}, one can also write 
\begin{equation}\label{eq:def-freq-cap-general-component-updated}
\mc C_{(i,r),(j,\ell)}(\omega_0)
=
-\frac{\delta_i v_i^2}{2\omega_0}
\int_{\partial D_i} V_{i,r}\,\partial_\nu V_{j,\ell}\big|_+\,\dd\sigma, 
\end{equation}
due to $u_{i,r} = g_{i,r} = V_{i,r}$ on $\partial D_i$. We will see that the above matrix appears naturally by applying the finite-rank operator $\mc P_{\omega_0}$ to the perturbation term. 

\begin{lemma}\label{lem:PL-on-kernel-basis}
For each $(j,\ell)\in \mc I$, one has
\begin{equation*}
    \L(\omega_0,\delta_{\partial D})\Psi_{j,\ell}
= \binom{0}{-\delta_{\partial D}\,\partial_\nu V_{j,\ell}|_+},
\end{equation*}
and consequently,
\begin{equation}\label{eq:PL-on-kernel-basis-explicit}
\mc P_{\omega_0}\L(\omega_0,\delta_{\partial D})\Psi_{j,\ell}
=
\sum_{(i,r)\in\mc I}
\mc C_{(i,r),(j,\ell)}(\omega_0)\,\Psi_{i,r}.
\end{equation}
\end{lemma}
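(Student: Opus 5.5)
The argument is a direct computation using the definitions of $\L$, $\Psi_{j,\ell}$ and $\mc P_{\omega_0}$. We first apply $\L(\omega_0,\delta_{\partial D})$ from \eqref{def:lomega} to $\Psi_{j,\ell}=(\psi_{j,\ell},\phi_{j,\ell})^T$ from \eqref{eq:field}. Only the $(2,2)$ block is nonzero, so the first component of the result vanishes and the second equals
\begin{equation*}
-\delta_{\partial D}\Bigl(\tfrac12 I+\mc K_D^{k_0,*}\Bigr)\phi_{j,\ell}.
\end{equation*}
By \eqref{eq:field2} we have $\phi_{j,\ell}=(\S_D^{k_0})^{-1}[g_{j,\ell}]$. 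The layer potential representation of $V_{j,\ell}$ stated after \eqref{eq:def-Vjl-updated} gives $\partial_\nu V_{j,\ell}|_+=(\tfrac12 I+\mc K_D^{k_0,*})(\S_D^{k_0})^{-1}[g_{j,\ell}]$. The jump relation for the single layer potential justifies this identity, and it relies on uniqueness of the outgoing exterior Dirichlet problem at real $k_0\neq0$. Substituting yields the first claimed identity.

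Next we feed $(f,g)=(0,-\delta_{\partial D}\partial_\nu V_{j,\ell}|_+)$ into the rank-finite formula \eqref{eq:Pw0-rankone-reduction} of \cref{prop:A0-pole-pencil-reduction}. Only the second component $g$ enters, so
\begin{equation*}
\mc P_{\omega_0}\L(\omega_0,\delta_{\partial D})\Psi_{j,\ell}
=\sum_{i\in\mc J}\frac{v_i^2}{2\omega_0}\sum_{r=1}^{m_i}\Bigl\langle -\delta_{\partial D}\,\partial_\nu V_{j,\ell}|_+,\,g_{i,r}\Bigr\rangle_{\partial D}\Psi_{i,r}.
\end{equation*}
We then compare each coefficient with \eqref{eq:def-freq-cap-general-updated}. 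The pairing is bilinear, so the minus sign and the factor $v_i^2/(2\omega_0)$ combine to give exactly $\mc C_{(i,r),(j,\ell)}(\omega_0)$. Reindexing the sum over $(i,r)\in\mc I$ gives \eqref{eq:PL-on-kernel-basis-explicit}.

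The only point needing care is functional-analytic. For $\phi_{j,\ell}\in L^2(\partial D)$, the normal derivative $\partial_\nu V_{j,\ell}|_+$ lies in $L^2(\partial D)$; indeed it is smooth, since $\partial D$ is $C^\infty$ and $g_{j,\ell}$ is the trace of a smooth Neumann eigenfunction. Multiplying by the piecewise-constant $\delta_{\partial D}$ keeps it in $L^2$, so the pairing with $g_{i,r}\in H^1(\partial D)$ is well defined. Even without this regularity, the $H^{-1/2}$ extension of $\Pi_{\omega_0}$ stated in \cref{prop:A0-pole-pencil-reduction} covers the case. The rest is bookkeeping.
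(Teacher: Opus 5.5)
Your proposal is correct and matches the paper's proof step for step. You read off $\L(\omega_0,\delta_{\partial D})\Psi_{j,\ell}$ from the block structure, identify $(\tfrac12 I+\mc K_D^{k_0,*})(\S_D^{k_0})^{-1}[g_{j,\ell}]$ with $\partial_\nu V_{j,\ell}|_+$, and match the coefficients of \eqref{eq:Pw0-rankone-reduction} (with $f=0$) against \cref{def:freq-cap-matrix-general-updated}; your regularity and bilinear-pairing remarks are harmless extras that the paper leaves implicit.
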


\begin{proof}
By the definitions of $\L(\omega,\delta_{\partial D})$ and $\Psi_{j,\ell}$ in \eqref{def:lomega} and \eqref{eq:field}, respectively, we have 
\begin{equation*}
    \L(\omega_0,\delta_{\partial D})\Psi_{j,\ell}
=
\binom{0}{-\delta_{\partial D}\Bigl(\dfrac12 I+\mc K_D^{k_0,*}\Bigr)(\S_D^{k_0})^{-1}[g_{j,\ell}]}
=
\binom{0}{-\delta_{\partial D}\,\partial_\nu V_{j,\ell}|_+}.
\end{equation*}
Applying \eqref{eq:Pw0-rankone-reduction} with $f=0$ and $g=-\delta_{\partial D}\,\partial_\nu V_{j,\ell}|_+$ gives
\begin{equation*}
    \mc P_{\omega_0}\L(\omega_0,\delta_{\partial D})\Psi_{j,\ell}
=
-\sum_{i\in\mc J}\frac{v_i^2}{2\omega_0}\sum_{r=1}^{m_i}
\Bigl\langle \delta_{\partial D}\,\partial_\nu V_{j,\ell}|_+,g_{i,r}\Bigr\rangle_{\partial D}\,\Psi_{i,r},
\end{equation*}
which is exactly \eqref{eq:PL-on-kernel-basis-explicit} by Definition~\ref{def:freq-cap-matrix-general-updated}.
\end{proof}

\smallsection{Reduction to a finite-dimensional matrix pencil}
We now apply the pole-pencil expansion of $\A_0(\omega)^{-1}$ to the
characteristic value problem for $\A(\omega,\delta_{\partial D})$ and derive asymptotic expansions of the resonant frequencies of \eqref{eq:multi-resonator-scattering-problem} and their corresponding eigenmodes as $\delta = \|\delta_{\partial D}\|_{L^\infty(\partial D)}$ goes to zero.  

\begin{theorem}\label{prop:finite-dimensional-pencil-reduction}
Suppose that the assumptions of \cref{prop:A0-pole-pencil-reduction} hold in a neighborhood of the nonzero characteristic value $\omega_0$ of $\A_0$. Then there exists an $m\times m$ matrix-valued function $\mathsf F(\omega,\delta_{\partial D})$, analytic jointly in $\omega$ and $(\delta_1,\ldots,\delta_N)$ near $(\omega_0,0)$, such that the characteristic values of $\A(\omega,\delta_{\partial D})$ near $\omega_0$ are exactly the zeros of
\begin{equation*}
    \det \mathsf F(\omega,\delta_{\partial D})=0,
\end{equation*}
with the same algebraic multiplicities, and we have 
\begin{equation*}
    \mathsf F(\omega,\delta_{\partial D})  = (\omega-\omega_0) I_m - \mc C(\omega_0) + \O\bigl(|\omega-\omega_0|^2+\delta|\omega-\omega_0|+\delta^2\bigr).
\end{equation*}
In particular, there exist constants $R>0$ and $\delta_0>0$ such that, for every contrast profile satisfying
$0<\delta=\|\delta_{\partial D}\|_{L^\infty(\partial D)}<\delta_0$, the disk
\begin{equation*}
    \bigl\{\omega\in\C:|\omega-\omega_0|<R\delta\bigr\}
\end{equation*}
contains exactly $m$ characteristic values of $\A(\mathord\cdot,\delta_{\partial D})$, counted with algebraic multiplicity, and these exhaust the characteristic values converging to $\omega_0$. Consequently, every characteristic value converging to $\omega_0$ satisfies $|\omega-\omega_0|=\O(\delta)$, and the above expansion reduces to $ \mathsf F(\omega,\delta_{\partial D}) = (\omega - \omega_0) I_m-\mc C(\omega_0)+\O(\delta^2)$. Hence, the leading-order characteristic equation is 
\begin{equation*}
    \det\bigl((\omega - \omega_0) I_m-\mc C(\omega_0)\bigr)=0.
\end{equation*}

For any such characteristic value $\omega$ and any $X=\binom{\psi}{\phi}\neq0$ satisfying
\begin{equation*}
    \A(\omega,\delta_{\partial D})X=0,
\end{equation*}
let $\mc X_\perp$ be the orthogonal complement of $\ker\A_0(\omega_0)$ in $L^2(\partial D)\times L^2(\partial D)$ and write
\begin{equation} \label{eq:decompositionofx}
    X=\sum_{(j,\ell)\in\mc I}a_{j,\ell}\Psi_{j,\ell}+X^\perp,
    \qquad X^\perp\in\mc X_\perp,
\end{equation}
where
\begin{equation*}
    a:=\bigl(a_{j,\ell}\bigr)_{(j,\ell)\in\mc I}\in\C^m.
\end{equation*}
Then $a\neq0$ and
\begin{equation}\label{eq:complement-estimate-finite-pencil}
    X^\perp=\O(\delta\|a\|_2),
\end{equation}
as well as
\begin{equation}\label{eq:finite-pencil-reduction}
    \bigl((\omega-\omega_0)I_m-\mc C(\omega_0)\bigr)a
    =\O(\delta^2\|a\|_2),
\end{equation}
where $\|a\|_2$ denotes the $\ell^2$ norm of $a$.

If $\lambda(\omega_0)$ is an eigenvalue of $\mc C(\omega_0)$ of algebraic multiplicity $p$ whose largest Jordan block has size $q$, then exactly $p$ corresponding characteristic values, counted with algebraic multiplicity, satsify
\begin{equation} \label{eq:approximation_omega}
    \omega= \underbrace{\omega_0 + \lambda(\omega_0)}_{:= \omega^{\mathrm{lead}}} + \O(\delta^{1+1/q}).
\end{equation}
In the semisimple case, \emph{i.e.}, $q=1$, this reduces to the sharper expansion
\begin{equation*}
        \omega = \omega_0 + \lambda(\omega_0) + \O(\delta^2).
\end{equation*}
\end{theorem}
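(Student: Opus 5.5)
We will carry out a Lyapunov--Schmidt (Schur complement) reduction built on the pole-pencil decomposition of \cref{prop:A0-pole-pencil-reduction}. Writing $\A(\omega,\delta_{\partial D})=\A_0(\omega)+\L(\omega,\delta_{\partial D})$, we have $\A=\A_0\bigl(I+\A_0^{-1}\L\bigr)$ for $\omega\neq\omega_0$ near $\omega_0$. Substituting \eqref{eq:A0-pole-pencil-reduction} and multiplying by $(\omega-\omega_0)$ gives
\begin{equation*}
(\omega-\omega_0)\A_0(\omega)^{-1}\A(\omega,\delta_{\partial D})
=(\omega-\omega_0)\bigl(I+\mc R_0(\omega)\L\bigr)-\mc P_{\omega_0}\L.
\end{equation*}
Since $\|\L\|=\O(\delta)$ and $\mc R_0$ is analytic, $I+\mc R_0(\omega)\L$ is invertible for small $\delta$ with inverse $I+\O(\delta)$, analytic in $(\omega,\delta_1,\dots,\delta_N)$. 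Factoring it out, the problem becomes the characteristic equation of $(\omega-\omega_0)I-\mc M(\omega,\delta)$ with $\mc M:=(I+\mc R_0\L)^{-1}\mc P_{\omega_0}\L$. The range of $\mc M$ lies in $(I+\mc R_0\L)^{-1}\ker\A_0(\omega_0)$, which is $m$-dimensional, so a Schur complement with respect to the splitting $\ker\A_0(\omega_0)\oplus\mc X_\perp$ reduces everything to an $m\times m$ matrix.

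Concretely, we will take the operator $Q$ that is the identity on $\ker\A_0(\omega_0)$ and zero on $\mc X_\perp$, and write $X=\sum a_{j,\ell}\Psi_{j,\ell}+X^\perp$. By \eqref{eq:Pw0-rankone-reduction}, $\mc P_{\omega_0}\L X$ lies in $\ker\A_0(\omega_0)$ with coefficients $\frac{v_i^2}{2\omega_0}\langle(\L X)_2,g_{i,r}\rangle$. The equation $\A X=0$ splits into a $\mc X_\perp$-component, which we solve for $X^\perp=\O(\delta\|a\|_2)$ by invertibility of the $\mc X_\perp$ block (whose size is $\omega-\omega_0+\O(\delta)$ plus an identity term coming from $(\omega-\omega_0)(I+\mc R_0\L)$, after dividing appropriately). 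To avoid dividing by $\omega-\omega_0$, we will instead work directly with the analytic Fredholm form $\A_0(\omega)$, using the equivalence in the proof of \cref{prop:simplepole} to $\mathrm{diag}(I,\Lambda_{\rm int}^\omega)$. The singular part of $\Lambda_{\rm int}^\omega$ is then $-(\omega-\omega_0)\Pi_{\omega_0}^{-1}$ restricted to $\mathcal G_{\omega_0}$, and the Schur complement onto $\mathcal G_{\omega_0}$ (an analytic Gohberg--Sigal / Keldysh reduction) yields $\mathsf F(\omega,\delta)$, analytic, with $\det\mathsf F$ equal to the original characteristic function up to an analytic nonvanishing factor, hence with the same multiplicities. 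Its expansion is $(\omega-\omega_0)I_m-\mc C(\omega_0)+\O(|\omega-\omega_0|^2+\delta|\omega-\omega_0|+\delta^2)$, where the linear-in-$\delta$ term is identified via \cref{lem:PL-on-kernel-basis}. The $\O(\delta|\omega-\omega_0|)$ term absorbs the replacement of $\L(\omega)$ by $\L(\omega_0)$, and the $\O(\delta^2)$ term absorbs the coupling through $X^\perp$. \textbf{We expect this bookkeeping, namely producing a genuinely analytic $\mathsf F$ with the correct normalization (the factor $v_j^2/2\omega_0$) and multiplicity preservation, to be the main obstacle.}

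The counting statement will follow from Rouch\'e's theorem. On the circle $|\omega-\omega_0|=R\delta$ with $R>2\|\mc C(\omega_0)\|/\delta+1$ (noting that $\|\mc C\|=\O(\delta)$), the function $\det\mathsf F$ is dominated by $\det((\omega-\omega_0)I)$, so it has exactly $m$ zeros inside. Outside the disk but in a fixed neighborhood of $\omega_0$, $\mathsf F$ is invertible, which shows that these zeros exhaust the characteristic values converging to $\omega_0$. On the disk, $|\omega-\omega_0|=\O(\delta)$, so the remainder is $\O(\delta^2)$. The eigenvector estimates \eqref{eq:complement-estimate-finite-pencil} and \eqref{eq:finite-pencil-reduction} come from the Schur step: $a=0$ would force $X^\perp=0$, and the reduced equation is $\mathsf F a=0$.

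Finally, for the eigenvalue expansion we will rescale: setting $\omega-\omega_0=\delta z$, the function $\delta^{-m}\det\mathsf F=\det(zI-\delta^{-1}\mc C(\omega_0)+\O(\delta))$. This is a perturbation of size $\O(\delta)$ of a matrix with bounded entries. Standard eigenvalue perturbation bounds (Rouch\'e applied to the characteristic polynomial, or the Lidskii--Vishik--Lyusternik bound of order $\epsilon^{1/q}$ for a Jordan block of size $q$) give exactly $p$ roots with $z=\lambda/\delta+\O(\delta^{1/q})$. Hence $\omega=\omega_0+\lambda+\O(\delta^{1+1/q})$, which reduces to $\O(\delta^2)$ in the semisimple case.
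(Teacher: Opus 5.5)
\textbf{The gap.} The core step is missing, and you flag it yourself. You never construct an analytic $m\times m$ matrix $\mathsf F$ whose determinant has exactly the characteristic values of $\A(\cdot,\delta_{\partial D})$ near $\omega_0$, with the same algebraic multiplicities, and whose linear part is $(\omega-\omega_0)I_m-\mc C(\omega_0)$. This is the substance of the theorem. The Rouch\'e count and the $\delta$-rescaled Jordan-block argument that follow are fine and essentially the paper's.

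Your first factorization cannot supply $\mathsf F$. The factor $(\omega-\omega_0)\A_0(\omega)^{-1}=-\mc P_{\omega_0}+(\omega-\omega_0)\mc R_0(\omega)$ is analytic but collapses to a finite-rank operator at $\omega_0$, so it is not an analytic equivalence. Likewise, $(\omega-\omega_0)I-\mc M$ has infinite-dimensional kernel at $\omega=\omega_0$, so it is not Fredholm there. Without an additional logarithmic-residue count, neither the multiplicities nor a possible characteristic value sitting exactly at $\omega_0$ are controlled. Your second route is only sketched, and it mixes the trace-space and density-space splittings.

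\textbf{How the paper closes it.} The paper uses a Grushin problem. From $\mc P_{\omega_0}=\iota_{\omega_0}\gamma_{\omega_0}$ and the Laurent identities it derives $\gamma_{\omega_0}\A_0(\omega_0)=0$, $\gamma_{\omega_0}\partial_\omega\A_0(\omega_0)\iota_{\omega_0}=-I_m$ and $\operatorname{ran}\A_0(\omega_0)=\ker\gamma_{\omega_0}$. It then borders $\A$ with $R_+$ (the coefficient map of \eqref{eq:decompositionofx}) and $R_-=-\partial_\omega\A_0(\omega_0)\iota_{\omega_0}$, and takes $\mathsf F$ to be the lower-right block of the inverse. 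The block factorization gives equality of multiplicities, \cref{lem:PL-on-kernel-basis} gives $D\mathsf F(\omega_0,0)[h,\eta]=hI_m-\mc C(\omega_0,\eta_{\partial D})$, and $X=E_+a$ with $a=R_+X$ yields \eqref{eq:complement-estimate-finite-pencil}--\eqref{eq:finite-pencil-reduction} directly.

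\textbf{Completing your block-elimination route.} Let $T_1(\omega)$ and $T_2(\omega)$ be the left and right triangular factors in the proof of \cref{prop:simplepole}. Two facts are needed that you do not state.

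First, these factors transform the full operator, not just $\A_0$. Indeed $\L T_2=\diag(0,-\delta_{\partial D}\Lambda^\omega_{\rm ext})$ with $\Lambda^\omega_{\rm ext}=(\frac12 I+\mc K_D^{k,*})(\S_D^k)^{-1}$, and $T_1$ leaves it unchanged. Hence $T_1\A T_2=\diag(I,\Lambda^\omega_{\rm int}-\delta_{\partial D}\Lambda^\omega_{\rm ext})$. Take the Schur complement with respect to $\mathcal G_{\omega_0}=\ker\Lambda^{\omega_0}_{\rm int}$ and its $L^2(\partial D)$-annihilator, which is the range of $\Lambda^{\omega_0}_{\rm int}$ by its bilinear symmetry. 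This is an analytic equivalence, with off-diagonal blocks of size $\O(|\omega-\omega_0|+\delta)$.

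Second, the normalization is $\langle\partial_\omega\Lambda^{\omega_0}_{\rm int}g_\beta,g_\alpha\rangle_{\partial D}=-\frac{2\omega_0}{v_j^2}\delta_{\alpha\beta}$, by Green's identity with the $L^2(D_j)$-normalized eigenfunctions. Your ``$-(\omega-\omega_0)\Pi_{\omega_0}^{-1}$'' is misleading, because $\Pi_{\omega_0}$ contains the non-identity boundary Gram matrix of the $g_\alpha$. The correct statement is $\Pi_{\omega_0}\partial_\omega\Lambda^{\omega_0}_{\rm int}=-I$ on $\mathcal G_{\omega_0}$. This is the route the paper itself uses in Section~\ref{sec:4}.

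This route yields the coefficient vector $\tilde a$ of the trace $g=u|_{\partial D}$, not the vector $a$ of \eqref{eq:decompositionofx} for $X=((\widetilde{\S}_D^{\omega})^{-1}g,(\S_D^k)^{-1}g)$. One has $a=\tilde a+\O(\delta\|\tilde a\|_2)$. This suffices because $(\omega-\omega_0)I_m-\mc C(\omega_0)=\O(\delta)$ on the cluster, but the transfer must be written out.

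Your exhaustion claim also needs a short estimate. With $\|\mc C(\omega_0)\|\le c_0\delta$, one has $\|z^{-1}(\mathsf F-zI_m)\|\le c_0/R+C(r_0+\delta+\delta/R)<1$ on $R\delta\le|z|\le r_0$, provided $R>2c_0$ and $r_0,\delta$ are small.
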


\begin{proof}
We recall estimates
\begin{equation} \label{est:lomegadelta}
    \L(\omega,\delta_{\partial D})=\O(\delta),
    \qquad
    \L(\omega,\delta_{\partial D})
    =
    \L(\omega_0,\delta_{\partial D})
    +\O(\delta|\omega-\omega_0|),
\end{equation}
uniformly for $\omega$ near $\omega_0$. We first construct an exact finite-dimensional reduction. Define
\begin{equation*}
    \mathscr X:=L^2(\partial D)\times L^2(\partial D),
    \qquad
    \mathscr Y:=H^1(\partial D)\times L^2(\partial D),
\end{equation*}
and the mapping 
\begin{equation*}
\iota_{\omega_0}:\C^m\longrightarrow\mathscr X,
    \qquad
    \iota_{\omega_0}a
    :=
    \sum_{(j,\ell)\in\mc I}a_{j,\ell}\Psi_{j,\ell}.
\end{equation*}
We also define $\gamma_{\omega_0}:\mathscr Y\to\C^m$ by
\begin{equation*}
    \left(
    \gamma_{\omega_0}\binom{f}{g}
    \right)_{j,\ell}
    :=
    \frac{v_j^2}{2\omega_0}
    \langle g,g_{j,\ell}\rangle_{\partial D}.
\end{equation*}
Then \eqref{eq:Pw0-rankone-reduction} gives the factorization
\begin{equation*}
    \mc P_{\omega_0}
    = \iota_{\omega_0}\gamma_{\omega_0}.
\end{equation*}
For simplicity, we write 
\begin{equation*}
    A_{00}:=\A_0(\omega_0),
    \qquad A_{01}:=\partial_\omega\A_0(\omega_0),
    \qquad
    R_{00}:=\mc R_0(\omega_0).
\end{equation*}
Comparing the coefficients of $(\omega-\omega_0)^{-1}$ and the constant terms in
\begin{equation*}
    \A_0(\omega)^{-1}\A_0(\omega)=I_{\mathscr X},
    \qquad
    \A_0(\omega)\A_0(\omega)^{-1}=I_{\mathscr Y},
\end{equation*}
and using \eqref{eq:A0-pole-pencil-reduction}, we obtain the Laurent identities
\begin{align*}
    \mc P_{\omega_0}A_{00}&=0,
    &-\mc P_{\omega_0}A_{01}+R_{00}A_{00}&=I_{\mathscr X},\\
    A_{00}\mc P_{\omega_0}&=0,
    &-A_{01}\mc P_{\omega_0}+A_{00}R_{00}&=I_{\mathscr Y}.
\end{align*}
These identities imply $\operatorname{ran}\mc P_{\omega_0} =\ker A_{00}=\operatorname{ran}\iota_{\omega_0}$. Hence, $\gamma_{\omega_0}$ is surjective, and the factorization
$\mc P_{\omega_0}=\iota_{\omega_0}\gamma_{\omega_0}$ also gives
\begin{equation} \label{eq:laurent-identity}
    \gamma_{\omega_0}A_{00}=0,
    \qquad
    \gamma_{\omega_0}A_{01}\iota_{\omega_0}=-I_m.
\end{equation}

Let $R_+:\mathscr X\to\C^m$ be the coefficient map associated with \eqref{eq:decompositionofx}, namely,
\begin{equation*}
    R_+\bigl(\iota_{\omega_0}a+X^\perp\bigr)=a,
\end{equation*}
and set $R_-:=-A_{01}\iota_{\omega_0}  :\C^m\rightarrow\mathscr Y$.
Then $R_+\iota_{\omega_0}=I_m$, $\ker R_+=\mc X_\perp$, and $\gamma_{\omega_0}R_-=I_m$. Moreover,
\begin{equation*}
    \operatorname{ran}A_{00}=\ker\gamma_{\omega_0}.
\end{equation*}
Indeed, the inclusion from left to right follows from
\eqref{eq:laurent-identity}. Conversely, if $Y\in\ker\gamma_{\omega_0}$,
then $\mc P_{\omega_0}Y=0$, and the last Laurent identity gives
$Y=A_{00}R_{00}Y$. It follows that
\begin{equation*}
    A_{00}\big|_{\ker R_+}:\ker R_+\rightarrow
    \ker\gamma_{\omega_0}
\end{equation*}
is an isomorphism. For surjectivity, one may take, for $Y\in\ker\gamma_{\omega_0}$,
\begin{equation*}
    R_{00}Y-\iota_{\omega_0}R_+R_{00}Y\in\ker R_+,
\end{equation*}
whose image under $A_{00}$ is $Y$; injectivity follows from $\ker A_{00}=\operatorname{ran}\iota_{\omega_0}$ and $R_+\iota_{\omega_0}=I_m$. Denote the inverse of this restriction by $B_0$. Consequently, the Grushin operator
\begin{equation*}
    \mathsf G(\omega,\delta_{\partial D})
    :=
    \begin{pmatrix}
        \A(\omega,\delta_{\partial D}) & R_-\\
        R_+ & 0
    \end{pmatrix}
    :
    \mathscr X\times\C^m
    \longrightarrow
    \mathscr Y\times\C^m
\end{equation*}
is invertible at $(\omega_0,0)$, with inverse
\begin{equation*}
    \mathsf G(\omega_0,0)^{-1}
    =
    \begin{pmatrix}
        B_0(I_{\mathscr Y}-R_-\gamma_{\omega_0}) & \iota_{\omega_0}\\[1mm]
        \gamma_{\omega_0} & 0
    \end{pmatrix}.
\end{equation*}
It therefore remains analytically invertible in a neighborhood of this point. Introduce 
\begin{equation*}
    \mathsf G(\omega,\delta_{\partial D})^{-1}
    =
    \begin{pmatrix}
        E & E_+\\
        E_- & \mathsf F
    \end{pmatrix},
\end{equation*}
where
\begin{equation*}
    E_+(\omega_0,0)=\iota_{\omega_0},
    \qquad
    E_-(\omega_0,0)=\gamma_{\omega_0},
    \qquad
    \mathsf F(\omega_0,0)=0.
\end{equation*}
For $h\in\C$ and $\eta=(\eta_1,\ldots,\eta_N)\in\C^N$, let $\eta_{\partial D}|_{\partial D_j}=\eta_j$. Then
\begin{equation*}
    D\A(\omega_0,0)[h,\eta]
    =hA_{01}+\L(\omega_0,\eta_{\partial D}).
\end{equation*}
Differentiating $\mathsf G^{-1}\mathsf G=I$ in the direction $(h,\eta)$ gives
\begin{equation*}
    D(\mathsf G^{-1})(\omega_0,0)[h,\eta]
    =-\mathsf G(\omega_0,0)^{-1}
    D\mathsf G(\omega_0,0)[h,\eta]
    \mathsf G(\omega_0,0)^{-1}.
\end{equation*}
Taking the lower-right block and using \eqref{eq:laurent-identity} and \cref{lem:PL-on-kernel-basis}, we obtain
\begin{equation*}
    D\mathsf F(\omega_0,0)[h,\eta]
    =-\gamma_{\omega_0}
    \bigl(hA_{01}+\L(\omega_0,\eta_{\partial D})\bigr)
    \iota_{\omega_0}
    =hI_m-\mc C(\omega_0,\eta_{\partial D}).
\end{equation*}
Taylor expansion therefore yields
\begin{equation*}
    \mathsf F(\omega,\delta_{\partial D})
    = (\omega-\omega_0)I_m-\mc C(\omega_0)
    + \O\bigl(|\omega-\omega_0|^2
        +\delta|\omega-\omega_0|
        +\delta^2
    \bigr).
\end{equation*}

The block identities for $\mathsf G^{-1}\mathsf G=I$ give the analytic factorization
\begin{equation*}
\mathsf G^{-1}
\begin{pmatrix}
    \A & 0\\
    0 & I_m
\end{pmatrix}
=
\begin{pmatrix}
    I_{\mathscr X} & 0\\
    0 & \mathsf F
\end{pmatrix}
\begin{pmatrix}
    I_{\mathscr X} & E_+\\
    0 & I_m
\end{pmatrix}
\begin{pmatrix}
    I_{\mathscr X} & 0\\
    -R_+ & I_m
\end{pmatrix}.
\end{equation*}
Here and below, the dependence on $(\omega,\delta_{\partial D})$ is suppressed. The Grushin operator and the two triangular factors in this identity are analytic and invertible. Thus, by the analytic equivalence underlying the Grushin reduction \cite{gohberg1971operator}, the characteristic values of $\A$ near $\omega_0$ are exactly the zeros of $\det\mathsf F$, with the same algebraic multiplicities.

We next count the characteristic values. Since $\|\mc C(\omega_0)\|\leq c_0\delta$, fix $R>c_0$ and set $\Gamma_\delta:=\{z\in\C:|z|=R\delta\}$. On $\Gamma_\delta$, with $z=\omega-\omega_0$,
\begin{equation*}
    \bigl\|\bigl(zI_m-\mc C(\omega_0)\bigr)^{-1}\bigr\|
    \leq \frac{1}{(R-c_0)\delta},
    \qquad
    \bigl\|\mathsf F(\omega_0+z,\delta_{\partial D})
    -(zI_m-\mc C(\omega_0))\bigr\|
    \leq C_R\delta^2.
\end{equation*}
Consequently,
\begin{equation*}
    \bigl\|(zI_m-\mc C(\omega_0))^{-1}
    [\mathsf F(\omega_0+z,\delta_{\partial D})
    -(zI_m-\mc C(\omega_0))]\bigr\|
    \leq \frac{C_R\delta}{R-c_0}<1
\end{equation*}
for $\delta$ sufficiently small. The finite-dimensional Rouch\'e theorem
therefore shows that $\det\mathsf F$ and $\det(zI_m-\mc C(\omega_0))$ have the same number
$m$ of zeros inside $\Gamma_\delta$, counted with algebraic multiplicity. Since $\mathsf F(\omega_0+z,0)=zI_m+\O(z^2)$, we may choose a fixed $r_0>0$ inside the Grushin neighborhood such that 
\begin{equation*}
    \bigl\|z^{-1}\mathsf F(\omega_0+z,0)-I_m\bigr\|<1,
    \qquad |z|=r_0.
\end{equation*}
The matrix Rouch\'e theorem gives exactly $m$ zeros in $|z|<r_0$ at $\delta_{\partial D}=0$, and uniform convergence on this fixed circle gives the same count for all sufficiently small $\delta$. Hence the annulus $R\delta\leq|z|<r_0$ contains no characteristic values, so the $R\delta$-disk exhausts those converging to $\omega_0$ and
\begin{equation}\label{eq:omega-close-to-omega0}
    |\omega-\omega_0|=\O(\delta).
\end{equation}

Let $X\neq0$ be a characteristic vector for any one of these values and set $a:=R_+X$. Then
\begin{equation*}
    \mathsf G(\omega,\delta_{\partial D})
    \binom{X}{0}=\binom{0}{a}.
\end{equation*}
Applying $\mathsf G^{-1}$ gives
\begin{equation*}
    X=E_+a,
    \qquad
    \mathsf F(\omega,\delta_{\partial D})a=0.
\end{equation*}
In particular, $a\neq0$. The lower-right block of $\mathsf G\mathsf G^{-1}=I$ gives $R_+E_+=I_m$, while analyticity gives, in $\mathcal L(\C^m,\mathscr X)$,
\begin{equation*}
    E_+(\omega,\delta_{\partial D})  =\iota_{\omega_0}+\O(|\omega-\omega_0|+\delta)
    =\iota_{\omega_0}+\O(\delta).
\end{equation*}
Therefore,
\begin{equation*}
    X^\perp  =X-\iota_{\omega_0}a
    =(E_+-\iota_{\omega_0})a = \O(\delta\|a\|_2),
\end{equation*}
which proves \eqref{eq:complement-estimate-finite-pencil}. Using the Taylor expansion of $\mathsf F$ and the identity $\mathsf Fa=0$, we obtain \eqref{eq:finite-pencil-reduction}:
\begin{equation}\label{est:lomegadelta2}
    \bigl((\omega-\omega_0)I_m-\mc C(\omega_0)\bigr)a
    =\O(\delta^2\|a\|_2). 
\end{equation}

It remains to prove the cluster refinement. The Jordan resolvent estimate, after scaling by $\delta$, gives constant $C_J, K_J>0$ independent of $\delta$ such that
\begin{equation*}
    \bigl\|(zI_m-\mc C(\omega_0))^{-1}\bigr\|
    \leq
    K_J\frac{\delta^{q-1}}
    {|z-\lambda(\omega_0)|^q}
\end{equation*}
whenever $0<|z-\lambda(\omega_0)|\leq C_J \delta/2$. On the circle
\begin{equation*}
    |z-\lambda(\omega_0)|=M\delta^{1+1/q},
\end{equation*}
the last bound is $K_J/(M^q\delta^2)$, whereas the Taylor remainder in $\mathsf F$ is bounded by $K_E\delta^2$. Choose $M$ so that $K_JK_E/M^q<1$, and then take $\delta$ sufficiently small that $M\delta^{1/q} < C_J/2$. The matrix Rouch\'e theorem shows that the disk bounded by this circle contains exactly $p$ zeros of $\det\mathsf F$, counted with algebraic multiplicity. This proves \eqref{eq:approximation_omega}; for $q=1$, its radius is $M\delta^2$, giving the stated semisimple estimate.
\end{proof}

From \eqref{eq:finite-pencil-reduction}, we deduce the leading-order
structure of the corresponding eigenmodes.

\begin{corollary}\label{cor:eigenmodes}
Let $\omega$ be any characteristic value in the local cluster described in \cref{prop:finite-dimensional-pencil-reduction}, and let $X(\delta)=(\psi,\phi)^\top\neq 0$ be a density pair with the decomposition \eqref{eq:decompositionofx}, where
$a(\delta)=(a_{j,\ell}(\delta))_{(j,\ell)\in\mc I}$ satisfies the normalization $\|a(\delta)\|_2=1$. Then the associated eigenmode of \eqref{eq:multi-resonator-scattering-problem} satisfies
\begin{equation*}
u_\delta(x) =
\sum_{(j,\ell)\in\mc I}a_{j,\ell}(\delta)\,
\begin{cases}
V_{j,\ell}(x),       & x\in\R^3\setminus\overline D,\\[2pt]
u_{j,\ell}(x),       & x\in D_j,\\[2pt]
0,                    & x\in D_i,\quad i\neq j,
\end{cases}
+\O(\delta \|a(\delta)\|_2)
\end{equation*}
in $H^1_{\rm loc}(\R^3)$, where $V_{j,\ell}$ is defined in \eqref{eq:def-Vjl-updated}.  

Moreover, suppose that $\omega$ belongs to the cluster in \eqref{eq:approximation_omega} associated with $\lambda(\omega_0)$, whose largest Jordan block has size $q$. Then there exists
\begin{equation*}
    a^{(0)}(\delta)\in
    \ker\bigl(\mc C(\omega_0)-\lambda(\omega_0)I_m\bigr)
\end{equation*}
such that $a(\delta)=a^{(0)}(\delta)+\O(\delta^{1/q})$. Consequently,
\begin{equation} \label{eq:fieldexpansion}
u_\delta(x)
=
\sum_{(j,\ell)\in\mc I}a^{(0)}_{j,\ell}(\delta)\,
\begin{cases}
V_{j,\ell}(x),       & x\in\R^3\setminus\overline D,\\[2pt]
u_{j,\ell}(x),       & x\in D_j,\\[2pt]
0,                    & x\in D_i,\quad i\neq j,
\end{cases}
+\O(\delta^{1/q})
\end{equation}
in $H^1_{\rm loc}(\R^3)$. If this eigenspace is one-dimensional, then, after choosing the phase of $X(\delta)$, one may replace $a^{(0)}(\delta)$ by a fixed unit eigenvector of $\mc C(\omega_0)$.
\end{corollary}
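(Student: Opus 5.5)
The plan is to push the density-level estimates of \cref{prop:finite-dimensional-pencil-reduction} through the layer-potential representation of \cref{lem:layerrep}. From the decomposition \eqref{eq:decompositionofx} and \eqref{eq:complement-estimate-finite-pencil} we have $X=\iota_{\omega_0}a+\O(\delta\|a\|_2)$ in $\mathscr X$. The resonant field is
\[
u_\delta=\mc S_D^{k}[\phi] \text{ outside } D, \qquad u_\delta=\widetilde{\mc S}_D^{\omega}[\psi] \text{ in } D.
\]
Both $\mc S_D^{k}$ and $\widetilde{\mc S}_D^{\omega}$ map $L^2(\partial D)$ boundedly into $H^1_{\rm loc}$, and they depend analytically on $\omega$. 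Since $|\omega-\omega_0|=\O(\delta)$ by \eqref{eq:omega-close-to-omega0}, we may replace $k,\omega$ by $k_0,\omega_0$ at a cost of $\O(\delta\|X\|)$. Moreover $\|X\|\lesssim\|a\|_2$, because $X=E_+a$ with $E_+$ bounded.

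It then remains to evaluate the potentials on the kernel basis $\Psi_{j,\ell}$. Outside $D$, the function $\mc S_D^{k_0}[\phi_{j,\ell}]=\mc S_D^{k_0}(\mc S_D^{k_0})^{-1}[g_{j,\ell}]$ is the outgoing solution with Dirichlet data $g_{j,\ell}$, so it equals $V_{j,\ell}$. Inside $D_i$, the function $\widetilde{\mc S}_D^{\omega_0}[\psi_{j,\ell}]$ solves $(\Delta+k_{i,0}^2)w=0$ with trace $g_{j,\ell}|_{\partial D_i}$. Since $k_{i,0}^2\notin\sigma_{\rm Dir}(-\Delta;D_i)$, this Dirichlet problem is uniquely solvable. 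Hence $w=u_{j,\ell}$ in $D_j$, because $u_{j,\ell}$ is itself a solution with that trace, and $w=0$ in $D_i$ for $i\neq j$. Summing over $(j,\ell)\in\mc I$ gives the first expansion in $H^1_{\rm loc}(\R^3)$.

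For the refined statement, set $z=\omega-\omega_0$. By \eqref{eq:approximation_omega} we have $|z-\lambda(\omega_0)|=\O(\delta^{1+1/q})$, and \eqref{eq:finite-pencil-reduction} gives $(zI_m-\mc C)a=\O(\delta^2)$. Together these yield
\[
(\mc C(\omega_0)-\lambda(\omega_0)I_m)a=\O(\delta^{1+1/q}).
\]
Write $\mc C=\delta\widehat{\mc C}$ and $\lambda=\delta\widehat\lambda$. Dividing by $\delta$ gives $(\widehat{\mc C}-\widehat\lambda I_m)a=\O(\delta^{1/q})$.

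The main obstacle is converting this small residual into closeness to the kernel with a uniform constant. $\widehat{\mc C}$ depends on the contrast profile only through the bounded ratios $\delta_j/\delta$, so it ranges over a bounded family. I would decompose $a$ along the spectral (Riesz) projection of $\widehat{\mc C}$ onto $\widehat\lambda$.

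The component outside the generalized eigenspace is controlled by the bounded reduced resolvent, so it is $\O(\delta^{1/q})$. The generalized-eigenspace component requires one more input, because a nilpotent residual alone does not force closeness to the kernel. This is where the cluster estimate enters: the reduced Grushin matrix $\mathsf F$ restricted to the cluster is a perturbation of size $\delta^2$. Going back to $\mathsf F a=0$ and using the Jordan resolvent bound on a circle of radius comparable to $\delta^{1+1/q}$, the Riesz projection of $\mathsf F$ onto its cluster is $\O(\delta^{1/q})$-close to the spectral projection of $\mc C$. Reading off the kernel part then gives $a=a^{(0)}+\O(\delta^{1/q})$, after possibly renormalizing $a^{(0)}$.

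Substituting $a^{(0)}$ into the first expansion and absorbing $\O(\delta)\subset\O(\delta^{1/q})$ gives \eqref{eq:fieldexpansion}. When the eigenspace is one-dimensional, $a^{(0)}$ is a scalar multiple of a fixed unit eigenvector with modulus $1+o(1)$. Fixing the phase of $X(\delta)$ then lets us take that eigenvector itself.
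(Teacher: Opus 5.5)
Your treatment of the first expansion matches the paper's: the density estimate $X=\iota_{\omega_0}a+\mathcal{O}(\delta\|a\|_2)$, the analytic replacement of $k,\omega$ by $k_0,\omega_0$, and the identification of the fields by uniqueness of the outgoing exterior and interior Dirichlet problems. You also correctly reach the residual bound $(\widehat{\mathcal C}-\widehat\lambda I_m)a=\mathcal{O}(\delta^{1/q})$.

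The gap is in what you do next. Your claim that ``a nilpotent residual alone does not force closeness to the kernel'' is false. For any fixed matrix $M$ one has $Ma=M(I_m-P_{\ker M})a$, and $M$ is injective on $(\ker M)^\perp$. Hence $\|Ma\|_2\ge\sigma_+(M)\,\|(I_m-P_{\ker M})a\|_2$, where $\sigma_+(M)>0$ is the smallest nonzero singular value of $M$. Jordan structure plays no role: for $M=\begin{pmatrix}0&1\\0&0\end{pmatrix}$, $\|Ma\|_2=|a_2|$ is exactly the distance from $a$ to $\ker M$. The $\delta^{1/q}$ loss enters only through $|\omega-\omega_0-\lambda(\omega_0)|=\mathcal{O}(\delta^{1+1/q})$, which you have already used. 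Apply this with $M=\widehat{\mathcal C}-\widehat\lambda I_m$ and set $a^{(0)}(\delta):=P_{\ker M}a(\delta)$; this finishes the proof at once and is precisely the paper's argument. The same observation would also have completed the generalized-eigenspace component of your own Riesz splitting.

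The replacement argument you propose does not close the gap. Since $\mathsf F$ depends nonlinearly on $\omega$, $\frac{1}{2\pi\mathrm i}\oint\mathsf F(\omega)^{-1}\,d\omega$ is not a projection in general. Even if you knew that $a$ lies near the range of the Riesz projection of $\mathcal C(\omega_0)$ at $\lambda(\omega_0)$, this only places $a$ near the \emph{generalized} eigenspace, not near $\ker(\mathcal C(\omega_0)-\lambda(\omega_0)I_m)$ when $q>1$. So ``reading off the kernel part'' requires exactly the residual bound you set aside.

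The quantitative claim is also unsupported. On $|z-\lambda(\omega_0)|\sim\delta^{1+1/q}$ the Jordan bound gives only $\|(zI_m-\mathcal C(\omega_0))^{-1}\|\lesssim\delta^{-2}$, and correspondingly $\|\mathsf F^{-1}\|\lesssim\delta^{-2}$, while the Taylor remainder is $\mathcal{O}(\delta^2)$. Comparing the two contour integrals over a circle of length $\sim\delta^{1+1/q}$ therefore gives only a bound of order $\delta^{1/q-1}$, not $\delta^{1/q}$.

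Your uniformity concern is legitimate, but Riesz projections do not resolve it either. Along a family where eigenvalues of $\widehat{\mathcal C}$ coalesce, reduced resolvents degenerate just as $\sigma_+$ does. The paper, consistently with the Jordan-resolvent constants in \cref{prop:finite-dimensional-pencil-reduction}, treats $\delta^{-1}\mathcal C(\omega_0)$ as fixed.
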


\begin{proof}
By the layer-potential ansatz \eqref{eq:multi-resonator-layer-ansatz}, the eigenmode $u_\delta$ at frequency $\omega$ is parametrized by a density pair $X(\delta)=(\psi,\phi)^\top\in L^2(\partial D)^2$ satisfying
$\mc A(\omega,\delta_{\partial D})X(\delta)=0$. Recalling the
decomposition \eqref{eq:decompositionofx} of $X(\delta)$, the estimate
\eqref{eq:complement-estimate-finite-pencil} gives
\begin{equation}\label{eq:densities-leading-order}
\psi
=
\sum_{(j,\ell)\in\mc I}a_{j,\ell}(\delta)\,\psi_{j,\ell}
+\O(\delta \|a(\delta)\|_2),
\qquad
\phi
=
\sum_{(j,\ell)\in\mc I}a_{j,\ell}(\delta)\,\phi_{j,\ell}
+\O(\delta \|a(\delta)\|_2),
\end{equation}
in $L^2(\partial D)$. 
The single-layer operators $\mc S_D^k:L^2(\partial D)\to H^1_{\rm loc}(\R^3)$ and $\widetilde{\mc S}_D^\omega:L^2(\partial D)\to H^1(D)$ depend analytically on $\omega$ in a neighborhood of $\omega_0$. Combined with \eqref{eq:omega-close-to-omega0}, this gives the operator-norm estimates
\begin{equation*}
\mc S_D^k=\mc S_D^{k_0}+\O(\delta),
\qquad
\widetilde{\mc S}_D^\omega=\widetilde{\mc S}_D^{\omega_0}+\O(\delta).
\end{equation*}
Substituting \eqref{eq:densities-leading-order} into the ansatz \eqref{eq:multi-resonator-layer-ansatz} and using the linearity and boundedness of $\mc S_D^{k_0}$ and $\widetilde{\mc S}_D^{\omega_0}$, we obtain
\begin{equation*}
u_\delta(x)
=
\sum_{(j,\ell)\in\mc I}a_{j,\ell}(\delta)\,
\begin{cases}
\mc S_D^{k_0}[\phi_{j,\ell}](x),                  & x\in\R^3\setminus\overline D,\\[2pt]
\widetilde{\mc S}_D^{\omega_0}[\psi_{j,\ell}](x), & x\in D,
\end{cases}
+\O(\delta \|a(\delta)\|_2),
\end{equation*}
with the error measured in $H^1_{\rm loc}(\R^3)$. By \eqref{eq:field2} and the definition of $V_{j,\ell}$ in \eqref{eq:def-Vjl-updated}, $\mc S_D^{k_0}[\phi_{j,\ell}]=V_{j,\ell}$ in $\R^3\setminus\overline D$. Moreover, $\widetilde{\mc S}_D^{\omega_0}[\psi_{j,\ell}]$ has boundary trace $g_{j,\ell}$; hence \eqref{eq:tracegu} and uniqueness of the interior Dirichlet problem give
\begin{equation*}
\widetilde{\mc S}_D^{\omega_0}[\psi_{j,\ell}](x)
=
\begin{cases}
u_{j,\ell}(x), & x\in D_j,\\[2pt]
0,              & x\in D_i,\quad i\neq j.
\end{cases}
\end{equation*}
If $\omega(\delta)$ is associated with $\lambda(\omega_0)$, then \eqref{eq:finite-pencil-reduction} and \eqref{eq:approximation_omega} give
\begin{equation*}
    \bigl(\mc C(\omega_0)-\lambda(\omega_0)I_m\bigr)a(\delta)
    =\O(\delta^{1+1/q}).
\end{equation*}
Let $P_\lambda$ be the orthogonal projection onto the kernel of
$\mc C(\omega_0)-\lambda(\omega_0)I_m$. Since $\delta^{-1}(\mc C(\omega_0)-\lambda(\omega_0)I_m)$ is fixed, its restriction to the orthogonal complement of its kernel is bounded below. Hence
\begin{equation*}
    \|(I_m-P_\lambda)a(\delta)\|_2
    \lesssim
    \delta^{-1}\|\bigl(\mc C(\omega_0)
    -\lambda(\omega_0)I_m\bigr)a(\delta)\|_2
    =\O(\delta^{1/q}).
\end{equation*}
Taking $a^{(0)}(\delta):=P_\lambda a(\delta)$ and replacing $a_{j,\ell}(\delta)$ by $a^{(0)}_{j,\ell}(\delta)$ in the preceding field expansion proves \eqref{eq:fieldexpansion}. The proof is complete.
\end{proof}

\begin{remark}
The approximations of the non-subwavelength resonant frequencies remain valid for complex material parameters and can therefore be used to detect exceptional-point degeneracies in parity-time symmetric systems; see \cite{ammari2020exceptional}. The frequency-dependent capacitance matrix formulation also extends to systems of resonators with an imaginary gauge potential, where it captures the non-Hermitian skin effect beyond the subwavelength regime; see \cite{skin3d, alex}.
\end{remark}

\begin{remark}
In three dimensions, the leading-order subwavelength resonant frequencies are of order $\sqrt{\delta}$, that is, $\omega_{\rm sub}^{\rm lead}=\O(\sqrt{\delta})$. This differs from the non-subwavelength regime considered here, where \eqref{eq:approximation_omega} gives $\omega^{\rm lead}=\omega_0+\lambda(\omega_0)$ with $\lambda(\omega_0)=\O(\delta)$. Moreover, $\omega_{\rm sub}^{\rm lead}$ is real, whereas the leading-order non-subwavelength approximation is generally complex. This is because the radiation condition is negligible at leading order in the subwavelength regime, but contributes at leading order for non-subwavelength resonances \cite{alex}. 
\end{remark}

\subsection{General properties of the frequency-dependent capacitance matrix} \label{sec:general_properties}

In the subwavelength regime, $2 \omega_0 \mc C(\omega_0)|_{\omega_0 = 0}$ reduces to the classical generalized capacitance matrix, obtained as the product of the capacitance matrix $C$ with a diagonal matrix $V$ whose entries are the weights $\delta_i v_i^2/|D_i|$, \emph{i.e.}, $V C = 2 \omega_0 \mc C(\omega_0)|_{\omega_0 = 0}$.
The (subwavelength) capacitance matrix $C$ has several well-known properties: it is real symmetric positive definite, its off-diagonal entries are negative, and it is diagonally dominant \cite{ammari2025mathematical,ammari2024functional}. Moreover, the subwavelength resonant frequencies and the corresponding eigenmodes are approximated by the eigenvalues and eigenvectors of the generalized capacitance matrix $V C$. Note that 
when $\delta_i v_i^2$ are real and positive, $V C$ is always diagonalizable.

In this section, we discuss which of these properties extends to the non-subwavelength regime, \emph{i.e.},  at a nonzero reference frequency. Let $\omega_0\neq 0$ be a characteristic value of the limiting operator $\mc{A}_0(\omega)$, and recall that
\begin{equation*}
    \mc I=\{(j,\ell):j\in\mc J,\ 1\le \ell\le m_j\}.
\end{equation*}
For $\alpha=(j,\ell)\in\mc I$, write $g_\alpha:=g_{j,\ell}$. We define
\begin{equation*}
\mathscr C_{\alpha\beta}(\omega_0) := -\bigl\langle \Lambda_{\rm ext}^{\omega_0}[g_\beta],\,g_\alpha\bigr\rangle_{\partial D}, \qquad \alpha,\beta\in\mc I,
\end{equation*}
where $\Lambda_{\rm ext}^{\omega_0}$ denotes the exterior DtN map at frequency $\omega_0$. With this convention, the frequency-dependent capacitance matrix can be written as
\begin{equation}\label{eq:mcC-from-scrC}
   \mc C(\omega_0)=\frac{1}{2\omega_0}\,\widehat D\,\mathscr C(\omega_0),
\end{equation}
by \cref{def:freq-cap-matrix-general-updated} with the equation \eqref{eq:def-Vjl-updated}, where $\widehat D:=\diag(\delta_i v_i^2)$ is the diagonal matrix whose entry, corresponding to a basis element $g_\alpha$ with $\alpha=(i,r)$, is $\widehat D_{\alpha\alpha}=\delta_i v_i^2$. Hence, $\mathscr C(\omega_0):= 2\omega_0\,\widehat D^{-1}\,\mc C(\omega_0)$ can be viewed as a generalization of the capacitance matrix $C$ to the non-subwavelength regime.

\begin{proposition}\label{prop:complex}
The matrix $\mathscr C(\omega_0)$ is complex symmetric, that is,
\begin{equation*}
    \mathscr C(\omega_0)^\top=\mathscr C(\omega_0),
\end{equation*}
or equivalently,
\begin{equation*}
    \bigl\langle \Lambda_{\rm ext}^{\omega_0}[g_\beta],\,g_\alpha\bigr\rangle_{\partial D}
=
\bigl\langle \Lambda_{\rm ext}^{\omega_0}[g_\alpha],\,g_\beta\bigr\rangle_{\partial D}
\qquad \text{for all }\alpha,\beta\in\mc I.
\end{equation*}
Consequently, $\mc C(\omega_0)$ is similar to the complex symmetric matrix
$\frac{1}{2\omega_0}\,\widehat D^{1/2}\,\mathscr C(\omega_0)\,\widehat D^{1/2}$:
\begin{equation*}
    \mc C(\omega_0)
=\frac{1}{2\omega_0}\,\widehat D\,\mathscr C(\omega_0)
\sim
\frac{1}{2\omega_0}\,\widehat D^{1/2}\,\mathscr C(\omega_0)\,\widehat D^{1/2}.
\end{equation*}
\end{proposition}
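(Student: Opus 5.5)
The plan is to prove the symmetry of $\mathscr C(\omega_0)$ with Green's second identity applied to the outgoing exterior fields $V_\alpha:=V_{j,\ell}$ from \eqref{eq:def-Vjl-updated}. By construction, $V_\alpha$ solves $(\Delta+k_0^2)V_\alpha=0$ in $\R^3\setminus\overline D$, satisfies $V_\alpha=g_\alpha$ on $\partial D$, and $\Lambda_{\rm ext}^{\omega_0}[g_\alpha]=\partial_\nu V_\alpha|_+$. So it suffices to show
\begin{equation*}
\int_{\partial D}\bigl(V_\alpha\,\partial_\nu V_\beta|_+-V_\beta\,\partial_\nu V_\alpha|_+\bigr)\,\dd\sigma=0 .
\end{equation*}
Note that no complex conjugation appears anywhere. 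The pairing is bilinear, which is exactly why the result is complex symmetry rather than Hermitian symmetry.

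\textbf{Step 1: Green's identity on a truncated domain.} Take a large ball $B_R\supset\overline D$ and apply Green's second identity on $B_R\setminus\overline D$. The volume terms $V_\alpha\Delta V_\beta-V_\beta\Delta V_\alpha$ cancel because both functions satisfy the same Helmholtz equation with wavenumber $k_0$. What remains is that the boundary integral above (with the sign fixed by the outward normal $\nu$ to $D$) equals
\begin{equation*}
\int_{\partial B_R}\bigl(V_\alpha\,\partial_r V_\beta-V_\beta\,\partial_r V_\alpha\bigr)\,\dd\sigma .
\end{equation*}
Interior regularity is not an issue here. The boundary is $C^\infty$ and the data $g_\alpha$ are smooth traces of Neumann eigenfunctions, so $V_\alpha$ is smooth up to $\partial D$, and the traces and normal derivatives are classical.

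\textbf{Step 2: Vanishing of the sphere term.} This is the main (though standard) point. I would write $\partial_r V=\i k_0V+o(R^{-1})$ using the Sommerfeld condition \eqref{sommerfeld}, which holds because $k_0=\omega_0/v$ is real and nonzero. Substituting, the integrand becomes $V_\alpha(\i k_0 V_\beta)-V_\beta(\i k_0V_\alpha)+o(R^{-1})(|V_\alpha|+|V_\beta|)$, and the leading terms cancel exactly. Outgoing solutions satisfy $V=\O(R^{-1})$ uniformly, for example from the layer-potential representation $V_\alpha=\S_D^{k_0}[\phi_\alpha]$. Hence the remainder integrates to $o(R^{-1})\cdot\O(R^{-1})\cdot\O(R^2)=o(1)$. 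Letting $R\to\infty$ gives zero, and therefore $\mathscr C_{\alpha\beta}=\mathscr C_{\beta\alpha}$.

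As an alternative route, one could note that $\Lambda_{\rm ext}^{\omega_0}=(\tfrac12 I+\mc K_D^{k_0,*})(\S_D^{k_0})^{-1}$ and use the symmetry of the kernel $G^{k_0}(x-y)$, that is, $(\S_D^{k_0})^\top=\S_D^{k_0}$ with respect to the bilinear pairing, together with the Calderón identity $\S\mc K^*=\mc K\S$. I would keep Green's identity as the primary argument because it is cleaner.

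\textbf{Step 3: The similarity statement.} This is algebraic. Fix any branch of square roots and set $\widehat D^{1/2}=\diag((\delta_iv_i^2)^{1/2})$. This matrix is invertible, since the $\delta_i$ are nonzero for $\delta>0$. Then
\begin{equation*}
\widehat D^{-1/2}\Bigl(\tfrac{1}{2\omega_0}\widehat D\,\mathscr C\Bigr)\widehat D^{1/2}=\tfrac{1}{2\omega_0}\widehat D^{1/2}\mathscr C\,\widehat D^{1/2},
\end{equation*}
and by \eqref{eq:mcC-from-scrC} the left-hand side is a similarity transform of $\mc C(\omega_0)$. The right-hand side is complex symmetric because $\mathscr C^\top=\mathscr C$ and diagonal matrices are symmetric.
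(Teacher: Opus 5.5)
Your argument is correct and is essentially the paper's proof: Green's second identity on $B_R\setminus\overline D$, cancellation of the volume terms, the Sommerfeld condition to kill the $\partial B_R$ contribution, and the sign flip $\nu_R=-\nu$ on $\partial D$; you also write out the $o(R^{-1})\cdot\O(R^{-1})\cdot\O(R^2)=o(1)$ decay estimate and the explicit similarity by $\widehat D^{1/2}$, which the paper leaves implicit. One small correction: invertibility of $\widehat D^{1/2}$ needs every $\delta_i\neq0$, and this does not follow from $\delta>0$; it is an implicit standing assumption, which the paper also makes when it writes $\widehat D^{-1}$.
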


\begin{proof}
For each $\alpha\in\mc I$, let $V_\alpha$ be the outgoing exterior solution of
\begin{equation*}
    (\Delta+k_0^2)V_\alpha=0 \quad \text{in }\R^3\setminus\overline D,
\qquad
V_\alpha|_{\partial D}=g_\alpha,
\end{equation*}
so that $\Lambda_{\rm ext}^{\omega_0}[g_\alpha]=\partial_\nu V_\alpha|_{\partial D}$
with $\nu$ pointing out of $D$ into the exterior. Choose $R>0$ large enough so that $\overline D\subset B_R$, where $B_R$ is the open ball of radius $R$ centered at the origin, and let $\Omega_R:=B_R\setminus\overline D$. Since $V_\alpha$ and
$V_\beta$ satisfy the same Helmholtz equation in $\Omega_R$, we have 
\begin{equation*}
\int_{\partial\Omega_R}\bigl(V_\alpha\,\partial_{\nu_R} V_\beta-V_\beta\,\partial_{\nu_R} V_\alpha\bigr)\, {\rm d} \sigma
=
\int_{\Omega_R}\bigl(V_\alpha\Delta V_\beta-V_\beta\Delta V_\alpha\bigr)\, {\rm d} x
=0,
\end{equation*}
where $\nu_R$ is the outward normal of $\Omega_R$. The radiation
condition \eqref{sommerfeld} for $V_\alpha$ and $V_\beta$ forces the integral on $\partial B_R$ to vanish as $R\to\infty$, while on $\partial D$
the relation $\nu_R=-\nu$ converts the remaining boundary integral into
\begin{equation*}
\int_{\partial D} g_\alpha\,\Lambda_{\rm ext}^{\omega_0}[g_\beta]\,\dd \sigma
=
\int_{\partial D} g_\beta\,\Lambda_{\rm ext}^{\omega_0}[g_\alpha]\,\dd \sigma,
\end{equation*}
which is the asserted symmetry.
\end{proof}

\Cref{prop:complex} shows that $\mathscr C(\omega_0)$ is complex symmetric but, in general, \emph{not} Hermitian: the outgoing radiation condition contributes in the leading order in $\delta$ and produces an imaginary part. As a consequence, the classical subwavelength properties of the capacitance matrix, such as positive definiteness, diagonal dominance, and more generally the Minkowski property \cite{ammari2025mathematical}, no longer hold beyond the subwavelength regime. Two mechanisms are responsible. First, radiation losses make $\mathscr C(\omega_0)$ (and hence $\mc C(\omega_0)$) typically complex-valued, as proved in \Cref{prop:complex}. Second, the basis traces of $g_\alpha$ are no longer nonnegative constants on each resonator, and the maximum principle fails for the interior Helmholtz operators $\Delta+\omega_0^2/v_i^2$ on $D_i$; entry-wise sign properties are therefore not basis-invariant. 

We next derive an estimate for the matrix norm of the
frequency-dependent capacitance matrix \eqref{eq:def-freq-cap-general-updated} for \emph{large enough real} $\omega > 0$ such that $\omega^2/v_i^2$ is a Neumann eigenvalue of $-\Delta$ in $D_i$, where $g_{i,r}$ denotes the boundary trace of an $L^2$-normalized Neumann eigenfunction. 

We begin with boundary-trace estimates for the normalized Neumann eigenfunctions of a single resonator. Throughout this subsection, the
notation $A\lesssim B$ means that $A\le CB$ for some constant $C>0$ depending
only on the geometry of the resonators.

\begin{lemma}\label{lem:sharp-trace}
Assume that each $D_i$ is of class $C^\infty$. Then, for every Neumann eigenfunction $u_{i,\ell}(\omega,\cdot)$ of \eqref{eq:sharp-neumann-eig}
normalized by $\|u_{i,\ell}(\omega,\cdot)\|_{L^2(D_i)}=1$, its boundary trace
$g_{i,\ell}(\omega):= u_{i,\ell}(\omega,\cdot)\big|_{\partial D_i}$ satisfies, for $ s\in\{0,\tfrac12,1\}$,
\begin{equation} \label{est:trace}
    \|g_{i,\ell}(\omega)\|_{H^{s}(\partial D_i)}
\lesssim \Bigl(1+\frac{\omega}{v_i}\Bigr)^{s+\frac{1}{2}},
\end{equation}
uniformly in the Neumann eigenfrequency $\omega > 0$.
\end{lemma}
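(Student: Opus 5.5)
The plan is to prove the three cases $s=0,\tfrac12,1$ separately. We set $\lambda:=\omega^2/v_i^2$ and $\mu:=\sqrt\lambda=\omega/v_i$. Let $u=u_{i,\ell}(\omega,\cdot)$ be the normalized Neumann eigenfunction, so that $-\Delta u=\lambda u$ in $D_i$ and $\partial_\nu u=0$ on $\partial D_i$. Green's identity immediately gives the energy identity
\begin{equation*}
\|\nabla u\|_{L^2(D_i)}^2=\lambda, \qquad \text{hence} \qquad \|u\|_{H^1(D_i)}\lesssim 1+\mu .
\end{equation*}
Elliptic regularity for the Neumann problem on the $C^\infty$ domain $D_i$, applied to $-\Delta u+u=(\lambda+1)u$, yields $\|u\|_{H^2(D_i)}\lesssim (1+\mu)^2$. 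By the same bootstrap, $\|u\|_{H^k(D_i)}\lesssim(1+\mu)^k$ for every integer $k$, and by interpolation this extends to $\|u\|_{H^t(D_i)}\lesssim(1+\mu)^t$ for all real $t\ge0$.

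For the trace itself, the step I would take is $\|g\|_{H^s(\partial D_i)}\lesssim\|u\|_{H^{s+1/2+\epsilon}(D_i)}$. This loses an $\epsilon$, so a sharper argument is needed. For $s=0$, I would use the multiplicative trace inequality
\begin{equation*}
\|u\|_{L^2(\partial D_i)}^2\lesssim \|u\|_{L^2(D_i)}\|u\|_{H^1(D_i)}+\|u\|_{L^2(D_i)}^2 .
\end{equation*}
This follows from the divergence theorem applied to $|u|^2 X$, where $X$ is a smooth vector field with $X\cdot\nu=1$ on $\partial D_i$. It gives $\|g\|_{L^2}^2\lesssim 1+\mu$, which is the claimed exponent $1/2$.

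For $s=1$, I would apply the same inequality to the tangential gradient: $\|\nabla_{\partial}g\|_{L^2(\partial D_i)}^2\lesssim\|\nabla u\|_{L^2}\|\nabla u\|_{H^1}+\|\nabla u\|_{L^2}^2$. Using $\|\nabla u\|_{L^2}=\mu$ and $\|\nabla u\|_{H^1}\lesssim(1+\mu)^2$ from the $H^2$ bound, this gives $(1+\mu)^3$, i.e.\ the exponent $3/2$. A cleaner alternative uses a Rellich identity with the multiplier $X\cdot\nabla u$. Because $\partial_\nu u=0$, the boundary term reduces to $\int_{\partial D_i}(X\cdot\nu)\bigl(|\nabla_\partial u|^2-\lambda|u|^2\bigr)$, and the interior terms are $O(\lambda)$. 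This yields $\|\nabla_\partial g\|_{L^2}^2\lesssim\lambda+\lambda\|g\|_{L^2}^2\lesssim(1+\mu)^3$, avoiding the $H^2$ regularity entirely.

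Finally, $s=\tfrac12$ follows by interpolation between the $L^2$ and $H^1$ bounds: $\|g\|_{H^{1/2}}\lesssim\|g\|_{L^2}^{1/2}\|g\|_{H^1}^{1/2}\lesssim(1+\mu)^{1/4+3/4}=(1+\mu)$. The main point requiring care is to keep every constant depending only on the geometry, which holds because all the inequalities above are geometric. The essential ingredient is the multiplicative trace inequality (or the Rellich identity) rather than the crude trace theorem, since only these give the exact $\tfrac12$ gain.
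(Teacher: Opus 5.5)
Your proof is correct. For $s=0$ it is the paper's argument: your multiplicative trace inequality from $\operatorname{div}(|u|^2X)$ is the $\varepsilon$-trace inequality $\|g\|_{L^2(\partial D_i)}^2\le\varepsilon\|\nabla u\|_{L^2(D_i)}^2+C(\varepsilon)\|u\|_{L^2(D_i)}^2$ with $\varepsilon=(1+\lambda)^{-1/2}$ already optimized.

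For the other two cases the paper is more black-box. It reads off $s=\tfrac12$ directly from the trace theorem $H^1(D_i)\to H^{1/2}(\partial D_i)$ and the energy identity. It gets $s=1$ from the Neumann bound $\|u\|_{H^2(D_i)}\lesssim 1+\lambda$, the trace theorem $H^2(D_i)\to H^{3/2}(\partial D_i)$, and interpolation between $H^{1/2}(\partial D_i)$ and $H^{3/2}(\partial D_i)$.

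This also shows that your claim that the trace step ``loses an $\epsilon$'' holds only at $s=0$. The trace map $H^t(D_i)\to H^{t-1/2}(\partial D_i)$ is bounded for every $t>\tfrac12$. So with your bounds $\|u\|_{H^t(D_i)}\lesssim(1+\mu)^t$, the cases $s=\tfrac12$ and $s=1$ follow immediately, and only $s=0$ needs a sharper tool.

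Your first $s=1$ argument (the multiplicative inequality applied to $\nabla u$, plus the $H^2$ bound) uses the same ingredients as the paper. The Rellich alternative is genuinely different and buys something. It gives $\|\nabla_\partial g\|_{L^2(\partial D_i)}^2\le\lambda\|g\|_{L^2(\partial D_i)}^2+O(\lambda)$, so your $s=1$ bound is controlled by the $L^2$ trace bound alone, and through your interpolation so is the $s=\tfrac12$ bound. Inserting Tataru's estimate $\|g\|_{L^2(\partial D_i)}\lesssim(1+\mu)^{1/3}$ would then give exponents $\tfrac43$ and $\tfrac56$. That is exactly the scaling $(1+\mu)^{1/3+s}$ anticipated in the remark after the lemma. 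The paper's $H^{1/2}$--$H^{3/2}$ interpolation never sees the $L^2$ trace, so it cannot propagate such an improvement.

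One caveat: to justify the Rellich identity you still need $u$ smooth up to $\partial D_i$. This holds on a $C^\infty$ domain, so ``avoiding the $H^2$ regularity entirely'' should be read as avoiding only the quantitative $H^2$ bound.
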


\begin{proof}
Let $u=u_{i,\ell}(\omega,\cdot)$ and $\lambda_i:=\omega^2/v_i^2$ be the corresponding Neumann eigenvalue of $-\Delta$ on $D_i$. Since $-\Delta u=\lambda_i u$ in $D_i$ with $\partial_\nu u=0$ on $\partial D_i$,
Green's identity gives
\begin{equation*}
    \|\nabla u\|_{L^2(D_i)}^2
= \int_{D_i}(-\Delta u)\,\overline u\, \dd x
- \int_{\partial D_i}(\partial_\nu u)\,\overline u\, \dd \sigma
= \lambda_i,
\end{equation*}
where the boundary term vanishes by the Neumann condition and the last
equality uses $\|u\|_{L^2(D_i)}=1$. Hence, we find
\begin{equation}\label{eq:H1-eigfn-bound}
    \|u\|_{H^1(D_i)}\lesssim (1+\lambda_i)^{1/2}\lesssim 1+\frac{\omega}{v_i}.
\end{equation}
By the standard trace theorem and \eqref{eq:H1-eigfn-bound}, it follows that 
\begin{equation*}
    \|g_{i,\ell}(\omega)\|_{H^{1/2}(\partial D_i)}
\lesssim \|u\|_{H^1(D_i)}
\lesssim 1+\frac{\omega}{v_i}.
\end{equation*}

The $\varepsilon$-trace inequality gives, for every $\varepsilon\in(0,1]$,
\begin{equation*}
    \|g_{i,\ell}(\omega)\|_{L^2(\partial D_i)}^2
\le \varepsilon\,\|\nabla u\|_{L^2(D_i)}^2
+ C(\varepsilon)\,\|u\|_{L^2(D_i)}^2,
\qquad
C(\varepsilon)\lesssim 1+\varepsilon^{-1}.
\end{equation*}
Choosing $\varepsilon=(1+\lambda_i)^{-1/2}$ and using
$\|\nabla u\|_{L^2(D_i)}^2=\lambda_i$ and $\|u\|_{L^2(D_i)}=1$, we obtain
\begin{equation*}
    \|g_{i,\ell}(\omega)\|_{L^2(\partial D_i)}^2
\lesssim (1+\lambda_i)^{1/2},
\end{equation*}
and hence
\begin{equation*}
    \|g_{i,\ell}(\omega)\|_{L^2(\partial D_i)}
\lesssim (1+\lambda_i)^{1/4}
\lesssim \Bigl(1+\frac{\omega}{v_i}\Bigr)^{1/2}.
\end{equation*}

Since $\partial_\nu u=0$ on $\partial D_i$ and $D_i$ is smooth, elliptic
regularity for the Neumann problem yields
\begin{equation*}
    \|u\|_{H^2(D_i)}
\lesssim \|\Delta u\|_{L^2(D_i)}+\|u\|_{H^1(D_i)}
\lesssim 1+\lambda_i,
\end{equation*}
which further gives, by the trace theorem,
\begin{equation*}
    \|g_{i,\ell}(\omega)\|_{H^{3/2}(\partial D_i)}
\lesssim \|u\|_{H^2(D_i)}
\lesssim 1+\lambda_i.
\end{equation*}
Interpolating between $H^{1/2}(\partial D_i)$ and $H^{3/2}(\partial D_i)$ \cite{mclean2000strongly}, we have 
\begin{equation*}
    \|g_{i,\ell}(\omega)\|_{H^1(\partial D_i)}
\lesssim \|g_{i,\ell}(\omega)\|_{H^{1/2}(\partial D_i)}^{1/2}\,
        \|g_{i,\ell}(\omega)\|_{H^{3/2}(\partial D_i)}^{1/2}
\lesssim (1+\lambda_i)^{3/4}
\lesssim \Bigl(1+\frac{\omega}{v_i}\Bigr)^{3/2}.
\end{equation*}
This completes the proof.
\end{proof}
\begin{remark}
The estimates in \Cref{lem:sharp-trace} are not sharp. In particular, Tataru's sharp trace estimate \cite{tataru} for solutions of the Helmholtz equation on smooth domains gives
\begin{equation*}
    \|g_{i,\ell}(\omega)\|_{L^2(\partial D_i)}
\lesssim \Bigl(1+\frac{\omega}{v_i}\Bigr)^{1/3},
\end{equation*}
improving the estimate \eqref{est:trace} with $s = 0$; see also \cite{barnett1,barnett2} for related results. In particular, on the unit ball, a direct computation,  using whispering-gallery modes, indicates the sharp scaling
\begin{equation*}
    \|g_{i,\ell}(\omega)\|_{H^s(\partial D_i)}
\lesssim \Bigl(1+\frac{\omega}{v_i}\Bigr)^{1/3+s},
\qquad s\in\{0,\tfrac12,1\},
\end{equation*}
which is expected to hold in a more general setting. 
\end{remark}

The next lemma gives an $L^2$-estimate for the exterior DtN map at high frequencies. To obtain an explicit bound on the wavenumber dependence, we need to assume that the number of resonators $N=1$ and that the exterior $\R^3\setminus\overline D$ is \emph{nontrapping} \cite{spence1,spence2}. Physically, the nontrapping condition states that incident rays illuminating and reflecting on $D$ according to the law of geometric optics exit any bounded set in finite time, which is the case, for instance, if $D$ is convex. The following estimate follows from \cite[Theorem 1.4]{spence1}; see also \cite[Lemma 4.2]{spence3}.

\begin{lemma}\label{lem:dtn-l2}
Assume that $N = 1$, and $D$ is of class $C^\infty$ with nontrapping $\R^3\setminus\overline D$. Then, for any $f\in H^1(\partial D)$ and
$\omega>0$, we have 
\begin{equation*}
    \|\Lambda_{\rm ext}^{\omega}f\|_{L^2(\partial D)}
\lesssim \|f\|_{H^1(\partial D)}+\left(1+\frac{\omega}{v}\right)\|f\|_{L^2(\partial D)}.
\end{equation*}
\end{lemma}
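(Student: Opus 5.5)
The estimate in \Cref{lem:dtn-l2} is essentially a rescaled form of the high-frequency Dirichlet-to-Neumann bound of \cite[Theorem 1.4]{spence1}. The plan is therefore to reduce it to that bound and then deal with the low-frequency range separately.

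\textbf{Reduction to the exterior wavenumber.} Since $\Lambda_{\rm ext}^{\omega}$ is the outgoing exterior DtN map for $\Delta+k^2$ with $k=\omega/v$, it suffices to prove
\begin{equation*}
\|\Lambda_{\rm ext}f\|_{L^2(\partial D)}\lesssim \|f\|_{H^1(\partial D)}+(1+k)\|f\|_{L^2(\partial D)}.
\end{equation*}

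\textbf{High frequencies, $k\ge k_0$.} For a smooth nontrapping obstacle, \cite[Theorem 1.4]{spence1} (see also \cite[Lemma 4.2]{spence3}) gives the bound
\begin{equation*}
\|\partial_\nu u\|_{L^2(\partial D)}\lesssim \|\nabla_{\partial D} f\|_{L^2(\partial D)}+k\|f\|_{L^2(\partial D)}
\end{equation*}
for the outgoing solution $u$ with Dirichlet data $f$, for all $k\ge k_0>0$. The underlying mechanism is as follows. One applies a Morawetz/Rellich-type identity with a multiplier $x\cdot\nabla u - \mathrm{i}k\beta u+\alpha u$ on a truncated exterior domain. The radiation condition controls the term at infinity. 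The nontrapping hypothesis enters through the resolvent bound $\|\chi R(k)\chi\|_{L^2\to L^2}\lesssim k^{-1}$, which lets one absorb the interior volume terms. Since $\|\nabla_{\partial D}f\|_{L^2}\le\|f\|_{H^1}$, this yields the desired bound on this range. I would simply quote this result rather than reprove it.

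\textbf{Low and intermediate frequencies, $0<k\le k_0$.} On this compact range the map $k\mapsto\Lambda_{\rm ext}^{k}\colon H^1(\partial D)\to L^2(\partial D)$ is continuous. Indeed, $\Lambda_{\rm ext}=(\tfrac12 I+\mathcal K_D^{k,*})(\mathcal S_D^k)^{-1}$ whenever $\mathcal S_D^k$ is invertible. If $k^2$ is an interior Dirichlet eigenvalue, one uses the combined-field representation instead, which is invertible for all real $k>0$. As $k\to0$ the operator stays bounded because in three dimensions $\mathcal S_D^0$ is invertible. Hence $\|\Lambda_{\rm ext}^k\|_{H^1\to L^2}$ is uniformly bounded on $(0,k_0]$, which gives the claim there with constant $1$ in place of $(1+k)$.

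\textbf{Main obstacle.} The only delicate point is checking that the constant in \cite{spence1} depends only on $D$ and not on $k$, in the norms used here. I expect this to follow directly from the nontrapping resolvent estimate, so the step amounts to matching norms and combining the two frequency ranges.
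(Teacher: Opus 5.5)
Your proposal is correct and takes essentially the paper's approach: the paper gives no argument of its own and simply derives the lemma from \cite[Theorem 1.4]{spence1} (see also \cite[Lemma 4.2]{spence3}) at the exterior wavenumber $k=\omega/v$. That result is stated only for $k\ge k_0>0$, so your low-frequency step usefully fills in the range $0<\omega<vk_0$ that the paper leaves implicit; there you use continuity of $k\mapsto\Lambda_{\rm ext}^{k}$ in $\mathcal{L}(H^1(\partial D),L^2(\partial D))$, the combined-field representation at interior Dirichlet eigenvalues, and the invertibility of $\mathcal{S}_D^0$ in three dimensions. One small correction: the compact interval on which continuity gives a uniform bound is $[0,k_0]$, after extending the map continuously to $k=0$, and not $(0,k_0]$.
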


We are now in a position to derive a high-frequency spectral norm bound for the frequency-dependent capacitance matrix.

\begin{proposition}\label{prop:sharp-matrix}
Under the assumptions of \Cref{lem:dtn-l2}, let $m = |\mc{I}|$ be defined as in \eqref{eq:dimensionIm}, and recall the decomposition \eqref{eq:mcC-from-scrC}. Then, for any Neumann eigenfrequency $\omega > 0$, 
\begin{equation}\label{eq:scrC-bound}
    \|\mathscr C(\omega)\|_{\ell^2(\C^m) \to \ell^2(\C^m)}
\lesssim m \Bigl(1+\frac{\omega}{v_{\min}}\Bigr)^2,
\qquad v_{\min}:=\min\{v,v_1\}.
\end{equation}
Consequently, we have
\begin{equation}\label{eq:mcC-bound}
    \|\mc C(\omega)\|_{\ell^2(\C^m) \to \ell^2(\C^m)}
\lesssim m \frac{\|\widehat D\|_{\ell^2(\C^m) \to \ell^2(\C^m)}}{2\omega}\Bigl(1+\frac{\omega}{v_{\min}}\Bigr)^2,
\end{equation}
and, in particular,
\begin{equation}\label{eq:mcC-high-freq}
    \|\mc C(\omega)\|_{\ell^2(\C^m) \to \ell^2(\C^m)} \lesssim \delta\,\omega\, m , 
\qquad\text{as }\omega\to\infty.
\end{equation}
\end{proposition}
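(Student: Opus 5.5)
The plan is to bound each entry of $\mathscr C(\omega)$ and then pass from entries to the spectral norm. Since $N=1$, every index $\alpha=(1,r)\in\mc I$ corresponds to a trace $g_\alpha=g_{1,r}(\omega)$ of an $L^2(D)$-normalized Neumann eigenfunction. By Cauchy--Schwarz on $\partial D$, each entry satisfies
\begin{equation*}
    |\mathscr C_{\alpha\beta}(\omega)|
    \le \|\Lambda_{\rm ext}^{\omega}[g_\beta]\|_{L^2(\partial D)}\,\|g_\alpha\|_{L^2(\partial D)}.
\end{equation*}
The first factor is handled by \Cref{lem:dtn-l2}:
\begin{equation*}
    \|\Lambda_{\rm ext}^{\omega}[g_\beta]\|_{L^2(\partial D)}
    \lesssim \|g_\beta\|_{H^1(\partial D)}+\Bigl(1+\frac{\omega}{v}\Bigr)\|g_\beta\|_{L^2(\partial D)}.
\end{equation*}
Both terms are then estimated with \Cref{lem:sharp-trace} at $s=1$ and $s=0$. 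This gives $\|g_\beta\|_{H^1}\lesssim(1+\omega/v_1)^{3/2}$ and $\|g_\beta\|_{L^2}\lesssim(1+\omega/v_1)^{1/2}$. Since $v_{\min}=\min\{v,v_1\}$, every factor $(1+\omega/v)$ and $(1+\omega/v_1)$ is bounded by $(1+\omega/v_{\min})$. So the DtN term is $\lesssim(1+\omega/v_{\min})^{3/2}$, and multiplying by $\|g_\alpha\|_{L^2}\lesssim(1+\omega/v_{\min})^{1/2}$ gives the uniform entrywise bound
\begin{equation*}
    |\mathscr C_{\alpha\beta}(\omega)|\lesssim\Bigl(1+\frac{\omega}{v_{\min}}\Bigr)^2 .
\end{equation*}

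Next, to pass to the operator norm, I will use the elementary inequality $\|A\|_{2\to2}\le\|A\|_F\le m\max_{\alpha,\beta}|A_{\alpha\beta}|$ for an $m\times m$ matrix. Alternatively, the Schur test $\|A\|\le\sqrt{\|A\|_1\|A\|_\infty}\le m\max|A_{\alpha\beta}|$ gives the same bound. Either way this yields \eqref{eq:scrC-bound}.

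For \eqref{eq:mcC-bound}, I use the factorization \eqref{eq:mcC-from-scrC} together with submultiplicativity: $\|\mc C\|\le\frac{1}{2\omega}\|\widehat D\|\,\|\mathscr C\|$. For \eqref{eq:mcC-high-freq}, note that $\widehat D=\diag(\delta_1v_1^2)$, so $\|\widehat D\|=|\delta_1|v_1^2\lesssim\delta$ since $\delta_1=\O(\delta)$. As $\omega\to\infty$ we have $(1+\omega/v_{\min})^2\lesssim\omega^2$, with constants depending on the fixed speeds. Hence $\|\mc C\|\lesssim\delta\,\omega^2 m/\omega=\delta\,\omega\, m$.

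I expect no step to be a serious obstacle, since the analytic difficulty is absorbed in \Cref{lem:sharp-trace} and \Cref{lem:dtn-l2}. The point needing care is bookkeeping. The implicit constants must be uniform in the Neumann eigenfrequency $\omega$ and in the index $\alpha$. Those from \Cref{lem:sharp-trace} are, by that lemma's statement. Those from \Cref{lem:dtn-l2} depend only on $D$, and I use that lemma with $\omega$ being the exterior frequency, so the factor involves $v$. A second point is that the multiplicity $m=m(\omega)$ may grow with $\omega$, which is why it is kept explicit in the bounds rather than absorbed into the constants.
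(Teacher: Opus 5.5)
Your proposal is correct and follows essentially the paper's argument: Cauchy--Schwarz on $\partial D$, then \Cref{lem:dtn-l2} and \Cref{lem:sharp-trace} with all speeds bounded below by $v_{\min}$, and finally the factorization $\mc C=\frac{1}{2\omega}\widehat D\,\mathscr C$ with $\|\widehat D\|=|\delta_1|v_1^2$. The only cosmetic difference is that you bound each entry and then use $\|A\|_{2\to2}\le\|A\|_F\le m\max_{\alpha,\beta}|A_{\alpha\beta}|$, whereas the paper bounds the bilinear form $b^\top\mathscr C(\omega)a$ directly through $g_a=\sum_\beta a_\beta g_\beta$ and $\sum_\beta|a_\beta|\le m^{1/2}\|a\|_2$; both give the same factor $m$.
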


\begin{proof}
For $a=(a_1,\dots,a_m)^\top\in\mathbb C^m$, set
$g_a(\omega):=\sum_{\beta \in \mc{I}} a_\beta\,g_\beta(\omega)\in H^1(\partial D)$,
and analogously $g_b(\omega)$ for $b\in\mathbb C^m$. By the definition of $\mathscr C(\omega)$, we have 
\begin{equation*}
    b^\top\mathscr C(\omega)\,a
= -\bigl\langle \Lambda_{\rm ext}^{\omega}[g_a(\omega)],\,g_b(\omega)\bigr\rangle_{\partial D},
\end{equation*}
and Cauchy's inequality on $L^2(\partial D)$ gives
\begin{equation}\label{eq:scrC-pairing-bound}
    |b^\top\mathscr C(\omega)\,a|
\le \|\Lambda_{\rm ext}^{\omega}[g_a(\omega)]\|_{L^2(\partial D)}\,
    \|g_b(\omega)\|_{L^2(\partial D)}.
\end{equation}

\Cref{lem:dtn-l2} gives
\begin{equation*}
    \|\Lambda_{\rm ext}^{\omega}[g_a(\omega)]\|_{L^2(\partial D)}
\lesssim \|g_a(\omega)\|_{H^1(\partial D)}+\left(1+\frac{\omega}{v}\right)\,\|g_a(\omega)\|_{L^2(\partial D)}.
\end{equation*}
The triangle inequality, the elementary bound
$\sum_\beta |a_\beta|\le m^{1/2} \|a \|_2$, and \Cref{lem:sharp-trace} yield
\begin{equation*}
    \|g_a(\omega)\|_{H^1(\partial D)}
\lesssim m^{1/2}\Bigl(1+\frac{\omega}{v_{\min}}\Bigr)^{3/2}\|a\|_2,
\qquad \|g_a(\omega)\|_{L^2(\partial D)}
\lesssim m^{1/2}\Bigl(1+\frac{\omega}{v_{\min}}\Bigr)^{1/2}\|a\|_2.
\end{equation*}
Combining the above two estimates implies 
\begin{equation*}
    \|\Lambda_{\rm ext}^{\omega}[g_a(\omega)]\|_{L^2(\partial D)} \lesssim m^{1/2}\Bigl(1+\frac{\omega}{v_{\min}}\Bigr)^{3/2}\|a\|_2.
\end{equation*}
The same argument with $g_b$ in place of $g_a$ yields
\begin{equation*}
    \|g_b(\omega)\|_{L^2(\partial D)}
\lesssim m^{1/2}\Bigl(1+\frac{\omega}{v_{\min}}\Bigr)^{1/2}\|b\|_2.
\end{equation*}

Substituting into \eqref{eq:scrC-pairing-bound} and taking the supremum over $\|a\|_2 = \|b\|_2 = 1$ gives \eqref{eq:scrC-bound}. The bound \eqref{eq:mcC-bound}
follows from $\mc C(\omega)=\frac{1}{2\omega}\widehat D\,\mathscr C(\omega)$ and
$\|\widehat D\|_{\ell^2(\C^m)\to\ell^2(\C^m)}=|\delta_1|v_1^2\lesssim \delta v_1^2$. Finally, \eqref{eq:mcC-high-freq} follows from $(1+\omega/v_{\min})^2/(2\omega)\sim\omega/(2v_{\min}^2)$
as $\omega\to\infty$.
\end{proof}

As a consequence of \cref{prop:sharp-matrix}, we finally discuss the
high-frequency dependence of the reduced residual in
\eqref{eq:finite-pencil-reduction}. We first recall the estimate from
\cite[Theorem A.1]{jfa}: for $k_0=\omega_0/v$ and
$\phi\in L^2(\partial D)$,
\begin{equation}\label{eq:KD-SD-highfreq}
 \Bigl\|\bigl(\tfrac12 I+\mathcal K_D^{k_0,*}\bigr)[\phi]\Bigr\|_{L^2(\partial D)}
\lesssim \Theta(k_0)\,\|\phi\|_{L^2(\partial D)},
\end{equation}
where 
\begin{equation}\label{eq:Theta-def}
    \Theta(k_0):=
    \begin{cases}
        1+k_0^{1/4}\log(2+k_0), & \text{in the general smooth case,}\\[3pt]
        1+k_0^{1/6}\log(2+k_0), & \text{if }\partial D\text{ has nonvanishing curvature.}
    \end{cases}
\end{equation}
We also need a local Lipschitz bound for $\mathcal K_D^{k,*}$. A direct
differentiation gives
\begin{equation*}
     \partial_k\partial_{\nu(x)}G^k(x,y) = \frac{k}{4\pi}e^{ik |x-y|}\,
    \frac{\nu(x)\cdot(x-y)}{|x-y|}.
\end{equation*}
Since $\partial D$ is smooth and compact, we have
$|\nu(x)\cdot(x-y)|\le C|x-y|$ for $x,y\in\partial D$. Hence, the kernel of
$\partial_k\mathcal K_D^{k,*}$ is bounded by $Ck$, and Schur's test gives
\begin{equation*}
    \|\partial_k\mathcal K_D^{k,*}\|_{L^2(\partial D)\to L^2(\partial D)}
    \lesssim 1 + k.
\end{equation*}
It follows that, for $k$ in a fixed neighborhood of $k_0$,
\begin{equation*}
    \|\mathcal K_D^{k,*}-\mathcal K_D^{k_0,*}\|_{L^2(\partial D)\to L^2(\partial D)}
    \lesssim (1+k_0)|k-k_0|, 
    \qquad k=\omega/v.
\end{equation*}
By the definition \eqref{def:lomega} of
$\mathcal L(\omega,\delta_{\partial D})$, the estimates
\eqref{est:lomegadelta} can therefore be refined to
\begin{equation*}
    \|\mathcal L(\omega,\delta_{\partial D})\|_{L^2(\partial D)\to L^2(\partial D)}
    \lesssim
    \delta\,\Theta(k_0)+\delta\,(1+k_0)|\omega-\omega_0|.
\end{equation*}

Tracking the wavenumber dependence in the Taylor remainder of the exact
Grushin matrix gives, for characteristic values in the local reduction
neighborhood,
\begin{equation*}
    \bigl\|\bigl((\omega-\omega_0)I_m-\mc C(\omega_0)\bigr)a\bigr\|_2
    \lesssim
    \delta\,\Upsilon(k_0)\,r(\omega_0)
    \bigl(|\omega-\omega_0|+\delta\,\Upsilon(k_0)\bigr)\|a\|_2,
\end{equation*}
where $\Upsilon(k_0):=\Theta(k_0)+1+k_0\lesssim 1+k_0$, and
$r(\omega_0)\geq1$ denotes a reduction constant controlling the residue,
the finite-dimensional coordinate maps, and the local $C^1$-norm of
$\mc R_0$ in that neighborhood.
Therefore, in the regime
\begin{equation}\label{eq:uniform-regime}
\delta(1+k_0)r(\omega_0)\ll 1, \qquad
\delta\,m(\omega_0)\,\omega_0\ll1,
\end{equation}
we have $\|\mc C(\omega_0)\|_{\ell^2(\C^m)\to\ell^2(\C^m)}\ll1$ by
\eqref{eq:mcC-high-freq}, and the residual estimate above is perturbative.
For fixed $v$ and large $\omega_0$, a sufficient condition implying the two conditions in \eqref{eq:uniform-regime} can be written as
\begin{equation*}
    \delta\,\omega_0\max\{r(\omega_0),m(\omega_0)\}\ll1.
\end{equation*}
We emphasize that this estimate only locates the resonance shifts in the pseudospectrum of $\mc C(\omega_0)$. Since $\mc C(\omega_0)$ is generally nonnormal, the estimate alone does not yield uniform eigenvalue or Jordan-block expansions. Such expansions would additionally require uniform
control of the exact Grushin remainder and of the spectral separation and conditioning of $\mc C(\omega_0)$; we do not pursue these estimates here.

\subsection{Numerical illustrations} 
We numerically illustrate the leading-order approximation derived above. We focus on spherical resonators in three dimensions, for which the Neumann eigenvalues of $-\Delta$ are explicitly characterized. For simplicity, throughout this subsection we set $v=v_i=1$.
\begin{lemma}
    Let $B_R \subset \R^3$ denote the ball of radius $R>0$ centered at $0$. The positive Neumann eigenvalues of $-\Delta$ on $B_R$ are
    \begin{equation*}
        \omega^2_{\ell,n} = \left(\frac{\beta_{\ell,n}}{R}\right)^2 \quad \text{for }\ell,n\in\NN \cup \{0\},
    \end{equation*}
    where $\{\beta_{\ell,n}\}_{n\geq0}$ are the positive zeros of the derivative of the spherical Bessel function of the first kind, indexed in increasing order, \emph{i.e.}, $j'_\ell(\beta_{\ell,n}) = 0$. Writing $x = \rho\phi$, where $\rho=\abs{x}$ and $\phi\in \mathbb S^2$, the corresponding eigenspace is spanned by
    \begin{equation*}
        u(\rho,\phi) = j_\ell(\beta_{\ell,n}\frac{\rho}{R})Y^m_\ell(\phi) \quad \text{for }m=-\ell, \dots, \ell,
    \end{equation*}
    where $Y^m_\ell$ denotes the spherical harmonic of degree $\ell$ and order $m$.
\end{lemma}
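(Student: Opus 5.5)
The plan is to use separation of variables in spherical coordinates, which is the classical route for the Neumann spectrum of a ball. Write $x=\rho\phi$ with $\rho\in(0,R)$ and $\phi\in\mathbb S^2$. The Laplacian then decomposes as
\begin{equation*}
\Delta=\partial_\rho^2+\tfrac{2}{\rho}\partial_\rho+\tfrac{1}{\rho^2}\Delta_{\mathbb S^2}.
\end{equation*}
The spherical harmonics $\{Y^m_\ell\}$, $\ell\ge0$, $|m|\le\ell$, form an orthonormal basis of $L^2(\mathbb S^2)$, and they satisfy $\Delta_{\mathbb S^2}Y^m_\ell=-\ell(\ell+1)Y^m_\ell$.

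\textbf{Step 1 (reduction to radial problems).} Given a Neumann eigenfunction $u$ with eigenvalue $\omega^2>0$, expand it as $u(\rho,\phi)=\sum_{\ell,m}f_{\ell m}(\rho)Y^m_\ell(\phi)$, where $f_{\ell m}(\rho)=\langle u(\rho\,\cdot),Y^m_\ell\rangle_{L^2(\mathbb S^2)}$. Since $u$ is smooth up to the boundary by elliptic regularity, we may differentiate under the integral. Projecting the equation $\Delta u+\omega^2u=0$ onto $Y^m_\ell$ gives the spherical Bessel equation
\begin{equation*}
f''+\tfrac{2}{\rho}f'+\bigl(\omega^2-\tfrac{\ell(\ell+1)}{\rho^2}\bigr)f=0 \quad \text{on } (0,R).
\end{equation*}
Projecting the boundary condition gives $f_{\ell m}'(R)=0$.

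\textbf{Step 2 (regularity at the origin).} This step is the only real subtlety. The solution space of the radial ODE is spanned by $j_\ell(\omega\rho)$ and $y_\ell(\omega\rho)$. Since $u\in H^1(B_R)$, in fact $u\in C^\infty$, each $f_{\ell m}$ is bounded near $\rho=0$, because $|f_{\ell m}(\rho)|\le \|u\|_{L^\infty}\|Y^m_\ell\|_{L^1}$. On the other hand, $y_\ell(\omega\rho)\sim -c\,\rho^{-\ell-1}$ is unbounded. Hence $f_{\ell m}=c_{\ell m}\,j_\ell(\omega\rho)$.

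\textbf{Step 3 (quantization).} The Neumann condition now reads $c_{\ell m}\,\omega\, j_\ell'(\omega R)=0$. Because $u\not\equiv0$, some $c_{\ell m}\neq0$, and therefore $\omega R$ is a positive zero of $j_\ell'$, that is, $\omega=\beta_{\ell,n}/R$ for some $n$. Conversely, each function $j_\ell(\beta_{\ell,n}\rho/R)Y^m_\ell(\phi)$ is smooth on $B_R$, since $j_\ell(r)Y^m_\ell$ is an entire solution of the Helmholtz equation, being $r^\ell Y^m_\ell$ times an even power series. It solves the Helmholtz equation with $\omega=\beta_{\ell,n}/R$ and has vanishing normal derivative at $\rho=R$, so it is an eigenfunction.

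\textbf{Step 4 (the eigenspace).} For a fixed eigenvalue $\omega^2$, the argument of Step 3 shows that $c_{\ell m}$ can be nonzero only for pairs $(\ell,n)$ with $\beta_{\ell,n}/R=\omega$. It follows that the eigenspace is spanned by the listed functions. If several $\ell$ share the same value $\beta_{\ell,n}$, the span includes all of them; this is consistent with the statement read as the union over such $(\ell,n)$.

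Beyond the regularity issue at $\rho=0$ handled in Step 2, the remaining steps are routine.
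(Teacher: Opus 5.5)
Your separation-of-variables proof is correct. The paper states this lemma without proof as a classical fact, and your argument is the standard one behind it.

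There is one small point to tighten in Step 4. The spherical-harmonic expansion converges a priori only in $L^2$, so to conclude that $u$ lies in the span (and not merely in its closure) you should note that only finitely many $\ell$ can contribute. Either of two facts gives this: Neumann eigenspaces are finite-dimensional, or the first positive zero of $j_\ell'$ exceeds $\sqrt{\ell(\ell+1)}$.
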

The restriction $\beta_{\ell,n}>0$ excludes the zero Neumann eigenfrequency, from which the subwavelength resonance bifurcates.

\begin{figure}[h]
    \centering
    \includegraphics[width=0.6\linewidth]{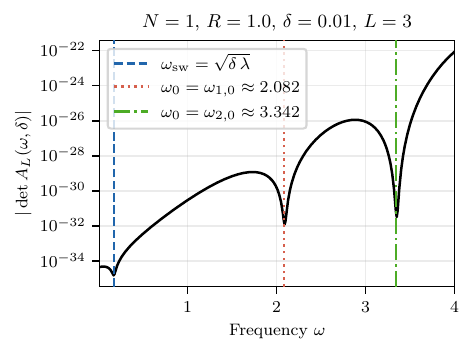}
    \caption{The characteristic determinant $\omega\mapsto \abs{\det A_L(\omega, \delta)}$ for a single spherical resonator. The dips identify the subwavelength resonant frequency of order $\sqrt{\delta}$, determined by the single eigenvalue $\lambda$ of the subwavelength capacitance matrix, as well as the resonance clusters near the two lowest positive Neumann eigenfrequencies.}
    \label{fig:det_scan}
\end{figure}

We next numerically validate \eqref{eq:approximation_omega}. We expand the layer potentials in spherical harmonics and truncate at degrees $\ell<L$. Discretizing the operator $\mathcal{A}(\omega,\delta)$ in \eqref{eq:multi-resonator-operator-equation} then yields a matrix $A_L(\omega,\delta)$ of size $2NL^2\times 2NL^2$. Using M\"uller's method to locate the zeros of $\omega\mapsto\det A_L(\omega,\delta)$, we approximate the resonant frequencies of \eqref{eq:multi-resonator-scattering-problem}.

\Cref{fig:det_scan} illustrates this determinant criterion. The subwavelength resonance and the two lowest non-subwavelength resonance clusters near the Neumann eigenfrequencies $\omega_{1,0}$ and $\omega_{2,0}$ are clearly resolved.

Near any fixed Neumann eigenfrequency $\omega_0$, we compare the numerically computed resonant frequencies $\{\omega_n^{\mathrm{exact}}\}$, obtained from the zeros of $\omega\mapsto\det A_L(\omega,\delta)$, with the leading-order approximations $\{\omega_n^{\mathrm{lead}}\}$ from \eqref{eq:approximation_omega}. Here and below, the superscript $\mathrm{exact}$ distinguishes the roots of the discretized characteristic equation from their leading-order approximations. As shown in \Cref{fig:thm37_convergence}, the observed Hausdorff set distance\footnote{Defined as $d(A,B) \coloneqq \max \{\sup_{a\in A}\operatorname{dist}(a,B),\sup_{b\in B}\operatorname{dist}(b,A)\}$, where $\operatorname{dist}(a,B)=\inf_{b\in B}|a-b|$.} scales as $\O(\delta^2)$ as $\delta\to 0$. This is consistent with the semisimple case of \eqref{eq:approximation_omega}.

\begin{figure}[h]
    \centering
    \includegraphics[width=0.9\linewidth]{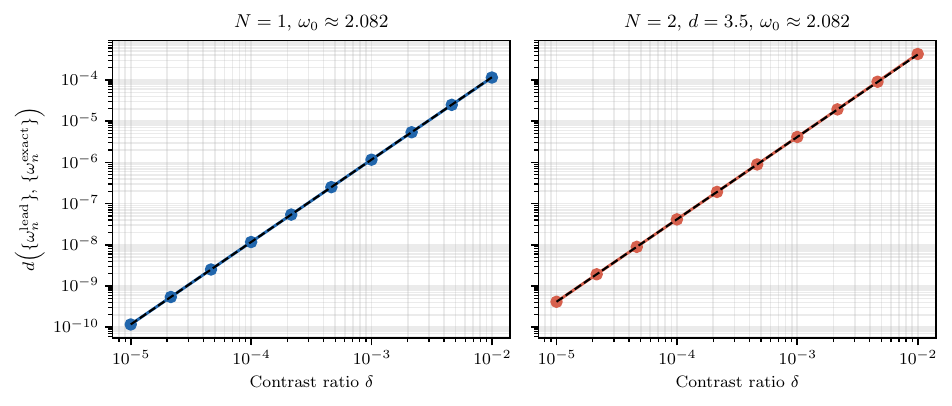}
    \caption{Hausdorff set distance between the leading-order approximate frequencies $\{\omega_n^{\mathrm{lead}}\}$ and the numerically computed resonant frequencies $\{\omega_n^{\mathrm{exact}}\}$ near the Neumann eigenfrequency $\omega_0\approx2.082$ of multiplicity $3$. For both a single spherical resonator ($N=1$ and $R=1$) and a dimer of identical spherical resonators ($N=2$, $R=1$, and separation distance $d=3.5$), the observed set distance scales as $\O(\delta^2)$.}
    \label{fig:thm37_convergence}
\end{figure}

\section{Periodic case} \label{sec:4}

As in the previous sections, let $D_1,\dots,D_N\subset\R^3$ be pairwise disjoint bounded domains with $C^\infty$ boundaries, and set $D:=\bigcup_{i=1}^N D_i$. We periodically repeat this configuration along a lattice $\Lambda$. For simplicity, we assume throughout this section that all interior wave speeds are equal, $v_i=v_b$ for $i=1,\dots,N$, and that all contrast parameters are equal, $\delta_i=\delta$.

We let $l_1,l_2, l_3\in \R^3$ denote the lattice vectors generating the lattice $\Lambda$:
$$\Lambda := \bigr\{ n_1 l_1+n_2 l_2 +n_{3} l_{3} ~|~ n_i \in \mathbb{Z} \bigr\}, $$
and denote by $Y$ a fundamental domain of $\Lambda$:
$$ Y := \bigr\{ c_1 l_1+c_2 l_2 +c_{3} l_{3} ~|~ 0 \le c_1,c_2,c_{3} \le 1 \bigr\}.$$
We recall that the dual lattice of $\Lambda$, denoted by $\Lambda^*$, is generated by $\alpha_1,\alpha_2,\alpha_{3}$ satisfying $ \alpha_i\cdot l_j = 2\pi \delta_{ij}$ (with $\delta_{ij}$ being the Kronecker symbol),  for $i,j = 1,2,3,$ and the Brillouin zone $Y^*$ is defined as $Y^*:= \R^3/\Lambda^*$. Moreover, for $\alpha \in Y^*$, we say that a smooth function $u$ is $\alpha$-quasiperiodic if 
$u(x+ m)= e^{\i \alpha \cdot m} u(x)$ for all $x\in \R^3$ and $m \in \Lambda$. 

We introduce the quasiperiodic Green function (or fundamental
solution) $G^{\alpha,\omega}$, which satisfies
 \begin{equation} \label{fGpoisson}
 (\Delta + \omega^2) G^{\alpha,\omega} (x,y) =
 \sum_{m \in \Lambda} \delta_0(x-y - m) e^{\i m \cdot \alpha} \quad \text{in } \R^3,
 \end{equation}
 where $\delta_0$ denotes the Dirac mass at $0$. 
 
 If $\omega \neq |q + \alpha|, \forall\; q \in
\Lambda^*$, then, using Poisson's summation formula
 \begin{equation} \label{poisson}
 \frac{1}{|Y|} \sum_{q \in \Lambda^*}
 e^{\i (q +\alpha) \cdot x} = \sum_{m \in \Lambda} \delta_0(x-m) e^{\i m \cdot\alpha},
 \end{equation}
where $|Y|$ denotes the volume of $Y$, the quasiperiodic
Green function $G^{\alpha,\omega}$ can be represented as the sum of augmented plane waves over the 
reciprocal lattice:  \begin{equation} \label{rephelm1}
G^{\alpha,\omega} (x,y) = \frac{1}{|Y|} \sum_{q\in \Lambda^*}
\frac{e^{\i (q+ \alpha) \cdot (x-y)}}{\omega^2 - |q + \alpha|^2};\end{equation}
see, for instance, \cite{photonic2018}.  

Whenever the quasiperiodic layer-potential representation below is used, we assume that $\omega \neq |q + \alpha|$
for all $q \in \Lambda^*$. For $\omega
> 0$, let $\mc S_D^{\alpha,\omega}$ and $\mc D_D^{\alpha,\omega}$ be the
quasiperiodic single- and double-layer potentials
associated with $G^{\alpha,\omega}$ on $D$; that is, for a given
density $\vp \in L^2(\p D)$,
\begin{align*}
\mc S_D^{\alpha, \omega}[\varphi] (x) & = \int_{\p D} G^{\alpha,\omega}(x,y)
\varphi (y) \, \mathrm{d} \sigma (y), \quad x \in \R^3,
\\ \mc D_D^{\alpha,\omega} [\varphi] (x) & = \int_{\p D}
\frac{\partial G^{\alpha,\omega} (x,y)}{\partial \nu(y)} \varphi (y) \, \mathrm{d} \sigma (y), \quad x
\in \R^3 \setminus \partial D.
\end{align*}
We also recall  the jump relations 
obeyed by the double-layer potential  and
by the normal derivative of the  single-layer potential  on $\partial D$:
\begin{align*}
\frac{\partial (\mc S_D^{\alpha,\omega} [\varphi])}{\partial \nu} \bigg |_{\pm}(x) & = \bigg
(\pm \frac{1}{2} I + (\mc K_D^{-\alpha,\omega})^*\bigg )[\varphi] (x),
\quad  x \in \partial D, \\
 (\mc D_D^{\alpha,\omega}[\varphi]) \bigg|_{\pm} (x) & = \bigg(\mp \frac{1}{2}
I + \mc K_D^{\alpha,\omega} \bigg)[\varphi] (x), \quad x \in
\partial D, 
\end{align*}
for $\varphi \in L^2(\partial D)$, where $\mc K_D^{\alpha,\omega}$ is the
operator on $L^2(\partial D)$ defined by
 $$
 \mc K_D^{\alpha,\omega}[\varphi] (x) = 
 \int_{\partial D} \frac{\partial G^{\alpha,\omega}(x,y)}{\partial \nu(y)} \varphi (y) \, \mathrm{d} \sigma (y)
 $$
and $(\mc K_D^{-\alpha,\omega})^*$ is the $L^2$-adjoint operator of
$\mc K_D^{-\alpha,\omega}$, which is given by
 $$
 (\mc K_D^{-\alpha,\omega})^*[\varphi] (x) = \int_{\partial D}
 \frac{\partial G^{\alpha,\omega}(x,y)}{\partial \nu(x)} \varphi (y) \, \mathrm{d} \sigma (y);
 $$
see again \cite{photonic2018}.

Finally, we emphasize that, in this section, boundary pairings $\langle \cdot, \cdot \rangle_{\partial D}$ are understood in the Hilbert-space sesquilinear sense. This convention differs from the bilinear pairing used in the free-space frequency-dependent capacitance matrix  \cref{def:freq-cap-matrix-general-updated} and its complex symmetric property in \cref{prop:complex}.

\subsection{Asymptotics of the Bloch band functions and associated Bloch eigenmodes} \label{sec:asymperiodic}

In this subsection, we derive the leading-order asymptotics of the Bloch eigenfrequencies for a high-contrast periodic scattering problem. To do so, we fix $\alpha\in Y^\ast$ and assume that the reference exterior wave
number
$\omega_0/v$ is not a diffraction threshold, that is, 
\begin{equation}\label{eq:three-regimes-no-threshold}
\frac{\omega_0}{v} \neq |q+\alpha| \qquad \text{for all } q\in \Lambda^\ast.
\end{equation}
Thus, the $\alpha$-quasiperiodic Green function is well defined for wave numbers near $\omega_0/v$. Then, we let
\begin{equation*}
    \Omega:=Y\setminus \overline D,
\end{equation*}
and consider, by the Floquet--Bloch theory, the $\alpha$-quasiperiodic
transmission problem
\begin{equation}\label{eq:periodic-bloch-eigenproblem}
\begin{dcases}
\Delta u + \dfrac{\omega^2}{v_b^2}u = 0 & \text{in } D,\\
\Delta u + \dfrac{\omega^2}{v^2}u = 0 & \text{in } \Omega,\\
u \big|_- = u \big|_+ & \text{on } \partial D,\\
\partial_\nu u \big|_- = \delta\,\partial_\nu u \big|_+ & \text{on } \partial D,\\
u \text{ is } \alpha\text{-quasiperiodic}.
\end{dcases}
\end{equation}

As in the derivation of \eqref{eq:multi-resonator-operator-equation}, whenever the relevant DtN maps are defined, the transmission problem \eqref{eq:periodic-bloch-eigenproblem} is equivalent to a boundary equation for the trace $g=u|_{\partial D}$. More precisely,  throughout this subsection, we assume that
\begin{equation*}
    \frac{\omega_0^2}{v_b^2}\notin \sigma_{\rm Dir}(-\Delta;D),
\end{equation*}
so that the interior DtN map is analytic near $\omega_0$. We also assume that $\omega^2/v^2$ is not an $\alpha$-quasiperiodic exterior Dirichlet eigenvalue in $\Omega$. Then \eqref{eq:periodic-bloch-eigenproblem} is equivalent to
\begin{equation}\label{eq:Tdelta-alpha}
\mathcal T_\delta^\alpha(\omega)g=0, \qquad \mathcal T_\delta^\alpha(\omega):= \Lambda_{\rm in}^{\omega}-\delta \Lambda_{\rm ext}^{\alpha,\omega}, 
\end{equation}
where 
\begin{equation*}
    \Lambda_{\rm in}^\omega
    :=
    \Bigl(-\frac12 I+(\mathcal K_D^{k_b})^*\Bigr)(\mathcal S_D^{k_b})^{-1}: H^{1/2}(\partial D)\to H^{-1/2}(\partial D),
\end{equation*}
and
\begin{equation*}
    \Lambda_{\rm ext}^{\alpha,\omega}
    :=
    \Bigl(\frac12 I+(\mathcal K_D^{-\alpha,k})^*\Bigr)
    (\mathcal S_D^{\alpha,k})^{-1}:  H^{1/2}(\partial D)\to H^{-1/2}(\partial D),
\end{equation*}
with $k_b:=\omega/v_b$ and $k:=\omega/v$. Here,
$\Lambda_{\rm in}^{\omega}$ is the interior DtN map for the
Helmholtz operator in $D$ at the wave number $k_b$, while
$\Lambda_{\rm ext}^{\alpha,\omega}$ is the meromorphic
$\alpha$-quasiperiodic exterior DtN map in $\Omega$ at
wave number $k$, with poles at the $\alpha$-quasiperiodic Dirichlet eigenvalues
of $\Omega$.

The possible limiting regimes of \eqref{eq:Tdelta-alpha} as $\d \to 0$ are determined by two mechanisms: the zeros of the interior DtN map $\Lambda_{\rm in}^\omega$ and the poles of the exterior quasiperiodic DtN map $\Lambda_{\rm ext}^{\alpha,\omega}$. The former correspond to interior Neumann eigenfrequencies, while the latter correspond to $\alpha$-quasiperiodic Dirichlet eigenfrequencies in the exterior cell $\Omega=Y\setminus\overline D$. Thus, unlike the finite-resonator problem in free space, the periodic setting has three nontrivial local regimes near a reference frequency $\omega_0$: an interior Neumann frequency with a regular exterior problem, an interior Neumann frequency that coincides with an exterior $\alpha$-quasiperiodic Dirichlet frequency, and an exterior $\alpha$-quasiperiodic Dirichlet frequency at which the interior problem is regular.

\begin{remark}[(Comparison with the free-space setting)]
In the two- and three-dimensional free-space finite-resonator problems, the exterior outgoing Dirichlet problem is well posed at positive real frequencies. Hence, near any real frequency $\omega_0>0$, the exterior DtN map is analytic and does not contribute a pole to the limiting regime. The one-dimensional setting is different; see \cite{fabry1} and Appendix \ref{sec:reduct}. In the periodic problem, the exterior domain is instead the bounded cell $\Omega=Y\setminus\overline D$ with $\alpha$-quasiperiodic boundary conditions. Its exterior DtN map is meromorphic, with real poles at the $\alpha$-quasiperiodic Dirichlet eigenfrequencies, and these poles are responsible for the additional regimes (Case 2 and Case 3 below).
\end{remark}

We discuss these cases in turn.  We denote by $\sigma_{\rm Neu}(-\Delta;D)$ the Neumann eigenvalues in $D$, and by $\sigma_{\alpha, \mathrm{Dir}}(-\Delta;\Omega)$ the $\alpha$-quasiperiodic Dirichlet eigenvalues in $\Omega$. 

\subsubsection*{Case 1: Interior Neumann eigenvalue, exterior regular}

Assume that
\begin{equation}\label{eq:case1-assumption}
\frac{\omega_0^2}{v_b^2}\in \sigma_{\rm Neu}(-\Delta;D), \qquad
\frac{\omega_0^2}{v^2}\notin \sigma_{\alpha,\mathrm{Dir}}(-\Delta;\Omega)
\quad \text{for \emph{a fixed} } \alpha\in Y^*.
\end{equation}
That is, the exterior $\alpha$-quasiperiodic Dirichlet problem is well posed at $\omega_0$.

Let
\begin{equation*}
    \mathscr G(\omega_0):=\ker \Lambda_{\rm in}^{\omega_0}, \qquad
    m:=\dim \mathscr G(\omega_0),
\end{equation*}
and choose an $L^2(D)$-orthonormal basis
$u_1,\dots,u_m$ of the interior Neumann eigenspace at $\omega_0$:
\begin{equation*}
    -\Delta u_p=\frac{\omega_0^2}{v_b^2}u_p \quad \text{in } D, \qquad
\partial_\nu u_p=0 \quad \text{on } \partial D, \qquad 
\int_D u_p \overline{u_q}\,dx=\delta_{pq}.
\end{equation*}
Again, similarly to \eqref{eq:tracegu}, we set
\begin{equation} \label{defgp}
    g_p:= u_p|_{\partial D}\in H^{1/2}(\partial D), \qquad p=1,\dots,m.
\end{equation}

Since the exterior problem is regular, $\Lambda_{\rm ext}^{\alpha,\omega}$ is analytic near $\omega_0$. On the other hand, differentiating the interior Helmholtz problem with respect to $\omega$ gives
\begin{equation}\label{eq:interior-derivative-normalization}
\left\langle \partial_\omega \Lambda_{\rm in}^{\omega_0}[g_q],\,g_p\right\rangle_{\partial D}
= -\frac{2\omega_0}{v_b^2}\,\delta_{pq}.
\end{equation}
Hence, on $\mathscr G(\omega_0)$, we have 
\begin{equation}\label{eq:Lambda-in-case1}
\Lambda_{\rm in}^{\omega}
= (\omega-\omega_0)\mathcal M(\omega_0)
+ \O((\omega-\omega_0)^2),
\text{ where } \mathcal M(\omega_0):=-\frac{2\omega_0}{v_b^2}I.
\end{equation}
Following \cref{def:freq-cap-matrix-general-updated}, we introduce the \emph{regular $\alpha$-quasiperiodic frequency-dependent capacitance matrix} as
\begin{equation}\label{eq:case1-cap-matrix}
\mc{C}^{\mathrm{reg}}_{\alpha}(\omega_0) =    \bigl((\mc{C}^{\mathrm{reg}}_{\alpha})_{pq}(\omega_0)\bigr)_{1\le p,q\le m},
\qquad
(\mc{C}^{\mathrm{reg}}_{\alpha})_{p,q}(\omega_0) :=
    -\frac{v_b^2}{2\omega_0}
    \left\langle
    \Lambda_{\rm ext}^{\alpha,\omega_0}[g_q],\,g_p
    \right\rangle_{\partial D}.
\end{equation}
Since the exterior $\alpha$-quasiperiodic Dirichlet problem is self-adjoint at real frequencies away from its spectrum, $\Lambda_{\rm ext}^{\alpha,\omega_0}$ is self-adjoint. Hence, we have the following result. 

\begin{lemma} \label{lem:hermitan}
Assuming \eqref{eq:case1-assumption}, $\mc C^{\mathrm{reg}}_{\alpha}(\omega_0)$ is Hermitian.     
\end{lemma}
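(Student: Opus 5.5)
The plan is to show that the sesquilinear form $(f,h)\mapsto\langle \Lambda_{\rm ext}^{\alpha,\omega_0} f, h\rangle_{\partial D}$ is Hermitian on $H^{1/2}(\partial D)$. Hermitian symmetry of $\mc C^{\mathrm{reg}}_\alpha(\omega_0)$ then follows immediately. Indeed, the prefactor $-v_b^2/(2\omega_0)$ is real because $\omega_0>0$ is real. Hence
\begin{equation*}
\overline{(\mc C^{\mathrm{reg}}_\alpha)_{q,p}(\omega_0)}
=-\frac{v_b^2}{2\omega_0}\,\overline{\langle \Lambda_{\rm ext}^{\alpha,\omega_0}[g_p],g_q\rangle_{\partial D}}
=-\frac{v_b^2}{2\omega_0}\,\langle g_q,\Lambda_{\rm ext}^{\alpha,\omega_0}[g_p]\rangle_{\partial D}^{*},
\end{equation*}
and this equals $(\mc C^{\mathrm{reg}}_\alpha)_{p,q}(\omega_0)$ once we know $\langle \Lambda f,h\rangle=\overline{\langle \Lambda h,f\rangle}$.

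To prove the Hermitian property of the DtN map, fix $f,h\in H^{1/2}(\partial D)$. By assumption \eqref{eq:case1-assumption}, $\omega_0^2/v^2\notin\sigma_{\alpha,\mathrm{Dir}}(-\Delta;\Omega)$, so there are unique $\alpha$-quasiperiodic solutions $U_f,U_h\in H^1_{\rm loc}$ of $(\Delta+k_0^2)U=0$ in the periodic exterior. These satisfy $U_f|_{\partial D}=f$ and $U_h|_{\partial D}=h$, and $\Lambda_{\rm ext}^{\alpha,\omega_0}f=\partial_\nu U_f|_+$, where $\nu$ is the normal pointing out of $D$, i.e.\ into $\Omega$. (For $\Lambda_{\rm ext}$ to coincide with this PDE DtN map, I also use the standing assumption that $\mc S_D^{\alpha,k}$ is invertible and that $k_0$ is not a diffraction threshold.)

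Next, apply Green's second identity to $U_f$ and $\overline{U_h}$ in the bounded cell $\Omega=Y\setminus\overline D$. Since $k_0$ is real, $\overline{U_h}$ solves the same Helmholtz equation, so the volume term $\int_\Omega (\overline{U_h}\Delta U_f-U_f\Delta\overline{U_h})\,\dd x$ vanishes.

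The key step is the cancellation on $\partial Y$. Because $U_f$ is $\alpha$-quasiperiodic and $\overline{U_h}$ is $(-\alpha)$-quasiperiodic, the product $\overline{U_h}\,\partial_n U_f$ is $\Lambda$-periodic. The outward normals on opposite faces of $Y$ are opposite, so the contributions of paired faces cancel. This leaves only the $\partial D$ terms, where the outward normal of $\Omega$ is $-\nu$:
\begin{equation*}
\int_{\partial D}\bigl(\overline{h}\,\Lambda_{\rm ext}^{\alpha,\omega_0}f - f\,\overline{\Lambda_{\rm ext}^{\alpha,\omega_0}h}\bigr)\,\dd\sigma=0 .
\end{equation*}
This is exactly $\langle \Lambda f,h\rangle=\overline{\langle \Lambda h,f\rangle}$ in the sesquilinear convention of this section.

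The main obstacle I expect is justifying the regularity needed for the Green identity and the face cancellation. The traces $g_p$ are smooth (they are traces of Neumann eigenfunctions on a $C^\infty$ domain), so $U_{g_p}$ is smooth up to $\partial D$ by elliptic regularity. The identity therefore holds classically for these data, which is all the lemma needs. The general $H^{1/2}$ case follows by density if desired.

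It is also worth remarking why the argument cannot be run in the free-space setting: there the outgoing radiation condition breaks the conjugation symmetry, which is why only complex symmetry (\cref{prop:complex}) holds. In the periodic cell no radiation term appears, and this is precisely why the matrix here is Hermitian.
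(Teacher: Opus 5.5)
Your proposal is correct and follows the paper's route. The paper justifies the lemma only by the one-line remark that the $\alpha$-quasiperiodic exterior Dirichlet problem is self-adjoint at real $\omega_0$ away from its spectrum, so that $\Lambda_{\rm ext}^{\alpha,\omega_0}$ is self-adjoint; your Green's-identity argument, using real $k_0$ and real $\alpha$ and the cancellation of the periodic integrand on paired faces of $\partial Y$, is exactly the verification of that claim. One cosmetic point: drop the superscript $^{*}$ in your first display, since in the sesquilinear convention $\overline{\langle \Lambda_{\rm ext}^{\alpha,\omega_0}[g_p],g_q\rangle_{\partial D}}=\langle g_q,\Lambda_{\rm ext}^{\alpha,\omega_0}[g_p]\rangle_{\partial D}$ already.
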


\begin{theorem}
\label{thm:case1-asymptotics}
Assume that \eqref{eq:case1-assumption} holds. Let
$\lambda_1^\alpha,\dots,\lambda_m^\alpha$ be the eigenvalues of
$\mc C^{\mathrm{reg}}_{\alpha}(\omega_0)$, counted with their multiplicities. Then, the
Bloch eigenfrequencies converging to $\omega_0$ can be labeled so that
\begin{equation} \label{approx:alphaj}
    \omega_j^\alpha(\delta)
    =
    \omega_0+\delta\lambda_j^\alpha+\O(\delta^2),
    \qquad j=1,\dots,m,
\end{equation}
as $\delta \to 0$, for the fixed $\alpha$ in \eqref{eq:case1-assumption}.
\end{theorem}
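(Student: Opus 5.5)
The plan is to repeat the Grushin/Lyapunov--Schmidt reduction from the proof of \cref{prop:finite-dimensional-pencil-reduction}, now applied directly to the scalar boundary family $\mathcal T_\delta^\alpha(\omega)=\Lambda_{\rm in}^\omega-\delta\Lambda_{\rm ext}^{\alpha,\omega}$ on $H^{1/2}(\partial D)\to H^{-1/2}(\partial D)$. Under \eqref{eq:case1-assumption}, both maps are analytic near $\omega_0$. The kernel of $\Lambda_{\rm in}^{\omega_0}$ is $\mathscr G(\omega_0)=\operatorname{span}\{g_p\}$. Since $\Lambda_{\rm in}^{\omega_0}$ is self-adjoint, Fredholm of index zero, and its pole-free inverse has a simple pole (by the argument of \cref{prop:simplepole}), its range is the annihilator of $\mathscr G(\omega_0)$ under the sesquilinear pairing.

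I will define $R_+:H^{1/2}\to\C^m$ by $R_+g=(\langle g,\tilde g_p\rangle)_p$, with $\tilde g_p$ dual to the $g_p$, and $R_-:\C^m\to H^{-1/2}$ by $R_-b=-\partial_\omega\Lambda_{\rm in}^{\omega_0}\sum_q b_q g_q$. For $R_-$ to complement the range, I need $\langle R_- b,g_p\rangle\neq0$, and this follows from the normalization \eqref{eq:interior-derivative-normalization}, since $\langle\partial_\omega\Lambda_{\rm in}^{\omega_0}g_q,g_p\rangle=-\tfrac{2\omega_0}{v_b^2}\delta_{pq}$ is a nonzero multiple of the identity.

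The key identity is \eqref{eq:interior-derivative-normalization} itself. I would obtain it by differentiating $(\Delta+\omega^2/v_b^2)u=0$ in $\omega$ and applying Green's formula against $u_p$, using $\partial_\nu u_p=0$ and $\|u_p\|_{L^2(D)}=1$. The Grushin operator is then invertible at $(\omega_0,0)$, and its Schur complement $\mathsf F(\omega,\delta)$ is an analytic $m\times m$ matrix function. Its zeros are the Bloch eigenfrequencies near $\omega_0$, with multiplicity, by the Gohberg--Sigal equivalence.

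Differentiating $\mathsf G^{-1}$ as in the earlier proof gives $\partial_\omega\mathsf F=c\,I_m$, for an explicit nonzero constant fixed by the normalization $-2\omega_0/v_b^2$. It also gives $\partial_\delta\mathsf F=$ the matrix $\bigl(\langle\Lambda_{\rm ext}^{\alpha,\omega_0}g_q,g_p\rangle\bigr)$ times a constant. After rescaling,
\begin{equation*}
\mathsf F(\omega,\delta)=(\omega-\omega_0)I_m-\delta\,\mc C^{\mathrm{reg}}_\alpha(\omega_0)+\O\bigl(|\omega-\omega_0|^2+\delta|\omega-\omega_0|+\delta^2\bigr).
\end{equation*}
This is the analogue of \cref{lem:PL-on-kernel-basis}. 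Note that here $\delta$ is factored out of the matrix in \eqref{eq:case1-cap-matrix}.

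Next, the matrix Rouch\'e argument from \cref{prop:finite-dimensional-pencil-reduction} shows that exactly $m$ zeros lie in $|\omega-\omega_0|<R\delta$, and that these exhaust the eigenfrequencies converging to $\omega_0$. The sharp $\O(\delta^2)$ error, rather than $\O(\delta^{1+1/q})$, comes from \cref{lem:hermitan}. Since $\mc C^{\mathrm{reg}}_\alpha(\omega_0)$ is Hermitian, it is diagonalizable with orthonormal eigenvectors, so every Jordan block has size $q=1$, and the resolvent bound is $\|(zI-\delta\mc C)^{-1}\|=1/\operatorname{dist}(z,\delta\sigma(\mc C))$.

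To conclude, I will use circles of radius $M\delta^2$ around each distinct eigenvalue $\delta\lambda$ with $M$ large. Rouch\'e then places exactly the right number of zeros within $\O(\delta^2)$ of $\omega_0+\delta\lambda_j^\alpha$, giving the labeling \eqref{approx:alphaj}. Coinciding eigenvalues are handled as a cluster, which is harmless because the matrix is Hermitian.

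The main obstacle is the functional setting. $\Lambda_{\rm in}^\omega$ is not invertible on the whole space, and it acts between $H^{1/2}$ and $H^{-1/2}$. I therefore have to verify carefully that $\operatorname{ran}\Lambda_{\rm in}^{\omega_0}$ is exactly the annihilator of $\mathscr G(\omega_0)$, and that the complement chosen through $\partial_\omega\Lambda_{\rm in}$ really makes the Grushin problem invertible. The Rouch\'e counting afterwards is routine.
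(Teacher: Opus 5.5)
Your proposal is correct and follows essentially the same route as the paper. Both arguments perform a Lyapunov--Schmidt reduction of $\mathcal T_\delta^\alpha(\omega)$ onto the Neumann trace space $\mathscr G(\omega_0)$, normalize with \eqref{eq:interior-derivative-normalization} to reach the pencil $(\omega-\omega_0)I-\delta\,\mc C^{\mathrm{reg}}_\alpha(\omega_0)+\O(|\omega-\omega_0|^2+\delta|\omega-\omega_0|+\delta^2)$, and use the Hermiticity of \cref{lem:hermitan} to exclude Jordan blocks and obtain the $\O(\delta^2)$ error. Your exact Grushin formulation and explicit Rouch\'e counting on circles of radius $M\delta^2$ simply make rigorous the steps the paper compresses into ``projecting onto $\mathscr G(\omega_0)$'' and ``applying the spectral theorem to the finite-dimensional pencil''.
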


\begin{proof}
Decompose
\begin{equation*}
    H^{1/2}(\partial D)=\mathscr G(\omega_0)\oplus \mathscr G(\omega_0)^\perp, 
\end{equation*}
and write
\begin{equation*}
    g=g_N+g_\perp.
\end{equation*}
Since $\Lambda_{\rm in}^{\omega_0}$ is invertible on $\mathscr G(\omega_0)^\perp$,
Lyapunov--Schmidt reduction applied to \eqref{eq:Tdelta-alpha} gives
\begin{equation*}
    \|g_\perp\|
    =
    \O\bigl((\delta+|\omega-\omega_0|)\|g_N\|\bigr).
\end{equation*}
Projecting \eqref{eq:Tdelta-alpha} onto $\mathscr G(\omega_0)$ and using \eqref{eq:Lambda-in-case1} yields
\begin{align*} 
    \Bigl((\omega-\omega_0)\mathcal M(\omega_0) - \delta\,P_{\mathscr G}\Lambda_{\rm ext}^{\alpha,\omega_0}P_{\mathscr G}
\Bigr)a = \O\bigl((\delta|\omega-\omega_0|+|\omega-\omega_0|^2+\delta^2) \|a\|_2\bigr), 
\end{align*}
where $g_N=\sum_{p=1}^m a_p g_p$ and $P_{\mathscr G}$ denotes the projection onto $\mathscr G(\omega_0)$. Characteristic values in this regime satisfy $|\omega-\omega_0|=\O(\delta)$. Multiplying by $\mathcal M(\omega_0)^{-1}$ and using \eqref{eq:case1-cap-matrix}, we obtain
\begin{equation} \label{approx:bloch}
    \Bigl((\omega-\omega_0)I-\delta \mc{C}^{\mathrm{reg}}_{\alpha}(\omega_0)
\Bigr)a = \O(\delta^2 \|a\|_2).
\end{equation}
Since $\mc C^{\mathrm{reg}}_{\alpha}(\omega_0)$ is Hermitian, it is
diagonalizable with real eigenvalues. Applying the spectral theorem to the
finite-dimensional pencil above gives the stated $\O(\delta^2)$ expansion. 
\end{proof}

\subsubsection*{Case 2: Interior Neumann eigenvalue and exterior Dirichlet eigenvalue}

Assume that
\begin{equation}\label{eq:case2-assumption}
\frac{\omega_0^2}{v_b^2}\in \sigma_{\rm Neu}(-\Delta;D),
\qquad
\frac{\omega_0^2}{v^2}\in \sigma_{\alpha,\mathrm{Dir}}(-\Delta;\Omega),
\end{equation}
for \emph{some} fixed $\alpha\in Y^*$. Let
\begin{equation*}
    u_1,\dots,u_m,\qquad g_p=u_p|_{\partial D},
\end{equation*}
be the interior Neumann eigenfunction basis and boundary traces introduced in
Case~1. Let $v_1^\alpha,\dots,v_r^\alpha$ be an $L^2(\Omega)$-orthonormal
basis of the exterior $\alpha$-quasiperiodic Dirichlet eigenspace at
$\omega_0$, that is,
\begin{equation*}
    -\Delta v_s^\alpha=\frac{\omega_0^2}{v^2}v_s^\alpha
    \quad \text{in } \Omega, \qquad
    v_s^\alpha=0 \quad \text{on } \partial D,
    \qquad
    \int_\Omega v_s^\alpha \overline{v_t^\alpha}\,dx=\delta_{st}.
\end{equation*}
We also introduce 
\begin{equation*}
    \psi_s^\alpha:=\partial_\nu v_s^\alpha|_{\partial D}\in H^{-1/2}(\partial D).
\end{equation*}
Then the exterior DtN map has a simple pole at $\omega_0$ with the Laurent expansion
\begin{equation}\label{eq:Lambda-ext-pole}
\Lambda_{\rm ext}^{\alpha,\omega}
= \frac{1}{\omega-\omega_0}\,\mathcal R_{-1}^\alpha + \mathcal R_0^\alpha
+ \O(\omega-\omega_0),
\end{equation}
where
\begin{equation}\label{eq:residue-explicit}
\mathcal R_{-1}^\alpha = - \frac{v^2}{2\omega_0}
\sum_{s=1}^r \psi_s^\alpha\otimes \psi_s^\alpha.
\end{equation}
Here, $\psi_s^\alpha\otimes \psi_s^\alpha$ is the mapping defined by  $(\psi_s^\alpha\otimes\psi_s^\alpha)[g] := \langle g,\psi_s^\alpha\rangle_{\partial D} \psi_s^\alpha$. The minus sign
comes from the convention that the normal on $\partial D$ points out of the resonators. 

Define the coupling matrix between the interior and the exterior modes, $g_p$ and $v^\alpha_s$, by
\begin{equation}\label{eq:Gamma-alpha}
\Gamma_\alpha=
\bigl(\Gamma_\alpha^{sp}\bigr)_{1\le s\le r}^{1\le p\le m},
\qquad
\Gamma_\alpha^{sp}
:=
\langle g_p,\psi_s^\alpha\rangle_{\partial D},
\end{equation}
and set
\begin{equation}\label{eq:scaled-Gamma-alpha}
    \widetilde\Gamma_\alpha
    :=
    \frac{v_bv}{2\omega_0}\Gamma_\alpha.
\end{equation}
We introduce the two \emph{singular $\alpha$-quasiperiodic frequency-dependent capacitance matrices}
\begin{align}
    \mc C^{\mathrm{sing}}_\alpha(\omega_0) &:=
    \widetilde\Gamma_\alpha^\ast\widetilde\Gamma_\alpha =
    \frac{v_b^2v^2}{4\omega_0^2}\, \Gamma_\alpha^\ast \Gamma_\alpha \in \mathbb{C}^{m \times m},
    \label{eq:case2-singular-matrix}\\ 
    \mc C^{\mathrm{sing,ext}}_\alpha(\omega_0) &:=
    \widetilde\Gamma_\alpha\widetilde\Gamma_\alpha^\ast =
    \frac{v_b^2v^2}{4\omega_0^2}\,   \Gamma_\alpha\Gamma_\alpha^\ast  \in \mathbb{C}^{r \times r}.
    \label{eq:case2-exterior-singular-matrix}
\end{align}
where the superscript $\ast$ denotes the Hermitian adjoint.
The first acts on the interior Neumann trace coefficients, while the second acts on the exterior Dirichlet eigenspace coefficients.

The following results hold.

\begin{lemma}
Assuming \eqref{eq:case2-assumption}, both $\mc C^{\mathrm{sing}}_\alpha(\omega_0)$ and $\mc C^{\mathrm{sing,ext}}_\alpha(\omega_0)$ are Hermitian positive semi-definite. Moreover, they have the same nonzero spectra. 
\end{lemma}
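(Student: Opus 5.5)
The statement is pure finite-dimensional linear algebra applied to the matrix $\widetilde\Gamma_\alpha\in\C^{r\times m}$ defined in \eqref{eq:scaled-Gamma-alpha}. The prefactor $\frac{v_bv}{2\omega_0}$ is real and positive, since $\omega_0>0$ and $v,v_b>0$. Hence $\mc C^{\mathrm{sing}}_\alpha(\omega_0)=\widetilde\Gamma_\alpha^\ast\widetilde\Gamma_\alpha$ and $\mc C^{\mathrm{sing,ext}}_\alpha(\omega_0)=\widetilde\Gamma_\alpha\widetilde\Gamma_\alpha^\ast$ are Gram-type products of a matrix with its Hermitian adjoint. The only analytic input is that the entries $\Gamma_\alpha^{sp}=\langle g_p,\psi_s^\alpha\rangle_{\partial D}$ are finite complex numbers. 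This holds because $g_p\in H^{1/2}(\partial D)$ and $\psi_s^\alpha=\partial_\nu v_s^\alpha|_{\partial D}\in H^{-1/2}(\partial D)$, so the sesquilinear duality pairing is well defined. In fact, by elliptic regularity on the smooth boundary both functions are smooth, so the pairing is an honest $L^2(\partial D)$ integral.

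\emph{Hermitian.} We check directly that $(\widetilde\Gamma_\alpha^\ast\widetilde\Gamma_\alpha)^\ast=\widetilde\Gamma_\alpha^\ast(\widetilde\Gamma_\alpha^\ast)^\ast=\widetilde\Gamma_\alpha^\ast\widetilde\Gamma_\alpha$. The same computation applies to $\widetilde\Gamma_\alpha\widetilde\Gamma_\alpha^\ast$.

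\emph{Positive semi-definite.} For $a\in\C^m$,
\begin{equation*}
a^\ast\mc C^{\mathrm{sing}}_\alpha(\omega_0)a=\|\widetilde\Gamma_\alpha a\|_2^2\ge 0,
\end{equation*}
and for $b\in\C^r$, $b^\ast\mc C^{\mathrm{sing,ext}}_\alpha(\omega_0)b=\|\widetilde\Gamma_\alpha^\ast b\|_2^2\ge0$.

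\emph{Equal nonzero spectra.} This is the standard fact that $AB$ and $BA$ share their nonzero eigenvalues with the same multiplicities, applied with $A=\widetilde\Gamma_\alpha^\ast$ and $B=\widetilde\Gamma_\alpha$. One route is the determinant identity $\lambda^{r}\det(\lambda I_m-A B)=\lambda^{m}\det(\lambda I_r-BA)$, obtained from the Schur complement factorizations of the block matrix $\begin{pmatrix}\lambda I_m & A\\ B & I_r\end{pmatrix}$.

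A more transparent route uses the singular value decomposition $\widetilde\Gamma_\alpha=U\Sigma W^\ast$. It gives $\widetilde\Gamma_\alpha^\ast\widetilde\Gamma_\alpha=W\Sigma^\ast\Sigma W^\ast$ and $\widetilde\Gamma_\alpha\widetilde\Gamma_\alpha^\ast=U\Sigma\Sigma^\ast U^\ast$. Both matrices have nonzero eigenvalues equal to the squared nonzero singular values $\sigma_1^2,\dots,\sigma_\rho^2$, where $\rho=\operatorname{rank}\Gamma_\alpha$, with identical multiplicities. This route also identifies the eigenvectors: if $\widetilde\Gamma_\alpha^\ast\widetilde\Gamma_\alpha a=\mu a$ with $\mu\ne0$, then $b=\widetilde\Gamma_\alpha a\neq0$ is an eigenvector of the exterior matrix for $\mu$. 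This correspondence is the form likely used later to interpret the coupled interior–exterior modes.

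There is no real obstacle. The only point needing a remark is the well-definedness of $\Gamma_\alpha$ via the $H^{1/2}$–$H^{-1/2}$ pairing. The multiplicity statement is settled by the SVD, and the two matrices differ only by the zero eigenvalues, with dimensions $m-\rho$ and $r-\rho$ respectively.
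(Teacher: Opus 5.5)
Your proposal is correct and is the standard argument the paper relies on. The paper states this lemma without a written proof, since it follows at once from the Gram structure $\mc C^{\mathrm{sing}}_\alpha(\omega_0)=\widetilde\Gamma_\alpha^\ast\widetilde\Gamma_\alpha$ and $\mc C^{\mathrm{sing,ext}}_\alpha(\omega_0)=\widetilde\Gamma_\alpha\widetilde\Gamma_\alpha^\ast$ built into the definitions, and your SVD argument is that reasoning written out.
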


\begin{theorem}
\label{thm:case2-asymptotics}
Assume that \eqref{eq:case2-assumption} holds, and set
$p_\alpha:=\operatorname{rank}\Gamma_\alpha$. Let
$\lambda_1^\alpha,\dots,\lambda_{p_\alpha}^\alpha$ be the common strictly positive
eigenvalues of $\mc C^{\mathrm{sing}}_{\alpha}(\omega_0)$ and
$\mc C^{\mathrm{sing,ext}}_\alpha(\omega_0)$, counted with their
multiplicities. Then the Bloch eigenfrequency branches associated with the
nonzero singular directions satisfy
\begin{equation*}
    \omega_{j,\pm}^\alpha(\delta) =  \omega_0\pm\delta^{1/2}\sqrt{\lambda_j^\alpha}+\O(\delta),
    \qquad j=1,\dots,p_\alpha\,, \quad \text{as $\delta\to 0$.}
\end{equation*}
The remaining $m+r-2p_\alpha$ Bloch eigenfrequency branches, corresponding at leading order to the space
$\ker\widetilde\Gamma_\alpha \oplus \ker\widetilde\Gamma_\alpha^\ast$, satisfy
\begin{equation*}
    \omega^\alpha(\delta) = \omega_0 + \O(\delta). 
\end{equation*}
All branches are counted with algebraic multiplicity.
\end{theorem}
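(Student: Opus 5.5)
The plan is a Lyapunov--Schmidt (Grushin) reduction of $\mathcal T_\delta^\alpha(\omega)g=0$ onto the enlarged space $\mathscr G(\omega_0)\oplus\operatorname{span}\{\psi_s^\alpha\}$. Because $\Lambda_{\rm ext}^{\alpha,\omega}$ has a pole at $\omega_0$, I first multiply \eqref{eq:Tdelta-alpha} by $(\omega-\omega_0)$, or equivalently introduce the auxiliary unknown
\begin{equation*}
b_s:=\frac{\delta^{1/2}}{\omega-\omega_0}\,\frac{v}{\sqrt{2\omega_0}}\,\langle g,\psi_s^\alpha\rangle_{\partial D},
\end{equation*}
which represents the amplitude of the exterior Dirichlet mode $v_s^\alpha$. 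This turns the problem into an analytic block system in the unknowns $(g,b)$. Using \eqref{eq:Lambda-in-case1} and \eqref{eq:Lambda-ext-pole}, I write $g=\sum_p a_p g_p+g_\perp$. Since $\Lambda_{\rm in}^{\omega_0}$ is invertible on $\mathscr G(\omega_0)^\perp$, I eliminate $g_\perp$ and obtain $g_\perp=\O\bigl((|\omega-\omega_0|+\delta)\|a\|+\delta^{1/2}\|b\|\bigr)$. The term $\delta\mathcal R_{-1}^\alpha g/(\omega-\omega_0)$ becomes $-\delta^{1/2}\frac{v}{\sqrt{2\omega_0}}\sum_s b_s\psi_s^\alpha$, which is analytic in the new variables.

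Next I project onto $g_p$, using \eqref{eq:interior-derivative-normalization} and $\mathcal M(\omega_0)^{-1}=-\frac{v_b^2}{2\omega_0}I$. Together with the definition of $b$, this gives a reduced $(m+r)$-dimensional pencil
\begin{equation*}
\Bigl((\omega-\omega_0)I_{m+r}-\delta^{1/2}\begin{pmatrix}0&\widetilde\Gamma_\alpha^\ast\\ \widetilde\Gamma_\alpha&0\end{pmatrix}\Bigr)\binom{a}{b}=\O\bigl((\delta+|\omega-\omega_0|^2)\|(a,b)\|\bigr),
\end{equation*}
after the symmetric rescaling built into \eqref{eq:scaled-Gamma-alpha}. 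I will check the constants so that the off-diagonal blocks are exactly $\widetilde\Gamma_\alpha^{\ast}$ and $\widetilde\Gamma_\alpha$, where $\langle g_p,\psi_s^\alpha\rangle$ enters one block and its conjugate the other. The resulting matrix is Hermitian. Its eigenvalues are $\pm\sqrt{\lambda_j^\alpha}$, the singular values of $\widetilde\Gamma_\alpha$, because the square of the block matrix is $\operatorname{diag}(\mc C^{\rm sing}_\alpha,\mc C^{\rm sing,ext}_\alpha)$; this also gives the preceding lemma. The remaining eigenvalue is $0$, with eigenspace $\ker\widetilde\Gamma_\alpha\oplus\ker\widetilde\Gamma_\alpha^\ast$ of dimension $m+r-2p_\alpha$.

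To make the argument rigorous, I will set up a Grushin problem as in \cref{prop:finite-dimensional-pencil-reduction}, with $\mathsf F(\omega,\delta^{1/2})$ analytic in $(\omega,\delta^{1/2})$. I then rescale $\omega-\omega_0=\delta^{1/2}z$, so that $\delta^{-1/2}\mathsf F=zI-\mathsf H+\O(\delta^{1/2})$, where $\mathsf H$ is the block matrix above. Because $\mathsf H$ is Hermitian and hence semisimple, a matrix Rouch\'e argument on circles of radius $M\delta^{1/2}$ (in the $z$ variable) around each eigenvalue gives the correct counts. The nonzero eigenvalues therefore yield $\omega=\omega_0\pm\delta^{1/2}\sqrt{\lambda_j^\alpha}+\O(\delta)$. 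The zero eigenvalue yields $m+r-2p_\alpha$ roots with $|\omega-\omega_0|=\O(\delta)$, again counted with algebraic multiplicity via the equivalence of $\det\mathsf F$ with the characteristic values.

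I expect the main obstacle to be justifying that the reduction is analytically equivalent to the original problem. The exterior DtN map is only meromorphic, so one must show that no spurious roots are introduced or lost when multiplying by $(\omega-\omega_0)$. I would handle this by working at the level of the PDE transmission problem, taking the unknown to be the pair (trace, exterior Dirichlet-mode amplitude) and using the Laurent structure \eqref{eq:Lambda-ext-pole} with its finite-rank residue. A secondary difficulty is verifying the residue formula \eqref{eq:residue-explicit} and the sign and normalization constants, so that the block matrix is precisely built from $\widetilde\Gamma_\alpha$.
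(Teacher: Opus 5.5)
Your proposal is correct and follows essentially the same route as the paper. Your auxiliary amplitude $b_s$ is exactly the paper's $\beta$, which desingularizes the simple pole of $\Lambda_{\rm ext}^{\alpha,\omega}$; the paper then eliminates $g_\perp$ by Lyapunov--Schmidt, rescales $\beta$ by $v_b/\sqrt{2\omega_0}$ to get the Hermitian block pencil $(\omega-\omega_0)I_{m+r}-\delta^{1/2}H_\alpha$, and concludes with a Rouch\'e/perturbation argument after setting $\omega-\omega_0=\delta^{1/2}\mu$. Your remainder $\O(\delta+|\omega-\omega_0|^2)$ drops the paper's cross term $\delta^{1/2}|\omega-\omega_0|$, but this is harmless since $\delta^{1/2}|\omega-\omega_0|\le\frac12(\delta+|\omega-\omega_0|^2)$.
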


\begin{remark}
\label{rem:case2-degenerate}
Before proving \Cref{thm:case2-asymptotics}, we note that the extreme case $\mc C^{\mathrm{sing}}_\alpha(\omega_0)=0$ is possible. In fact,  $\Gamma_\alpha^{sp}=\langle g_p,\psi_s^\alpha\rangle_{\partial D}$ may vanish identically, e.g., when a symmetry of $D$ forces a parity mismatch between the interior Neumann boundary traces $g_p$ and the exterior $\alpha$-quasiperiodic Dirichlet normal traces $\psi_s^\alpha$. In this degenerate situation, the $\delta^{1/2}$ splitting is absent. 
\end{remark}

\begin{proof}
Set $z:=\omega-\omega_0$ and $\varepsilon:=\delta^{1/2}$. As in Case~1, we decompose
\begin{equation*}
    H^{1/2}(\partial D)=\mathscr G(\omega_0)\oplus \mathscr G(\omega_0)^\perp,
    \qquad
    g=g_N+g_\perp,
    \qquad
    g_N=\sum_{p=1}^m a_p g_p.
\end{equation*}
By \eqref{eq:Lambda-in-case1}, the interior DtN map satisfies
\begin{equation*}
    \Lambda_{\rm in}^{\omega_0+z}
    =
    z\,\mathcal M(\omega_0)+\O(z^2)
    \quad \text{on } \mathscr G(\omega_0),
    \qquad
    \mathcal M(\omega_0)=-\frac{2\omega_0}{v_b^2}I_m.
\end{equation*}
Moreover, by \eqref{eq:Lambda-ext-pole}--\eqref{eq:residue-explicit}, we have 
\begin{equation*}
    \Lambda_{\rm ext}^{\alpha,\omega_0+z}
    =
    \frac{1}{z}\,\mathcal R_{-1}^\alpha+\mathcal R_{\rm reg}^\alpha(z),
    \qquad
    \mathcal R_{-1}^\alpha
    =
    -\frac{v^2}{2\omega_0}\sum_{s=1}^r \psi_s^\alpha\otimes \psi_s^\alpha,
\end{equation*}
where $\mathcal R_{\rm reg}^\alpha$ is analytic near $z=0$.

Introducing the finite-rank map:
\begin{equation*}
    \mathcal B_\alpha:H^{1/2}(\partial D)\to \mathbb C^r,
    \qquad
    \mathcal B_\alpha f := \bigl(\langle f,\psi_s^\alpha\rangle_{\partial D}\bigr)_{s=1}^r,
\end{equation*}
then we can write 
\begin{equation*}
    \mathcal R_{-1}^\alpha = -\frac{v^2}{2\omega_0} \mathcal B_\alpha^\ast\mathcal B_\alpha.
\end{equation*}
Hence, for $z\neq 0$, the boundary equation \eqref{eq:Tdelta-alpha} is
equivalent to the augmented system
\begin{equation*}
    \begin{cases}
    \Lambda_{\rm in}^{\omega_0+z}g
    +
    \varepsilon \frac{v}{\sqrt{2\omega_0}}\,\mathcal B_\alpha^\ast\beta
    -
    \varepsilon^2 \mathcal R_{\rm reg}^\alpha(z)g
    =0,\\[2mm]
    z\beta-\varepsilon \frac{v}{\sqrt{2\omega_0}}\,\mathcal B_\alpha g=0,
    \end{cases}
\end{equation*}
with unknowns $g\in H^{1/2}(\partial D)$ and $\beta\in\mathbb C^r$. Indeed,
eliminating $\beta$ from the second equation gives
\begin{equation*}
    \beta=\frac{\varepsilon v }{z \sqrt{2\omega_0}}\,\mathcal B_\alpha g,
\end{equation*}
and substituting this into the first equation recovers
\eqref{eq:Tdelta-alpha}. Thus, the coupled system is an analytic desingularization of the local characteristic equation at $z=0$.

Since $\Lambda_{\rm in}^{\omega_0}$ is invertible on
$\mathscr G(\omega_0)^\perp$, Lyapunov--Schmidt reduction applied to the first
equation gives
\begin{equation*}
    g_\perp
    =
    \O\bigl((|z|+\varepsilon)(|a|+|\beta|)\bigr).
\end{equation*}
Projecting the first equation onto $\mathscr G(\omega_0)$ and using
$\mathcal B_\alpha g_N=\Gamma_\alpha a$, we obtain
\begin{equation*}
    z\,\mathcal M(\omega_0)a
    +
    \varepsilon \frac{v}{\sqrt{2\omega_0}}\,\Gamma_\alpha^\ast\beta
    =
    \O\bigl((z^2+\varepsilon|z|+\varepsilon^2)(|a|+|\beta|)\bigr).
\end{equation*}
The second equation gives
\begin{equation*}
    z\beta-\varepsilon \frac{v}{\sqrt{2\omega_0}}\,\Gamma_\alpha a
    =
    \O\bigl((\varepsilon|z|+\varepsilon^2)(|a|+|\beta|)\bigr).
\end{equation*}

Now, setting
\begin{equation*}
    b:= \frac{v_b}{\sqrt{2\omega_0}} \beta,
\end{equation*}
the reduced system in terms of $(a,b)$ becomes
\begin{equation} \label{eq:leading-coupling}
    \Bigl(z I_{m+r} - \varepsilon H_\alpha + \mathcal E_\alpha(z,\varepsilon) \Bigr) \binom{a}{b} = 0, \qquad   H_\alpha := \begin{pmatrix}
    0 & \widetilde\Gamma_\alpha^\ast\\
    \widetilde\Gamma_\alpha & 0
    \end{pmatrix},
\end{equation}
and $\mathcal E_\alpha$ is analytic near $(0,0)$ with
\begin{equation*}
    \|\mathcal E_\alpha(z,\varepsilon)\|
    \lesssim
    z^2+\varepsilon|z|+\varepsilon^2.
\end{equation*}
We can write $z=\varepsilon\mu$ and find 
\begin{equation*}
    \Bigl( \mu I_{m+r} - H_\alpha
    + \varepsilon \widetilde{\mathcal E}_\alpha(\mu,\varepsilon)
    \Bigr)
    \binom{a}{b} = 0,
\end{equation*}
where $\widetilde{\mathcal E}_\alpha$ is analytic and uniformly bounded for $\mu$ in bounded sets. Letting $\varepsilon$ 
be sufficiently small and applying the generalized Rouch\'e theorem for analytic matrix-valued functions, we conclude that all characteristic values near $\omega_0$ satisfy $|\omega - \omega_0| = \O(\varepsilon)$. Moreover, standard finite-dimensional perturbation theory for
analytic matrix pencils now shows that the characteristic values $\mu$ are $\O(\varepsilon)$-perturbations of the eigenvalues of $H_\alpha$. Note that $H_\alpha$ is Hermitian, its nonzero eigenvalues are
\begin{equation*}
\pm\sqrt{\lambda_1^\alpha},\dots,\pm\sqrt{\lambda_{p_\alpha}^\alpha},
\end{equation*}
and
\begin{equation*}
    \ker H_\alpha
    =
    \ker\widetilde\Gamma_\alpha\oplus \ker\widetilde\Gamma_\alpha^\ast,
    \qquad
    \dim\ker H_\alpha=m+r-2p_\alpha.
\end{equation*}
Therefore, the $2p_\alpha$ nonzero branches satisfy
\begin{equation*}
    \mu_{j,\pm}^\alpha(\varepsilon)
    =
    \pm\sqrt{\lambda_j^\alpha}+\O(\varepsilon),
    \qquad
    j=1,\dots,p_\alpha,
\end{equation*}
whereas the remaining $m+r-2p_\alpha$ branches satisfy
\begin{equation*}
    \mu^\alpha(\varepsilon)=\O(\varepsilon).
\end{equation*}
Since $z=\varepsilon\mu$, $\varepsilon^2=\delta$, and $\omega=\omega_0+z$, we complete the proof. 
\end{proof}

We now show that the singular quasiperiodic frequency-dependent capacitance matrix $\mc{C}^{\mathrm{sing}}_{\alpha}(\omega_0)$ arises as the residue of the natural meromorphic extension of the regular quasiperiodic frequency-dependent capacitance matrix $\mc{C}^{\mathrm{reg}}_{\alpha}(\omega)$ at $\omega_0$ satisfying \eqref{eq:case2-assumption}. This is the periodic analogue of the singular reduction in the one-dimensional setting; see Appendix \ref{sec:reduct}. 

\begin{lemma}[(Connection between the regular and singular quasiperiodic capacitance matrices)]
\label{lem:reg-sing-connection}
Assume \eqref{eq:case2-assumption}, and let $g_1,\dots,g_m$ and
$\psi_1^\alpha,\dots,\psi_r^\alpha$ be as above. Consider the quasiperiodic frequency-dependent capacitance matrix as in Case 1:
\begin{equation}\label{eq:Creg-meromorphic}
\bigl(\mc{C}^{\mathrm{reg}}_{\alpha}(\omega)\bigr)_{p,q} =
-\frac{v_b^2}{2\omega}\,
\bigl\langle \Lambda_{\rm ext}^{\alpha,\omega}[g_q],g_p\bigr\rangle_{\partial D},
\qquad 1\le p,q\le m,
\end{equation}
for $\omega$ in a punctured neighborhood of $\omega_0$. Then, $\mc C^{\mathrm{reg}}_{\alpha}(\,\cdot\,)$ is meromorphic at $\omega_0$
with at most a simple pole and admits the following Laurent expansion:
\begin{equation}\label{eq:reg-sing-connection}
\mc{C}^{\mathrm{reg}}_{\alpha}(\omega)
=
\frac{1}{\omega-\omega_0}\,\mc{C}^{\mathrm{sing}}_{\alpha}(\omega_0)
+
\mathcal H_\alpha(\omega),
\end{equation}
where $\mc{C}^{\mathrm{sing}}_{\alpha}(\omega_0)$ is the singular quasiperiodic
frequency-dependent capacitance matrix \eqref{eq:case2-singular-matrix}, and
$\mathcal H_\alpha:U\to \mathbb{C}^{m\times m}$ is analytic in a neighborhood $U$ of
$\omega_0$.
\end{lemma}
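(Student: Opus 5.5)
The plan is to substitute the Laurent expansion \eqref{eq:Lambda-ext-pole}--\eqref{eq:residue-explicit} of the exterior quasiperiodic DtN map directly into the definition \eqref{eq:Creg-meromorphic}. We also expand the scalar prefactor $-v_b^2/(2\omega)$ about $\omega_0$. Since the traces $g_p,g_q$ are fixed elements of $H^{1/2}(\partial D)$ and the pairing $H^{-1/2}\times H^{1/2}$ is continuous, each entry of $\mc C^{\mathrm{reg}}_\alpha(\omega)$ is the product of the function $-v_b^2/(2\omega)$, analytic near $\omega_0\neq0$, and the meromorphic scalar function $\omega\mapsto\langle\Lambda_{\rm ext}^{\alpha,\omega}[g_q],g_p\rangle_{\partial D}$. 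The latter has at most a simple pole at $\omega_0$. Hence $\mc C^{\mathrm{reg}}_\alpha$ is meromorphic with at most a simple pole. Its residue is obtained by evaluating the prefactor at $\omega_0$, and everything else goes into the analytic remainder $\mathcal H_\alpha$.

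The key computation is the residue. Write $-\frac{v_b^2}{2\omega}=-\frac{v_b^2}{2\omega_0}+\O(\omega-\omega_0)$. The coefficient of $(\omega-\omega_0)^{-1}$ is then
\begin{equation*}
-\frac{v_b^2}{2\omega_0}\bigl\langle \mathcal R^\alpha_{-1}[g_q],g_p\bigr\rangle_{\partial D}
=\frac{v_b^2v^2}{4\omega_0^2}\sum_{s=1}^r\langle g_q,\psi_s^\alpha\rangle_{\partial D}\,\langle\psi_s^\alpha,g_p\rangle_{\partial D}.
\end{equation*}
With the sesquilinear convention of this section, $\langle\psi_s^\alpha,g_p\rangle_{\partial D}=\overline{\langle g_p,\psi_s^\alpha\rangle_{\partial D}}=\overline{\Gamma_\alpha^{sp}}$ and $\langle g_q,\psi_s^\alpha\rangle_{\partial D}=\Gamma_\alpha^{sq}$. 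The sum therefore equals $\sum_s\overline{\Gamma_\alpha^{sp}}\Gamma_\alpha^{sq}=(\Gamma_\alpha^\ast\Gamma_\alpha)_{pq}$. This matches \eqref{eq:case2-singular-matrix} exactly. The cross term, namely the $\O(\omega-\omega_0)$ part of the prefactor times $\mathcal R_{-1}^\alpha/(\omega-\omega_0)$, is analytic. So is the prefactor times $\mathcal R_0^\alpha+\O(\omega-\omega_0)$. Both are absorbed into $\mathcal H_\alpha$.

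The main point requiring care is justifying \eqref{eq:Lambda-ext-pole}--\eqref{eq:residue-explicit} as an expansion in the operator norm $H^{1/2}(\partial D)\to H^{-1/2}(\partial D)$, with a remainder analytic near $\omega_0$. The paper states this expansion, so we may invoke it. If a justification is wanted, it would go as follows. Write the exterior Dirichlet solution with data $g$ as $E g - (\Delta_{\alpha,\mathrm{Dir}}+\omega^2/v^2)^{-1}\bigl[(\Delta+\omega^2/v^2)Eg\bigr]$, using a fixed lifting $E$. Insert the spectral resolution of the self-adjoint quasiperiodic Dirichlet Laplacian on $\Omega$, whose resolvent has a simple pole with orthogonal-projector residue. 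Then apply Green's identity, which converts $\langle (\Delta+k^2)Eg, v_s^\alpha\rangle_\Omega$ into $-\langle g,\psi_s^\alpha\rangle_{\partial D}$ up to sign conventions. Finally, use $\omega_0^2-\omega^2=-2\omega_0(\omega-\omega_0)+\O((\omega-\omega_0)^2)$, exactly as in the proof of \cref{prop:A0-pole-pencil-reduction}.

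Two sign issues need tracking: the orientation of $\nu$ (outward from $D$, hence inward for $\Omega$) and the conjugation in the pairing. These are the only delicate points, and they are fixed consistently by the stated form of $\mathcal R_{-1}^\alpha$.
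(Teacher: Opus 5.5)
Your proposal is correct and follows the paper's proof: substitute the Laurent expansion \eqref{eq:Lambda-ext-pole}--\eqref{eq:residue-explicit} into \eqref{eq:Creg-meromorphic}, and evaluate the analytic prefactor $-v_b^2/(2\omega)$ at $\omega_0$ to get the residue. Then identify $\langle\mathcal R_{-1}^\alpha[g_q],g_p\rangle_{\partial D}=-\frac{v^2}{2\omega_0}(\Gamma_\alpha^\ast\Gamma_\alpha)_{pq}$, with the conjugation handled exactly as the paper does. One small correction to your optional justification sketch, which the paper does not need because it takes the expansion as given: with $\nu$ pointing from $D$ into $\Omega$, Green's identity gives $\langle(\Delta+\omega^2/v^2)Eg,v_s^\alpha\rangle_\Omega=+\langle g,\psi_s^\alpha\rangle_{\partial D}+\O(\omega-\omega_0)$, and this plus sign is what reproduces the stated negative residue in \eqref{eq:residue-explicit}.
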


\begin{proof}
By the Laurent expansion \eqref{eq:Lambda-ext-pole} of
$\Lambda_{\rm ext}^{\alpha,\omega}$ at $\omega_0$ and the explicit residue \eqref{eq:residue-explicit}, pairing with $g_q$ and $g_p$, and using from \eqref{eq:Gamma-alpha} that
$\Gamma_\alpha^{sp}=\langle g_p,\psi_s^\alpha\rangle_{\partial D}$ yields
\begin{equation*}
\bigl\langle \mathcal R_{-1}^\alpha[g_q],g_p\bigr\rangle_{\partial D}
=
-\frac{v^2}{2\omega_0}\sum_{s=1}^r
\langle g_q,\psi_s^\alpha\rangle\,\overline{\langle g_p,\psi_s^\alpha\rangle}
=
-\frac{v^2}{2\omega_0}\,(\Gamma_\alpha^*\Gamma_\alpha)_{p,q}.
\end{equation*}
Substituting the above formula into \eqref{eq:Creg-meromorphic} and using that
$\omega\mapsto -v_b^2/(2\omega)$ is analytic at $\omega_0$ with the value $-v_b^2/(2\omega_0)$, we obtain
\begin{equation*}
\mc{C}^{\mathrm{reg}}_{\alpha}(\omega)
=
\frac{1}{\omega-\omega_0}\,\frac{v_b^2 v^2}{4\omega_0^2}\,\Gamma_\alpha^*\Gamma_\alpha
+
\mathcal H_\alpha(\omega),
\end{equation*}
where $\mathcal H_\alpha(\omega)$ collects the analytic contributions originating from $\mathcal R_0^\alpha$, the higher-order coefficients of the Laurent expansion of
$\Lambda_{\rm ext}^{\alpha,\omega}$ and the Taylor expansion of $-v_b^2/(2\omega)$ at $\omega_0$. By \eqref{eq:case2-singular-matrix}, the residue equals $\mc{C}^{\mathrm{sing}}_{\alpha}(\omega_0)$, finishing the proof. 
\end{proof}

\begin{remark}
\label{rem:reg-sing-scaling}
\Cref{lem:reg-sing-connection} explains the two distinct scalings that appear in \Cref{thm:case1-asymptotics,thm:case2-asymptotics}. When $\omega_0$ is regular (Case~1), $\mc{C}^{\mathrm{reg}}_{\alpha}(\omega_0)$ is finite and the resonance equation gives $$|\omega^\alpha(\delta)-\omega_0|=\O(\delta).$$ At a Case-2 frequency, the simple-pole factor $1/(\omega-\omega_0)$ in \eqref{eq:reg-sing-connection} implies that
$$(\omega-\omega_0)^2\sim \delta\,\mc{C}^{\mathrm{sing}}_{\alpha}(\omega_0),$$
on the coupled interior-trace sector, which produces the
$\delta^{1/2}$-splitting. The directions not seen by this sector require a next-order reduction; see \cref{rem:sing-matrices} below. 
\end{remark}

\begin{remark} \label{rem:sing-matrices}
It is worth distinguishing the roles of the two singular capacitance matrices \eqref{eq:case2-singular-matrix}--\eqref{eq:case2-exterior-singular-matrix} in Case~2. The matrix $\mc C^{\mathrm{sing}}_\alpha(\omega_0)$ acts on the $m$-dimensional space of interior Neumann eigenspace coefficients, whereas $\mc C^{\mathrm{sing,ext}}_\alpha(\omega_0)$ acts on the $r$-dimensional space of exterior Dirichlet eigenspace coefficients. They arise, respectively, by eliminating the $b$- and the $a$-block of the leading pencil $zI_{m+r}-\varepsilon H_\alpha$ in \eqref{eq:leading-coupling}, that is, as its two Schur complements.

In particular, $\mc C^{\mathrm{sing,ext}}_\alpha(\omega_0)$ cannot be the residue of $\mc C^{\mathrm{reg}}_\alpha(\omega)$, since the latter is an $m\times m$ matrix on the interior Neumann trace space, whereas $\mc C^{\mathrm{sing,ext}}_\alpha(\omega_0)$ is $r\times r$. By \Cref{lem:reg-sing-connection}, it is $\mc C^{\mathrm{sing}}_\alpha(\omega_0)$ that occurs as this residue, namely, up to the scalar $-v_b^2/(2\omega_0)$, the compression of the exterior DtN residue $\mathcal R_{-1}^\alpha$ to the interior Neumann trace space $\mathscr G(\omega_0)$.

The kernels of the two matrices have a transparent interpretation. A vector in $\ker\mc C^{\mathrm{sing}}_\alpha(\omega_0)=\ker\widetilde\Gamma_\alpha$ represents an interior Neumann mode that does not couple, at leading order, to the exterior Dirichlet pole; symmetrically, a vector in $\ker\mc C^{\mathrm{sing,ext}}_\alpha(\omega_0)=\ker\widetilde\Gamma_\alpha^\ast$ represents an exterior Dirichlet mode that does not couple to the interior Neumann trace space. Borrowing the language of dark states, we call these the interior \emph{dark} Neumann subspace and the exterior \emph{dark} Dirichlet subspace, respectively. Together, they form $\ker H_\alpha$ (see \eqref{eq:leading-coupling} for the definition of $H_\alpha$). Hence, the associated zero-leading cluster has no $\O(\delta^{1/2})$ splitting and consists of $m+r-2p_\alpha$ branches satisfying $\omega-\omega_0=\O(\delta)$. Their individual shifts and leading coefficient vectors are determined by the next-order reduction on $\ker H_\alpha$.
\end{remark}

\subsubsection*{Case 3: Exterior Dirichlet eigenvalue, interior regular}

Assume that
\begin{equation}\label{eq:case3-assumption}
\frac{\omega_0^2}{v_b^2}\notin \sigma_{\rm Neu}(-\Delta;D), \qquad
\frac{\omega_0^2}{v^2}\in \sigma_{\alpha,\mathrm{Dir}}(-\Delta;\Omega),
\end{equation}
for some fixed $\alpha\in Y^*$. Let $v_1^\alpha,\dots,v_r^\alpha$ and $\psi_s^\alpha=\partial_\nu v_s^\alpha|_{\partial D}$ be as in Case~2. Since $\omega_0^2/v_b^2$ is not an interior Neumann eigenvalue, $\Lambda_{\rm in}^{\omega_0}$ is invertible on $H^{1/2}(\partial D)$, and the interior Neumann problem
\begin{equation*}
\bigl(\Delta+\omega_0^2/v_b^2\bigr)\Phi_s^\alpha=0 \quad\text{in } D,
\qquad \partial_\nu \Phi_s^\alpha=\psi_s^\alpha \quad\text{on }\partial D,
\end{equation*}
admits a unique solution such that  $\Phi_s^\alpha:=(\Lambda_{\rm in}^{\omega_0})^{-1}\psi_s^\alpha$ for each $s=1,\dots,r$.

In this case, we introduce the \emph{exterior-mode $\alpha$-quasiperiodic frequency-dependent capacitance matrix}
\begin{equation}\label{eq:case3-matrix}
\mc{B}_\alpha(\omega_0)
= \bigl(\mc{B}_\alpha^{st}(\omega_0)\bigr)_{1\le s,t\le r},
\qquad
\mc{B}_\alpha^{st}(\omega_0) := -\frac{v^2}{2\omega_0}\,
\bigl\langle (\Lambda_{\rm in}^{\omega_0})^{-1}\psi_t^\alpha,\psi_s^\alpha\bigr\rangle_{\partial D}.
\end{equation}
The following results hold.

\begin{lemma}
Assuming \eqref{eq:case3-assumption}, $\mc{B}_\alpha(\omega_0)$ is Hermitian.
\end{lemma}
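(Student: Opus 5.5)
We have to show that $\mc B_\alpha(\omega_0)$ is Hermitian, that is,
\begin{equation*}
\overline{\mc B_\alpha^{ts}(\omega_0)}=\mc B_\alpha^{st}(\omega_0)
\qquad \text{for all } 1\le s,t\le r.
\end{equation*}
Since the prefactor $-v^2/(2\omega_0)$ is real, this reduces to a symmetry statement for the sesquilinear form $(\psi,\varphi)\mapsto\langle(\Lambda_{\rm in}^{\omega_0})^{-1}\psi,\varphi\rangle_{\partial D}$, evaluated at $\psi=\psi_t^\alpha$, $\varphi=\psi_s^\alpha$. In words, we need the interior Neumann-to-Dirichlet map $N:=(\Lambda_{\rm in}^{\omega_0})^{-1}:H^{-1/2}(\partial D)\to H^{1/2}(\partial D)$ to be self-adjoint with respect to the sesquilinear duality pairing. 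This parallels the remark before \cref{lem:hermitan} for the exterior DtN map, so the plan is to mirror that argument via Green's identity in $D$.

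\textbf{Setting up the Green's identity.} Let $\Phi_t:=\Phi_t^\alpha$ and $\Phi_s:=\Phi_s^\alpha$ be the unique solutions of the interior Neumann problems with data $\psi_t^\alpha$ and $\psi_s^\alpha$. These exist by assumption \eqref{eq:case3-assumption}, which makes $\Lambda_{\rm in}^{\omega_0}$ invertible. Then $\mc B_\alpha^{st}$ is a real multiple of
\begin{equation*}
\int_{\partial D}\Phi_t\,\overline{\partial_\nu\Phi_s}\,\dd\sigma .
\end{equation*}
Since $\omega_0$ and $v_b$ are real, $\overline{\Phi_s}$ solves the same Helmholtz equation $(\Delta+\omega_0^2/v_b^2)\overline{\Phi_s}=0$ in $D$. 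Green's second identity applied to $\Phi_t$ and $\overline{\Phi_s}$ on $D$ therefore gives
\begin{equation*}
\int_{\partial D}\bigl(\Phi_t\,\partial_\nu\overline{\Phi_s}-\overline{\Phi_s}\,\partial_\nu\Phi_t\bigr)\,\dd\sigma
=\int_D\bigl(\Phi_t\,\Delta\overline{\Phi_s}-\overline{\Phi_s}\,\Delta\Phi_t\bigr)\,\dd x=0 .
\end{equation*}

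\textbf{Reading off the symmetry.} The identity says $\langle N\psi_t^\alpha,\psi_s^\alpha\rangle_{\partial D}=\overline{\langle N\psi_s^\alpha,\psi_t^\alpha\rangle_{\partial D}}$, which is precisely $\mc B_\alpha^{st}=\overline{\mc B_\alpha^{ts}}$. No radiation term appears, because $D$ is bounded. This also explains why Hermiticity holds here, in contrast to the complex symmetry of \cref{prop:complex}.

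\textbf{Regularity.} The only point needing care is justifying Green's identity in this setting. Since $\partial D$ is smooth, the Dirichlet eigenfunctions $v_s^\alpha$ are smooth up to $\partial D$, by elliptic regularity applied locally near $\partial D$ inside the cell (the quasiperiodic boundary lies away from $D$). Hence $\psi_s^\alpha\in C^\infty(\partial D)$, and so $\Phi_s\in C^\infty(\overline D)$, and the identity is classical. If one prefers not to use smoothness, the argument can be run in $H^1(D)$ with the weak form: both boundary pairings equal $\int_D(\nabla\Phi_t\cdot\nabla\overline{\Phi_s}-\tfrac{\omega_0^2}{v_b^2}\Phi_t\overline{\Phi_s})\,\dd x$, which is manifestly Hermitian in $(t,s)$.
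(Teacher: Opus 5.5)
Your proof is correct and takes essentially the paper's route. The paper states this lemma without a written proof and relies implicitly on the self-adjointness of the relevant boundary map at a real frequency, as in its remark before the Case~1 Hermiticity lemma; your Green's identity argument for $(\Lambda_{\rm in}^{\omega_0})^{-1}$ on the bounded domain $D$, combined with the real prefactor $-v^2/(2\omega_0)$, is exactly the justification of that fact.
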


\begin{theorem}
\label{thm:case3-asymptotics}
Assume that \eqref{eq:case3-assumption} holds. Let
$\lambda_1^\alpha,\dots,\lambda_r^\alpha$ be the eigenvalues of
$\mc{B}_\alpha(\omega_0)$, counted with their multiplicities. Then the Bloch eigenfrequencies converging to $\omega_0$ satisfy 
\begin{equation*}
    \omega_j^\alpha(\delta) =  \omega_0+\delta\,\lambda_j^\alpha+\O(\delta^2), \qquad j=1,\dots,r, \quad \text{as $\delta\to 0$.}
\end{equation*}
\end{theorem}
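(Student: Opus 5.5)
The plan is to mirror Case~2, but with the roles of the interior and exterior operators swapped. Here the singular behaviour comes only from the exterior pole, while the interior DtN map is invertible. Set $z:=\omega-\omega_0$. By \eqref{eq:Lambda-ext-pole}--\eqref{eq:residue-explicit} we can write
\begin{equation*}
\Lambda_{\rm ext}^{\alpha,\omega_0+z}=\frac1z\mathcal R_{-1}^\alpha+\mathcal R^\alpha_{\rm reg}(z),\qquad \mathcal R_{-1}^\alpha=-\frac{v^2}{2\omega_0}\mathcal B_\alpha^\ast\mathcal B_\alpha,
\end{equation*}
with $\mathcal B_\alpha f=(\langle f,\psi_s^\alpha\rangle_{\partial D})_{s=1}^r$ as in the proof of \Cref{thm:case2-asymptotics}. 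For $z\neq0$, equation \eqref{eq:Tdelta-alpha} is then equivalent to the augmented analytic system
\begin{equation*}
\Lambda_{\rm in}^{\omega_0+z}g+\frac{\delta v^2}{2\omega_0}\mathcal B_\alpha^\ast\beta-\delta\,\mathcal R^\alpha_{\rm reg}(z)g=0,\qquad z\beta-\mathcal B_\alpha g=0,
\end{equation*}
in the unknowns $(g,\beta)\in H^{1/2}(\partial D)\times\C^r$. Eliminating $\beta=z^{-1}\mathcal B_\alpha g$ recovers \eqref{eq:Tdelta-alpha}. This desingularizes the problem at $z=0$.

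Since $\Lambda_{\rm in}^{\omega}$ is analytic and invertible near $\omega_0$, we solve the first equation for $g$ exactly by a Neumann series:
\begin{equation*}
g=-\frac{\delta v^2}{2\omega_0}\bigl(I-\delta(\Lambda_{\rm in}^{\omega_0+z})^{-1}\mathcal R^\alpha_{\rm reg}(z)\bigr)^{-1}(\Lambda_{\rm in}^{\omega_0+z})^{-1}\mathcal B_\alpha^\ast\beta .
\end{equation*}
Substituting this into the second equation gives an $r\times r$ analytic matrix equation $\mathsf F(z,\delta)\beta=0$. Its expansion is
\begin{equation*}
\mathsf F(z,\delta)=zI_r+\frac{\delta v^2}{2\omega_0}\mathcal B_\alpha(\Lambda_{\rm in}^{\omega_0})^{-1}\mathcal B_\alpha^\ast+\O(\delta|z|+\delta^2)=zI_r-\delta\,\mc B_\alpha(\omega_0)+\O(\delta|z|+\delta^2),
\end{equation*}
where the last identity is the definition \eqref{eq:case3-matrix}, since $\bigl(\mathcal B_\alpha(\Lambda_{\rm in}^{\omega_0})^{-1}\mathcal B_\alpha^\ast\bigr)_{st}=\langle(\Lambda_{\rm in}^{\omega_0})^{-1}\psi_t^\alpha,\psi_s^\alpha\rangle_{\partial D}$. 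Characteristic values of \eqref{eq:Tdelta-alpha} near $\omega_0$ (with $z\ne0$) correspond exactly, with multiplicity, to zeros of $\det\mathsf F$. This follows from the same Schur-complement/analytic-equivalence argument used in \cref{prop:finite-dimensional-pencil-reduction}. One also checks $\beta\neq0$ for any nontrivial solution, since $\beta=0$ forces $g=0$.

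For counting, we first apply the matrix Rouch\'e argument on a fixed small circle $|z|=r_0$ at $\delta=0$, where $\mathsf F(z,0)=zI_r$. This gives exactly $r$ zeros in the disk for small $\delta$. Comparing then on circles $|z|=R\delta$ with $R>\|\mc B_\alpha(\omega_0)\|$ shows that all of them satisfy $z=\O(\delta)$. Consequently the remainder is $\O(\delta^2)$ on the relevant region, and we obtain $(zI_r-\delta\mc B_\alpha(\omega_0))\beta=\O(\delta^2\|\beta\|)$.

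The remaining step uses that $\mc B_\alpha(\omega_0)$ is Hermitian (by the preceding lemma, from self-adjointness of $(\Lambda_{\rm in}^{\omega_0})^{-1}$ at real $\omega_0$). It is therefore diagonalizable with real eigenvalues, so its resolvent satisfies $\|(zI-\delta\mc B_\alpha)^{-1}\|\le 1/\operatorname{dist}(z,\delta\sigma(\mc B_\alpha))$. Applying Rouch\'e on circles of radius $M\delta^2$ around each cluster $\delta\lambda_j^\alpha$, exactly as in the semisimple case $q=1$ of \cref{prop:finite-dimensional-pencil-reduction}, yields $\omega_j^\alpha=\omega_0+\delta\lambda_j^\alpha+\O(\delta^2)$ with multiplicities matched.

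The main obstacle is the rigorous bookkeeping of the exact equivalence between \eqref{eq:Tdelta-alpha} and $\det\mathsf F=0$ including algebraic multiplicities, together with excluding spurious solutions at $z=0$. The latter needs care: at $z=0$ the original problem is not defined, but the augmented system is. I expect the cleanest route to be a Grushin problem for the augmented analytic operator, as in \cref{prop:finite-dimensional-pencil-reduction}. The point $z=0$ is then excluded for $\delta>0$ small, because $\mathsf F(0,\delta)=-\delta\mc B_\alpha(\omega_0)+\O(\delta^2)$; if $\mc B_\alpha(\omega_0)$ has a zero eigenvalue, the corresponding branch is still $\O(\delta^2)$ from $\omega_0$, consistent with the statement.
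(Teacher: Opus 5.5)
Your argument is correct and reaches the same $r\times r$ pencil with leading matrix $\mc B_\alpha(\omega_0)$ as the paper, but it uses a different reduction.

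The paper inverts $\Lambda_{\rm in}^{\omega}$ in the trace equation and multiplies $g=\delta(\Lambda_{\rm in}^{\omega})^{-1}\Lambda_{\rm ext}^{\alpha,\omega}g$ by $\omega-\omega_0$. It then performs a Lyapunov--Schmidt projection onto the $r$-dimensional range $\operatorname{span}\{\Phi_s^\alpha|_{\partial D}\}$ of $(\Lambda_{\rm in}^{\omega_0})^{-1}\mathcal R_{-1}^\alpha$. This yields only the approximate relation $((\omega-\omega_0)I-\delta\mc B_\alpha(\omega_0))c=\O((\delta|\omega-\omega_0|+\delta^2)|c|)$, and the proof stops there.

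You instead reuse the bordered system from the Case~2 proof and eliminate $g$ exactly. To leading order, your $\beta$ is the paper's $c$ rescaled by $-2\omega_0/(\delta v^2)$. What your route gives is an exact Schur complement $\mathsf F(z,\delta)$, jointly analytic in $(z,\delta)$ including $z=0$. From it you get:
\begin{itemize}
\item the equivalence with algebraic multiplicities;
\item the Rouch\'e count of exactly $r$ zeros;
\item via Hermiticity, the $\O(\delta^2)$ localization.
\end{itemize}
The paper's short proof leaves all three implicit. In its formulation they would need extra care, because at $z=0$ the multiplied operator reduces to the finite-rank operator $-\delta(\Lambda_{\rm in}^{\omega_0})^{-1}\mathcal R_{-1}^\alpha$, whose kernel is infinite dimensional.

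The one loose end in your write-up is a possible zero of $\det\mathsf F$ at $z=0$ when $\mc B_\alpha(\omega_0)$ is singular. Saying it is "consistent with the statement" does not show it is a genuine Bloch eigenfrequency, which you need for the count of exactly $r$ branches. You can close this as follows. At $\omega=\omega_0$, the exterior $\alpha$-quasiperiodic Dirichlet problem with data $g$ is solvable if and only if $\mathcal B_\alpha g=0$. A particular solution then has normal trace $\mathcal R^\alpha_{\rm reg}(0)g=\mathcal R_0^\alpha g$, the limit of $\Lambda_{\rm ext}^{\alpha,\omega}g$. The remaining freedom $\operatorname{span}\{v_s^\alpha\}$ contributes a term of the form $\mathcal B_\alpha^\ast\beta$. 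Hence your augmented system at $z=0$ is exactly the transmission problem at $\omega_0$.
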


\begin{proof}
Since $\Lambda_{\rm in}^{\omega}$ is invertible for $\omega$ in a neighborhood of $\omega_0$, equation \eqref{eq:Tdelta-alpha} gives
\begin{equation*}
g=\delta\,(\Lambda_{\rm in}^{\omega})^{-1}\Lambda_{\rm ext}^{\alpha,\omega}g.
\end{equation*}
Substituting the pole expansion \eqref{eq:Lambda-ext-pole}, and multiplying by $(\omega-\omega_0)$, we obtain
\begin{equation*}
(\omega-\omega_0)\,g
=\delta\,(\Lambda_{\rm in}^{\omega_0})^{-1}\mathcal R_{-1}^\alpha g
+\delta(\omega-\omega_0)\,K(\omega)\,g,
\end{equation*}
with $K(\omega)$ being analytic at $\omega_0$. Using \eqref{eq:residue-explicit}, the leading operator $(\Lambda_{\rm in}^{\omega_0})^{-1}\mathcal R_{-1}^\alpha$ has a finite-dimensional range $V:=\mathrm{span}\{\Phi_1^\alpha|_{\partial D},\dots,\Phi_r^\alpha|_{\partial D}\}\subset H^{1/2}(\partial D)$.
Decompose $H^{1/2}(\partial D)=V\oplus V^\perp$, write $g=g_V+g_\perp$ with
$g_V=\sum_{s=1}^r c_s\Phi_s^\alpha$, and apply the Lyapunov--Schmidt reduction.
Then, projecting onto $V^\perp$ yields
\begin{equation*}
\|g_\perp\|=\O\bigl(\delta\,\|g_V\|\bigr),
\end{equation*}
and projecting onto $V$ yields the finite-dimensional pencil
\begin{equation*}
\bigl((\omega-\omega_0)\,I-\delta\,\mc{B}_\alpha(\omega_0)\bigr)c
=
\O\bigl((\delta|\omega-\omega_0|+\delta^2)|c|\bigr).
\end{equation*}
The proof is complete. 
\end{proof}

\begin{remark}
We conjecture that Case~1 and Case~3 are generic in the sense that, for any fixed reference frequency $\omega_0>0$, Case~2 holds only on a set of quasiperiodicities $\alpha\in Y^*$ of Lebesgue measure zero. This conjecture would be implied by the absolute continuity of the spectrum of the Dirichlet Laplacian on the periodic domain $\R^3\setminus\bigcup_{\gamma\in\Lambda}(\overline D+\gamma)$ (\emph{i.e.}, no flat bands), together with the real-analyticity of its Bloch band functions on $Y^*$. 
\end{remark}

Finally, we describe the leading-order structure of the Bloch eigenmodes of \eqref{eq:periodic-bloch-eigenproblem}. In Case~1, the exterior problem is regular, and the leading-order mode is generated by the interior Neumann traces, just as in the finite-resonator problem. In Case~2, the positive eigenvalues of $\mc C^{\mathrm{sing}}_\alpha(\omega_0)$ produce a different behavior: the exterior Dirichlet pole amplifies the exterior component by $(\omega-\omega_0)^{-1}=\O(\delta^{-1/2})$, so that the normalized leading mode is supported in the exterior cell $\Omega$. For Case~3 branches associated with nonzero eigenvalues of $\mc B_\alpha(\omega_0)$, the same pole-expansion argument as in Case~2 shows that the normalized eigenmodes are exterior-cell-dominated: their exterior components converge to the corresponding exterior Dirichlet modes, whereas their interior components are $\O(\delta)$. We omit the details.

\begin{proposition}[(Bloch eigenmodes in Case~1)]\label{prop:bloch-eigenmodes}
Assume \eqref{eq:case1-assumption}. Let $\lambda_j^\alpha$ be a simple eigenvalue of $\mc C^{\mathrm{reg}}_\alpha(\omega_0)$, let $a=(a_p)_{p=1}^m$ be a corresponding eigenvector normalized by $\|a\|_2=1$, and let $\omega_j^\alpha(\delta)$ be the associated Bloch eigenfrequency in \eqref{approx:alphaj}. Define
\begin{equation*}
U_a^\alpha(x)
:=
\sum_{p=1}^m a_p
\begin{cases}
u_p(x),
& x\in D,\\[2pt]
\S_D^{\alpha,\,\omega_0/v}
\bigl[(\S_D^{\alpha,\,\omega_0/v})^{-1}[g_p]\bigr](x),
& x\in\Omega.
\end{cases}
\end{equation*}
If $u_{\delta,j}^\alpha$ is a corresponding Bloch eigenmode normalized by $\|u_{\delta,j}^\alpha\|_{L^2(Y)}=1$, then, after multiplying it by a phase factor if necessary,
\begin{equation}\label{eq:case1-mode-expansion}
u_{\delta,j}^\alpha
=
\frac{U_a^\alpha}{\|U_a^\alpha\|_{L^2(Y)}}
+\O(\delta)
\qquad
\text{in }H^1(D)\oplus H^1(\Omega).
\end{equation}

More generally, for an eigenvalue $\lambda^\alpha$ of arbitrary multiplicity, set
\begin{equation*}
E_{\lambda^\alpha} :=
\ker\bigl(\mc C^{\mathrm{reg}}_\alpha(\omega_0)
-\lambda^\alpha I_m\bigr)\,, \qquad \mathscr U_{\lambda^\alpha} :=
\left\{\frac{U_a^\alpha}{\|U_a^\alpha\|_{L^2(Y)}}:
a\in E_{\lambda^\alpha},\ \|a\|_2=1\right\}. 
\end{equation*}
Every $L^2(Y)$-normalized Bloch eigenmode associated with a branch satisfying $\omega^\alpha(\delta) =\omega_0+\delta\lambda^\alpha+\O(\delta^2)$ satisfies
\begin{equation*}
\operatorname{dist}_{H^1(D)\oplus H^1(\Omega)}
\bigl(u_\delta^\alpha,\mathscr U_{\lambda^\alpha}\bigr) =\O(\delta).
\end{equation*}
\end{proposition}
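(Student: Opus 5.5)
The argument runs parallel to the proof of \cref{cor:eigenmodes}, using the Lyapunov--Schmidt decomposition from the proof of \cref{thm:case1-asymptotics}. Let $g=u_{\delta,j}^\alpha|_{\partial D}$ be the trace of a Bloch eigenmode at $\omega=\omega_j^\alpha(\delta)$. It solves $\mathcal T_\delta^\alpha(\omega)g=0$. Decompose $g=g_N+g_\perp$ with $g_N=\sum_p c_p g_p$. Since $|\omega-\omega_0|=\O(\delta)$, the reduction gives $\|g_\perp\|_{H^{1/2}(\partial D)}=\O(\delta\|c\|_2)$ and, by \eqref{approx:bloch},
\begin{equation*}
\bigl(\mc C^{\mathrm{reg}}_\alpha(\omega_0)-\lambda_j^\alpha I_m\bigr)c=\O(\delta\|c\|_2).
\end{equation*}
Here we divide $(\omega-\omega_0)I-\delta\mc C^{\mathrm{reg}}_\alpha$ by $\delta$ and use $\omega-\omega_0=\delta\lambda_j^\alpha+\O(\delta^2)$. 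Because $\mc C^{\mathrm{reg}}_\alpha(\omega_0)$ is Hermitian (\cref{lem:hermitan}), the matrix $\mc C^{\mathrm{reg}}_\alpha(\omega_0)-\lambda^\alpha I_m$ is bounded below by the spectral gap on $E_{\lambda^\alpha}^\perp$. Hence the orthogonal projection $P_\lambda c$ onto $E_{\lambda^\alpha}$ satisfies $c-P_\lambda c=\O(\delta\|c\|_2)$. Note that this is an $\O(\delta)$ rather than $\O(1)$ statement, since the $\delta$-independent gap of $\mc C^{\mathrm{reg}}_\alpha(\omega_0)$ does not degrade. When $\lambda_j^\alpha$ is simple, $P_\lambda c=\langle c,a\rangle a$.

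Next we reconstruct the field from its trace. In $D$, $u_{\delta,j}^\alpha$ solves $(\Delta+\omega^2/v_b^2)u=0$ with Dirichlet data $g$. Since $\omega_0^2/v_b^2\notin\sigma_{\rm Dir}(-\Delta;D)$, the Dirichlet solution operator depends analytically on $\omega$ and is bounded $H^{1/2}(\partial D)\to H^1(D)$. Thus $u|_D=\sum_p c_pu_p+\O(\delta\|c\|_2)$ in $H^1(D)$, using that the solution at $\omega_0$ with data $g_p$ is $u_p$. In $\Omega$, the exterior $\alpha$-quasiperiodic Dirichlet problem is well posed near $\omega_0$ by \eqref{eq:case1-assumption}, and its solution operator is likewise analytic. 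Therefore $u|_\Omega=\sum_pc_p\S_D^{\alpha,\omega_0/v}(\S_D^{\alpha,\omega_0/v})^{-1}[g_p]+\O(\delta\|c\|_2)$ in $H^1(\Omega)$. Combining the two with the previous step gives
\begin{equation*}
u_{\delta,j}^\alpha=U_{P_\lambda c}^\alpha+\O(\delta\|c\|_2)
\end{equation*}
in $H^1(D)\oplus H^1(\Omega)$, where we use the linearity of $a\mapsto U_a^\alpha$.

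It remains to normalize. The map $a\mapsto U_a^\alpha$ is injective and bounded below, because $\|U_a^\alpha\|_{L^2(D)}=\|a\|_2$ by the $L^2(D)$-orthonormality of the $u_p$. Hence $\|U_a^\alpha\|_{L^2(Y)}\ge\|a\|_2$. From $\|u_{\delta,j}^\alpha\|_{L^2(Y)}=1$ we get $\|c\|_2\simeq 1$ and $\|U_{P_\lambda c}^\alpha\|_{L^2(Y)}=1+\O(\delta)$. Dividing, and using that $\|\cdot\|_{L^2(Y)}$ is Lipschitz on the $\O(\delta)$ discrepancy, we obtain $\operatorname{dist}(u_\delta^\alpha,\mathscr U_{\lambda^\alpha})=\O(\delta)$. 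In the simple case we choose the phase so that $\langle c,a\rangle>0$, which yields \eqref{eq:case1-mode-expansion}.

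The main obstacle is to make the Lyapunov--Schmidt remainder bound uniform in operator norm. Concretely, one has to check that the $\O(\delta^2\|a\|_2)$ term in \eqref{approx:bloch} and the bound on $g_\perp$ hold in $H^{1/2}$ with constants depending only on $\omega_0$ and $\alpha$. This follows from the analyticity of $\Lambda_{\rm in}^\omega$ and $\Lambda_{\rm ext}^{\alpha,\omega}$ near $\omega_0$ and from the invertibility of $\Lambda_{\rm in}^{\omega_0}$ on $\mathscr G(\omega_0)^\perp$. I would make this explicit via the same Grushin construction as in \cref{prop:finite-dimensional-pencil-reduction}, with $E_+(\omega,\delta)=\iota+\O(\delta)$.
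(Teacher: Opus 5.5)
Your proposal is correct and follows essentially the same route as the paper: the Case~1 Lyapunov--Schmidt bound on $g_\perp$, the $\O(\delta)$ distance of the coefficient vector to $E_{\lambda^\alpha}$ from \eqref{approx:bloch} and Hermiticity, reconstruction of the field through the analytic interior and exterior Dirichlet solution operators, and the lower bound $\|U_a^\alpha\|_{L^2(Y)}\ge\|a\|_2$. The only differences are cosmetic. The paper first normalizes the coefficient vector to $\|c(\delta)\|_2=1$ and divides by $\|U_a^\alpha\|_{L^2(Y)}$ at the end, whereas you normalize in $L^2(Y)$ first and recover $\|c\|_2\simeq1$. You also spell out the spectral-gap projection step, which the paper only asserts.
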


\begin{proof}
Let $\widetilde u_\delta^\alpha$ be a representative of the Bloch eigenmode, let $g_\delta:=\widetilde u_\delta^\alpha|_{\partial D}$, and scale $\widetilde u_\delta^\alpha$ so that the coefficient vector $c(\delta)=(c_p(\delta))_{p=1}^m$ of the component of $g_\delta$ in $\mathscr G(\omega_0)$ satisfies $\|c(\delta)\|_2=1$. The
Lyapunov--Schmidt estimate in the proof of \cref{thm:case1-asymptotics} and
$|\omega_j^\alpha(\delta)-\omega_0|=\O(\delta)$ give
\begin{equation*}
g_\delta
=
\sum_{p=1}^m c_p(\delta)g_p+\O(\delta)
\qquad\text{in }H^{1/2}(\partial D).
\end{equation*}
Moreover, \eqref{approx:alphaj} and \eqref{approx:bloch} imply
\begin{equation*}
\operatorname{dist}_{\ell^2}\bigl(c(\delta),E_{\lambda_j^\alpha}\bigr)
=\O(\delta).
\end{equation*}
If $\lambda_j^\alpha$ is simple, then, after choosing the phase, $c(\delta)=a+\O(\delta)$. Since the interior and exterior Dirichlet solution operators depend analytically on $\omega$ near $\omega_0$, it follows that
\begin{equation*}
\widetilde u_\delta^\alpha
=U_a^\alpha+\O(\delta)
\qquad\text{in }H^1(D)\oplus H^1(\Omega).
\end{equation*}
Since $u_1,\ldots,u_m$ are $L^2(D)$-orthonormal, $\|U_a^\alpha\|_{L^2(Y)}\geq1$. Normalizing in $L^2(Y)$ therefore proves \eqref{eq:case1-mode-expansion}. The same argument, with a unit vector in $E_{\lambda^\alpha}$ allowed to depend on $\delta$, proves the result for an eigenvalue of arbitrary multiplicity.
\end{proof}

\begin{proposition}[(Bloch eigenmodes in Case~2)]\label{prop:case2-bloch-eigenmodes}
Assume \eqref{eq:case2-assumption}. Let $\lambda>0$ be a simple eigenvalue of $\mc C^{\mathrm{sing}}_\alpha(\omega_0)$, and let $a=(a_p)_{p=1}^m$ be a corresponding eigenvector, normalized by $\|a\|_2=1$. Set
\begin{equation*}
    b:=\Gamma_\alpha a\in\mathbb C^r,
    \qquad
    V_b^\alpha:=\sum_{s=1}^r b_s\, v_s^\alpha ,
\end{equation*}
so that $b\neq0$ and $\|V_b^\alpha\|_{L^2(\Omega)}=\|b\|_2$. Let
\begin{equation*}
    \omega_\pm^\alpha(\delta)
    =
    \omega_0\pm \delta^{1/2}\sqrt{\lambda}+\O(\delta)
\end{equation*}
be the two Bloch eigenfrequency branches associated with $\lambda$ in \Cref{thm:case2-asymptotics}. If $u_{\delta,\pm}^\alpha$ is a corresponding Bloch eigenmode normalized by $\|u_{\delta,\pm}^\alpha\|_{L^2(Y)}=1$, then, after multiplying by a phase factor if necessary,
\begin{equation}\label{eq:case2-exterior-mode}
    u_{\delta,\pm}^\alpha
    =
    \frac{V_b^\alpha}{\|V_b^\alpha\|_{L^2(\Omega)}}
    +\O(\delta^{1/2})
    \qquad \text{in } H^1(\Omega),
\end{equation}
and
\begin{equation}\label{eq:case2-interior-small}
    u_{\delta,\pm}^\alpha
    =
    \O(\delta^{1/2})
    \qquad \text{in } H^1(D).
\end{equation}
In particular, $u_{\delta,\pm}^\alpha\big|_{\partial D} =\O(\delta^{1/2})$ in $H^{1/2}(\partial D)$.  That is, the normalized Bloch eigenmodes, corresponding to $\O(\delta^{1/2})$-split branches, are exterior-cell-dominated at leading order.
\end{proposition}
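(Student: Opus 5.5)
The plan is to read off the mode structure from the desingularized augmented system in the proof of \Cref{thm:case2-asymptotics}, combined with a Laurent expansion of the exterior Dirichlet solution operator (not only of the DtN map) at $\omega_0$. Throughout, write $z=\omega_\pm^\alpha(\delta)-\omega_0$ and $\varepsilon=\delta^{1/2}$, so that $z=\pm\varepsilon\sqrt\lambda+\O(\varepsilon^2)$.

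\textbf{Step 1 (pole of the exterior solution operator).} Let $E^\alpha(\omega):H^{1/2}(\partial D)\to H^1(\Omega)$ be the $\alpha$-quasiperiodic exterior Dirichlet solution operator. I will show that
\begin{equation*}
E^\alpha(\omega)g=-\frac{v^2}{2\omega_0}\,\frac{1}{\omega-\omega_0}\sum_{s=1}^r\langle g,\psi_s^\alpha\rangle_{\partial D}\,v_s^\alpha+\mathcal E_{\rm reg}^\alpha(\omega)g ,
\end{equation*}
with $\mathcal E_{\rm reg}^\alpha$ analytic near $\omega_0$ as an operator $H^{1/2}(\partial D)\to H^1(\Omega)$. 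To get this, first lift $g$ to $\Omega$ by a fixed extension operator. Then solve for the correction with the resolvent of the quasiperiodic Dirichlet Laplacian on $\Omega$. That resolvent has a simple pole with spectral projector onto $\mathrm{span}\{v_s^\alpha\}$. By Green's identity, the coefficient of $v_s^\alpha$ in the residue equals $\pm\int_{\partial D} g\,\overline{\partial_\nu v_s^\alpha}\,\dd\sigma$. Composing $\lambda=\omega^2/v^2$ produces the factor $v^2/(2\omega_0)$. Taking the normal derivative of this expansion must reproduce \eqref{eq:residue-explicit}, and I will use that to fix the sign consistently.

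\textbf{Step 2 (leading coefficients).} Take the augmented system with unknowns $(g,\beta)$ from the proof of \Cref{thm:case2-asymptotics}, and the rescaled pair $(a,b)$ with $b=\frac{v_b}{\sqrt{2\omega_0}}\beta$. Since $\lambda$ is a simple eigenvalue of $\mc C^{\rm sing}_\alpha(\omega_0)=\widetilde\Gamma_\alpha^\ast\widetilde\Gamma_\alpha$, the eigenvalues $\pm\sqrt\lambda$ of $H_\alpha$ are simple. The corresponding eigenvectors are proportional to $(a,\pm\widetilde\Gamma_\alpha a/\sqrt\lambda)$. Simple-eigenvalue perturbation theory applied to $\mu I-H_\alpha+\varepsilon\widetilde{\mathcal E}_\alpha$ then gives, after normalizing by $\|a(\delta)\|_2=1$ and fixing a phase,
\begin{equation*}
a(\delta)=a+\O(\varepsilon).
\end{equation*}
The Lyapunov--Schmidt bound $g_\perp=\O((|z|+\varepsilon)(|a|+|\beta|))$ holds with $|\beta|=\O(1)$. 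Hence the unnormalized trace satisfies $g_\delta=\sum_p a_p g_p+\O(\varepsilon)$ in $H^{1/2}(\partial D)$. Finally, $b=\Gamma_\alpha a\neq0$, because $\Gamma_\alpha^\ast\Gamma_\alpha a=\frac{4\omega_0^2}{v_b^2v^2}\lambda a\neq0$.

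\textbf{Step 3 (assembling the field).} In $D$, the interior solution operator is analytic at $\omega_0$, since the Dirichlet problem is regular there. So the unnormalized interior field is $\sum_p a_pu_p+\O(\varepsilon)=\O(1)$ in $H^1(D)$.

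In $\Omega$, Step 1 gives
\begin{equation*}
\widetilde u_\delta|_\Omega=-\frac{v^2}{2\omega_0 z}\sum_s\bigl(\Gamma_\alpha a+\O(\varepsilon)\bigr)_s v_s^\alpha+\O(1)=-\frac{v^2}{2\omega_0 z}\,V_b^\alpha+\O(1).
\end{equation*}
The perturbation $\O(\varepsilon)$ of $g$ is divided by $|z|\sim\varepsilon$, so it only contributes at order $\O(1)$. The leading exterior term therefore has $H^1(\Omega)$ size about $\varepsilon^{-1}\|b\|_2$, using $\|V_b^\alpha\|_{L^2(\Omega)}=\|b\|_2$ by orthonormality of the $v_s^\alpha$. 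The $L^2(Y)$ norm of the whole field is $\frac{v^2\|b\|_2}{2\omega_0|z|}(1+\O(\varepsilon))$.

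Dividing by this norm and absorbing the phase of $-z$ gives \eqref{eq:case2-exterior-mode}. It also gives \eqref{eq:case2-interior-small}, since the $\O(1)$ interior part becomes $\O(\varepsilon)$. The trace statement then follows from the trace theorem.

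\textbf{Main obstacle.} I expect Step 1 to be the delicate part. The residue formula has to hold in $H^1(\Omega)$ uniformly for $H^{1/2}$ data, the sign and normalization conventions must match \eqref{eq:residue-explicit}, and the regular part must stay bounded uniformly for $|z|\lesssim\varepsilon$. I would first try the lifting-plus-resolvent argument. If the Green-identity bookkeeping becomes awkward, the fallback is to recover the exterior coefficients directly from the auxiliary variable $\beta=\frac{\varepsilon v}{z\sqrt{2\omega_0}}\mathcal B_\alpha g$.
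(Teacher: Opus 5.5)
Your proposal is correct and follows essentially the same route as the paper. Like the paper, you normalize the interior coefficient block so that $a_\delta=a+\O(\delta^{1/2})$, using that $\pm\sqrt\lambda$ are simple eigenvalues of $H_\alpha$. You then use the simple pole of the exterior Dirichlet solution operator to obtain $\widetilde u=-\frac{v^2}{2\omega_0(\omega-\omega_0)}V_b^\alpha+\O(1)$ in $H^1(\Omega)$ and $\O(1)$ in $H^1(D)$, and renormalize in $L^2(Y)$ up to a phase. The only addition is your lifting-plus-resolvent derivation, via Green's identity, of that pole expansion, which the paper simply asserts as consistent with \eqref{eq:residue-explicit}; your sign and normalization bookkeeping there checks out.
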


\begin{proof}
Since $a$ is an eigenvector of $\mc C^{\mathrm{sing}}_\alpha(\omega_0)=\frac{v_b^2v^2}{4\omega_0^2}\Gamma_\alpha^\ast\Gamma_\alpha$ with $\lambda>0$,
\begin{equation*}
    \frac{v_b^2v^2}{4\omega_0^2}\,\|\Gamma_\alpha a\|_2^2
    =
    a^\ast \mc C^{\mathrm{sing}}_\alpha(\omega_0)\, a
    =
    \lambda\,\|a\|_2^2
    >0,
\end{equation*}
so $b=\Gamma_\alpha a\neq0$; since $v_1^\alpha,\dots,v_r^\alpha$ are $L^2(\Omega)$-orthonormal, $\|V_b^\alpha\|_{L^2(\Omega)}=\|b\|_2$.

Let $\varepsilon:=\delta^{1/2}$ and choose a representative $\widetilde u_{\delta,\pm}^\alpha$ of the corresponding Bloch eigenspace by normalizing its interior coefficient block. More precisely, if $a_\delta$ denotes the interior coefficient vector arising from the Lyapunov--Schmidt reduction in the proof of \cref{thm:case2-asymptotics}, then the simplicity of $\lambda$ implies that $\pm\sqrt\lambda$ are simple eigenvalues of $H_\alpha$. Thus, we may rescale $\widetilde u_{\delta,\pm}^\alpha$ and choose its phase so that
\begin{equation*}
    a_\delta=a+\O(\varepsilon).
\end{equation*}
If $\widetilde g_{\delta,\pm}$ denotes the boundary trace of this
coefficient-normalized representative, then
\begin{equation*}
    \widetilde g_{\delta,\pm}
    =
    \sum_{p=1}^m a_p g_p+\O(\varepsilon)
    \qquad\text{in }H^{1/2}(\partial D).
\end{equation*}
Since the interior solution operator is regular at $\omega_0$, it follows that
\begin{equation*}
    \widetilde u_{\delta,\pm}^\alpha
    =
    \sum_{p=1}^m a_pu_p+\O(\varepsilon)
    \qquad\text{in }H^1(D).
\end{equation*}

On the exterior side, the $\alpha$-quasiperiodic Dirichlet solution operator inherits the simple pole of $\Lambda_{\rm ext}^{\alpha,\omega}$: for $f\in H^{1/2}(\partial D)$, the exterior solution with boundary value $f$ admits the expansion
\begin{equation*}
    u_{\rm ext}^{\alpha,\omega}[f]
    =
    -\frac{v^2}{2\omega_0(\omega-\omega_0)}
    \sum_{s=1}^r
    \langle f,\psi_s^\alpha\rangle_{\partial D}\,v_s^\alpha
    +\O\bigl(\|f\|_{H^{1/2}(\partial D)}\bigr)
    \qquad \text{in } H^1(\Omega),
\end{equation*}
with remainder uniformly bounded for $\omega$ near $\omega_0$; this is consistent with \eqref{eq:residue-explicit}, since $\partial_\nu v_s^\alpha|_{\partial D}=\psi_s^\alpha$. By \eqref{eq:Gamma-alpha} and the expansion of $\widetilde g_{\delta,\pm}$,
\begin{equation*}
    \langle \widetilde g_{\delta,\pm},\psi_s^\alpha\rangle_{\partial D} =  (\Gamma_\alpha a)_s+\O(\varepsilon)
    = b_s+\O(\varepsilon),
\end{equation*}
so that in $H^1(\Omega)$, 
\begin{equation*}
    \widetilde u_{\delta,\pm}^\alpha   =  A_{\delta,\pm}V_b^\alpha
    +\O(1)\,, \quad \text{with }\  A_{\delta,\pm} :=  -\frac{v^2}{2\omega_0\bigl(\omega_\pm^\alpha(\delta)-\omega_0\bigr)}.
\end{equation*}
Since $\omega_\pm^\alpha(\delta)-\omega_0  =\pm\varepsilon\sqrt\lambda+\O(\varepsilon^2)$, we have
\begin{equation*}
    \|\widetilde u_{\delta,\pm}^\alpha\|_{L^2(Y)}
    =
    |A_{\delta,\pm}|\,\|V_b^\alpha\|_{L^2(\Omega)}
    \bigl(1+\O(\varepsilon)\bigr).
\end{equation*}
The $L^2(Y)$-normalized Bloch mode is therefore
\begin{equation*}
    u_{\delta,\pm}^\alpha
    =
    e^{\mathrm i\theta_{\delta,\pm}}
    \frac{\widetilde u_{\delta,\pm}^\alpha}
    {\|\widetilde u_{\delta,\pm}^\alpha\|_{L^2(Y)}},
\end{equation*}
where the phase is chosen so that $e^{\mathrm i\theta_{\delta,\pm}}A_{\delta,\pm}/|A_{\delta,\pm}|=1$. It follows that
\begin{equation*}
    u_{\delta,\pm}^\alpha
    =
    \frac{V_b^\alpha}{\|V_b^\alpha\|_{L^2(\Omega)}}
    +\O(\varepsilon)
    \qquad\text{in }H^1(\Omega),
\end{equation*}
while $ u_{\delta,\pm}^\alpha=\O(\varepsilon)$ in $H^1(D)$.  Since $\varepsilon=\delta^{1/2}$, this proves \eqref{eq:case2-exterior-mode} and \eqref{eq:case2-interior-small}. The proof is complete. 
\end{proof}

\begin{remark}
The simplicity assumption in \Cref{prop:case2-bloch-eigenmodes} is only used to avoid extra notation: for a positive eigenvalue of higher multiplicity, the same conclusion holds for any branch whose interior coefficient vector converges to a fixed vector in the corresponding eigenspace.

By contrast, the zero-leading subspace $\ker\widetilde\Gamma_\alpha\oplus \ker\widetilde\Gamma_\alpha^\ast$ identified in \Cref{thm:case2-asymptotics} accounts for the remaining $m+r-2p_\alpha$ branches, whose frequency shifts are $\O(\delta)$. Their leading shifts and coefficient vectors are determined by the next-order effective pencil restricted to this subspace. Since this pencil may couple the two summands, we do not assert that these modes admit a universal classification as either interior- or exterior-cell-dominated.
\end{remark}

\subsection{Bandgap opening} \label{sec:bandgap}
Throughout this subsection, we restrict ourselves to the simplest setting: a lattice $\Lambda\subset\R^3$ with a \emph{single} resonator per unit cell ($N=1$). Let
\begin{equation*}
    0<\mu_1<\mu_2<\cdots
\end{equation*}
be the distinct positive Neumann eigenvalues of $-\Delta$ on $D$, and set
\begin{equation*}
    \omega_n^{\rm Neu}:=v_b\sqrt{\mu_n},\qquad n\ge 1,
\end{equation*}
so that $\omega_n^{\rm Neu}$ is the corresponding interior Neumann frequency at the material wave speed $v_b$. We assume that the first $J\ge 2$ eigenvalues $\mu_1,\dots,\mu_J$ are simple, satisfying
\begin{equation}\label{eq:bandgap-interior-regularity}
    \mu_n\notin\sigma_{\rm Dir}(-\Delta;D),
    \qquad 1\le n\le J,
\end{equation}
and impose the stronger uniform exterior nonresonance condition
\begin{equation}\label{eq:bandgap-case1-uniform}
\left[\frac{(\omega_1^{\rm Neu})^2}{v^2},\frac{(\omega_J^{\rm Neu})^2}{v^2}\right]
\cap \sigma_{\alpha,\mathrm{Dir}}(-\Delta;\Omega) = \varnothing
\qquad\text{for every }\alpha\in Y^*.
\end{equation}

\begin{remark} \label{rem:sufficient-case1}
    A simple sufficient condition for \eqref{eq:bandgap-case1-uniform} is that the first $J$ scaled interior Neumann eigenvalues $(v_b^2/v^2)\mu_n$ lie below the bottom of the exterior quasiperiodic Dirichlet spectrum. Specifically, let
\begin{equation*}
    \lambda_{\mathrm{ext},1}^D
:=
\min_{\alpha\in Y^*}\lambda_1^D(\alpha),
\end{equation*}
where $\lambda_1^D(\alpha)$ is the first eigenvalue of $-\Delta$ on $\Omega$ with the Dirichlet boundary condition on $\partial D$ and the $\alpha$-quasiperiodic boundary condition on $\partial Y$. Then,  \eqref{eq:bandgap-case1-uniform} holds provided that
\begin{equation*}
    \frac{v_b^2}{v^2}\,\mu_J<\lambda_{\mathrm{ext},1}^D,
\qquad\text{or, equivalently,}\qquad
\frac{v_b}{v}<\sqrt{\frac{\lambda_{\mathrm{ext},1}^D}{\mu_J}}.
\end{equation*}
Since $\lambda_{\mathrm{ext},1}^D>0$ and $\mu_J$ is fixed, this inequality is satisfied whenever the wave-speed ratio $v_b/v$ is sufficiently small. 
\end{remark}

Under \eqref{eq:bandgap-case1-uniform}, compactness of $Y^*$ and the continuous dependence of the exterior Dirichlet eigenvalues on $\alpha$ give a uniform positive distance between the interval in \eqref{eq:bandgap-case1-uniform} and the exterior Dirichlet spectrum. Consequently, the exterior quasiperiodic DtN maps are holomorphic with uniform local operator bounds near $\omega_n^{\rm Neu}$, $1\le n\le J$.

For each $1\le n\le J$, set $z:=\omega-\omega_n^{\rm Neu}$. By \eqref{eq:bandgap-interior-regularity}, the exact Lyapunov--Schmidt reduction of \eqref{eq:Tdelta-alpha}, normalized using \eqref{eq:interior-derivative-normalization}, gives a scalar Schur complement
\begin{equation*}
    f_n^\alpha(z,\delta)
    =
    z-\delta\,\mc C_\alpha^{\mathrm{reg}}(\omega_n^{\rm Neu})
    +r_n^\alpha(z,\delta).
\end{equation*}
Here $f_n^\alpha$ is analytic in $(z,\delta)$ and continuous in $\alpha$, and there exist constants $\rho,C_{\rm LS}>0$, independent of $n$ and $\alpha$, such that
\begin{equation*}
    |r_n^\alpha(z,\delta)|
    \leq
    C_{\rm LS}\bigl(|z|^2+|\delta||z|+|\delta|^2\bigr),
    \qquad |z|+|\delta|<\rho.
\end{equation*}
Moreover, $\omega_n^{\rm Neu}+z$ is a Bloch eigenfrequency near $\omega_n^{\rm Neu}$ if and only if $f_n^\alpha(z,\delta)=0$, with the same algebraic multiplicity. Since $\partial_zf_n^\alpha(0,0)=1$, the uniform analytic implicit-function theorem yields, in a fixed neighborhood independent of $\alpha$, a unique simple zero that is continuous in $\alpha$. Thus, for every $n=1,\dots,J$, there is a unique resonant Bloch band function $\alpha\mapsto \omega^\alpha_n(\delta)$ on $Y^*$ that converges to $\omega_n^{\rm Neu}$ as $\delta\to 0$, and
\begin{equation}\label{eq:bandgap-band}
\omega^\alpha_n(\delta)
=
\omega_n^{\rm Neu}+\delta\,\mc C^{\mathrm{reg}}_\alpha(\omega_n^{\rm Neu})+\O(\delta^2)
\end{equation}
uniformly in $\alpha\in Y^*$. Here, $\mc C^{\mathrm{reg}}_\alpha(\omega_n^{\rm Neu})\in \R$ is the scalar regular quasiperiodic frequency-dependent capacitance coefficient at $\omega_n^{\rm Neu}$, which is real-valued by \cref{lem:hermitan}.

We first state a uniform boundedness property.

\begin{lemma}
\label{lem:uniform-periodic-bound-revised}
Under \eqref{eq:bandgap-case1-uniform}, there exists a constant $M_J>0$ such that
\begin{equation*}
    \bigl|\mc C^{\mathrm{reg}}_\alpha(\omega_n^{\rm Neu})\bigr|\le M_J
\qquad \text{for every } 1\le n\le J \text{ and } \alpha\in Y^*.
\end{equation*}
\end{lemma}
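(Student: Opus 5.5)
The bound will follow from writing the coefficient as a boundary pairing and controlling the pairing uniformly in $\alpha$ by compactness. Since $N=1$ and each $\mu_n$ is simple, $\mc C^{\mathrm{reg}}_\alpha(\omega_n^{\rm Neu})$ is the scalar
\begin{equation*}
\mc C^{\mathrm{reg}}_\alpha(\omega_n^{\rm Neu}) = -\frac{v_b^2}{2\omega_n^{\rm Neu}}\bigl\langle \Lambda_{\rm ext}^{\alpha,\omega_n^{\rm Neu}}[g_n],g_n\bigr\rangle_{\partial D},
\end{equation*}
where $g_n$ is the trace of the $L^2(D)$-normalized Neumann eigenfunction $u_n$. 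The trace does not depend on $\alpha$, and $\|g_n\|_{H^{1/2}(\partial D)}\lesssim \|u_n\|_{H^1(D)}\lesssim (1+\mu_n)^{1/2}$, as in the proof of \cref{lem:sharp-trace}. There are only finitely many indices $1\le n\le J$ and $\omega_n^{\rm Neu}\ge\omega_1^{\rm Neu}>0$, so the prefactors are bounded. It therefore suffices to show
\begin{equation*}
\sup_{\alpha\in Y^*}\ \max_{1\le n\le J}\ \bigl\|\Lambda_{\rm ext}^{\alpha,\omega_n^{\rm Neu}}\bigr\|_{H^{1/2}(\partial D)\to H^{-1/2}(\partial D)}<\infty ,
\end{equation*}
after which duality gives $|\langle \Lambda g_n,g_n\rangle|\le \|\Lambda\|\,\|g_n\|_{H^{1/2}}^2$.

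To obtain this, I will avoid the quasiperiodic Green function and its threshold assumption \eqref{eq:three-regimes-no-threshold}, since that assumption may fail for some $\alpha$. Instead I will use the variational characterization of the exterior DtN map. Set $k_n:=\omega_n^{\rm Neu}/v$ and, for $f\in H^{1/2}(\partial D)$, lift $f$ to $Ef\in H^1(\Omega)$, supported near $\partial D$, with $\|Ef\|_{H^1}\lesssim\|f\|_{H^{1/2}}$. Because $Ef$ vanishes near $\partial Y$, it is $\alpha$-quasiperiodic for every $\alpha$. The exterior solution is $w=Ef+w_0$, where $w_0$ lies in the $\alpha$-quasiperiodic $H^1_0$-type space $H_\alpha$ on $\Omega$ (Dirichlet on $\partial D$) and solves
\begin{equation*}
(-\Delta_\alpha^{D}-k_n^2)w_0=(\Delta+k_n^2)Ef .
\end{equation*}
Here $-\Delta^D_\alpha$ is the self-adjoint quasiperiodic Dirichlet Laplacian on $\Omega$. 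By \eqref{eq:bandgap-case1-uniform} together with the uniform positive distance noted just before this lemma, $\operatorname{dist}(k_n^2,\sigma(-\Delta_\alpha^D))\ge d>0$ uniformly in $\alpha$ and $n$. The spectral theorem then gives $\|(-\Delta^D_\alpha-k_n^2)^{-1}\|_{H_\alpha^{-1}\to H_\alpha}\le C(1+k_n^2)/d$, using that the $H^1$ norm is equivalent to the form norm of $-\Delta^D_\alpha+1$ with constants independent of $\alpha$ (by gauging $u=e^{\i\alpha\cdot x}\tilde u$ with $\alpha$ in the compact set $\overline{Y^*}$). Hence $\|w\|_{H^1(\Omega)}\lesssim \|f\|_{H^{1/2}}$ uniformly.

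The DtN map is then recovered weakly: for test functions $h\in H^{1/2}(\partial D)$,
\begin{equation*}
\langle\Lambda_{\rm ext}^{\alpha,\omega_n^{\rm Neu}}f,h\rangle_{\partial D} = -\int_\Omega\bigl(\nabla w\cdot\overline{\nabla Eh}-k_n^2 w\overline{Eh}\bigr)\,\dd x ,
\end{equation*}
with the sign fixed by the normal pointing out of $D$. Quasiperiodic boundary terms on $\partial Y$ cancel, and they vanish anyway because $Eh$ is supported near $\partial D$. This yields the uniform operator bound and hence the constant $M_J$.

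The main obstacle is justifying that the resolvent bound is uniform in $\alpha$. This reduces to the self-adjointness of $-\Delta^D_\alpha$ for each $\alpha$ and the uniform spectral gap from \eqref{eq:bandgap-case1-uniform}. For the gap, I would rely on the continuity-and-compactness remark preceding the lemma rather than re-prove the continuity of the eigenvalues in $\alpha$.
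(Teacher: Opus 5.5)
Your proposal is correct and follows the paper's proof. You bound the scalar coefficient by $\frac{v_b^2}{2\omega_n^{\rm Neu}}\|\Lambda_{\rm ext}^{\alpha,\omega_n^{\rm Neu}}\|_{H^{1/2}\to H^{-1/2}}\|g_n\|_{H^{1/2}(\partial D)}^2$, use the uniform spectral separation in \eqref{eq:bandgap-case1-uniform} to bound the DtN norms uniformly in $\alpha$, and conclude from the finiteness of $1\le n\le J$; your variational resolvent argument supplies the uniform DtN bound that the paper only asserts, and it does so without the quasiperiodic Green function, which fails at the diffraction thresholds $\omega_n^{\rm Neu}/v=|q+\alpha|$ attained by some $\alpha\in Y^*$.
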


\begin{proof}
Let $g_n$ be the trace of the $L^2(D)$-normalized Neumann eigenfunction associated with $\mu_n$. By \eqref{eq:case1-cap-matrix},
\begin{equation*}
    \bigl|\mc C^{\mathrm{reg}}_\alpha(\omega_n^{\rm Neu})\bigr|
    \leq
    \frac{v_b^2}{2\omega_n^{\rm Neu}}
    \bigl\|\Lambda_{\rm ext}^{\alpha,\omega_n^{\rm Neu}}\bigr\|_{H^{1/2}\to H^{-1/2}}
    \|g_n\|_{H^{1/2}(\partial D)}^2.
\end{equation*}
The DtN norms are uniformly bounded by the spectral separation in \eqref{eq:bandgap-case1-uniform}, and the set $1\le n\le J$ is finite.
\end{proof}

The next theorem gives the separation of the resonant Bloch bands bifurcating from $\omega_n^{\rm Neu}$ and $\omega_{n+1}^{\rm Neu}$ and shows that the intervening interval is a spectral bandgap. It
is the non-subwavelength analogue of the bandgap opening mechanism in
\cite{ammari2017subwavelength}.

\begin{theorem}
\label{thm:uniform-bandgap-revised}
Assume that $\mu_1,\dots,\mu_J$ are simple and that \eqref{eq:bandgap-interior-regularity} and \eqref{eq:bandgap-case1-uniform} hold, and set
\begin{equation*}
    \gamma_J:=\min_{1\le n\le J-1}\bigl(\omega_{n+1}^{\rm Neu}-\omega_n^{\rm Neu}\bigr)>0.
\end{equation*}
Then there exists $\delta_J>0$ such that, for any $0<\delta<\delta_J$ and $1\le n\le J-1$,
\begin{equation}\label{eq:bandgap-strict}
\sup_{\alpha\in Y^*}\omega^\alpha_n(\delta)
\;<\;
\inf_{\alpha\in Y^*}\omega^\alpha_{n+1}(\delta).
\end{equation}
Moreover, the open interval
\begin{equation} \label{def:gapinterval}
    \left(
    \sup_{\alpha\in Y^*}\omega_n^\alpha(\delta),
    \inf_{\alpha\in Y^*}\omega_{n+1}^\alpha(\delta)
    \right)
\end{equation}
contains no Bloch eigenfrequency of \eqref{eq:periodic-bloch-eigenproblem} for any $\alpha\in Y^*$.
\end{theorem}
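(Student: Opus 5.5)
The plan has two parts: first separate the resonant bands using the uniform expansion \eqref{eq:bandgap-band}, then rule out any other Bloch eigenfrequencies in the gap by an \emph{a priori} localization argument.

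\textbf{Step 1: band separation.} The uniform expansion \eqref{eq:bandgap-band} gives a constant $C_J$, independent of $\alpha$ and $n$, such that $|\omega_n^\alpha(\delta)-\omega_n^{\rm Neu}-\delta\,\mc C^{\rm reg}_\alpha(\omega_n^{\rm Neu})|\le C_J\delta^2$ for $\delta$ small. Combined with \cref{lem:uniform-periodic-bound-revised}, this yields
\begin{equation*}
\sup_{\alpha}\omega_n^\alpha(\delta)\le \omega_n^{\rm Neu}+M_J\delta+C_J\delta^2,
\qquad
\inf_\alpha\omega_{n+1}^\alpha(\delta)\ge \omega_{n+1}^{\rm Neu}-M_J\delta-C_J\delta^2.
\end{equation*}
Choosing $\delta_J$ with $2M_J\delta_J+2C_J\delta_J^2<\gamma_J$ gives \eqref{eq:bandgap-strict}. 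It also shows that the gap interval \eqref{def:gapinterval} contains $[\omega_n^{\rm Neu}+\rho',\,\omega_{n+1}^{\rm Neu}-\rho']$ once $\delta$ is small, for any fixed $\rho'>0$.

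\textbf{Step 2: no other eigenfrequencies in the gap.} This is the main obstacle, because Step 1 only controls the branches that bifurcate from Neumann frequencies. I would show that, for $\delta$ small and uniformly in $\alpha$, every real Bloch eigenfrequency $\omega$ in $[\omega_1^{\rm Neu}-\rho,\omega_J^{\rm Neu}+\rho]$ lies within $\rho$ of some $\omega_n^{\rm Neu}$.

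First choose $\rho$ smaller than the Lyapunov--Schmidt radius. Next, using the uniform separation from the exterior Dirichlet spectrum under \eqref{eq:bandgap-case1-uniform}, shrinking $\rho$ if needed, $\Lambda_{\rm ext}^{\alpha,\omega}$ is bounded $H^{1/2}\to H^{-1/2}$ uniformly in $\alpha\in Y^*$ and $\omega$ in this compact real interval. By compactness of the interval and continuity of the interior DtN map away from Dirichlet eigenvalues, I also need $\Lambda_{\rm in}^\omega$ to be invertible with uniformly bounded inverse on the set $K$ of points at distance $\ge\rho$ from all $\omega_n^{\rm Neu}$. The potential difficulty is the interior Dirichlet eigenvalues inside $K$, where $\Lambda_{\rm in}^\omega$ itself has poles. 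I would handle these by working with the NtD map $(\Lambda_{\rm in}^\omega)^{-1}$, which is analytic away from the Neumann spectrum, and rewriting \eqref{eq:Tdelta-alpha} as $\partial_\nu u|_-=\delta\,\Lambda_{\rm ext}^{\alpha,\omega}u|_{\partial D}$. Equivalently, set $h=\partial_\nu u|_-$ and obtain the equation $h=\delta\,\Lambda_{\rm ext}^{\alpha,\omega}N^\omega h$, where $N^\omega$ is the interior NtD map. Since $N^\omega$ is uniformly bounded on $K$, $I-\delta\Lambda_{\rm ext}^{\alpha,\omega}N^\omega$ is invertible for $\delta<\delta_J$ after shrinking $\delta_J$, so there are no eigenfrequencies in $K$.

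\textbf{Step 3: conclusion.} Within the $\rho$-neighborhoods of $\omega_n^{\rm Neu}$, the uniform implicit-function statement preceding \cref{lem:uniform-periodic-bound-revised} says the only zero of $f_n^\alpha$ is $\omega_n^\alpha(\delta)$, and this lies outside the gap by definition of the sup and inf. Since the Bloch eigenfrequencies are real for real $\delta$ by self-adjointness, the gap interval is free of spectrum for every $\alpha$.
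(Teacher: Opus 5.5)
Your proposal is correct and follows essentially the same route as the paper. The paper separates the bands using \eqref{eq:bandgap-band} together with the uniform bound $M_J$. It then rules out eigenfrequencies on the compact intervals $K_n$ between the Neumann neighborhoods by rewriting the problem through the interior NtD map as $(I-\delta\Lambda_{\rm ext}^{\alpha,\omega}\mathcal N_{\rm in}^{\omega})q=0$ and inverting with a Neumann series, and it handles the neighborhoods themselves by uniqueness of the zero of the scalar reduction, exactly as you do.
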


\begin{proof}
By \eqref{eq:bandgap-band} and \cref{lem:uniform-periodic-bound-revised}, there exist constants $C_J>0$ and $\delta_J^{(0)}>0$ such that
\begin{equation}\label{eq:bandgap-pointwise-est}
\bigl|\omega^\alpha_n(\delta)-\omega_n^{\rm Neu}\bigr|\le M_J\,\delta+C_J\,\delta^2
\qquad \text{for every } 1\le n\le J,\;\alpha\in Y^*,\;0<\delta<\delta_J^{(0)}.
\end{equation}
Hence, for every $1\le n\le J-1$, we have 
\begin{equation*}
\inf_{\alpha\in Y^*}\omega^\alpha_{n+1}(\delta)-\sup_{\alpha\in Y^*}\omega^\alpha_n(\delta)
\ge
\bigl(\omega_{n+1}^{\rm Neu}-\omega_n^{\rm Neu}\bigr)-2M_J\,\delta-2C_J\,\delta^2
\ge
\gamma_J-2M_J\,\delta-2C_J\,\delta^2.
\end{equation*}
Setting
\begin{equation*}
\delta_J:=\min\!\left\{\delta_J^{(0)},\;\frac{\gamma_J}{4M_J},\;\sqrt{\frac{\gamma_J}{8C_J}}\right\},
\end{equation*}
we obtain, for every $0<\delta<\delta_J$, that
\begin{equation*}
\inf_{\alpha\in Y^*}\omega^\alpha_{n+1}(\delta)-\sup_{\alpha\in Y^*}\omega^\alpha_n(\delta)
\ge \frac{\gamma_J}{4}>0,
\end{equation*}
which proves \eqref{eq:bandgap-strict}.

It remains to exclude other Bloch eigenfrequencies from this interval. Choose $\eta>0$ such that the intervals
\begin{equation*}
    I_j:=\bigl(\omega_j^{\rm Neu}-\eta,\omega_j^{\rm Neu}+\eta\bigr),
    \qquad 1\le j\le J,
\end{equation*}
are pairwise disjoint and the exact scalar reduction above has in each $I_j$, for every $\alpha\in Y^*$ and all sufficiently small $\delta$, precisely one characteristic value, namely $\omega_j^\alpha(\delta)$. For $1\le n\le J-1$, the compact interval
\begin{equation*}
    K_n:=\bigl[\omega_n^{\rm Neu}+\eta,
    \omega_{n+1}^{\rm Neu}-\eta\bigr]
\end{equation*}
avoids the interior Neumann frequencies. Let $\mathcal N_{\rm in}^{\omega}$ denote the interior NtD map. On $K_n$, both $\mathcal N_{\rm in}^{\omega}$ and $\Lambda_{\rm ext}^{\alpha,\omega}$ are uniformly bounded in $\omega$ and $\alpha$. If $u$ is a Bloch eigenmode and $q:=\partial_\nu u|_-$, then
\begin{equation*}
    \bigl(I-\delta\Lambda_{\rm ext}^{\alpha,\omega}
    \mathcal N_{\rm in}^{\omega}\bigr)q=0.
\end{equation*}
For sufficiently small $\delta$, the operator in parentheses is uniformly invertible by a Neumann series, so $q=0$ and hence $u=0$. Thus, $K_n$ is spectrally empty. After decreasing $\delta_J$ over the finitely many $n$, if necessary, \eqref{eq:bandgap-pointwise-est} places the two relevant band functions in $I_n$ and $I_{n+1}$. It follows that the interval \eqref{def:gapinterval} is a spectral bandgap, which completes the proof. 
\end{proof}

\subsection{Generic Dirac degeneracies in a honeycomb lattice}

In this subsection, we consider the two-dimensional case and extend the subwavelength honeycomb analysis of \cite{honeycomb1} beyond the subwavelength regime through the frequency-dependent capacitance matrix. All lattice objects below are therefore understood in dimension two. We use the standard honeycomb lattice vectors
\begin{equation*}
    \ell_1=a\left(\frac{\sqrt3}{2},\frac12\right),
    \qquad
    \ell_2=a\left(\frac{\sqrt3}{2},-\frac12\right),
    \qquad
    \Lambda=\mathbb Z\ell_1\oplus\mathbb Z\ell_2,
\end{equation*}
where $a>0$ is the lattice constant. Following the configuration in \cite{honeycomb1}, assume that the periodic structure is honeycomb, with two congruent resonators $D_1,D_2\subset Y$ in each cell. Let $Y=Y_1\cup Y_2$ be the standard triangular decomposition of the honeycomb cell, chosen so that $D_j\subset Y_j$, and set
\begin{equation*}
    \Gamma_3:=\partial Y_1\cap\partial Y_2.
\end{equation*}
We denote by $R_3$ the reflection across $\Gamma_3$. Specifically,
\begin{assumption} \label{assumphoneycomb}
Let $\Lambda^*=\mathbb Zb_1\oplus\mathbb Zb_2$ be the reciprocal lattice,
where $b_i\cdot\ell_j=2\pi\delta_{ij}$, and choose
\begin{equation*}
    \mathrm K=\frac{2b_1+b_2}{3},
    \qquad
    \mathrm K'=\frac{b_1+2b_2}{3}
    \equiv-\mathrm K\pmod{\Lambda^*}
\end{equation*}
as representatives of the two inequivalent Brillouin-zone corners.
Choose the origin at a threefold rotation center and let $R$ be the
counterclockwise rotation by $2\pi/3$. Then $R\Lambda=\Lambda$ and
$R\mathrm K\equiv\mathrm K$, $R\mathrm K'\equiv\mathrm K'$ modulo
$\Lambda^*$.
We suppose that
\begin{enumerate}
    \item[(i)] the full periodic configuration $(D_1\cup D_2)+\Lambda$ is
    invariant under $R$, and each $D_j$ is invariant under the local
    threefold rotation $R_j$ about its center $z_j$;
    \item[(ii)] the pair $D_1\cup D_2$ is invariant under inversion through
    the center of the cell and under the reflection $R_3$, which interchanges
    the two resonators: $R_3(D_1)=D_2$;
    \item[(iii)] $\omega_0>0$ is a simple Neumann eigenfrequency of a single resonator, and 
\begin{equation}\label{eq:honeycomb-regular-assumption}
\frac{\omega_0^2}{v_b^2}\in \sigma_{\rm Neu}(-\Delta;D_1)
=\sigma_{\rm Neu}(-\Delta;D_2),
\qquad
\frac{\omega_0^2}{v_b^2}\notin
\sigma_{\rm Dir}(-\Delta;D_1)\cup\sigma_{\rm Dir}(-\Delta;D_2),
\end{equation}
and the exterior problems at the two high-symmetry points are regular:
\begin{equation}\label{eq:honeycomb-exterior-regular-assumption}
\frac{\omega_0^2}{v^2}\notin \sigma_{\alpha,\mathrm{Dir}}(-\Delta;\Omega)
\quad \text{for }\alpha\in\{\mathrm{K},\mathrm{K}'\}.
\end{equation}
\end{enumerate}
Let $\widetilde f$ denote the $\alpha$-quasiperiodic extension of $f$,
defined by $\widetilde f(x+\ell)=e^{\mathrm i\alpha\cdot\ell}f(x)$ for
$\ell\in\Lambda$. Let $U_R^\alpha$ be the unitary Bloch action obtained
by applying $f\mapsto\widetilde f\circ R^{-1}$ and restricting back to
the reference cell. On the two-dimensional resonator-indexed mode space at $\mathrm K$, we use the basis in which
\begin{equation*}
    U_R^{\mathrm K}=\diag(\tau,\overline\tau),
    \qquad
    \tau=e^{2\pi\mathrm i/3}.
\end{equation*}
The conjugate representation is used at $\mathrm K'$.
\end{assumption}

\begin{remark}\label{rem:sufficietcond}
    We emphasize that the simplicity condition in \Cref{assumphoneycomb} (iii) holds generically \cite{generic,teytel1999rare}. As in \cref{rem:sufficient-case1}, the exterior non-resonance condition \eqref{eq:honeycomb-exterior-regular-assumption} can be enforced by taking the wave-speed ratio $v_b/v$ sufficiently small. By continuity of the exterior Dirichlet eigenvalues with respect to $\alpha$,
    the condition \eqref{eq:honeycomb-exterior-regular-assumption}
  at $\mathrm{K}$ and $\mathrm{K}'$ persists in neighborhoods of these points, where the regular quasiperiodic frequency-dependent capacitance matrix $\mc C^{\mathrm{reg}}_\alpha(\omega_0)$ is therefore well defined.
\end{remark}

By condition (iii), the two copies of the simple interior mode give a two-dimensional limiting space, spanned by the corresponding boundary traces. In the basis indexed by
the two resonators, we write
\begin{equation*}
    \mc C^\alpha_{\rm hc}(\omega_0):=\mc C^{\mathrm{reg}}_\alpha(\omega_0)
    \in \mathbb C^{2\times 2}.
\end{equation*}
\begin{proposition}
\label{prop:honeycomb-normal-form}
Under the honeycomb symmetry \Cref{assumphoneycomb} above, there exist orthonormal bases of the two-dimensional mode spaces at $\mathrm{K}$ and $\mathrm{K}'$ such that
\begin{equation}\label{eq:honeycomb-degenerate-cap}
    \mc C^\mathrm{K}_{\rm hc}(\omega_0)=c_\mathrm{K}(\omega_0)\,I_2,
\qquad
    \mc C^{\mathrm{K}'}_{\rm hc}(\omega_0)=c_{\mathrm{K}'}(\omega_0)\,I_2,
\end{equation}
for some $c_\mathrm{K}(\omega_0),c_{\mathrm{K}'}(\omega_0)\in\mathbb R$. Moreover, near the $\mathrm{K}$ point, the following expansion holds:
\begin{equation}\label{eq:honeycomb-linear-normal-form}
\mc C^{\mathrm{K}+\xi}_{\rm hc}(\omega_0)
=
c_\mathrm{K}(\omega_0)\,I_2
+
\begin{pmatrix}
0 & \gamma_\mathrm{K}(\omega_0)(\xi_1-i\xi_2)\\
\overline{\gamma_\mathrm{K}(\omega_0)}(\xi_1+i\xi_2) & 0
\end{pmatrix}
+\O(|\xi|^2)
\end{equation}
for some $\gamma_\mathrm{K}(\omega_0)\in\mathbb C$. The analogous expansion holds at $\mathrm{K}'$, with a coefficient $\gamma_{\mathrm{K}'}(\omega_0)\in\mathbb C$.
\end{proposition}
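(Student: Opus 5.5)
Our approach follows the symmetry argument of \cite{honeycomb1} for the subwavelength capacitance matrix, transplanted to $\mc C^{\mathrm{reg}}_\alpha(\omega_0)$ from \eqref{eq:case1-cap-matrix}. First, we check covariance. Since the periodic configuration is $R$-invariant, each $D_j$ is $R_j$-invariant, and the Laplacian commutes with rotations, the Bloch action $U_R^\alpha$ has three properties. It maps $\alpha$-quasiperiodic exterior Dirichlet solutions to $R\alpha$-quasiperiodic ones. It intertwines the exterior DtN maps, $\Lambda_{\rm ext}^{R\alpha,\omega_0} U_R = U_R\Lambda_{\rm ext}^{\alpha,\omega_0}$ on traces. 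It preserves the interior Neumann eigenspace spanned by $g_1,g_2$, which is two-dimensional by the simplicity in (iii). Because $R\mathrm K\equiv\mathrm K$, we obtain $U_R^{\mathrm K}\mc C^{\mathrm K}_{\rm hc}(\omega_0)=\mc C^{\mathrm K}_{\rm hc}(\omega_0)U_R^{\mathrm K}$. In the basis where $U_R^{\mathrm K}=\diag(\tau,\overline\tau)$ with $\tau\neq\overline\tau$, commutation forces $\mc C^{\mathrm K}_{\rm hc}$ to be diagonal.

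Second, we show the two diagonal entries coincide. We use the reflection/inversion symmetry (ii), composed with complex conjugation as needed, which maps $\mathrm K$ to itself modulo $\Lambda^*$ and swaps the two eigenlines $\tau\leftrightarrow\overline\tau$ of $U_R^{\mathrm K}$. An antiunitary or unitary operator that interchanges the two eigenvectors and commutes with the DtN pairing (via Green's identity, as in \cref{prop:complex} and \cref{lem:hermitan}) equates the diagonal entries. Realness follows from Hermiticity (\cref{lem:hermitan}), which gives $\mc C^{\mathrm K}_{\rm hc}=c_{\mathrm K}I_2$ with $c_{\mathrm K}\in\R$. The same argument applies at $\mathrm K'$ with the conjugate representation.

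Third, we obtain the linear normal form. By \cref{rem:sufficietcond}, $\alpha\mapsto\Lambda_{\rm ext}^{\alpha,\omega_0}$ is real-analytic near $\mathrm K$. This follows, for instance, from the representation \eqref{rephelm1} of the quasiperiodic Green function and analytic invertibility of $\mc S_D^{\alpha,k}$. Hence $\mc C^{\mathrm K+\xi}_{\rm hc}=c_{\mathrm K}I_2+\xi_1A_1+\xi_2A_2+\O(|\xi|^2)$ with Hermitian $A_1,A_2$. Covariance now reads $U_R\,\mc C^{\mathrm K+\xi}U_R^{-1}=\mc C^{\mathrm K+R\xi}$, which yields $U_R(\xi\cdot A)U_R^{-1}=(R\xi)\cdot A$. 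Writing $A(\xi)=\xi_1A_1+\xi_2A_2$ in the $\diag(\tau,\overline\tau)$ basis, the diagonal entries $d_j(\xi)$ are linear functionals satisfying $d_j(R\xi)=d_j(\xi)$. This forces $d_j=0$, since $R$ has no fixed nonzero vector in $\R^2$. The off-diagonal entry $b(\xi)$ satisfies $b(R\xi)=\tau^2 b(\xi)=\overline\tau b(\xi)$ or the conjugate. Among complex linear forms in $\xi$, the only eigenvectors of $\xi\mapsto R\xi$ are multiples of $\xi_1\pm\mathrm i\xi_2$. This gives $b(\xi)=\gamma_{\mathrm K}(\xi_1-\mathrm i\xi_2)$ (relabelling the basis if the other sign appears), and Hermiticity supplies the lower-left entry.

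The main obstacle is step two. We must construct precisely the symmetry operator exchanging the two $R$-eigenlines while preserving the pairing $\langle\Lambda_{\rm ext}^{\mathrm K,\omega_0}[\cdot],\cdot\rangle$ in the sesquilinear convention, and verify that the Neumann trace basis transforms consistently. In particular, the choice of real eigenfunctions $u_{j}$ with $u_2=u_1\circ R_3$ must be compatible with $R_j$-invariance, since simplicity makes $u_j$ $R_j$-invariant. We would first try $\mathcal T f:=\overline{\widetilde f\circ(-x)}$, namely inversion composed with conjugation. This preserves $\mathrm K$ quasiperiodicity, and it commutes with $U_R$ up to conjugating $\tau$, hence swaps the eigenlines. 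If that fails, we would use $R_3$ combined with conjugation.
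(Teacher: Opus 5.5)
Your proposal follows the paper's proof essentially step for step. Rotation covariance makes $\mc C^{\mathrm K}_{\rm hc}(\omega_0)$ diagonal in the $\diag(\tau,\overline\tau)$ basis. Your first candidate $\mathcal T$ (inversion composed with conjugation) is exactly the paper's antiunitary $S=PT$; it swaps the two $U_R$-eigenlines, and together with Hermiticity this equates the real diagonal entries. Differentiating the covariance relation then kills the diagonal linear terms and forces the off-diagonal term to be a multiple of $\xi_1-\mathrm{i}\xi_2$. Two small points. Writing the covariance with a fixed $U_R$ instead of $U_R^{\mathrm K+\xi}$ is harmless at first order, because the $\alpha$-derivative of $U_R^\alpha$ drops out when conjugating the scalar matrix $c_{\mathrm K}I_2$, as the paper notes. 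Also, no relabelling is needed, since $b(R\xi)=\tau^2 b(\xi)$ already selects $\xi_1-\mathrm{i}\xi_2$.
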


\begin{proof}
The quasiperiodic exterior problem and the interior Neumann eigenspaces are covariant under the honeycomb symmetries. With the induced action $U_R^\alpha$ defined in \Cref{assumphoneycomb}, we have
\begin{equation} \label{eq:covarianeq}
    \mc C^{R\alpha}_{\rm hc}(\omega_0)
    =
    U_R^\alpha\,\mc C^\alpha_{\rm hc}(\omega_0)\,
    (U_R^\alpha)^{-1}.
\end{equation}
At the corner $\alpha=\mathrm{K}$, write $U_R:=U_R^{\mathrm K}$. Since
$R\mathrm{K}\equiv \mathrm{K}$ modulo the reciprocal lattice, we have
\begin{equation*}
\mc C^\mathrm{K}_{\rm hc}(\omega_0)\,U_R = U_R\,\mc C^\mathrm{K}_{\rm hc}(\omega_0).
\end{equation*}
In the resonator-indexed basis of \Cref{assumphoneycomb},
$U_R=\diag(\tau,\overline\tau)$. Since
$\tau\ne\overline\tau$, any matrix that commutes with
$\diag(\tau,\overline\tau)$ is itself diagonal, so
\begin{equation*}
    \mc C^\mathrm{K}_{\rm hc}(\omega_0) = \diag(c_1,c_2),
\qquad c_1,c_2\in\mathbb R,
\end{equation*}
where the diagonal entries are real by Hermiticity.

Let $P$ denote spatial inversion ($x\mapsto -x$ about the center of the unit cell) and $T$ complex conjugation. Both act on Bloch eigenspaces by $\alpha\mapsto -\alpha$, so the composition $S:=PT$ preserves the Bloch eigenspace at $\mathrm{K}$ and is anti-unitary. Moreover, $S$ commutes with both $U_R$ and $\mc C^\mathrm{K}_{\rm hc}(\omega_0)$. For any $U_R$-eigenvector $v$ with $U_R v=\tau v$, anti-linearity of $S$ gives
\begin{equation*}
U_R(Sv) = S(U_R v) = S(\tau v) = \overline\tau\,(Sv).
\end{equation*}
Hence, $S$ maps the $\tau$-eigenspace into the $\overline\tau$-eigenspace, and up to rescaling we may set $v_2 = Sv_1$. Applying the commutation $S\,\mc C^\mathrm{K}_{\rm hc}(\omega_0) = \mc C^\mathrm{K}_{\rm hc}(\omega_0)\,S$ to $v_1$ then yields
\begin{equation*}
c_2\,v_2
= \mc C^\mathrm{K}_{\rm hc}(\omega_0)\,v_2
= \mc C^\mathrm{K}_{\rm hc}(\omega_0)(Sv_1)
= S\,\mc C^\mathrm{K}_{\rm hc}(\omega_0)\,v_1
= S(c_1 v_1)
= \overline{c_1}\,(Sv_1)
= c_1\,v_2,
\end{equation*}
where the last equality uses $c_1\in\mathbb R$. Hence $c_1=c_2$, which proves \eqref{eq:honeycomb-degenerate-cap}. The same argument applies at $\mathrm{K}'$.

It remains to identify the first-order correction term in \eqref{eq:honeycomb-linear-normal-form}. Let 
\begin{equation*}
        L_\mathrm{K}(\xi):=D_\alpha\,\mc C^\alpha_{\rm hc}(\omega_0)\bigr|_{\alpha=\mathrm{K}}[\xi].
\end{equation*}
Differentiating the covariance relation \eqref{eq:covarianeq} at
$\alpha=\mathrm{K}$ and using
$\mc C^\mathrm{K}_{\rm hc}(\omega_0)=c_\mathrm{K}(\omega_0)I_2$
yields
\begin{equation*}
    L_\mathrm{K}(R\xi)=U_R\,L_\mathrm{K}(\xi)\,U_R^{-1}.
\end{equation*}
Indeed, the terms involving $D_\alpha U_R^\alpha$ cancel because the
matrix at $\mathrm K$ is scalar.
The diagonal entries of $L_\mathrm{K}(\xi)$ are real linear and invariant under $R$, so they vanish. The upper off-diagonal entry $\ell(\xi)$ satisfies
\begin{equation*}
    \ell(R\xi)=\tau^2\,\ell(\xi).
\end{equation*}
One can see that the space of complex-valued real-linear forms $\ell(\xi) = a \xi_1 + b \xi_2$, $a,b \in \mathbb{C}$, with the above transformation relation, is one-dimensional and is spanned by $\xi_1-i\xi_2$. By Hermiticity of $\mc C^\alpha_{\rm hc}(\omega_0)$ for real $\alpha$, the lower off-diagonal entry is the complex conjugate of the upper one, giving \eqref{eq:honeycomb-linear-normal-form}.
\end{proof}

\begin{lemma}
\label{lem:exact-honeycomb-crossing}
Under \Cref{assumphoneycomb}, suppose that
$\gamma_\mathrm{K}(\omega_0)\neq0$. For every sufficiently small real $\delta>0$,
the two Bloch eigenfrequency branches converging to $\omega_0$ may be labeled
so that
\begin{equation}\label{eq:exact-dirac-crossing}
    \omega_+^\mathrm{K}(\delta)=\omega_-^\mathrm{K}(\delta)
    =:\omega_{\rm D}^\mathrm{K}(\delta),
    \qquad
    \omega_{\rm D}^\mathrm{K}(\delta)
    =\omega_0+\delta c_\mathrm{K}(\omega_0)+\O(\delta^2),
\end{equation}
and, as $\xi\to0$,
\begin{equation}\label{eq:exact-dirac-cone}
    \omega_\pm^{\mathrm{K}+\xi}(\delta)
    =\omega_{\rm D}^\mathrm{K}(\delta)
    \pm v_{\rm D}^\mathrm{K}(\delta)|\xi|
    +\O_\delta(|\xi|^2),
    \qquad
    v_{\rm D}^\mathrm{K}(\delta)
    =\delta|\gamma_\mathrm{K}(\omega_0)|+\O(\delta^2)>0.
\end{equation}
The analogous statements hold at $\mathrm{K}'$.
\end{lemma}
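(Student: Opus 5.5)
The plan is to work with the exact (not merely leading-order) finite-dimensional reduction of \eqref{eq:Tdelta-alpha} at the two-dimensional interior Neumann space. I will also make the honeycomb symmetries act on that reduced matrix, not only on $\mc C^{\alpha}_{\rm hc}(\omega_0)$.

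\textbf{Step 1: an exact reduced $2\times2$ pencil.} As in the proof of \cref{thm:case1-asymptotics}, I will use a Grushin/Lyapunov--Schmidt reduction near $(\omega_0,0)$ with $\alpha$ near $\mathrm K$; this is legitimate by \cref{rem:sufficietcond}. After normalizing with \eqref{eq:interior-derivative-normalization}, it gives an analytic $2\times2$ matrix function
\begin{equation*}
F^\alpha(z,\delta)=zI_2-\delta\,\mc C^\alpha_{\rm hc}(\omega_0)+r^\alpha(z,\delta),\qquad |r^\alpha|\lesssim |z|^2+\delta|z|+\delta^2,
\end{equation*}
depending smoothly on $\alpha$. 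Bloch eigenfrequencies near $\omega_0$ are exactly the zeros of $\det F^\alpha$, with the same multiplicities. The symmetries $R$ and $S=PT$ commute with the full operator $\mathcal T^\alpha_\delta(\omega)$ for real $\omega,\delta$ and map interior Neumann traces to interior Neumann traces. I will therefore choose the reduction data (projections and coordinate maps) equivariantly. Then $F^\alpha$ inherits the covariance $F^{R\alpha}=U_R^\alpha F^\alpha (U_R^\alpha)^{-1}$ and the anti-unitary commutation with $S$ at $\alpha=\mathrm K$.

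\textbf{Step 2: the exact degeneracy at $\mathrm K$.} Repeat the argument of \cref{prop:honeycomb-normal-form}, now applied to $F^{\mathrm K}(z,\delta)$ for real $z,\delta$. Commutation with $U_R=\diag(\tau,\overline\tau)$ forces $F^{\mathrm K}$ to be diagonal, $\diag(f_1,f_2)$. Commutation with $S$ gives $f_2(z,\delta)=\overline{f_1(\bar z,\delta)}$. Self-adjointness of the reduced problem at real frequencies makes $f_1$ real on the real axis, so $f_1=f_2$ there, and hence identically by analyticity. The scalar equation $f_1(z,\delta)=z-\delta c_{\mathrm K}+\O(z^2+\delta|z|+\delta^2)=0$ has, by the implicit function theorem, a unique root $z_D(\delta)=\delta c_{\mathrm K}+\O(\delta^2)$. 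That root is a double zero of $\det F^{\mathrm K}$, which gives \eqref{eq:exact-dirac-crossing}.

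\textbf{Step 3: the cone.} Expand $F^{\mathrm K+\xi}(z_D+\zeta,\delta)$ in $(\zeta,\xi)$ at fixed small $\delta$. Differentiating the covariance as in \cref{prop:honeycomb-normal-form} shows that the $\xi$-linear part has zero diagonal entries and off-diagonal entries $\delta\gamma_{\mathrm K}(\xi_1\mp i\xi_2)+\O(\delta^2|\xi|)$. The $\zeta$-linear part is $\partial_z f_1\,I_2$, with $\partial_zf_1=1+\O(\delta)$. Hence
\begin{equation*}
\det F=(\partial_z f_1)^2\zeta^2-|\delta\gamma_{\mathrm K}+\O(\delta^2)|^2|\xi|^2+\O_\delta\bigl((|\zeta|+|\xi|)^3\bigr).
\end{equation*}
Setting $\zeta=|\xi|s$ and applying Rouch\'e's theorem near $s=\pm v_D$ gives two simple roots $\zeta_\pm=\pm v_D|\xi|+\O_\delta(|\xi|^2)$, with $v_D=\delta|\gamma_{\mathrm K}|+\O(\delta^2)>0$ because $\gamma_{\mathrm K}\neq0$. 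The statement at $\mathrm K'$ follows by the same argument with the conjugate representation.

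\textbf{Main obstacle.} The delicate step is Step 1: ensuring that the exact reduced matrix, and not just its leading term, carries the symmetry constraints. This requires equivariant Grushin data and verifying that $S=PT$ is compatible with the sesquilinear pairing, so that the $\delta$-linear coefficient of the off-diagonal term remains exactly $\gamma_{\mathrm K}$ up to $\O(\delta^2)$. If equivariant projections turn out to be awkward, I will instead use the symmetry-adapted Bloch subspaces ($\tau$- and $\overline\tau$-eigenspaces of $U_R$) to decouple $\mathcal T^{\mathrm K}_\delta$ into two scalar problems exchanged by $S$.
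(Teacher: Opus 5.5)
Your proposal is correct and is essentially the paper's proof. It uses an exact Schur-complement reduction onto $\mathscr G(\omega_0)$ that inherits the rotation and $PT$ symmetries, the scalar form $f_{\mathrm K}I_2$ at $\mathrm K$ with the implicit function theorem giving the double root, and the differentiated covariance giving the off-diagonal $\xi$-linear term. The only real deviation is in the last step: you extract the cone from $\det F$ via Rouch\'e in $s=\zeta/|\xi|$, whereas the paper uses Hermiticity and the monotonicity in $\omega$ of the ordered eigenvalues (positive-definite $\partial_\omega\mathsf F_{\rm hc}^\alpha$).

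Your ``main obstacle'' is resolved exactly as in the paper. Take the $L^2(\partial D)$-orthogonal projection onto $\operatorname{span}\{g_1,g_2\}$ and its dual; these automatically commute with the (anti-)unitary symmetries and yield a Hermitian Schur complement. One small correction: the lower off-diagonal coefficient should be $-\delta\overline{\gamma_{\mathrm K}}(\xi_1+i\xi_2)$, not $\delta\gamma_{\mathrm K}(\xi_1+i\xi_2)$. It is this conjugate that makes the product of the off-diagonal terms a positive multiple of $|\xi|^2$.
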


\begin{proof}
Recall that $\mathscr G(\omega_0)  :=\ker\Lambda_{\rm in}^{\omega_0}  =\operatorname{span}\{g_1,g_2\} \subset H^{1/2}(\partial D)$. 
By \eqref{eq:Tdelta-alpha},
$\mathcal T_\delta^\alpha(\omega):H^{1/2}(\partial D)\to
H^{-1/2}(\partial D)$. Let $P$ be the $L^2(\partial D)$-orthogonal projection
onto $\mathscr G(\omega_0)$ and let $P^*$ be its dual projection on
$H^{-1/2}(\partial D)$. Thus $H^{1/2}(\partial D) =\operatorname{ran}P\oplus\ker P$ and $H^{-1/2}(\partial D) =\operatorname{ran}P^*\oplus\ker P^*$. With respect to these decompositions, we can write 
\begin{equation*}
    \mathcal T_\delta^\alpha(\omega)
    = \begin{pmatrix}A&B\\ C&D\end{pmatrix},
\end{equation*}
where $\mathcal T_\delta^\alpha(\omega)$ is defined as in \eqref{eq:Tdelta-alpha}, and 
\begin{equation*}
\begin{aligned}
    A&=P^*\mathcal T_\delta^\alpha(\omega)P,
    &B&=P^*\mathcal T_\delta^\alpha(\omega)(I-P),\\
    C&=(I-P^*)\mathcal T_\delta^\alpha(\omega)P,
    &D&=(I-P^*)\mathcal T_\delta^\alpha(\omega)(I-P).
\end{aligned}
\end{equation*}
The block $D$ is invertible near $(\mathrm{K},\omega_0,0)$, and the Schur complement $S:=A-BD^{-1}C$ satisfies
\begin{equation*}
    \begin{pmatrix}A&B\\ C&D\end{pmatrix}
    =
    \begin{pmatrix}I&BD^{-1}\\0&I\end{pmatrix}
    \begin{pmatrix}S&0\\0&D\end{pmatrix}
    \begin{pmatrix}I&0\\D^{-1}C&I\end{pmatrix}.
\end{equation*}
Thus the problem \eqref{eq:Tdelta-alpha} and $\det S=0$ have the same characteristic values and algebraic multiplicities. For real parameters, we have $C=B^*$, $D=D^*$, and $S=S^*$. The normalized matrix
\begin{equation*}
    \mathsf F_{\rm hc}^\alpha(\omega,\delta)
    :=\mathcal M(\omega_0)^{-1}S,
    \qquad
    \mathcal M(\omega_0)=-\frac{2\omega_0}{v_b^2}I_2,
\end{equation*}
is therefore also Hermitian. The Schur formula $S = A-BD^{-1}C$ and \eqref{eq:interior-derivative-normalization} give
\begin{equation}\label{eq:exact-honeycomb-schur-expansion}
    \mathsf F_{\rm hc}^\alpha(\omega,\delta)
    = (\omega-\omega_0)I_2
      -\delta\mc C_{\rm hc}^\alpha(\omega_0)
      +\mathsf E^\alpha(\omega,\delta),
\end{equation}
where, for every multi-index $\eta$ with $|\eta|\leq2$,
\begin{equation*}
    \bigl\|\partial_\alpha^\eta
    \mathsf E^\alpha(\omega,\delta)\bigr\|
    \lesssim |\omega-\omega_0|^2
    +\delta|\omega-\omega_0|+\delta^2.
\end{equation*}
This follows from joint analyticity and the fact that the $\alpha$-dependence disappears when $\delta=0$.

The covariance \eqref{eq:covarianeq} and $PT$ symmetries used in the proof of \Cref{prop:honeycomb-normal-form} are inherited by this reduction. At $\mathrm{K}$, the rotation symmetry makes $\mathsf F_{\rm hc}^\mathrm{K}$ diagonal in the $\operatorname{diag}(\tau,\overline\tau)$ basis, while the anti-unitary $PT$ symmetry exchanges the two diagonal sectors. Hence
\begin{equation*}
    \mathsf F_{\rm hc}^\mathrm{K}(\omega,\delta)
    =f_\mathrm{K}(\omega,\delta)I_2
\end{equation*}
for real $\omega$ and $\delta$; holomorphy in $\omega$ extends this identity to a complex neighborhood of $\omega_0$. By \eqref{eq:exact-honeycomb-schur-expansion},
\begin{equation*}
    f_\mathrm{K}(\omega,\delta)
    =\omega-\omega_0-\delta c_\mathrm{K}(\omega_0)
    +\O\bigl(|\omega-\omega_0|^2
    +\delta|\omega-\omega_0|+\delta^2\bigr).
\end{equation*}
Since $\partial_\omega f_\mathrm{K}(\omega_0,0)=1$, the implicit-function theorem gives the unique root \eqref{eq:exact-dirac-crossing}. Because $\det\mathsf F_{\rm hc}^\mathrm{K}=f_\mathrm{K}^2$, this root has multiplicity two.

Similarly, differentiating the covariance relation at $\mathrm{K}$ gives
\begin{equation*}
    D_\alpha\mathsf F_{\rm hc}^\alpha
    (\omega_{\rm D}^\mathrm{K}(\delta),\delta)\big|_{\alpha=\mathrm{K}}[\xi]
    =
    \begin{pmatrix}
        0 & \beta_\mathrm{K}(\delta)(\xi_1-i\xi_2)\\
        \overline{\beta_\mathrm{K}(\delta)}(\xi_1+i\xi_2) & 0
    \end{pmatrix},
\end{equation*}
where \eqref{eq:exact-honeycomb-schur-expansion} and \eqref{eq:honeycomb-linear-normal-form} give
\begin{equation*}
    \beta_\mathrm{K}(\delta)=-\delta\gamma_\mathrm{K}(\omega_0)+\O(\delta^2).
\end{equation*}
Also, $\partial_\omega\mathsf F_{\rm hc}^\mathrm{K} (\omega_{\rm D}^\mathrm{K}(\delta),\delta)=q_\mathrm{K}(\delta)I_2$ with $q_\mathrm{K}(\delta)=1+\O(\delta)>0$. Writing $s=\omega-\omega_{\rm D}^\mathrm{K}(\delta)$, Taylor's theorem and Hermiticity show that the two eigenvalues of the Schur complement are
\begin{equation*}
    q_\mathrm{K}(\delta)s\pm|\beta_\mathrm{K}(\delta)||\xi|
    +\O_\delta\bigl(s^2+|s||\xi|+|\xi|^2\bigr).
\end{equation*}
After shrinking the neighborhood, $\partial_\omega\mathsf F_{\rm hc}^\alpha$
is positive definite. Hence the two ordered eigenvalues are strictly
increasing in $\omega$, and solving their unique zeros for $s$ yields
\begin{equation*}
    \omega_\pm^{\mathrm{K}+\xi}(\delta)
    =\omega_{\rm D}^\mathrm{K}(\delta)
    \pm\frac{|\beta_\mathrm{K}(\delta)|}{q_\mathrm{K}(\delta)}|\xi|
    +\O_\delta(|\xi|^2),
\end{equation*}
which proves \eqref{eq:exact-dirac-cone}. The proof at $\mathrm{K}'$ is identical, since time-reversal symmetry gives, for a fixed unitary matrix $U$,
\begin{equation*}
    \mathsf F_{\rm hc}^{\mathrm{K}'-\xi}(\omega,\delta)
    =U\,\overline{\mathsf F_{\rm hc}^{\mathrm{K}+\xi}
    (\overline\omega,\delta)}\,U^{-1},
    \qquad |\xi|\ll1.
\end{equation*}
In particular, the Dirac resonant frequencies agree, and differentiation at $\xi=0$
gives equal slope magnitudes at $\mathrm{K}$ and $\mathrm{K}'$. The proof is complete.
\end{proof}

For the triangular decomposition $Y = Y_1 \cup Y_2$ fixed above, set
\begin{equation*}
    \Omega_j:=Y_j\setminus\overline{D_j},
    \qquad j=1,2.
\end{equation*}
Let $\Gamma_j:=\partial Y_j\cap\partial Y$, $j=1,2$. We write $\nu_Y$ for the outward normal on $\partial Y$ and the normal vector $\nu_3$ from $Y_1$ to $Y_2$. For any subdomain $G\subset\Omega$, set
\begin{equation*}
    Q_{\omega_0,G}(u,v)
    :=\int_G\bigl(\nabla u\cdot\overline{\nabla v}
    -k_0^2u\overline v\bigr)\,dx,
    \qquad k_0:=\frac{\omega_0}{v}.
\end{equation*}
Let $u_j$ be the normalized interior Neumann eigenfunction on $D_j$, and let $g_j$ be its boundary trace, extended by zero to the other component of $\partial D$. By \Cref{assumphoneycomb}(i) and the simplicity of the eigenvalue,
\begin{equation*}
    g_j\circ R_j=g_j
    \quad\text{on }\partial D_j,
    \qquad j=1,2.
\end{equation*}
By the Bloch-rotation convention in \Cref{assumphoneycomb},
\begin{equation*}
    U_R^{\mathrm K}g_1=\tau g_1,
    \qquad
    U_R^{\mathrm K}g_2=\overline\tau g_2.
\end{equation*}
By the reflection symmetry in \Cref{assumphoneycomb} and the simplicity of
the Neumann eigenvalue, the phases of the normalized eigenfunctions may be chosen so that $u_2=u_1\circ R_3$, and hence $g_2=g_1\circ R_3$; we use this convention below.

\begin{proposition}
\label{prop:frequency-dependent-lemma35}
Under \Cref{assumphoneycomb} and the above convention, the entries of $\mc C^\alpha_{\rm hc}(\omega_0)$ are differentiable in $\alpha$
at $\mathrm{K}$, with
\begin{equation}\label{eq:fd-lemma35-gradient}
    \nabla_\alpha
    \bigl(\mc C^\alpha_{\rm hc}(\omega_0)\bigr)_{11}\bigr|_{\alpha=\mathrm{K}}=0,
    \qquad
    \nabla_\alpha
    \bigl(\mc C^\alpha_{\rm hc}(\omega_0)\bigr)_{12}\bigr|_{\alpha=\mathrm{K}}
    =
    \gamma_\mathrm{K}(\omega_0)
    \binom{1}{-i}.
\end{equation}
Moreover, if $V_1^\mathrm{K}$ denotes the $\mathrm{K}$-quasiperiodic exterior solution with Dirichlet data $g_1$ on $\partial D_1$ and zero on $\partial D_2$, then
\begin{equation}\label{eq:gamma-implies-half-cell-zero}
    \gamma_\mathrm{K}(\omega_0)=0
    \quad\Longrightarrow\quad
    Q_{\omega_0,\Omega_2}(V_1^\mathrm{K},V_1^\mathrm{K})=0.
\end{equation}
The analogous conclusion holds at $\mathrm{K}'$.
\end{proposition}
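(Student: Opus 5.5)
Differentiability in $\alpha$ comes first. By \eqref{eq:honeycomb-exterior-regular-assumption} and \cref{rem:sufficietcond}, the $\alpha$-quasiperiodic exterior Dirichlet problem in $\Omega$ at $k_0$ is uniquely solvable for $\alpha$ near $\mathrm K$. To remove the $\alpha$-dependence of the function space, I would conjugate by $e^{\mathrm i\alpha\cdot x}$: write $V_j^\alpha=e^{\mathrm i\alpha\cdot x}w_j^\alpha$ with $w_j^\alpha$ periodic. Then $w_j^\alpha$ solves $(\nabla+\mathrm i\alpha)^2w+k_0^2w=0$, with data $e^{-\mathrm i\alpha\cdot x}g_j$, and this operator family is analytic in $\alpha$. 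Hence $\alpha\mapsto\Lambda_{\rm ext}^{\alpha,\omega_0}$ is analytic, and so is every entry of $\mc C^\alpha_{\rm hc}(\omega_0)$ in \eqref{eq:case1-cap-matrix}.

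Next I would rewrite the entries variationally. Green's identity in $\Omega$ gives
\[
\langle\Lambda_{\rm ext}^{\alpha,\omega_0}g_q,g_p\rangle_{\partial D}=-Q_{\omega_0,\Omega}(V_q^\alpha,V_p^\alpha).
\]
The contributions of $\partial Y$ cancel because $V_q^\alpha\overline{\partial_{\nu_Y}V_p^\alpha}$ is $\Lambda$-periodic. The gradient formula \eqref{eq:fd-lemma35-gradient} then comes from \Cref{prop:honeycomb-normal-form}. The diagonal entries of the linear term $L_{\mathrm K}$ vanish, and its upper off-diagonal entry is $\gamma_{\mathrm K}(\xi_1-\mathrm i\xi_2)$, whose gradient is $\gamma_{\mathrm K}(1,-\mathrm i)^\top$. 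The only thing to check is that the basis in \Cref{prop:honeycomb-normal-form} is the resonator-indexed basis $g_1,g_2$. This holds because $U_R^{\mathrm K}g_1=\tau g_1$ and $U_R^{\mathrm K}g_2=\overline\tau g_2$, so the two bases coincide up to phases. These phases are fixed by the convention $g_2=g_1\circ R_3$, and they only rescale $\gamma_{\mathrm K}$ by a unimodular factor.

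The substantive part is \eqref{eq:gamma-implies-half-cell-zero}. Following the subwavelength argument of \cite{honeycomb1}, I would compute $\partial_\alpha$ of the entry $(1,2)$ explicitly. Differentiating the quadratic form and using the equations for $V_j^\alpha$, the derivative collapses (Hellmann--Feynman style) to boundary terms on $\partial Y$. The terms in $\partial_\alpha V^\alpha$ cancel because the Dirichlet data are fixed up to the factor $e^{-\mathrm i\alpha\cdot x}$, which vanishes on the relevant cross terms after integration by parts. What remains has the form
\[
\partial_{\alpha_k}\langle\Lambda_{\rm ext}^{\alpha}g_2,g_1\rangle\;\propto\;\int_{\partial Y}x_k\bigl(V_2\overline{\partial_{\nu_Y}V_1}-\partial_{\nu_Y}V_2\,\overline{V_1}\bigr)\,d\sigma,
\]
or equivalently a volume term $\mathrm i\int_\Omega(\ldots)$.

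I would then split $\Omega=\Omega_1\cup\Omega_2$ and use two symmetries. The reflection $R_3$ maps $V_1^{\mathrm K}$ to $V_2^{\mathrm K}$, possibly composed with conjugation and $\alpha\mapsto R_3\alpha$. The rotation $R$ relates the pieces of $\partial Y$, which are the three edges of each triangle. Together these reduce $\gamma_{\mathrm K}$ to a fixed nonzero multiple of a single integral over $\Gamma_3$ or $\Gamma_2$ involving only $V_1^{\mathrm K}$ restricted to $\Omega_2$. Finally, Green's identity in $\Omega_2$, where $V_1^{\mathrm K}$ has zero Dirichlet data on $\partial D_2$, converts that boundary integral into $Q_{\omega_0,\Omega_2}(V_1^{\mathrm K},V_1^{\mathrm K})$. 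Setting $\gamma_{\mathrm K}=0$ then yields \eqref{eq:gamma-implies-half-cell-zero}. For $\mathrm K'$ I would apply time reversal, which is conjugation sending $\mathrm K$ to $\mathrm K'$.

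I expect the main obstacle to be this last symmetry bookkeeping. The phases of $\tau$ across the three edges of $\Gamma_j$ must combine so that the off-diagonal derivative reduces exactly to the half-cell form $Q_{\omega_0,\Omega_2}$, with a nonvanishing prefactor, rather than to a combination that cancels. I would first redo the subwavelength computation of \cite{honeycomb1} with $k_0^2$ inserted and check that each step survives the indefinite form $Q_{\omega_0,\cdot}$. The key point is that this form is never used for positivity, only for algebraic identities.
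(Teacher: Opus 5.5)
\textbf{Verdict: genuine gap.} The first part of your proposal is sound, but the step that carries the content of \eqref{eq:gamma-implies-half-cell-zero} is asserted rather than derived.

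\textbf{What works.} Analyticity in $\alpha$ via conjugation by $e^{\mathrm i\alpha\cdot x}$ is correct. So is reading the gradient formula off the normal form in \Cref{prop:honeycomb-normal-form}, once the resonator-indexed basis is identified with the $U_R^{\mathrm K}$-eigenbasis. This matches the paper.

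Your derivative identity is also correct:
\[
\partial_{\alpha_1}Q_{\omega_0,\Omega}(V_2^\alpha,V_1^\alpha)\big|_{\alpha=\mathrm K}=\mathrm i\int_{\partial Y}x_1\bigl(V_2\overline{\partial_{\nu_Y}V_1}-\partial_{\nu_Y}V_2\,\overline{V_1}\bigr)\,d\sigma .
\]
Your stated reason is garbled. The clean one is that $\dot V_j=0$ on $\partial D$ and $\dot V_j-\mathrm i x_1V_j^{\mathrm K}$ is $\mathrm K$-quasiperiodic. Pairing opposite edges of $\partial Y$ then gives the paper's $\gamma_{\mathrm K}(\omega_0)=\mathrm i\frac{v_b^2L}{2\omega_0}(J_2-I_2)$.

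\textbf{Where it breaks.} $J_2-I_2$ consists of \emph{cross} terms between $V_2^{\mathrm K}$ and $V_1^{\mathrm K}$ on the outer edges. $R_3$ does not fix those edges; it maps $\Gamma_1$ onto $\Gamma_2$. Turning these terms into self-integrals $\overline{V_1}\,\partial_\nu V_1$ therefore requires composing $R_3$ with lattice translations and rotations, which introduces Bloch phases. Whether those phases cancel is exactly what you leave open. The splitting $\Omega=\Omega_1\cup\Omega_2$ you mention only helps if you split a form whose value you know.

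\textbf{The missing ingredient.} The off-diagonal entry of $\mc C^{\mathrm K}_{\rm hc}(\omega_0)=c_{\mathrm K}(\omega_0)I_2$ vanishes, i.e.\ $Q_{\omega_0,\Omega}(V_2^{\mathrm K},V_1^{\mathrm K})=0$. Splitting \emph{this} form over $\Omega_1$ and $\Omega_2$ and integrating by parts gives
\[
0=J_1+I_2+\int_{\Gamma_3}\bigl(V_2^{\mathrm K}\overline{\partial_{\nu_3}V_1^{\mathrm K}}-\overline{V_1^{\mathrm K}}\,\partial_{\nu_3}V_2^{\mathrm K}\bigr)\,d\sigma .
\]
Since $J_2=-J_1$ by quasiperiodicity, $J_2-I_2$ equals this $\Gamma_3$ integral.

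On $\Gamma_3$ the reflection acts pointwise, so $V_2^{\mathrm K}=V_1^{\mathrm K}$ and $\partial_{\nu_3}V_2^{\mathrm K}=-\partial_{\nu_3}V_1^{\mathrm K}$. The integral is therefore $2\operatorname{Re}\int_{\Gamma_3}\overline{V_1^{\mathrm K}}\,\partial_{\nu_3}V_1^{\mathrm K}\,d\sigma$, with no phase bookkeeping.

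The threefold rotation multiplies $V_1^{\mathrm K}$ by a unimodular constant, so together with quasiperiodicity it transports this self-integral to the remaining edges of $Y_2$. Green's identity in $\Omega_2$, where $V_1^{\mathrm K}=0$ on $\partial D_2$, then gives
\[
Q_{\omega_0,\Omega_2}(V_1^{\mathrm K},V_1^{\mathrm K})=-3\operatorname{Re}\int_{\Gamma_3}\overline{V_1^{\mathrm K}}\,\partial_{\nu_3}V_1^{\mathrm K}\,d\sigma .
\]
Hence $\gamma_{\mathrm K}(\omega_0)$ is a nonzero multiple of $Q_{\omega_0,\Omega_2}(V_1^{\mathrm K},V_1^{\mathrm K})$. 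The paper runs the same chain as an implication.

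\textbf{Your direct route also closes.} On each edge $e\subset\Gamma_1$, use the reflection across $e$, which is a composition of $R_3$, a lattice translation and a rotation about the center of $D_1$. It gives $V_2^{\mathrm K}=c_eV_1^{\mathrm K}$ and $\partial_{\nu_Y}V_2^{\mathrm K}=-c_e\,\partial_{\nu_Y}V_1^{\mathrm K}$ on $e$, with $\{c_e\}=\{e^{-\mathrm i\mathrm K\cdot\ell_1},e^{-\mathrm i\mathrm K\cdot\ell_2}\}=\{\tau,\overline\tau\}$. Then $\tau+\overline\tau=-1\neq0$ is precisely the non-cancelling prefactor you were worried about. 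That computation is what your proposal omits.
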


\begin{proof}
The exterior regularity in
\eqref{eq:honeycomb-exterior-regular-assumption} therefore gives the differentiability of $\mc C^\alpha_{\rm hc}(\omega_0)$ near $\mathrm{K}$. The
covariance relation \eqref{eq:covarianeq} gives the same symmetry relations as in \cite[Lemma~3.5]{honeycomb1}, and differentiating these relations at
$\alpha=\mathrm{K}$ yields \eqref{eq:fd-lemma35-gradient}.
Here the simplicity of the interior eigenvalue and the threefold symmetry of each resonator imply that $g_j$ is invariant under the corresponding threefold rotation: the rotation acts on the one-dimensional real eigenspace by a real cube root of unity, which must be $1$.

It remains to prove \eqref{eq:gamma-implies-half-cell-zero}. For $\alpha$ near $\mathrm{K}$, let $V_j^\alpha$ be the exterior solution with Dirichlet data $g_j$ on $\partial D_j$ and zero on the other resonator, and write
\begin{equation*}
    \dot V_j
    :=\left.\partial_{\alpha_1}V_j^\alpha\right|_{\alpha=\mathrm{K}}.
\end{equation*}
In particular, $V_1^\mathrm{K}$ solves
\begin{equation*}
    (\Delta+k_0^2)\,V_1^\mathrm{K}=0 \quad \text{in }\Omega,
    \qquad
    V_1^\mathrm{K}=g_1 \quad \text{on }\partial D_1,
    \qquad
    V_1^\mathrm{K}=0 \quad \text{on }\partial D_2.
\end{equation*}
The differentiated fields satisfy
\begin{equation*}
    (\Delta+k_0^2)\dot V_j=0 \quad\text{in }\Omega,
    \qquad
    \dot V_j=0 \quad\text{on }\partial D,
\end{equation*}
together with the affine differentiated quasiperiodicity condition. The explicit condition is
\begin{equation*}
    \dot V_j(x+\ell)
    =e^{\mathrm iK\cdot\ell}
    \bigl(\dot V_j(x)+\mathrm i\ell_{(1)}V_j^\mathrm{K}(x)\bigr),
    \qquad \ell\in\Lambda,
\end{equation*}
where $\ell_{(1)}$ denotes the component in the direction with respect to which $\alpha_1$ is differentiated. The mass term creates no additional contribution in the Green identities. More precisely, the definition \eqref{eq:case1-cap-matrix} and Green's identity give
\begin{equation*}
    \gamma_\mathrm{K}(\omega_0)
    =\frac{v_b^2}{2\omega_0}
    \left(
    Q_{\omega_0,\Omega}(\dot V_2,V_1^\mathrm{K})
    +Q_{\omega_0,\Omega}(V_2^\mathrm{K},\dot V_1)
    \right).
\end{equation*}
For solutions $V_\ell^\mathrm{K}$ and differentiated solutions $\dot V_j$, we also have 
\begin{align*}
    Q_{\omega_0,\Omega}(\dot V_j,V_\ell^\mathrm{K})
     = \int_{\partial Y}\dot V_j\,
      \overline{\partial_{\nu_Y} V_\ell^\mathrm{K}}\,d\sigma, \qquad
    Q_{\omega_0,\Omega}(V_j^\mathrm{K},\dot V_\ell)
     = \int_{\partial Y}\partial_{\nu_Y} V_j^\mathrm{K}\,
      \overline{\dot V_\ell}\,d\sigma.
\end{align*}

To make the quasiperiodic boundary calculation explicit, set
\begin{equation*}
    I_r:=\int_{\Gamma_r}\overline{V_1^\mathrm{K}}\,
    \partial_{\nu_Y}V_2^\mathrm{K}\,d\sigma,
    \qquad
    J_r:=\int_{\Gamma_r}\overline{\partial_{\nu_Y}V_1^\mathrm{K}}\,
    V_2^\mathrm{K}\,d\sigma,
    \qquad r=1,2.
\end{equation*}
The two sides of $\Gamma_2$ are paired with the corresponding sides of $\Gamma_1$ by the translations $\ell_1$ and $\ell_2$. Let 
\begin{equation*}
   L := (\ell_1)_1=(\ell_2)_1=\frac{\sqrt3a}{2}.
\end{equation*}
 Substitution of the quasiperiodicity condition into the two boundary integrals above gives
\begin{equation*}
    \gamma_\mathrm{K}(\omega_0)
    =\mathrm i\,\frac{v_b^2L}{2\omega_0}\,(J_2-I_2).
\end{equation*}
We also note 
\begin{equation*}
    I_2=-I_1,  \qquad  J_2=-J_1,  \qquad  J_1=-I_1.
\end{equation*}
Consequently,
\begin{equation*}
    \gamma_\mathrm{K}(\omega_0)
    =2\mathrm i\,\frac{v_b^2L}{2\omega_0}\,I_1.
\end{equation*}
Thus $\gamma_\mathrm{K}(\omega_0)=0$ implies
\begin{equation*}
    I_1=I_2=J_1=J_2=0.
\end{equation*}
Moreover, \eqref{eq:honeycomb-degenerate-cap} and
\eqref{eq:case1-cap-matrix} give
\begin{equation*}
    Q_{\omega_0,\Omega}(V_2^\mathrm{K},V_1^\mathrm{K})=0.
\end{equation*}
Splitting this form over $\Omega_1$ and $\Omega_2$ and integrating by parts, we have 
\begin{equation*}
    \int_{\Gamma_3}
    \left(
    \overline{\partial_{\nu_3}V_1^\mathrm{K}}\,V_2^\mathrm{K}
    -\overline{V_1^\mathrm{K}}\,\partial_{\nu_3}V_2^\mathrm{K}
    \right)d\sigma
    =Q_{\omega_0,\Omega}(V_2^\mathrm{K},V_1^\mathrm{K})=0.
\end{equation*}
The reflection relations
\begin{equation*}
    V_2^\mathrm{K}=V_1^\mathrm{K},
    \qquad
    \partial_{\nu_3}V_2^\mathrm{K}=-\partial_{\nu_3}V_1^\mathrm{K}
    \quad\text{on }\Gamma_3
\end{equation*}
then give
\begin{equation} \label{eq:gammak}
    \operatorname{Re}\int_{\Gamma_3}
    \overline{V_1^\mathrm{K}}\,\partial_{\nu_3}V_1^\mathrm{K}\,d\sigma=0.
\end{equation}
Using the local rotation $R_1$ from \Cref{assumphoneycomb}, the induced
map changes $V_1^\mathrm{K}$ only by a unit phase, and
$\Gamma_1=(R_1\Gamma_3)\cup(R_1^2\Gamma_3)$. Thus the above
\eqref{eq:gammak}, by $R_1$ and $R_1^2$ transformations, yields
\begin{equation*}
    \operatorname{Re}\int_{\Gamma_1}
    \overline{V_1^\mathrm{K}}\,\partial_{\nu_Y}V_1^\mathrm{K}\,d\sigma=0.
\end{equation*}
Similarly, the same identity holds on $\Gamma_2$. Finally, $V_1^\mathrm{K}=0$ on $\partial D_2$. Green's identity in $\Omega_2$ now gives
\begin{equation*}
    Q_{\omega_0,\Omega_2}(V_1^\mathrm{K},V_1^\mathrm{K})
    =\operatorname{Re}\int_{\partial\Omega_2}
    \overline{V_1^\mathrm{K}}\,\partial_{\nu}V_1^\mathrm{K}\,d\sigma=0,
\end{equation*}
which proves \eqref{eq:gamma-implies-half-cell-zero}. The argument at $\mathrm{K}'$ is the same after time reversal.
\end{proof}

To establish the \emph{generic} nonvanishing of the Dirac velocity, we now study the coefficient $\gamma_\mathrm{K}$ directly. Write the reflection $R_3$ in
coordinates centered at $z_1$ and $z_2$ as
\begin{equation*}
    R_3(z_1+y)=z_2+A_3y,
    \qquad
    A_3\in O(2),\quad A_3^2=I,\quad \det A_3=-1.
\end{equation*}
Let $B\subset\mathbb R^2$ be a fixed bounded $C^\infty$ reference domain, centered at the origin and invariant under rotation by $2\pi/3$. We also
assume that $A_3B=-B$, which ensures compatibility with inversion through the center of the cell. Set
\begin{equation*}
    Q_1:=I, \qquad  Q_2:=A_3,
\end{equation*}
and consider the real-analytic honeycomb scaling family
\begin{equation}\label{eq:domainansatz}
    D^\rho:=D_1^\rho\cup D_2^\rho,
    \qquad
    D_j^\rho:=z_j+\rho Q_jB,
    \qquad j=1,2,
    \qquad 0<\rho<\rho_{\max},
\end{equation}
where $\rho_{\max}>0$ is chosen so that the inclusions remain disjoint,
$D_j^\rho\subset Y_j$, the domains $Y\setminus\overline{D^\rho}$ and
$Y_2\setminus\overline{D_2^\rho}$ remain connected. The choices above ensure
that the geometric symmetries in \Cref{assumphoneycomb}(i)--(ii) are
preserved. Fix a simple positive Neumann eigenvalue $\mu_\ast$ of $-\Delta$
on $B$, with a real
$L^2(B)$-normalized eigenfunction $U$, and assume that
\begin{equation}\label{eq:scaled-interior-regularity}
    \mu_\ast\notin\sigma_{\rm Dir}(-\Delta;B).
\end{equation}
Set
\begin{equation*}
    \Omega_\rho:=Y\setminus\overline{D^\rho},
    \qquad
    \Omega_{j,\rho}:=Y_j\setminus\overline{D_j^\rho},
    \qquad j=1,2,
    \qquad
    \omega_0(\rho):=\frac{v_b\sqrt{\mu_\ast}}{\rho},
    \qquad
    k_0(\rho):=\frac{\omega_0(\rho)}{v}.
\end{equation*}
The corresponding normalized modes and traces are
\begin{equation*}
    U_j^\rho(x)
    :=
    \rho^{-1}U\!\left(\frac{Q_j^{-1}(x-z_j)}{\rho}\right),
    \qquad
    g_j^\rho:=U_j^\rho|_{\partial D_j^\rho},
    \qquad j=1,2.
\end{equation*}
We extend each $g_j^\rho$ by zero to the other component of $\partial D^\rho$. By the definitions of $A_3$ and $Q_j$, $U_2^\rho=U_1^\rho\circ R_3$ and hence $g_2^\rho=g_1^\rho\circ R_3$. Whenever the $\alpha$-quasiperiodic exterior Dirichlet problem is regular for $\alpha\in\{\mathrm{K},\mathrm{K}'\}$, we write $c_\alpha(\rho):=c_\alpha(\omega_0(\rho))$ and $\gamma_\alpha(\rho):=\gamma_\alpha(\omega_0(\rho))$, and set $v_\alpha(\rho):=|\gamma_\alpha(\rho)|$.

\begin{proposition}
\label{prop:gammaK-generic-rho}
For $0<\rho<\rho_{\max}$, consider the lowest Laplacian
eigenvalue in $\Omega_\rho$ with the homogeneous Neumann condition on $\partial D^\rho$:
\begin{equation*}
    \lambda_{\mathrm{K},\mathrm N}(\rho)
    :=
    \inf_{0\ne w\in H_\mathrm{K}^1(\Omega_\rho)}
    \frac{\displaystyle\int_{\Omega_\rho}|\nabla w|^2\,dx}
         {\displaystyle\int_{\Omega_\rho}|w|^2\,dx},
\end{equation*}
where $H_\mathrm{K}^1(\Omega_\rho)$ denotes the space of quasiperiodic $H^1$-functions with Bloch parameter $\mathrm{K}$. Define also 
\begin{equation*}
    \lambda_{\mathrm{mix}}(\rho)
    :=
    \inf_{\substack{0\ne w\in H^1(\Omega_{2,\rho})\\
                     w|_{\partial D_2^\rho}=0}}
    \frac{\displaystyle\int_{\Omega_{2,\rho}}|\nabla w|^2\,dx}
         {\displaystyle\int_{\Omega_{2,\rho}}|w|^2\,dx}.
\end{equation*}
Then $\lambda_{\mathrm{mix}}(\rho)>0$. 

Suppose that, for some $\rho_\star\in(0,\rho_{\max})$,
\begin{equation}\label{eq:coercive-gamma-condition}
    k_0(\rho_\star)^2
    <\min\bigl\{\lambda_{\mathrm{K},\mathrm N}(\rho_\star),
    \lambda_{\mathrm{mix}}(\rho_\star)\bigr\}.
\end{equation}
Then the singular set\footnote{$\rho\in(0,\rho_{\max})\setminus\mathcal P_\mathrm{K}$ is called regular.}
\begin{equation*}
    \mathcal P_\mathrm{K}
    :=
    \left\{\rho\in(0,\rho_{\max}):
    k_0(\rho)^2\in
    \sigma_{\mathrm{K},\mathrm{Dir}}(-\Delta;\Omega_\rho)\right\}
\end{equation*}
is locally finite, and both $c_\mathrm{K}(\rho)$ and $\gamma_\mathrm{K}(\rho)$, initially defined outside $\mathcal P_\mathrm{K}$, have meromorphic continuations in $\rho$. Moreover,
\begin{equation*}
    c_\mathrm{K}(\rho_\star)>0,
    \qquad
    \gamma_\mathrm{K}(\rho_\star)\ne0,
\end{equation*}
and the regular zero set of the Dirac-slope coefficient,
\begin{equation*}
    \mathcal Z_{\gamma,\mathrm{K}}
    :=
    \left\{\rho\in(0,\rho_{\max})\setminus\mathcal P_\mathrm{K}:
    \gamma_\mathrm{K}(\rho)=0\right\},
\end{equation*}
is locally finite. Defining $\mathcal P_{\mathrm{K}'}$ analogously, time-reversal symmetry gives
\begin{equation*}
    \mathcal P_{\mathrm{K}'}=\mathcal P_\mathrm{K},
    \qquad
    c_{\mathrm{K}'}(\rho)=c_\mathrm{K}(\rho),
    \qquad
    |\gamma_{\mathrm{K}'}(\rho)|=|\gamma_\mathrm{K}(\rho)|
\end{equation*}
for real regular $\rho$. Consequently, $\mathcal E:=\mathcal P_\mathrm{K}\cup\mathcal Z_{\gamma,\mathrm{K}}$ is locally finite in $(0,\rho_{\max})$, and there exists a sequence $\rho_n\downarrow0$ with $\rho_n\notin\mathcal E$.
\end{proposition}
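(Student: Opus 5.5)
Here is the approach I would follow. To get $\lambda_{\mathrm{mix}}(\rho)>0$, I would use the fact that $\Omega_{2,\rho}$ is bounded and connected. The Dirichlet condition is imposed on $\partial D_2^\rho$, a set of positive capacity, so a Poincaré–Friedrichs inequality holds for $H^1$ functions vanishing there. Equivalently, a minimizer with zero Rayleigh quotient would be constant and therefore zero. The infimum is attained by compactness of $H^1(\Omega_{2,\rho})\hookrightarrow L^2$.

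For meromorphy, I would pull everything back to a fixed reference configuration via a real-analytic family of diffeomorphisms $\Phi_\rho$. These equal the scaling $x\mapsto z_j+\rho Q_jy$ near $D_j^\rho$, the identity near $\partial Y$, and preserve the symmetries. After the pull-back, the $\mathrm K$-quasiperiodic exterior Dirichlet problem with Dirichlet data $g_j^\rho$ becomes an analytic Fredholm family in $\rho$, with coefficients and $k_0(\rho)^2$ analytic for $\rho>0$. Analytic Fredholm theory makes the solution operator meromorphic, with poles exactly at $\mathcal P_{\mathrm K}$. This set is discrete, hence locally finite, because at $\rho_\star$ the operator is invertible: $k_0^2<\lambda_{\mathrm K,\mathrm N}\le\lambda_{\mathrm K,\mathrm{Dir}}$, since the Dirichlet minimum is at least the Neumann one. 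The traces $g_j^\rho$, the DtN pairing in \eqref{eq:case1-cap-matrix}, and the derivative formulas for $\gamma_{\mathrm K}$ obtained in the proof of \Cref{prop:frequency-dependent-lemma35} (via $\dot V_j$) then give meromorphic $c_{\mathrm K}(\rho)$ and $\gamma_{\mathrm K}(\rho)$. Once $\gamma_{\mathrm K}(\rho_\star)\ne0$ is known, the identity theorem on the connected interval $(0,\rho_{\max})$ shows that $\gamma_{\mathrm K}\not\equiv0$. Its regular zeros are therefore locally finite.

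The sign statements at $\rho_\star$ are the core step. For $c_{\mathrm K}$, I would use $c_{\mathrm K}=(\mc C^{\mathrm K}_{\rm hc})_{11}=-\frac{v_b^2}{2\omega_0}\langle\Lambda^{\mathrm K,\omega_0}_{\rm ext}g_1,g_1\rangle$. Green's identity together with quasiperiodic cancellation on $\partial Y$ gives $\langle\Lambda_{\rm ext}g_1,g_1\rangle=-Q_{\omega_0,\Omega}(V_1^{\mathrm K},V_1^{\mathrm K})$, with the sign fixed by the outward normal of $D$. The condition $k_0^2<\lambda_{\mathrm K,\mathrm N}$ makes $Q_{\omega_0,\Omega}$ coercive on all of $H^1_{\mathrm K}(\Omega_\rho)$. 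Since $V_1^{\mathrm K}\ne0$ (its trace $g_1\ne0$), we get $Q>0$, hence $c_{\mathrm K}>0$.

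For $\gamma_{\mathrm K}$, I would argue by contradiction using \eqref{eq:gamma-implies-half-cell-zero}: if $\gamma_{\mathrm K}(\rho_\star)=0$, then $Q_{\omega_0,\Omega_2}(V_1^{\mathrm K},V_1^{\mathrm K})=0$. But $V_1^{\mathrm K}$ vanishes on $\partial D_2^\rho$, so it is admissible for $\lambda_{\mathrm{mix}}$. The inequality $k_0^2<\lambda_{\mathrm{mix}}$ then forces $V_1^{\mathrm K}\equiv0$ on $\Omega_{2,\rho}$. Unique continuation for $\Delta+k_0^2$ on the connected set $\Omega_\rho$ gives $V_1^{\mathrm K}\equiv0$, contradicting its trace $g_1\ne0$ on $\partial D_1^\rho$. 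I expect the main obstacle here to be justifying that the regularity and symmetry hypotheses of \Cref{prop:frequency-dependent-lemma35} hold along the family. This requires checking that $\omega_0(\rho)^2/v_b^2=\mu_\ast/\rho^2$ remains a simple Neumann and non-Dirichlet eigenvalue of $D_j^\rho$, which holds by scaling and \eqref{eq:scaled-interior-regularity}.

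For the $\mathrm K'$ statements, complex conjugation maps $\mathrm K$-quasiperiodic solutions to $(-\mathrm K)\equiv\mathrm K'$-quasiperiodic ones with real data $g_j$. This gives $\mathcal P_{\mathrm K'}=\mathcal P_{\mathrm K}$. The relation $\mathsf F^{\mathrm K'-\xi}=U\overline{\mathsf F^{\mathrm K+\xi}}U^{-1}$ from \Cref{lem:exact-honeycomb-crossing} yields equal real $c$ and equal $|\gamma|$. Finally, $\mathcal E$ is a union of two locally finite subsets of $(0,\rho_{\max})$. Each interval $(\rho_{\max}/2^{n+1},\rho_{\max}/2^{n})$ meets $\mathcal E$ in a finite set, so one can pick $\rho_n$ in it outside $\mathcal E$, and then $\rho_n\downarrow0$.
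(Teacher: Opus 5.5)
Your proposal is correct and follows essentially the same route as the paper. The ingredients match step by step: the Poincar\'e inequality gives $\lambda_{\mathrm{mix}}>0$; coercivity at $\rho_\star$ combined with Green's identity gives $c_{\mathrm K}(\rho_\star)>0$; the half-cell identity of \Cref{prop:frequency-dependent-lemma35}, together with $k_0^2<\lambda_{\mathrm{mix}}$ and unique continuation, gives $\gamma_{\mathrm K}(\rho_\star)\neq0$; real-analytic pullbacks and the analytic Fredholm theorem give meromorphy and local finiteness; and complex conjugation gives the $\mathrm K'$ statements. The remaining differences are cosmetic: you obtain invertibility at $\rho_\star$ from $\lambda_{\mathrm K,\mathrm N}\le\lambda_{\mathrm K,\mathrm{Dir}}$ rather than from Lax--Milgram, and you choose $\rho_n$ in dyadic intervals rather than recursively.
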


\begin{remark}
Similarly to \cref{rem:sufficient-case1}, for any fixed $\rho_\star$, condition \eqref{eq:coercive-gamma-condition} is satisfied whenever the wave-speed ratio $v_b/v$ is sufficiently small, since $k_0(\rho_\star)^2 =\left(\frac{v_b}{v}\right)^2\frac{\mu_\ast}{\rho_\star^2}$. 
\end{remark}

\begin{proof}
Since $\Omega_\rho$ is connected and $\mathrm{K}$ is not equivalent to $0$ modulo the reciprocal lattice, the only constant $\mathrm{K}$-quasiperiodic function is zero. The quasiperiodic Poincar\'e
inequality therefore gives $\lambda_{\mathrm{K},\mathrm N}(\rho)>0$.
The Poincar\'e inequality with zero trace on the nonempty boundary portion
$\partial D_2^\rho$ likewise gives $\lambda_{\mathrm{mix}}(\rho)>0$.
Condition
\eqref{eq:coercive-gamma-condition} makes the Helmholtz form coercive on
$H_\mathrm{K}^1(\Omega_{\rho_\star})$, so the inhomogeneous exterior Dirichlet problem
is well posed by the Lax--Milgram theorem; in particular,
$\rho_\star\notin\mathcal P_\mathrm{K}$. Let
$g_1^{\rho_\star}$ be the trace of the normalized Neumann eigenfunction on
$\partial D_1^{\rho_\star}$, extended by zero to
$\partial D_2^{\rho_\star}$, and let $V_{\rho_\star}^\mathrm{K}$ be the exterior
solution with boundary trace $g_1^{\rho_\star}$. By
\eqref{eq:coercive-gamma-condition},
\begin{align*}
    Q_{\rho_\star}(V_{\rho_\star}^\mathrm{K},V_{\rho_\star}^\mathrm{K})
    &:=
    \int_{\Omega_{\rho_\star}}
    \left(|\nabla V_{\rho_\star}^\mathrm{K}|^2
    -k_0(\rho_\star)^2|V_{\rho_\star}^\mathrm{K}|^2\right)dx\\
    &\ge
    \left(\lambda_{\mathrm{K},\mathrm N}(\rho_\star)
    -k_0(\rho_\star)^2\right)
    \|V_{\rho_\star}^\mathrm{K}\|_{L^2(\Omega_{\rho_\star})}^2>0.
\end{align*}
Here $V_{\rho_\star}^\mathrm{K}\ne0$ because its boundary trace is nonzero. Green's
identity, with the normal convention used in the exterior DtN map, yields
\begin{equation*}
    Q_{\rho_\star}(V_{\rho_\star}^\mathrm{K},V_{\rho_\star}^\mathrm{K})
    =-
    \left\langle
    \Lambda_{\rm ext}^{\mathrm{K},\omega_0(\rho_\star)}
    [g_1^{\rho_\star}],g_1^{\rho_\star}
    \right\rangle_{\partial D^{\rho_\star}}.
\end{equation*}
Using \eqref{eq:case1-cap-matrix} and
$\mc C_{\rm hc}^\mathrm{K}(\omega_0(\rho_\star))=c_\mathrm{K}(\rho_\star)I_2$, we obtain
\begin{equation}\label{eq:coercive-ck-positive}
    c_\mathrm{K}(\rho_\star)
    =
    \frac{v_b^2}{2\omega_0(\rho_\star)}
    Q_{\rho_\star}(V_{\rho_\star}^\mathrm{K},V_{\rho_\star}^\mathrm{K})>0.
\end{equation}

We next prove the nonvanishing of the Dirac-slope coefficient. If $\gamma_\mathrm{K}(\rho_\star)=0$, then
\cref{prop:frequency-dependent-lemma35}, the zero trace of
$V_{\rho_\star}^\mathrm{K}$ on $\partial D_2^{\rho_\star}$, and
\eqref{eq:coercive-gamma-condition} give
\begin{equation*}
    0=
    \int_{\Omega_{2,\rho_\star}}
    \left(|\nabla V_{\rho_\star}^\mathrm{K}|^2
    -k_0(\rho_\star)^2|V_{\rho_\star}^\mathrm{K}|^2\right)dx
    \ge
    \bigl(\lambda_{\mathrm{mix}}(\rho_\star)
    -k_0(\rho_\star)^2\bigr)
    \|V_{\rho_\star}^\mathrm{K}\|_{L^2(\Omega_{2,\rho_\star})}^2.
\end{equation*}
Thus its restriction to $\Omega_{2,\rho_\star}$ vanishes. Unique
continuation in the connected
exterior domain then gives $V_{\rho_\star}^\mathrm{K}=0$ throughout
$\Omega_{\rho_\star}$, contradicting its nonzero trace on
$\partial D_1^{\rho_\star}$. Thus
\begin{equation}\label{eq:coercive-gamma-nonzero}
    \gamma_\mathrm{K}(\rho_\star)\ne0.
\end{equation}

It remains to propagate this nonvanishing information from $\rho_\star$ along the scaling family. Using real-analytic pullbacks to fixed reference cells,  the exterior Dirichlet problem becomes a jointly holomorphic Fredholm family of index zero in $(\alpha,\rho)$, denoted by $\mathscr{A}(\alpha,\rho)$. By its invertibility at $\rho_\star$ and the analytic Fredholm theorem, we have the meromorphic inverse $\mathscr{A}(\mathrm{K},\rho)^{-1}$ in $\rho$, with the locally finite pole set $\mathcal P_\mathrm{K}$. Therefore, the associated exterior DtN map and its $\alpha_1$-derivative at $\mathrm K$ are meromorphic in $\rho$, and the boundary pairings defining $c_\mathrm{K}(\rho)$ and $\gamma_\mathrm{K}(\rho)$ admit compatible meromorphic continuations across these poles. By \eqref{eq:coercive-ck-positive} and \eqref{eq:coercive-gamma-nonzero}, neither continuation is identically zero. Hence $\mathcal Z_{\gamma,\mathrm{K}}$, and therefore $\mathcal P_\mathrm{K}\cup\mathcal Z_{\gamma,\mathrm{K}}$, is locally finite. Since $\mathrm{K}'\equiv-\mathrm{K}$ modulo the reciprocal lattice and the coefficients and the interior trace are real, complex conjugation intertwines the $\mathrm{K}$ and $\mathrm{K}'$ fibers and their DtN maps. This gives $\mathcal P_{\mathrm{K}'}=\mathcal P_\mathrm{K}$ and, on the real regular set,
\begin{equation*}
    c_{\mathrm{K}'}(\rho)=\overline{c_\mathrm{K}(\rho)}=c_\mathrm{K}(\rho),
    \qquad
    |\gamma_{\mathrm{K}'}(\rho)|=|\gamma_\mathrm{K}(\rho)|,
\end{equation*}
where the equality for $c_\mathrm{K}$ follows from Hermiticity. Finally, since a locally finite subset of $(0,\rho_{\max})$ contains no interval, starting with $\rho_0:=\rho_{\max}$, one may therefore choose recursively
\begin{equation*}
    \rho_n\in
    \left(0,\min\left\{\frac1n,\frac{\rho_{n-1}}2,\rho_{\max}\right\}\right)
    \setminus\mathcal E,
\end{equation*}
which ensures $\rho_n\downarrow0$. The proof is complete. 
\end{proof}

Combining
\cref{prop:honeycomb-normal-form,lem:exact-honeycomb-crossing,prop:frequency-dependent-lemma35,prop:gammaK-generic-rho}
now yields the main result of this subsection.

\begin{theorem}[(Generic existence of Dirac cones)]
\label{thm:honeycomb-dirac-cone}
Under \Cref{assumphoneycomb} and the assumptions of
\Cref{prop:gammaK-generic-rho}, let
$\mathcal E=\mathcal P_\mathrm{K}\cup\mathcal Z_{\gamma,\mathrm{K}}$. For every
$\rho\in(0,\rho_{\max})\setminus\mathcal E$, the non-degeneracy condition
\begin{equation*}
    v_\mathrm{K}(\rho)=|\gamma_\mathrm{K}(\rho)|>0
\end{equation*}
holds, the two eigenvalues of $\mathcal{C}^{\alpha}_{\mathrm{hc}}(\omega_0(\rho))$
near $\alpha = \mathrm{K}$ satisfy
\begin{equation}\label{eq:honeycomb-cap-eigs-cone}
    \lambda_\pm^{\mathrm{K}+\xi}(\omega_0(\rho))
    = c_\mathrm{K}(\rho) \pm v_\mathrm{K}(\rho)\,|\xi| + \O(|\xi|^2),
    \qquad \xi \to 0,
\end{equation}
and the two resonant Bloch bands converging to $\omega_0(\rho)$ satisfy
\begin{equation}\label{eq:honeycomb-bands-cone}
    \omega_\pm^{\mathrm{K}+\xi}(\delta)
    = \omega_0(\rho)
      + \delta\,c_\mathrm{K}(\rho)
      \pm \delta\,v_\mathrm{K}(\rho)\,|\xi|
      + \O\!\left(\delta\,|\xi|^2 + \delta^2\right),
\end{equation}
for each fixed admissible $\rho$, uniformly for $\xi$ in a sufficiently small
neighborhood of $0$. For every sufficiently small fixed real $\delta>0$, the
same two branches form an exact Dirac cone
\begin{equation}\label{eq:honeycomb-exact-bands-cone}
    \omega_\pm^{\mathrm{K}+\xi}(\delta)
    =\omega_{\rm D}^\mathrm{K}(\delta;\rho)
    \pm v_{\rm D}^\mathrm{K}(\delta;\rho)|\xi|
    +\O_{\delta,\rho}(|\xi|^2),
\end{equation}
where
\begin{equation*}
    \omega_{\rm D}^\mathrm{K}(\delta;\rho)
    =\omega_0(\rho)+\delta c_\mathrm{K}(\rho)+\O(\delta^2),
    \qquad
    v_{\rm D}^\mathrm{K}(\delta;\rho)
    =\delta v_\mathrm{K}(\rho)+\O(\delta^2)>0.
\end{equation*}
The same conclusions hold at $\mathrm{K}'$. Moreover, there exists a sequence
$\rho_n\downarrow0$, with $\rho_n\notin\mathcal E$, such that
$\omega_0(\rho_n)\to\infty$. The smallness threshold for $\delta$ may depend
on $n$ along this sequence.
\end{theorem}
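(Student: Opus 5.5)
The proof assembles the four preceding results; there is little new to establish beyond checking that their hypotheses hold for every admissible $\rho$. We fix $\rho\in(0,\rho_{\max})\setminus\mathcal E$. Since $\rho\notin\mathcal P_\mathrm{K}$, the $\mathrm{K}$-quasiperiodic exterior Dirichlet problem is regular at $k_0(\rho)$. Because $\mathcal P_{\mathrm{K}'}=\mathcal P_\mathrm{K}$ by \Cref{prop:gammaK-generic-rho}, the same holds at $\mathrm{K}'$, so \eqref{eq:honeycomb-exterior-regular-assumption} is satisfied. The interior conditions in \Cref{assumphoneycomb}(iii) follow from scaling: the Neumann and Dirichlet spectra of $D_j^\rho$ are $\rho^{-2}$ times those of $B$. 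Hence $\omega_0(\rho)^2/v_b^2=\mu_\ast/\rho^2$ is a simple Neumann eigenvalue of $D_j^\rho$, and it is not a Dirichlet eigenvalue by \eqref{eq:scaled-interior-regularity}. The geometric symmetries (i)--(ii) hold by construction of the family \eqref{eq:domainansatz}. Finally, $\rho\notin\mathcal Z_{\gamma,\mathrm K}$ gives $v_\mathrm{K}(\rho)=|\gamma_\mathrm{K}(\rho)|>0$.

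Next we apply \Cref{prop:honeycomb-normal-form} at $\omega_0=\omega_0(\rho)$. This gives the expansion \eqref{eq:honeycomb-linear-normal-form} of the Hermitian $2\times2$ matrix $\mc C^{\mathrm K+\xi}_{\rm hc}$. Its eigenvalues are $c_\mathrm K\pm\bigl|\gamma_\mathrm K(\xi_1-i\xi_2)\bigr|+\O(|\xi|^2)=c_\mathrm K\pm v_\mathrm K|\xi|+\O(|\xi|^2)$. Here we use the standard bound that eigenvalues of Hermitian matrices are $1$-Lipschitz in operator norm (Weyl), so the $\O(|\xi|^2)$ remainder does not spoil the cone. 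This proves \eqref{eq:honeycomb-cap-eigs-cone}.

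For \eqref{eq:honeycomb-bands-cone}, we use the Hermitian Schur-complement reduction \eqref{eq:exact-honeycomb-schur-expansion} from the proof of \Cref{lem:exact-honeycomb-crossing}. Its remainder bound holds uniformly for $\alpha=\mathrm K+\xi$ in a neighborhood, since the exterior regularity persists near $\mathrm K$ (\cref{rem:sufficietcond}). The same Rouch\'e/monotonicity argument as in \Cref{thm:case1-asymptotics} then gives $\omega_\pm^{\mathrm K+\xi}=\omega_0+\delta\lambda_\pm^{\mathrm K+\xi}+\O(\delta^2)$ uniformly in small $\xi$. The positivity of $\partial_\omega\mathsf F$ makes the ordered eigenvalue branches monotone in $\omega$, which is what justifies the labeling. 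Substituting \eqref{eq:honeycomb-cap-eigs-cone} yields \eqref{eq:honeycomb-bands-cone}.

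The exact cone \eqref{eq:honeycomb-exact-bands-cone} is \Cref{lem:exact-honeycomb-crossing} applied with $\gamma_\mathrm K(\omega_0(\rho))\neq0$. The $\mathrm K'$ statements follow from the time-reversal relation in that lemma together with $c_{\mathrm K'}=c_\mathrm K$ and $|\gamma_{\mathrm K'}|=|\gamma_\mathrm K|$ from \Cref{prop:gammaK-generic-rho}. For the last claim, \Cref{prop:gammaK-generic-rho} supplies $\rho_n\downarrow0$ with $\rho_n\notin\mathcal E$, and $\omega_0(\rho_n)=v_b\sqrt{\mu_\ast}/\rho_n\to\infty$. Every constant above (the Grushin neighborhood, $C_{\rm LS}$, the $\O_{\delta,\rho}$ terms) depends on $\rho$, which is why the $\delta$-threshold may depend on $n$.

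The main point needing care is uniformity in $\xi$ in the $\O(\delta|\xi|^2+\delta^2)$ remainder. A naive combination yields only $\O(\delta|\xi|^2+\delta^2)$ in the eigenvalue expansion if the $\delta^2$ error is uniform in $\xi$. This follows from joint analyticity of $\mathsf F^\alpha_{\rm hc}$ in $(\alpha,\omega,\delta)$ on a fixed compact neighborhood, which we will invoke explicitly.
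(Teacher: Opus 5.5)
Your proposal is correct and follows essentially the same route as the paper. You fix $\rho\notin\mathcal E$ to obtain exterior regularity at $\mathrm K,\mathrm K'$ and $\gamma_\mathrm K(\rho)\neq0$, and read off the eigenvalue cone from the off-diagonal Hermitian normal form. You then substitute it into the locally uniform Case~1 (Schur-complement) expansion, and invoke \cref{lem:exact-honeycomb-crossing} and time reversal for the exact cone and for $\mathrm K'$. Your extra checks are details the paper leaves implicit: the scaling of the interior spectra, Weyl's inequality for the $\O(|\xi|^2)$ remainder, uniformity in $\xi$ via joint analyticity, and $\omega_0(\rho_n)=v_b\sqrt{\mu_\ast}/\rho_n\to\infty$.
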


\begin{proof}
By \cref{prop:gammaK-generic-rho}, fix $\rho\in(0,\rho_{\max})\setminus\mathcal E$. Then $\gamma_\mathrm{K}(\rho)\neq0$ and the exterior problems at $\mathrm{K}$ and $\mathrm{K}'$ are
regular. Hence $v_\mathrm{K}(\rho)=|\gamma_\mathrm{K}(\rho)|>0$. 
For the eigenvalue expansion \eqref{eq:honeycomb-cap-eigs-cone}, it suffices to note that the off-diagonal Hermitian matrix \eqref{eq:honeycomb-linear-normal-form} has eigenvalues $\pm v_\mathrm{K}(\rho)\,|\xi|$. Since exterior regularity persists in a neighborhood of $\mathrm{K}$, the local Case~1 reduction is uniform there; substituting the eigenvalue expansion into \eqref{approx:alphaj} gives \eqref{eq:honeycomb-bands-cone}. The exact expansion \eqref{eq:honeycomb-exact-bands-cone} follows from
\cref{lem:exact-honeycomb-crossing}. The argument at $\mathrm{K}'$ follows by time-reversal symmetry. 
\end{proof}

\section{Concluding remarks}
In this paper, we have extended the formalism of the capacitance matrix approximation beyond the subwavelength regime. We have considered finite and periodic systems and established approximations at leading order in the contrast of the resonant frequencies and Bloch band functions of, respectively, finite and periodic systems of high-contrast resonators in terms of the frequency-dependent capacitance matrices. We have established key properties of these matrices. In the periodic case, under the stated non-resonance and symmetry assumptions, we have proved bandgap opening for three-dimensional lattices with a single resonator per cell and the generic existence of Dirac cones for two-dimensional honeycomb lattices; the latter occur along a sequence of frequencies tending to infinity.

Our results in this paper form the basis for the analysis and computation of scattering resonances in high-contrast systems of resonators. They can be used to illustrate the effective behavior of waves in honeycomb structures as in \cite{honeycomb2,honeycomb3} beyond the subwavelength regime. They can also be extended to systems with nonlinear or time-modulated material parameters. This will be the subject of subsequent parts of our series of papers on high-contrast systems of resonators beyond the subwavelength regime. It would also be interesting to generalize our approach and results to elastic resonators with high-contrast material parameters \cite{elast1,elast2,elast3}. Finally, numerical illustrations of our findings in the periodic case are currently under consideration and will be the subject of a forthcoming work.

\section*{Acknowledgments}
This work was partially supported by the National Key R\&D Program of China grant number 2024YFA1016000, the Fundamental Research Funds for the Central Universities grant number 226-2025-00192, and the City University of Hong Kong start-up fund 7200843. It was initiated while H.A. was visiting the Hong Kong Institute for Advanced Study as a Senior Fellow.

\appendix

\section{ODE formulation of the frequency-dependent capacitance matrix in the one-dimensional case} \label{sec:reduct}

In this appendix, we consider the one-dimensional setting as in \cite{fabry1}. Although the analysis of this work is carried out in two and three dimensions, the differential equation formulation based on DtN maps in \Cref{sec:3} can be adapted to the one-dimensional case, complementing the results of our previous work \cite{fabry1}. 

Let $N\ge 2$ and
\begin{equation*}
    D=\bigcup_{j=1}^N (x_j^-,x_j^+)\subset \R,
\qquad
\ell_j:=x_j^+-x_j^-,
\qquad
\ell_{j,j+1}:=x_{j+1}^- - x_j^+ \quad (1\le j\le N-1),
\end{equation*}
where
\begin{equation*}
    x_1^-<x_1^+<x_2^-<x_2^+<\cdots <x_N^-<x_N^+.
\end{equation*}
Fix a positive limiting reference frequency $\omega_0>0$, and recall the notation
\begin{equation*}
    k_0:=\frac{\omega_0}{v},
\qquad
k_{j,0}:=\frac{\omega_0}{v_j},
\qquad
\mc{J}:=\Bigl\{j\in\{1,\dots,N\}: k_{j,0}^2\in \sigma_{\rm Neu}(-\partial_x^2;D_j)\Bigr\}.
\end{equation*}
Note that any Neumann eigenvalue of $-\partial_x^2$ on $D_j=(x_j^-,x_j^+)$ is simple, and for each $j\in \mc{J}$ there exists a unique integer $n_j\in \mathbb{N}$ such that
\begin{equation*}
    k_{j,0}=\frac{n_j\pi}{\ell_j},
\end{equation*}
with the corresponding $L^2(D_j)$-normalized Neumann eigenfunction given by
\begin{equation*}
    u_j(x):=\sqrt{\frac{2}{\ell_j}}
\cos \Bigl(\frac{n_j\pi(x-x_j^-)}{\ell_j}\Bigr),
\qquad x\in D_j.
\end{equation*}
The boundary trace $g_j$ of $u_j$ is supported on $\partial D_j=\{x_j^-,x_j^+\}$, and satisfies 
\begin{equation*}
    g_j(x_j^-)=\sqrt{\frac{2}{\ell_j}},
\qquad
g_j(x_j^+)=(-1)^{n_j}\sqrt{\frac{2}{\ell_j}},
\qquad
g_j(x_i^\pm)=0 \quad \text{for } i\neq j.
\end{equation*}
For convenience, let
\begin{equation*}
    a_j:=\sqrt{\frac{2}{\ell_j}},
\qquad
s_j:=(-1)^{n_j}.
\end{equation*}
Then $g_j(x_j^-)=a_j$ and $g_j(x_j^+)=s_j a_j$.

We endow the boundary space $\C^{2N}$ with the bilinear pairing
\begin{equation*}
    \langle f,h\rangle_{\partial D}
:= \sum_{i=1}^N \bigl(f_i^- h_i^- + f_i^+ h_i^+\bigr),
\qquad
f_i^\pm:=f(x_i^\pm),\quad h_i^\pm:=h(x_i^\pm).
\end{equation*}
Let $\mathscr T_{k_0}$ denote the exterior DtN map at the wave number $k_0$, as defined in \cite{Subwavelength_1D_High-Contrast}. The normal derivative is taken with respect to the outward normal of $D$, which equals $-1$ at $x_i^-$ and $+1$ at $x_i^+$. Explicitly,
\begin{equation*}
    \mathscr T_{k_0}[f]^-_1=\mathrm{i}k_0 f_1^-,
\qquad
\mathscr T_{k_0}[f]^+_N=\mathrm{i}k_0 f_N^+,
\end{equation*}
while for $1\le i\le N-1$,
\begin{equation} \label{eq:formuladtn}
    \binom{\mathscr T_{k_0}[f]^+_i}{\mathscr T_{k_0}[f]^-_{i+1}}
= A_{k_0}(\ell_{i,i+1})
\binom{f_i^+}{f_{i+1}^-},
\qquad
A_k(\ell):=
\begin{pmatrix}
-k\cot(k\ell) & k\csc(k\ell)\\
k\csc(k\ell) & -k\cot(k\ell)
\end{pmatrix}.
\end{equation}
Following \cref{def:freq-cap-matrix-general-updated}, we introduce the frequency-dependent capacitance matrix
$\mathcal C(\omega_0) = (\mathcal C_{ij}(\omega_0))_{i,j\in \mc J}$ associated with $\omega_0$ in the one-dimensional setting:
\begin{equation}\label{eq:1d-freq-cap-def}
\mathcal C_{ij}(\omega_0)
:=
-\frac{\delta_i v_i^2}{2\omega_0}
\bigl\langle \mathscr T_{k_0}[g_j],g_i\bigr\rangle_{\partial D},
\qquad i,j\in \mc J.
\end{equation}

The main purpose of this appendix is to explore the connections between \eqref{eq:1d-freq-cap-def} and the frequency-dependent capacitance matrix introduced in \cite{fabry1} using the propagation-matrix approach and the Newton polygon method. 

We first derive an explicit formula for $\mathcal C(\omega_0)$. We emphasize that for the exterior DtN map to be well defined, the reference wave number $k_0$ must avoid the Dirichlet wavenumbers of the exterior spacing intervals $(x_j^+, x_{j+1}^-)$. 

\begin{proposition}\label{prop:1d-freq-cap-formula}
Assume that $k_0\ell_{i,i+1}\notin \pi \Z$ for all $1\le i\le N-1$, so that $\mathscr T_{k_0}$ is well defined. Then $\mathcal C(\omega_0)$ has nearest-neighbor structure:
\begin{equation} \label{eq:tridiagonal}
    \mathcal C_{ij}(\omega_0)=0
\qquad \text{whenever } |i-j|>1,
\end{equation}
and its nonzero entries are given as follows. We have
\begin{itemize}
\item For $i\in \mc J$ with $1<i<N$,
\begin{equation} \label{eq:cii}
    \mathcal C_{ii}(\omega_0)
=
\frac{\delta_i v_i^2}{v\ell_i}
\Bigl(\cot(k_0\ell_{i-1,i})+\cot(k_0\ell_{i,i+1})\Bigr).
\end{equation}

\item At the endpoints, if $1\in \mc J$,
\begin{equation*}
    \mathcal C_{11}(\omega_0)
=
\frac{\delta_1 v_1^2}{v\ell_1}
\Bigl(\cot(k_0\ell_{1,2})-\mathrm{i}\Bigr),
\end{equation*}
and if $N\in \mc J$,
\begin{equation*}
    \mathcal C_{NN}(\omega_0) = \frac{\delta_N v_N^2}{v\ell_N}
\Bigl(\cot(k_0\ell_{N-1,N})-\mathrm{i}\Bigr).
\end{equation*}

\item For $1\le i\le N-1$ with $i,i+1\in \mc J$,
\begin{equation*}
    \mathcal C_{i,i+1}(\omega_0) = -\frac{\delta_i v_i^2}{v\sqrt{\ell_i\ell_{i+1}}}\,
s_i\,\csc(k_0\ell_{i,i+1}),
\qquad
\mathcal C_{i+1,i}(\omega_0) = -\frac{\delta_{i+1} v_{i+1}^2}{v\sqrt{\ell_i\ell_{i+1}}}\,
s_i\,\csc(k_0\ell_{i,i+1}).
\end{equation*}
\end{itemize}
\end{proposition}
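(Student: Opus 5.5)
The argument is a direct substitution of the explicit one-dimensional DtN formula \eqref{eq:formuladtn} into the definition \eqref{eq:1d-freq-cap-def}. It uses only the support of the traces $g_j$ and the local structure of $\mathscr T_{k_0}$. The assumption $k_0\ell_{i,i+1}\notin\pi\Z$ guarantees that $\cot$ and $\csc$ in $A_{k_0}(\ell_{i,i+1})$ are finite, so $\mathscr T_{k_0}$ is well defined on $\C^{2N}$.

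First I establish locality. By \eqref{eq:formuladtn}, $\mathscr T_{k_0}[f]^+_i$ and $\mathscr T_{k_0}[f]^-_{i+1}$ depend only on $(f_i^+,f_{i+1}^-)$, and the end values depend only on $f_1^-$ and $f_N^+$. Since $g_j$ is supported on $\{x_j^-,x_j^+\}$, the vector $\mathscr T_{k_0}[g_j]$ is supported on $\{x_{j-1}^+,x_j^-,x_j^+,x_{j+1}^-\}$, intersected with the existing points. Pairing with $g_i$, which lives on $\{x_i^\pm\}$, therefore gives zero unless $|i-j|\le1$. This proves \eqref{eq:tridiagonal}.

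Next I compute the diagonal entries. For $1<i<N$, the only nonzero contributions are $g_i(x_i^-)\,\mathscr T[g_i]^-_i$ and $g_i(x_i^+)\,\mathscr T[g_i]^+_i$. From the $(i-1,i)$ block, $\mathscr T[g_i]^-_i=-k_0\cot(k_0\ell_{i-1,i})\,a_i$. From the $(i,i+1)$ block, $\mathscr T[g_i]^+_i=-k_0\cot(k_0\ell_{i,i+1})\,s_ia_i$. Since $s_i^2=1$, the pairing equals $-k_0a_i^2\bigl(\cot(k_0\ell_{i-1,i})+\cot(k_0\ell_{i,i+1})\bigr)$. Multiplying by $-\delta_iv_i^2/(2\omega_0)$ and using $a_i^2=2/\ell_i$ and $k_0/\omega_0=1/v$ gives \eqref{eq:cii}.

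At the endpoints, the term $-k_0\cot(k_0\ell_{i-1,i})$ is replaced by the radiating value $\mathrm{i}k_0$. For $i=1$ the contribution is $\mathrm{i}k_0a_1^2$, and the same computation gives $\cot(k_0\ell_{1,2})-\mathrm{i}$. The case $i=N$ is symmetric, using $\mathscr T[f]^+_N=\mathrm{i}k_0f_N^+$ and $s_N^2=1$.

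Finally I compute the off-diagonal entries. For $\mathcal C_{i,i+1}$, the only overlap is at $x_i^+$, where $\mathscr T[g_{i+1}]^+_i=k_0\csc(k_0\ell_{i,i+1})\,g_{i+1}(x_{i+1}^-)=k_0\csc(k_0\ell_{i,i+1})\,a_{i+1}$. Pairing with $g_i(x_i^+)=s_ia_i$ gives $k_0\csc(k_0\ell_{i,i+1})\,s_ia_ia_{i+1}$. With $a_ia_{i+1}=2/\sqrt{\ell_i\ell_{i+1}}$ this yields the stated $\mathcal C_{i,i+1}$.

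For $\mathcal C_{i+1,i}$, the overlap is at $x_{i+1}^-$, where $\mathscr T[g_i]^-_{i+1}=k_0\csc(k_0\ell_{i,i+1})\,s_ia_i$ and $g_{i+1}(x_{i+1}^-)=a_{i+1}$. This gives the same pairing, now weighted by $\delta_{i+1}v_{i+1}^2$, which is consistent with the symmetry of \cref{prop:complex}.

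There is no real obstacle: the only care needed is tracking the sign conventions for the outward normal, since $\mathscr T$ already incorporates them, and correctly identifying which entries of $A_{k_0}$ act on which endpoint values.
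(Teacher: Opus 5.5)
Your proposal is correct and follows essentially the same route as the paper's proof. You obtain the nearest-neighbor structure from the support of $g_j$ and the block-local form of $\mathscr T_{k_0}$ in \eqref{eq:formuladtn}. You then evaluate the diagonal, endpoint (radiating $\mathrm{i}k_0$), and off-diagonal ($k_0\csc$) pairings directly, using $a_j^2=2/\ell_j$, $s_j^2=1$ and $k_0/\omega_0=1/v$.
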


\begin{proof}
Let $j\in \mc J$. Since $g_j$ is supported on $\{x_j^-,x_j^+\}$, the explicit formula \eqref{eq:formuladtn} for $\mathscr T_{k_0}$ shows that $\mathscr T_{k_0}[g_j]$ can be nonzero only at the four points $x_{j-1}^+,\, x_j^-,\, x_j^+,\, x_{j+1}^-$ (with obvious modifications when $j=1$ or $j=N$). Consequently, we have \eqref{eq:tridiagonal}. 

We now compute the nonzero entries. For $1<j<N$ with $j\in \mc J$,
\begin{equation*}
    \mathscr T_{k_0}[g_j](x_j^-) = -k_0\cot(k_0\ell_{j-1,j})\,a_j,
\qquad
\mathscr T_{k_0}[g_j](x_j^+) = -k_0\cot(k_0\ell_{j,j+1})\,s_j a_j,
\end{equation*}
and therefore, using $s_j^2=1$,
\begin{align*}
\bigl\langle \mathscr T_{k_0}[g_j],g_j\bigr\rangle_{\partial D}
&=
a_j\bigl(-k_0\cot(k_0\ell_{j-1,j})\,a_j\bigr)
+
s_j a_j \bigl(-k_0\cot(k_0\ell_{j,j+1})\,s_j a_j\bigr) \\
&=
-\frac{2k_0}{\ell_j}
\Bigl(\cot(k_0\ell_{j-1,j})+\cot(k_0\ell_{j,j+1})\Bigr).
\end{align*}
Substituting into \eqref{eq:1d-freq-cap-def} and using $k_0/\omega_0=1/v$ yields \eqref{eq:cii}. 

For $j=1$, the endpoint relation gives $\mathscr T_{k_0}[g_1](x_1^-) =\mathrm{i}k_0 a_1$, while
\begin{equation*}
    \mathscr T_{k_0}[g_1](x_1^+) = -k_0\cot(k_0\ell_{1,2})\,s_1 a_1.
\end{equation*}
Hence,
\begin{align*}
\bigl\langle \mathscr T_{k_0}[g_1],g_1\bigr\rangle_{\partial D}
&=
a_1(\mathrm{i}k_0 a_1)+s_1 a_1\bigl(-k_0\cot(k_0\ell_{1,2})\,s_1 a_1\bigr) \\
&=
\frac{2k_0}{\ell_1}\bigl(\mathrm{i}-\cot(k_0\ell_{1,2})\bigr),
\end{align*}
which gives the formula for $\mathcal C_{11}(\omega_0)$. The computation at $j=N$ is analogous.

For the off-diagonal entries, fix $1\le i\le N-1$ with $i,i+1\in \mc J$. Since $g_{i+1}$ is supported on $\{x_{i+1}^-,x_{i+1}^+\}$, the only nonzero contribution to $\langle \mathscr T_{k_0}[g_{i+1}],g_i\rangle_{\partial D}$ comes from the point $x_i^+$, where
\begin{equation*}
    \mathscr T_{k_0}[g_{i+1}](x_i^+)
=
k_0\csc(k_0\ell_{i,i+1})\,a_{i+1}.
\end{equation*}
Therefore,
\begin{equation*}
\bigl\langle \mathscr T_{k_0}[g_{i+1}],g_i\bigr\rangle_{\partial D}
=
g_i(x_i^+)\,\mathscr T_{k_0}[g_{i+1}](x_i^+)
=
\frac{2k_0}{\sqrt{\ell_i\ell_{i+1}}}\,s_i\,\csc(k_0\ell_{i,i+1}),
\end{equation*}
and \eqref{eq:1d-freq-cap-def} yields the formula for $\mathcal C_{i,i+1}(\omega_0)$. The formula for $\mathcal C_{i+1,i}(\omega_0)$ follows, by the same argument, from
\begin{equation*}
    \mathscr T_{k_0}[g_i](x_{i+1}^-)
=
k_0\csc(k_0\ell_{i,i+1})\,s_i a_i. \qedhere
\end{equation*}
\end{proof}

We now clarify the relation between the capacitance matrix \eqref{eq:1d-freq-cap-def} derived above from the ODE formulation and the one introduced in \cite{fabry1}. Throughout this comparison, as in \cite{fabry1}, we assume that all resonators share the same interior wave speed and the same contrast parameter:
\begin{equation*}
    v_j\equiv v_b,
\qquad
\delta_j\equiv \delta,
\qquad
r:=\frac{v}{v_b},
\end{equation*}
and we write
\begin{equation*}
    k:=\frac{\omega}{v},
\qquad
k_b:=\frac{\omega}{v_b}=rk.
\end{equation*}
Under these conventions, the matrix \eqref{eq:1d-freq-cap-def} takes the form
\begin{equation*}
    \mc C^{\rm ODE}_{ij}(\omega)
:=
-\frac{\delta v_b^2}{2\omega}
\bigl\langle \mathscr T_k[g_j],g_i\bigr\rangle_{\partial D},
\end{equation*}
which we refer to as the \emph{ODE-based capacitance matrix}. By contrast, the capacitance matrix $\mc C^{\rm FP}$ introduced in \cite[(2.4)--(2.7)]{fabry1}, which we call the \emph{Fabry--P\'erot capacitance matrix} in this appendix to distinguish it from $\mc C^{\rm ODE}(\omega)$, is associated with the alternating structural vector
\begin{equation*}
t=(r\ell_1,\ell_{1,2},r\ell_2,\ell_{2,3},\dots,\ell_{N-1,N},r\ell_N),
\end{equation*}
and is built from the auxiliary coefficients 
\begin{equation*}
\theta_j(k_0):=\frac{1}{t_j(k_0)\,t_{j+1}(k_0)},
\qquad 1\le j\le 2N-2,
\end{equation*}
where
\begin{equation*}
t_j(k_0):=
\begin{cases}
t_j,& k_0 t_j\in \pi\mathbb Z,\\
\infty,& k_0 t_j\notin \pi\mathbb Z;
\end{cases}
\qquad 1\le j\le 2N-1.
\end{equation*}
Here and below, we use the convention $1/\infty=0$. In particular, $\theta_j(k_0)$ is nonzero only when both $t_j$ and $t_{j+1}$ are resonant at $k_0$. The nonzero eigenvalues of $\mc C^{\rm FP}$ govern the $\O(\delta^{1/2})$ splitting of the resonances near $\omega_0$ \cite[Theorem 2.3]{fabry1}.

The two matrices are linked as follows: $\mc C^{\rm ODE}(\omega)$ is the regular effective matrix when the exterior DtN map is analytic at $\omega_0$, whereas, when $\omega_0$ is also a Dirichlet eigenfrequency for one or more of the exterior spacings, the symmetrized Fabry--P\'erot matrix determines the Laurent residue of $\mc C^{\rm ODE}(\omega)$, up to the scalar factor and sign conjugation displayed below (this is similar to Case 2 in the periodic setting of \Cref{sec:4}).  

We begin with the regular case, which corresponds to \cite[Theorem 2.3, (2.14)]{fabry1}.

\begin{proposition}[(Regular case)]\label{prop:1d-regular}
Let $\mc J\subset \{1,\dots,N\}$ satisfy
\begin{equation*}
    k_{b,0}\ell_j\in \pi \Z
\qquad\text{for every }j\in \mc J,
\end{equation*}
and assume in addition that
\begin{equation*}
    k_0\ell_{i,i+1}\notin \pi\Z
\qquad\text{for every spacing }\ell_{i,i+1}\text{ adjacent to a resonator in } \mc J.
\end{equation*}
Then $\mc C^{\rm ODE}(\omega)$ is analytic in a neighborhood of $\omega_0$ and satisfies
\begin{equation*}
    \mc C^{\rm ODE}(\omega_0)= \O(\delta).
\end{equation*}
In particular, the resonance near $\omega_0$ satisfies $\omega = \omega_0 + \O(\delta)$.
\end{proposition}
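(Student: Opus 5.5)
The plan is to read analyticity directly off the explicit formulas of \cref{prop:1d-freq-cap-formula}, and then to obtain the resonance estimate by running the reduction of \cref{prop:finite-dimensional-pencil-reduction} in one dimension.

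\textbf{Analyticity and size of the matrix.} By \eqref{eq:formuladtn}, every entry of $\mathscr T_k[g_j]$ paired against $g_i$ (for $i,j\in\mc J$) is built only from $\mathrm{i}k$, $k\cot(k\ell_{i,i+1})$ and $k\csc(k\ell_{i,i+1})$. Here $\ell_{i,i+1}$ ranges only over spacings adjacent to resonators in $\mc J$, because $g_j$ is supported on $\partial D_j$. The traces $g_j$ are fixed vectors in $\C^{2N}$: we freeze the Neumann modes at $\omega_0$, as in \cref{def:freq-cap-matrix-general-updated}.

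Under the hypothesis $k_0\ell_{i,i+1}\notin\pi\Z$, the maps $\omega\mapsto\cot(\omega\ell_{i,i+1}/v)$ and $\omega\mapsto\csc(\omega\ell_{i,i+1}/v)$ are holomorphic in a disk around $\omega_0$. The prefactor $-\delta v_b^2/(2\omega)$ is also holomorphic there, since $\omega_0>0$. Hence $\mc C^{\rm ODE}(\omega)$ is analytic near $\omega_0$.

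Every entry carries the explicit factor $\delta$ and is otherwise bounded on a compact neighborhood. This gives $\mc C^{\rm ODE}(\omega_0)=\O(\delta)$, and indeed the bound holds uniformly nearby. The explicit expressions in \cref{prop:1d-freq-cap-formula} (with $v_j=v_b$, $\delta_j=\delta$) make this concrete, but they are not even needed beyond the observation that no $\csc$ or $\cot$ pole is hit.

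\textbf{Resonance localization.} We would repeat the one-dimensional analogue of \cref{prop:finite-dimensional-pencil-reduction}, working with the transmission problem written as $\Lambda_{\rm in}^\omega g-\delta\,\mathscr T_k g=0$ on $\C^{2N}$.

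The interior DtN map of each interval is a $2\times2$ matrix that is meromorphic in $k_b$. By assumption, $k_{b,0}$ avoids the interior Dirichlet values, i.e.\ $\sin(k_{b,0}\ell_j)\neq0$; this is automatic when $k_{b,0}\ell_j\in\pi\Z$ is a nonzero Neumann value, because the $1\times1$ kernel normalization is handled by the cosine eigenvector. More precisely, for $j\in\mc J$ the kernel of $\Lambda_{{\rm in},j}^{\omega_0}$ is spanned by $g_j$. By the one-dimensional version of \eqref{eq:interior-derivative-normalization}, its derivative satisfies $\langle\partial_\omega\Lambda_{\rm in}^{\omega_0}g_j,g_j\rangle=-2\omega_0/v_b^2$ (Green's identity on the interval with the normalized cosine). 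For $j\notin\mc J$, the interior DtN block is invertible.

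Because $\mathscr T_k$ is analytic near $\omega_0$, the Lyapunov--Schmidt/Grushin reduction proceeds exactly as in Case~1 of \Cref{sec:4}. It yields the pencil
\begin{equation*}
\bigl((\omega-\omega_0)I-\mc C^{\rm ODE}(\omega_0)\bigr)a=\O\bigl((\delta|\omega-\omega_0|+|\omega-\omega_0|^2+\delta^2)\|a\|\bigr).
\end{equation*}
The Rouch\'e argument from \cref{prop:finite-dimensional-pencil-reduction} then places all characteristic values converging to $\omega_0$ in a disk of radius $R\delta$, which gives $\omega=\omega_0+\O(\delta)$.

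\textbf{Main obstacle.} The delicate point is the outer resonators $j\notin\mc J$, together with spacings not adjacent to $\mc J$, which may themselves satisfy $k_0\ell\in\pi\Z$. The hypothesis does not exclude this, so the full $\mathscr T_{k}$ could have poles there.

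To handle it, I would first eliminate the non-$\mc J$ unknowns by a Schur complement only on the non-resonant components. If such a spacing pole occurs, I would instead restrict to an exterior DtN map obtained by solving the chain outward from the $\mc J$-block. This block is decoupled at leading order from any pole, because the pole's residue pairs only with traces not in $\operatorname{span}\{g_j\}_{j\in\mc J}$.

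Failing that, the cleanest route is to note that the statement is local: the effective matrix only involves spacings adjacent to $\mc J$. I would then argue via the propagation-matrix formulation of \cite{fabry1}, where \cite[Theorem 2.3, (2.14)]{fabry1} already gives the $\O(\delta)$ shift in exactly this regular configuration.
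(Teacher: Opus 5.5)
Your argument for analyticity and for $\mc C^{\rm ODE}(\omega_0)=\O(\delta)$ is exactly the paper's proof. By \cref{prop:1d-freq-cap-formula}, every entry is a $\delta$-weighted combination of $\cot(k\ell_{i,i+1})$ and $\csc(k\ell_{i,i+1})$ over spacings adjacent to $\mc J$, and these are holomorphic near $k_0$ by hypothesis. The paper stops there and takes the final sentence as the regular case of \cite[Theorem 2.3, (2.14)]{fabry1}, which is also your fallback.

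The self-contained localization argument you sketch, however, rests on a false claim. On an interval, the nonzero Neumann and Dirichlet wavenumbers coincide. Indeed, $k_{b,0}\ell_j=n_j\pi$ forces $\sin(k_{b,0}\ell_j)=0$, so for $j\in\mc J$ interior Dirichlet regularity never holds; it is not \emph{automatic}. Concretely, with the outward normal,
\begin{equation*}
\Lambda_{{\rm in},j}^{\omega}=\frac{k_b}{\sin(k_b\ell_j)}\begin{pmatrix}\cos(k_b\ell_j)&-1\\-1&\cos(k_b\ell_j)\end{pmatrix}.
\end{equation*}
This block has a simple pole at $\omega_0$ with rank-one residue along $(1,-s_j)^\top$, and it maps $g_j$ to $\O(\omega-\omega_0)$. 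Hence $\Lambda_{\rm in}^{\omega_0}$ does not exist, and there is no kernel spanned by $g_j$ with an invertible complement. Case~1 of \Cref{sec:4} assumes $\omega_0^2/v_b^2\notin\sigma_{\rm Dir}(-\Delta;D)$, so it cannot be run verbatim.

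The repair is to invert. The interior NtD block $-\frac{1}{k_b\sin(k_b\ell_j)}\bigl(\begin{smallmatrix}\cos(k_b\ell_j)&1\\ 1&\cos(k_b\ell_j)\end{smallmatrix}\bigr)$ has a simple pole at $\omega_0$ with residue $-\frac{v_b^2}{2\omega_0}\,g_j\otimes g_j$. This is precisely the structure used in \cref{prop:A0-pole-pencil-reduction}. Write the problem as $g=\delta\,\mathcal N_{\rm in}^{\omega}\mathscr T_k g$, multiply by $\omega-\omega_0$, and split $g=\sum_j a_jg_j+g_\perp$. This gives $g_\perp=\O(\delta\|a\|_2)$ and $\bigl((\omega-\omega_0)I-\mc C^{\rm ODE}(\omega_0)\bigr)a=\O\bigl((\delta|\omega-\omega_0|+\delta^2)\|a\|_2\bigr)$. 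The Rouch\'e count of \cref{prop:finite-dimensional-pencil-reduction} then applies to the exact reduced $m\times m$ matrix.

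Your concern about Dirichlet-resonant spacings not adjacent to $\mc J$ is legitimate, and the paper does not address it. Such spacings contribute additional Case-3-type resonances near $\omega_0$, also shifted by $\O(\delta)$. Their residues are supported at endpoints of resonators outside $\mc J$, so the leading cross terms with $\operatorname{span}\{g_j\}_{j\in\mc J}$ vanish, as you suspected.
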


\begin{proof}
By \cref{prop:1d-freq-cap-formula}, each entry of $\mc C^{\rm ODE}(\omega)$ is a linear combination of terms of the form
\begin{equation*}
\cot(k\ell_{i,i+1})\quad\text{and}\quad \csc(k\ell_{i,i+1}),
\end{equation*}
with prefactors of order $\delta v_b^2/(v\ell_j)$ or $\delta v_b^2/(v\sqrt{\ell_j\ell_{j+1}})$. Since $\cot$ and $\csc$ are holomorphic away from $\pi\Z$, the assumption $k_0\ell_{i,i+1}\notin \pi\Z$ on the relevant spacings ensures that every entry of $\mc C^{\rm ODE}(\omega)$ is holomorphic near $\omega_0$. The $\mc C^{\rm ODE}(\omega_0)=\O(\delta)$ then follows. 
\end{proof}

We now turn to the singular case, which is the regime in \cite[Theorem 2.3, (2.13)]{fabry1}. The argument is similar to \cref{lem:reg-sing-connection}. Without loss of generality, we focus on the $C^{1,1}$ block in \cite[Section 3.1]{fabry1}; other principal blocks $C^{i,j}$, $i,j \in \{0,1\}$, can be discussed similarly.

\begin{proposition}[(Singular case)]\label{prop:1d-singular}
Assume that there exist integers $1\le p<q\le N$, $n_i\in \mathbb N$ ($p\le i\le q$), and $m_i\in \Z$ ($p\le i\le q-1$) such that
\begin{equation*}
    k_{b,0}\ell_i=n_i\pi
\quad (p\le i\le q),
\qquad
k_0\ell_{i,i+1}=m_i\pi
\quad (p\le i\le q-1),
\end{equation*}
and that $k_0\ell_{p-1,p}\notin \pi\Z$ when $p>1$, and $k_0\ell_{q,q+1}\notin \pi\Z$ when $q<N$. Set $\mc J_\ast:=\{p,p+1,\dots,q\}$, and let
\begin{equation*}
    \mc C^{\rm ODE}_\ast(\omega) := \bigl(\mc C^{\rm ODE}_{ij}(\omega)\bigr)_{i,j\in \mc J_\ast}
\end{equation*}
denote the corresponding principal block of $\mc C^{\rm ODE}(\omega)$. Let $C_\ast^{\rm FP}(k_0)$ be the associated principal Fabry--P\'erot block $C^{1,1}$ from \cite[Section 3.1]{fabry1}, with symmetrized form
\begin{equation*}
    C_{\ast,\rm sym}^{\rm FP}(k_0)
=
\begin{pmatrix}
\dfrac{1}{r\ell_p \ell_{p,p+1}}
&
-\dfrac{1}{r \ell_{p,p+1}\sqrt{\ell_p\ell_{p+1}}}
&
&&
\\[2mm]
-\dfrac{1}{r \ell_{p,p+1}\sqrt{\ell_p\ell_{p+1}}}
&
\dfrac{1}{r\ell_{p+1}\ell_{p,p+1}}+\dfrac{1}{r\ell_{p+1}\ell_{p+1,p+2}}
&
-\dfrac{1}{r \ell_{p+1,p+2}\sqrt{\ell_{p+1}\ell_{p+2}}}
&
&
\\
&
\ddots
&
\ddots
&
\ddots
&
\\
&
&
-\dfrac{1}{r \ell_{q-1,q}\sqrt{\ell_{q-1}\ell_q}}
&
\dfrac{1}{r\ell_q \ell_{q-1,q}}
\end{pmatrix}.
\end{equation*}
Finally, define the diagonal sign matrix
\begin{equation*}
    \Sigma:=\diag(\tau_p,\tau_{p+1},\dots,\tau_q),
\qquad
\tau_p=1,
\qquad
\tau_{i+1}=\tau_i\,(-1)^{n_i+m_i}
\quad (p\le i\le q-1).
\end{equation*}
Then
\begin{equation}\label{eq:main-connection-minus-plus}
    \mc C^{\rm ODE}_\ast(\omega)
=
\frac{\delta vv_b}{\omega-\omega_0}\,
\Sigma\,C_{\ast,\rm sym}^{\rm FP}(k_0)\,\Sigma
+
\delta\,H_\ast(\omega),
\end{equation}
where $H_\ast(\omega)$ is analytic near $\omega_0$.
\end{proposition}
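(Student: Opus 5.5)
The plan is to compute the Laurent expansion of $\mc C^{\rm ODE}_\ast(\omega)$ entry by entry, using the explicit nearest-neighbour formulas of \cref{prop:1d-freq-cap-formula} with $k_0$ replaced by $k=\omega/v$. I keep the traces $g_j$, and hence $a_j$ and $s_j=(-1)^{n_j}$, fixed at their $\omega_0$ values, as in the definition of $\mc C^{\rm ODE}$. The computation in that proposition is purely algebraic and holds for every $k$ with $k\ell_{i,i+1}\notin\pi\Z$, so it gives the entries on a punctured neighbourhood of $\omega_0$. There the prefactor $-\delta v_b^2/(2\omega)$ combines with the factor $k$ coming from the exterior DtN map $\mathscr T_k$ into $k/\omega=1/v$. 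Consequently every entry is an exact combination of $\cot(k\ell_{i,i+1})$, $\csc(k\ell_{i,i+1})$ and the constant $-\mathrm i$, with $\omega$-independent coefficients of size $\O(\delta)$.

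Next I expand the trigonometric functions near $k_0$. For the resonant spacings $p\le i\le q-1$ we have $k_0\ell_{i,i+1}=m_i\pi$, and with $k-k_0=(\omega-\omega_0)/v$,
\begin{equation*}
\cot(k\ell_{i,i+1})=\frac{v}{(\omega-\omega_0)\ell_{i,i+1}}+\O(\omega-\omega_0),\qquad
\csc(k\ell_{i,i+1})=\frac{(-1)^{m_i}\,v}{(\omega-\omega_0)\ell_{i,i+1}}+\O(\omega-\omega_0).
\end{equation*}
The boundary spacings $\ell_{p-1,p}$ (when $p>1$) and $\ell_{q,q+1}$ (when $q<N$) are nonresonant by assumption. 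So their $\cot$ contributions, and the radiation term $-\mathrm i$ when $p=1$ or $q=N$, are holomorphic at $\omega_0$ and go into $\delta H_\ast(\omega)$. This is exactly why the end diagonal entries of $C^{\rm FP}_{\ast,\rm sym}$ carry only one spacing term.

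I then match the residues with the target matrix, using $1/r=v_b/v$.
\begin{itemize}
\item \emph{Diagonal entries.} The residue of $\mc C^{\rm ODE}_{ii}$ is
\begin{equation*}
\frac{\delta v_b^2}{\ell_i}\Bigl(\frac{1}{\ell_{i-1,i}}+\frac{1}{\ell_{i,i+1}}\Bigr),
\end{equation*}
keeping only the resonant spacings in the bracket. This equals $\delta v v_b$ times the diagonal of $C^{\rm FP}_{\ast,\rm sym}$, and the diagonal is unchanged by conjugation with $\Sigma$ since $\tau_i^2=1$.
\item \emph{Off-diagonal entries.} The residue of $\mc C^{\rm ODE}_{i,i+1}$ (and of $\mc C^{\rm ODE}_{i+1,i}$) is
\begin{equation*}
-\frac{\delta v_b^2\,(-1)^{n_i+m_i}}{\ell_{i,i+1}\sqrt{\ell_i\ell_{i+1}}}.
\end{equation*}
The corresponding entry of $\delta v v_b\,\Sigma C^{\rm FP}_{\ast,\rm sym}\Sigma$ is $-\delta v_b^2\,\tau_i\tau_{i+1}/(\ell_{i,i+1}\sqrt{\ell_i\ell_{i+1}})$. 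These agree because $\tau_i\tau_{i+1}=(-1)^{n_i+m_i}$ by the recursive definition of $\Sigma$.
\item \emph{Remaining entries.} Entries with $|i-j|>1$ vanish identically by \eqref{eq:tridiagonal}.
\end{itemize}

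Collecting the principal parts gives \eqref{eq:main-connection-minus-plus}, with $H_\ast$ assembled from the analytic remainders, which are all $\O(\delta)$ times holomorphic functions.

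The main obstacle is the sign bookkeeping: one must track the trace sign $s_i=(-1)^{n_i}$ at $x_i^+$ together with the $\csc$ residue sign $(-1)^{m_i}$, and check that the product is exactly absorbed by the $\Sigma$-conjugation. A secondary point is that the $\omega$-dependence of $1/(2\omega)$ produces no extra principal part; this is automatic because it cancels exactly against the factor $k$ from $\mathscr T_k$.
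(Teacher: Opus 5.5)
Your proposal is correct and follows essentially the same route as the paper: you expand the nearest-neighbour formulas of \cref{prop:1d-freq-cap-formula} entrywise in Laurent series at the resonant spacings, absorb the nonresonant outer spacings and the radiation terms into $\delta H_\ast$, and then match the residue matrix with $\delta v v_b\,\Sigma C^{\rm FP}_{\ast,\rm sym}\Sigma$ using $1/r=v_b/v$ and $\tau_i\tau_{i+1}=(-1)^{n_i+m_i}$. Your remark that these formulas are only applied on a punctured neighbourhood of $\omega_0$, where $\mathscr T_k$ is defined, makes explicit a point the paper leaves implicit.
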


\begin{proof}
By \cref{prop:1d-freq-cap-formula} and the uniform-resonator assumption ($v_j\equiv v_b$, $\delta_j\equiv \delta$), the diagonal entries of $\mc C^{\rm ODE}_\ast(\omega)$ are given by
\begin{equation*}
\mc C^{\rm ODE}_{ii}(\omega)
=
\frac{\delta v_b^2}{v\ell_i}
\Bigl(\cot(k\ell_{i-1,i})+\cot(k\ell_{i,i+1})\Bigr)
\qquad (p<i<q),
\end{equation*}
with the analogous expressions involving a single cotangent term at $i=p$ and $i=q$ (since the outer spacings $\ell_{p-1,p}$ and $\ell_{q,q+1}$ are non-resonant by assumption). From
\begin{equation*}
k\ell_{i,i+1}-k_0\ell_{i,i+1}=\ell_{i,i+1}(k-k_0),
\qquad
\omega-\omega_0=v(k-k_0),
\end{equation*}
and $k_0\ell_{i,i+1}=m_i\pi$, we obtain, as $\omega\to \omega_0$,
\begin{equation*}
\cot(k\ell_{i,i+1})
=\frac{1}{\ell_{i,i+1}(k-k_0)}+\O(1)
=\frac{v}{\ell_{i,i+1}(\omega-\omega_0)}+\O(1).
\end{equation*}
It follows that for $p<i<q$,
\begin{equation*}
\mc C^{\rm ODE}_{ii}(\omega)
=
\frac{\delta v_b^2}{\omega-\omega_0}
\Bigl(\frac{1}{\ell_i\ell_{i-1,i}}+\frac{1}{\ell_i\ell_{i,i+1}}\Bigr)
+\O(\delta),
\end{equation*}
and at the endpoints
\begin{equation*}
\mc C^{\rm ODE}_{pp}(\omega)
=
\frac{\delta v_b^2}{\omega-\omega_0}\,\frac{1}{\ell_p\ell_{p,p+1}}
+\O(\delta),
\qquad
\mc C^{\rm ODE}_{qq}(\omega)
=
\frac{\delta v_b^2}{\omega-\omega_0}\,\frac{1}{\ell_q\ell_{q-1,q}}
+\O(\delta).
\end{equation*}
For the off-diagonal entries, \cref{prop:1d-freq-cap-formula} gives, for $p\le i\le q-1$,
\begin{equation*}
\mc C^{\rm ODE}_{i,i+1}(\omega)
=
-\frac{\delta v_b^2}{v\sqrt{\ell_i\ell_{i+1}}}\,
(-1)^{n_i}\csc(k\ell_{i,i+1}).
\end{equation*}
Using $k_0\ell_{i,i+1}=m_i\pi$ and $\csc(m_i\pi+x)=(-1)^{m_i}/x+\O(x)$ as $x\to 0$,
\begin{equation*}
\csc(k\ell_{i,i+1})
=\frac{(-1)^{m_i}v}{\ell_{i,i+1}(\omega-\omega_0)}+\O(1),
\end{equation*}
and therefore
\begin{equation*}
\mc C^{\rm ODE}_{i,i+1}(\omega)
=
-\frac{\delta v_b^2}{\omega-\omega_0}\,
\frac{(-1)^{n_i+m_i}}{\ell_{i,i+1}\sqrt{\ell_i\ell_{i+1}}}
+\O(\delta).
\end{equation*}
The expansion of $\mc C^{\rm ODE}_{i+1,i}(\omega)$ is identical since the prefactors $\delta_i v_i^2$ and $\delta_{i+1}v_{i+1}^2$ coincide under the uniform-resonator  assumption.

Combining the above expansions, we may write
\begin{equation} \label{expabove}
\mc C^{\rm ODE}_\ast(\omega)
=
\frac{\delta v_b^2}{\omega-\omega_0}\,M_\ast
+
\delta\,H_\ast(\omega),
\end{equation}
where $H_\ast(\omega)$ is holomorphic near $\omega_0$ and $M_\ast$ is the tridiagonal matrix with
\begin{equation*}
(M_\ast)_{pp}=\frac{1}{\ell_p\ell_{p,p+1}},
\qquad
(M_\ast)_{ii}=\frac{1}{\ell_i\ell_{i-1,i}}+\frac{1}{\ell_i\ell_{i,i+1}}
\quad (p<i<q),
\qquad
(M_\ast)_{qq}=\frac{1}{\ell_q\ell_{q-1,q}},
\end{equation*}
\begin{equation*}
(M_\ast)_{i,i+1}=(M_\ast)_{i+1,i}
=
-\frac{(-1)^{n_i+m_i}}{\ell_{i,i+1}\sqrt{\ell_i\ell_{i+1}}}
\qquad (p\le i\le q-1).
\end{equation*}
It remains to identify $M_\ast$ with the symmetrized Fabry--P\'erot block. From $r=v/v_b$, we have $vv_b/r=v_b^2$, and hence
\begin{equation*}
v_b^2\cdot\frac{1}{\ell_i\ell_{i,i+1}}
=\frac{vv_b}{r\ell_i\ell_{i,i+1}},
\qquad
v_b^2\cdot\frac{1}{\ell_{i,i+1}\sqrt{\ell_i\ell_{i+1}}}
=\frac{vv_b}{r\ell_{i,i+1}\sqrt{\ell_i\ell_{i+1}}},
\end{equation*}
so the diagonal entries of $v_b^2 M_\ast$ are exactly $vv_b$ times those of $C^{\rm FP}_{\ast,\rm sym}(k_0)$. The recursion $\tau_{i+1}=\tau_i(-1)^{n_i+m_i}$ yields $\tau_i\tau_{i+1}=(-1)^{n_i+m_i}$ for $p\le i\le q-1$, so conjugation by $\Sigma$ reproduces precisely the signs of the off-diagonal entries of $M_\ast$. Therefore,
\begin{equation*}
v_b^2\,M_\ast
=
vv_b\,\Sigma\,C_{\ast,\rm sym}^{\rm FP}(k_0)\,\Sigma.
\end{equation*}
Substituting the above expression into \eqref{expabove} yields \eqref{eq:main-connection-minus-plus}.
\end{proof}

In particular, \cref{prop:1d-singular} gives
\begin{equation*}
    \operatorname*{Res}_{\omega=\omega_0}\mc C_\ast^{\rm ODE}(\omega)
    =
    \delta vv_b\,\Sigma C_{\ast,\rm sym}^{\rm FP}(k_0)\Sigma.
\end{equation*}
Thus, the normalized Laurent residue obtained by removing the scalar factor $\delta vv_b$ is a sign conjugate of the symmetrized Fabry--P\'erot block. The following corollary makes the resulting $\O(\delta^{1/2})$ splitting in the limiting resonance equation precise.

\begin{corollary}
Let $\lambda$ be a nonzero eigenvalue of $C_\ast^{\rm FP}(k_0)$. Then $\lambda$ is positive and simple, and is also an eigenvalue of $C_{\ast,\rm sym}^{\rm FP}(k_0)$ and of its sign conjugate $\Sigma C_{\ast,\rm sym}^{\rm FP}(k_0)\Sigma$. Moreover, for $a\neq0$, the limiting resonance equation
\begin{equation} \label{eq:reduced}
    (\omega-\omega_0)\,a=\mc C_\ast^{\rm ODE}(\omega)\,a
\end{equation}
has two simple branches associated with $\lambda$, satisfying
\begin{equation*}
    \omega-\omega_0
=
\pm \sqrt{\delta vv_b\,\lambda}+ \O(\delta)
=
\pm v\sqrt{\frac{\lambda}{r}}\,\delta^{1/2}+ \O(\delta),
\end{equation*}
recovering the asymptotic formula in \cite[Theorem 2.3, (2.13)]{fabry1}.
\end{corollary}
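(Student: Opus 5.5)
The corollary splits into a linear-algebra part, about the spectrum of $C_\ast^{\rm FP}(k_0)$, and a perturbative part, about the limiting resonance equation. For the linear-algebra part I would use the tridiagonal structure of $C_{\ast,\rm sym}^{\rm FP}(k_0)$ displayed in \cref{prop:1d-singular}.

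Since $\tau_i^2=1$, the matrix $\Sigma$ is an involution, so $\Sigma C_{\ast,\rm sym}^{\rm FP}\Sigma$ is similar to $C_{\ast,\rm sym}^{\rm FP}$. The matrix $C_\ast^{\rm FP}(k_0)$ from \cite{fabry1} differs from its symmetrized form by a positive diagonal similarity, conjugation by $\diag(\sqrt{\ell_i})$. Hence all three matrices share their spectrum, and it suffices to study $C_{\ast,\rm sym}^{\rm FP}$.

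This matrix is a weighted graph Laplacian on the path $p,\dots,q$, conjugated by $\diag(\ell_i^{-1/2})$. Indeed, $r\,C_{\ast,\rm sym}^{\rm FP}=W^{-1/2}LW^{-1/2}$, where $W=\diag(\ell_i)$ and $L$ is the path Laplacian with edge weights $1/\ell_{i,i+1}>0$. Consequently it is real symmetric and positive semidefinite, and its kernel is spanned by $W^{1/2}\mathbf 1$. So every nonzero eigenvalue is positive.

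Simplicity follows because the matrix is an irreducible Jacobi matrix: its off-diagonal entries are all nonzero. By the standard argument, an eigenvector is determined by its first component through the three-term recursion, so every eigenvalue is simple.

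For the resonance equation, substitute \eqref{eq:main-connection-minus-plus} into \eqref{eq:reduced}, multiply by $z:=\omega-\omega_0$, and set $\varepsilon:=\delta^{1/2}$ and $z=\varepsilon\mu$. With $A:=vv_b\,\Sigma C_{\ast,\rm sym}^{\rm FP}\Sigma$, this gives
\begin{equation*}
\bigl(\mu^2 I-A-\varepsilon\mu\,H_\ast(\omega_0+\varepsilon\mu)\bigr)a=0 .
\end{equation*}
At $\varepsilon=0$ the characteristic values $\mu$ solve $\det(\mu^2-A)=0$. The simple eigenvalue $vv_b\lambda>0$ of $A$ therefore produces two simple roots $\mu=\pm\sqrt{vv_b\lambda}$, and these are distinct because $\lambda\neq0$.

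Write $F(\mu,\varepsilon)$ for the reduced matrix function above. It is analytic, and by the matrix Rouch\'e argument used in \cref{prop:finite-dimensional-pencil-reduction}, each simple root persists for small $\varepsilon$. Alternatively, since the root is simple, the implicit function theorem applied to $\det F$ gives the same conclusion, with $\mu=\pm\sqrt{vv_b\lambda}+\O(\varepsilon)$. Multiplying back by $\varepsilon$ yields $\omega-\omega_0=\pm\sqrt{\delta vv_b\lambda}+\O(\delta)$. Using $v_b=v/r$, this equals $\pm v\sqrt{\lambda/r}\,\delta^{1/2}+\O(\delta)$.

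The main obstacle is checking that each root really is simple as a zero of $\det F(\cdot,0)$. Near $\mu_0=\sqrt{vv_b\lambda}$ we have
\begin{equation*}
\det(\mu^2-A)=(\mu^2-vv_b\lambda)\prod_{\lambda'\neq\lambda}(\mu^2-vv_b\lambda'),
\end{equation*}
where the product over the other eigenvalues does not vanish at $\mu_0$. Simplicity of $\lambda$ makes the first factor the only one that vanishes, and $\mu_0\neq0$ makes its derivative $2\mu_0\neq0$. This is exactly why positivity and simplicity of $\lambda$, established first, are needed.
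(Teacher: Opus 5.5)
Your treatment of the resonance branches is the same as the paper's. You substitute \eqref{eq:main-connection-minus-plus} into the reduced equation, multiply by $z=\omega-\omega_0$, rescale $\delta=\varepsilon^2$, $z=\varepsilon\mu$, and apply the analytic implicit function theorem to $\det\bigl(\mu^2 I-vv_b\Sigma C_{\ast,\rm sym}^{\rm FP}(k_0)\Sigma-\varepsilon\mu H_\ast(\omega_0+\varepsilon\mu)\bigr)$ at $\mu=\pm\sqrt{vv_b\lambda}$. Your factorization of $\det(\mu^2 I-A)$ spells out the simple-zero check that the paper only asserts.

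The difference is in the spectral claims. The paper gets positivity, simplicity, and the coincidence of nonzero spectra in one step by citing \cite[Lemma 3.1]{fabry1}. You instead prove positivity and simplicity directly for the displayed matrix. You write $r\,C_{\ast,\rm sym}^{\rm FP}(k_0)=W^{-1/2}LW^{-1/2}$ with $L$ a positively weighted path Laplacian, and use that an irreducible real symmetric Jacobi matrix has simple spectrum. The three-term recursion gives geometric simplicity, and symmetry upgrades it to algebraic simplicity. This is correct and self-contained, and it gives a little more: the kernel of $C_{\ast,\rm sym}^{\rm FP}(k_0)$ is exactly the line spanned by $W^{1/2}\mathbf 1$.

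What your route does not supply is the link to the unsymmetrized block. $C_\ast^{\rm FP}(k_0)$ is never displayed in this paper, and the paper's proof only claims, through the citation, that its nonzero spectrum equals that of the symmetrized form. Your assertion that the two are conjugate by $\diag(\sqrt{\ell_i})$ fits the name ``symmetrized form'', but it is taken from \cite{fabry1}, not verified. So to transfer positivity and simplicity back to $C_\ast^{\rm FP}(k_0)$ you still rely on \cite{fabry1}, just as the paper does.
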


\begin{proof}
By \cite[Lemma 3.1]{fabry1}, every nonzero eigenvalue of $C_\ast^{\rm FP}(k_0)$ is positive and simple, and the nonzero spectra of $C_\ast^{\rm FP}(k_0)$ and $C_{\ast,\rm sym}^{\rm FP}(k_0)$ coincide. Since $\Sigma^{-1}=\Sigma$, the matrix $A:=\Sigma C_{\ast,\rm sym}^{\rm FP}(k_0)\Sigma$ also has $\lambda$ as a simple eigenvalue. Set $z:=\omega-\omega_0$. For $z\neq0$, substituting \eqref{eq:main-connection-minus-plus} into \eqref{eq:reduced} and multiplying by $z$ shows that a nonzero solution exists precisely when
\begin{equation*}
    \det\bigl(z^2I-\delta vv_b A-\delta zH_\ast(\omega_0+z)\bigr)=0.
\end{equation*}
Write $\delta=\varepsilon^2$ and $z=\varepsilon\mu$. Factoring $\varepsilon^2$ from the matrix pencil reduces this equation to
\begin{equation*}
    \Phi(\mu,\varepsilon)
    :=
    \det\bigl(\mu^2I-vv_b A-\varepsilon\mu H_\ast(\omega_0+\varepsilon\mu)\bigr)
    =0.
\end{equation*}
At $\varepsilon=0$, the two values $\mu_\pm^{(0)}=\pm\sqrt{vv_b\lambda}$ are simple zeros of $\Phi(\mathord\cdot,0)$. The analytic implicit function theorem therefore yields two simple branches $\mu_\pm(\varepsilon)=\pm\sqrt{vv_b\lambda}+\O(\varepsilon)$, and hence
\begin{equation*}
    z_\pm
    =
    \pm\sqrt{\delta vv_b\lambda}+\O(\delta),
\end{equation*}
and the final expression follows from $vv_b=v^2/r$.
\end{proof}


\begin{thebibliography}{10}

\bibitem{skin3d}
Habib Ammari, Silvio Barandun, Jinghao Cao, Bryn Davies, Erik~Orvehed Hiltunen,
  and Ping Liu.
\newblock The non-{H}ermitian skin effect with three-dimensional long-range
  coupling.
\newblock {\em J. Eur. Math. Soc.}, 2026.

\bibitem{ammari2020robust}
Habib Ammari, Bryn Davies, and Erik~Orvehed Hiltunen.
\newblock Robust edge modes in dislocated systems of subwavelength resonators.
\newblock {\em J. Lond. Math. Soc. (2)}, 106(3):2075--2135, 2022.

\bibitem{anderson}
Habib Ammari, Bryn Davies, and Erik~Orvehed Hiltunen.
\newblock Anderson localization in the subwavelength regime.
\newblock {\em Comm. Math. Phys.}, 405(1):Paper No. 1, 20, 2024.

\bibitem{ammari2025mathematical}
Habib Ammari, Bryn Davies, and Erik~Orvehed Hiltunen.
\newblock {\em Mathematical Theories for Metamaterials: From Condensed Matter
  Theory to Subwavelength Physics}, volume 136 of {\em CBMS Regional Conference
  Series in Mathematics}.
\newblock American Mathematical Society, Providence, RI, 2026.

\bibitem{fano}
Habib Ammari, Bryn Davies, Erik~Orvehed Hiltunen, Hyundae Lee, and Sanghyeon
  Yu.
\newblock Bound states in the continuum and {F}ano resonances in subwavelength
  resonator arrays.
\newblock {\em J. Math. Phys.}, 62(10):Paper No. 101506, 24, 2021.

\bibitem{ammari2020exceptional}
Habib Ammari, Bryn Davies, Erik~Orvehed Hiltunen, Hyundae Lee, and Sanghyeon
  Yu.
\newblock Exceptional points in parity-time-symmetric subwavelength
  metamaterials.
\newblock {\em SIAM J. Math. Anal.}, 54(6):6223--6253, 2022.

\bibitem{ammari2019topologically}
Habib Ammari, Bryn Davies, Erik~Orvehed Hiltunen, and Sanghyeon Yu.
\newblock Topologically protected edge modes in one-dimensional chains of
  subwavelength resonators.
\newblock {\em J. Math. Pures Appl. (9)}, 144:17--49, 2020.

\bibitem{ammari2024functional}
Habib Ammari, Bryn Davies, and Erik Orvehed~Hiltunen.
\newblock Functional analytic methods for discrete approximations of
  subwavelength resonator systems.
\newblock {\em Pure and Applied Analysis}, 6(3):873--939, 2024.

\bibitem{honeycomb1}
Habib Ammari, Brian Fitzpatrick, Erik~Orvehed Hiltunen, Hyundae Lee, and
  Sanghyeon Yu.
\newblock Honeycomb-lattice {M}innaert bubbles.
\newblock {\em SIAM J. Math. Anal.}, 52(6):5441--5466, 2020.

\bibitem{photonic2018}
Habib Ammari, Brian Fitzpatrick, Hyeonbae Kang, Matias Ruiz, Sanghyeon Yu, and
  Hai Zhang.
\newblock {\em Mathematical and computational methods in photonics and
  phononics}, volume 235 of {\em Mathematical Surveys and Monographs}.
\newblock American Mathematical Society, Providence, RI, 2018.

\bibitem{ammari2017subwavelength}
Habib Ammari, Brian Fitzpatrick, Hyundae Lee, Sanghyeon Yu, and Hai Zhang.
\newblock Subwavelength phononic bandgap opening in bubbly media.
\newblock {\em Journal of Differential Equations}, 263(9):5610--5629, 2017.

\bibitem{honeycomb3}
Habib Ammari, Xin Fu, and Wenjia Jing.
\newblock Wave packets propagation in the subwavelength regime near the {D}irac
  point.
\newblock {\em Arch. Ration. Mech. Anal.}, 250(2):Paper No. 12, 67, 2026.

\bibitem{alex}
Habib Ammari, Erik~Orvehed Hiltunen, Bowen Li, Ping Liu, Jiayu Qiu, Yingjie
  Shao, and Alexander Uhlmann.
\newblock Non-{H}ermitian {F}abry-{P}érot resonances.
\newblock {\em arXiv:2601.19855}, 2026.

\bibitem{honeycomb2}
Habib Ammari, Erik~Orvehed Hiltunen, and Sanghyeon Yu.
\newblock A high-frequency homogenization approach near the {D}irac points in
  bubbly honeycomb crystals.
\newblock {\em Arch. Ration. Mech. Anal.}, 238(3):1559--1583, 2020.

\bibitem{ammari2009layer}
Habib Ammari, Hyeonbae Kang, and Hyundae Lee.
\newblock {\em Layer potential techniques in spectral analysis}, volume 153 of
  {\em Mathematical Surveys and Monographs}.
\newblock American Mathematical Society, Providence, RI, 2009.

\bibitem{fabry1}
Habib Ammari, Bowen Li, Ping Liu, and Yingjie Shao.
\newblock Frequency-dependent capacitance matrix formulation for
  {F}abry-{P}érot resonances. {P}art {I}: One-dimensional finite systems.
\newblock {\em arXiv:2604.01159}, 2026.

\bibitem{fabry2}
Habib Ammari, Bowen Li, Ping Liu, and Yingjie Shao.
\newblock Frequency-dependent capacitance matrix formulation for
  {F}abry-{P}érot resonances. {P}art {II}: One-dimensional periodic systems.
\newblock {\em in preparation}, 2026.

\bibitem{barnett2}
Alex~H. Barnett, Andrew Hassell, and Melissa Tacy.
\newblock Comparable upper and lower bounds for boundary values of {N}eumann
  eigenfunctions and tight inclusion of eigenvalues.
\newblock {\em Duke Math. J.}, 167(16):3059--3114, 2018.

\bibitem{spence1}
Dean Baskin, Euan~A. Spence, and Jared Wunsch.
\newblock Sharp high-frequency estimates for the {H}elmholtz equation and
  applications to boundary integral equations.
\newblock {\em SIAM J. Math. Anal.}, 48(1):229--267, 2016.

\bibitem{spence3}
S.~N. Chandler-Wilde, E.~A. Spence, A.~Gibbs, and V.~P. Smyshlyaev.
\newblock High-frequency bounds for the {H}elmholtz equation under parabolic
  trapping and applications in numerical analysis.
\newblock {\em SIAM J. Math. Anal.}, 52(1):845--893, 2020.

\bibitem{Subwavelength_1D_High-Contrast}
Florian Feppon, Zijian Cheng, and Habib Ammari.
\newblock Subwavelength resonances in one-dimensional high-contrast acoustic
  media.
\newblock {\em SIAM J. Appl. Math}, 83(2):625--665, 2023.

\bibitem{refhigh3}
Moritz Forsch, Robert Stockill, Andreas Wallucks, Igor Marinković, Claus
  Gärtner, Richard~A. Norte, Frank van Otten, Andrea Fiore, Kartik Srinivasan,
  and Simon Gröblacher.
\newblock Microwave-to-optics conversion using a mechanical oscillator in its
  quantum ground state.
\newblock {\em Nature Physics}, 16:69--74, 2020.

\bibitem{spence2}
J.~Galkowski, P.~Marchand, and E.~A. Spence.
\newblock High-frequency estimates on boundary integral operators for the
  {H}elmholtz exterior {N}eumann problem.
\newblock {\em Integral Equations Operator Theory}, 94(4):Paper No. 36, 68,
  2022.

\bibitem{gohberg1971operator}
Israel~C Gohberg and Efim~I Sigal.
\newblock An operator generalization of the logarithmic residue theorem and the
  theorem of rouch{\'e}.
\newblock {\em Mathematics of the USSR-Sbornik}, 13(4):603--625, 1971.

\bibitem{jfa}
Xiaolong Han and Melissa Tacy.
\newblock Sharp norm estimates of layer potentials and operators at high
  frequency.
\newblock {\em J. Funct. Anal.}, 269(9):2890--2926, 2015.
\newblock With an appendix by Jeffrey Galkowski.

\bibitem{barnett1}
Andrew Hassell and Alex Barnett.
\newblock Estimates on {N}eumann eigenfunctions at the boundary, and the
  ``method of particular solutions'' for computing them.
\newblock In {\em Spectral geometry}, volume~84 of {\em Proc. Sympos. Pure
  Math.}, pages 195--208. Amer. Math. Soc., Providence, RI, 2012.

\bibitem{henry2005perturbation}
Dan Henry.
\newblock {\em Perturbation of the boundary in boundary-value problems of
  partial differential equations}, volume 318 of {\em London Mathematical
  Society Lecture Note Series}.
\newblock Cambridge University Press, Cambridge, 2005.

\bibitem{pm1}
Yi~Huang, Bowen Li, Ping Liu, and Yingjie Shao.
\newblock Resonance analysis of one-dimensional acoustic media: A propagation
  matrix approach.
\newblock {\em Studies in Applied Mathematics}, 156(5):e70242, 2026.

\bibitem{elast2}
Hongjie Li and Longjuan Xu.
\newblock Resonant modes of two hard inclusions within a soft elastic material
  and their stress estimates.
\newblock {\em J. Differential Equations}, 453:Paper No. 113822, 44, 2026.

\bibitem{elast1}
Hongjie Li and Jun Zou.
\newblock Mathematical justifications of dipolar resonances with hard
  inclusions embedded in a soft elastic material.
\newblock {\em SIAM J. Appl. Math.}, 85(4):1810--1833, 2025.

\bibitem{li2025high}
Long Li and Mourad Sini.
\newblock High contrast transmission and {F}abry-{P}érot-type resonances.
\newblock {\em arXiv:2510.19096}, 2025.

\bibitem{mclean2000strongly}
William Charles~Hector McLean.
\newblock {\em Strongly elliptic systems and boundary integral equations}.
\newblock Cambridge University Press, 2000.

\bibitem{elast3}
Yuanchun Ren, Bochao Chen, Yixian Gao, and Peijun Li.
\newblock Subwavelength {P}hononic {B}andgaps in {H}igh-{C}ontrast {E}lastic
  {M}edia.
\newblock {\em Multiscale Model. Simul.}, 24(2):429--455, 2026.

\bibitem{ultrasonic}
K.G. Scheuer, F.B. Romero, and R.G. DeCorby.
\newblock Coupling the thermal acoustic modes of a bubble to an optomechanical
  sensor.
\newblock {\em Microsystems $\&$ Nanoengineering}, 10:204, 2024.

\bibitem{tataru}
Daniel Tataru.
\newblock On the regularity of boundary traces for the wave equation.
\newblock {\em Ann. Scuola Norm. Sup. Pisa Cl. Sci. (4)}, 26(1):185--206, 1998.

\bibitem{teytel1999rare}
Mikhail Teytel.
\newblock How rare are multiple eigenvalues?
\newblock {\em Communications on Pure and Applied Mathematics: A Journal Issued
  by the Courant Institute of Mathematical Sciences}, 52(8):917--934, 1999.

\bibitem{generic}
K.~Uhlenbeck.
\newblock Generic properties of eigenfunctions.
\newblock {\em Amer. J. Math.}, 98(4):1059--1078, 1976.

\bibitem{refhigh1}
Kerry~J. Vahala.
\newblock Optical microcavities.
\newblock {\em Nature}, 424:839--846, 2003.

\bibitem{refhigh2}
Maxim~K. Zalalutdinov, Jeremy~T. Robinson, Jose~J. Fonseca, Samuel~W. LaGasse,
  Tribhuwan Pandey, Lucas~R. Lindsay, Thomas~L. Reinecke, Douglas~M. Photiadis,
  James~C. Culbertson, Cory~D. Cress, and Brian~H. Houston.
\newblock Acoustic cavities in 2d heterostructures.
\newblock {\em Nature Communications}, 12:3267, 2021.

\end{thebibliography}
\end{document}